\renewcommand{\C}{{\mathcal{C}}}
\newcommand{\D}{\mathcal{D}}
\newcommand{\DS}{\mathcal{DS}}
\newcommand{\partialvis}{\partial_\text{vis}}
\DeclareMathOperator{\Bis}{Bis}
\DeclareMathOperator{\vv}{{\mathrm{v}}}
\DeclareMathOperator{\type}{type}
\DeclareMathOperator{\Hull}{Hull}
\DeclareMathOperator{\Diag}{Diag}
\DeclareMathOperator{\asym}{slope}
\DeclareMathOperator{\Th}{Th}
\DeclareMathOperator{\Ker}{ker}
\DeclareMathOperator{\Thomega}{{Th}^{\omega}_{\rm flag}} 
\DeclareMathOperator{\ThomegaH}{{Th}^{\omega}_{\rm horo}}
\newcommand{\Omegaom}{\Omega^{\omega}_{\rm flag}} 
\newcommand{\OmegaomH}{\Omega^{\omega}_{\rm horo}} 
\newcommand{\partialomX}{\partial^{\omega}_{\rm horo} \X}
\newcommand{\partialvisX}{\partial_{\rm vis} \X}
\newcommand{\Selberg}{\mathfrak{s}}
\title{Dirichlet domains for Anosov subgroups}
\author{Colin Davalo}
\address{Mathematisches Institut, Ruprecht-Karls Universität Heidelberg, Im Neuenheimer Feld 205, 69120 Heidelberg, Germany}
\email{cdavalo@mathi.uni-heidelberg.de}
\author{J.\ Maxwell Riestenberg}
\address{Max Planck Institute for Mathematics in the Sciences, Inselstraße 22, 04103 Leipzig, Germany}
\email{max.riestenberg@mis.mpg.de}
\begin{document}

\begin{abstract}
We introduce a sufficient condition for a finitely generated subgroup $\Gamma$ of a semisimple Lie group $G$ to admit finite-sided Dirichlet domains for polyhedral Finsler metrics on the symmetric space $G/K$.
The condition always implies the $\Theta$-Anosov condition for some $\Theta$, and can be arranged to be equivalent to the $\Theta$-Anosov condition when $G$ is simple and $\Theta$ is the set of long roots or the set of short roots.
The Dirichlet domain we obtain extends to a fundamental domain for the action of $\Gamma$ on a domain of discontinuity in a flag manifold. 

For instance, Borel Anosov subgroups of $\SL(d,\R)$ have finite-sided Dirichlet domains for the Hilbert metric on the symmetric space which extends to the space of line-hyperplane flags, and $n$-Anosov subgroups of $\Sp(2n,\R)$ have finite-sided Dirichlet-Selberg domains in $\SL(2n,\R)/\SO(2n)$ which extend to a domain in projective space bounded by quadrics.
\end{abstract}

\maketitle

\section{Introduction}

Let $\Gamma$ be a discrete group acting properly by isometries on a metric space $(X,d)$. 
Such an action can be understood by producing a \emph{fundamental domain}: a connected closed subset meeting every orbit whose interior points meet only one orbit.
The \emph{Dirichlet domain} based at $o\in X$ is the cell containing $o$ in the Voronoï diagram associated with the $\Gamma$-orbit of this point, or in other words the set of points $x\in X$ such that for all $\gamma\in \Gamma$, $d(o,x)\leq d(\gamma\cdot o,x)$.
When $X$ is a proper geodesic metric space, the Dirichlet domain is a fundamental domain \cite{Rat19}.
It is natural to ask when a Dirichlet domain has finitely many sides. We address that question in this paper for certain discrete subgroups acting on higher rank symmetric spaces equipped with various Finsler metrics.

\medskip

Before discussing our results for higher rank symmetric spaces, let us mention what is known in rank one.
For real hyperbolic space $\mathbb{H}^n$, the use of Dirichlet domains to study actions of discrete isometry groups goes back to Poincare \cite{poincare1883memoire}.
In that setting the construction is particularly well-behaved: the bisectors are totally geodesic submanifolds, and the Dirichlet domain is convex.
Moreover in the projective model, the resulting domain is a convex polyhedron: a projectively convex subset of the ball which has a locally finite collection of sides.
For lattices and convex cocompact subgroups in $\Isom(\HH^n)$, every Dirichlet domain has finitely many sides; these are special cases of \emph{geometrically finite} groups. 
A subgroup of $\Isom(\HH^n)$ for $n =2$ or $n=3$ is called geometrically finite if any (equivalently, every) Dirichlet domain has finitely many sides.
The characterization of geometrically finite groups in $\Isom(\HH^n)$ for $n\ge 4$ in terms of their Dirichlet domains is somewhat subtle: there exist subgroups admitting finite-sided Dirichlet domains for some basepoints and not others, see \cite{Rat19} for a detailed discussion. 

Characterizing convex cocompact subgroups in higher dimensions in terms of their Dirichlet domains is more straightforward. 
Let $\overline{\mathbb{\HH}^n} = \HH^n \cup \partial_{\rm vis} \HH^n$ be the \emph{visual compactification} of $\HH^n$. 
To each point of $\partial_{\rm vis} \HH^n$, one can associate a Busemann function defined up to an additive constant.
Let $\mathcal{D}_\Gamma(o)$ denote the closure in $\overline{\mathbb{\HH}^n}$ of the Dirichlet domain in $\HH^n$. 
The intersection $\mathcal{D}_\Gamma(o) \cap \partial_{\rm vis}\HH^n$ consists precisely of the points whose associated Busemann function $b$ restricted to $\Gamma\cdot o$ has $o$ as a minimum.
We say that a Dirichlet domain is \emph{properly finite-sided} if there exist a neighborhood $U$ of $\mathcal{D}_\Gamma(o)$ that is included for all except finitely many $\gamma$ in $\Gamma$ in the closure in $\overline{\mathbb{\HH}^n}$ of the half-space:
$$\mathcal{H}(o,\gamma \cdot o) \coloneqq \lbrace x\in \mathbb{H}^n \mid d(o,x)\leq d(\gamma\cdot o,x)\rbrace \cup \lbrace [b] \in \partial_{\rm vis} \mathbb{H}^n \mid b(o) \leq b(\gamma\cdot o)\rbrace .$$
A subgroup $\Gamma$ is convex cocompact if and only some (and hence any) of its Dirichlet domains is properly finite-sided. 
Moreover, $\mathcal{D}_\Gamma(o) \cap \partial_{\rm vis} \HH^n$ is a fundamental domain for the action of $\Gamma$ on its \emph{domain of discontinuity} in $\partial_{\rm vis} \mathbb{H}^n$.
Accordingly, the Busemann functions $[b] \in \partial_{\rm vis} \HH^n$ satisfy a dichotomy: either $[b]$ belongs to the limit set and then goes to $-\infty$ along some sequence in the orbit, or $[b]$ is proper and bounded below on the whole orbit $\Gamma \cdot o$. 
This whole discussion applies for any negatively curved symmetric space. 
Our results can be viewed as a generalization of this characterization and its proof to higher rank symmetric spaces. 

\medskip

In this paper we study the case when $\Gamma$ is an \emph{Anosov subgroup} of a semisimple Lie group $G$ acting on the associated symmetric space $\X=G/K$.  
Anosov subgroups are regarded as the natural generalization of convex cocompact subgroups of rank 1 Lie groups to higher rank.
An Anosov subgroup is the image of an Anosov representation; these were introduced by Labourie \cite{Lab06} for fundamental groups of closed negatively curved manifolds and Guichard-Wienhard \cite{GW12} for word hyperbolic groups in general.

\medskip

The symmetric space $\X$ has a natural $G$-invariant Riemannian metric, uniquely determined up to rescaling by a constant on each isometric factor. 
But in higher rank, Dirichlet domains for this Riemannian metric differ too drastically from those for rank $1$ symmetric spaces to have a chance at mimicking the characterization of convex cocompact subgroups. 
To see the issue, suppose that there is a loxodromic element $\gamma$ of $\Gamma$ translating along some axis $c \colon \R \to \X$. 
In higher rank, one can find a Busemann function which is constant along $c$: just take any perpendicular direction in any maximal flat containing $c$. 
Such a Busemann function violates the dichotomy phenomenon we saw in rank $1$. 

Instead, it is fruitful to consider $G$-invariant \emph{polyhedral Finsler metrics} on $\X$, i.e.\ $G$-invariant Finsler metrics whose restriction to a maximal flat is polyhedral.
This gives us the flexibility to control the shapes of horospheres in a manner compatible with the asymptotics of $\Gamma$ (i.e.\ the limit cone).
 Polyhedral Finsler metrics on symmetric spaces have been studied recently by various authors \cite{KL18a,HSWW,Lemmens2023symmetriccones}.

\subsection{Undistorted subgroups}

Given a linear form $\omega$ on the model restricted Cartan subalgebra $\mathfrak{a}$ of the Lie algebra $\mathfrak{g}$ of $G$, one can define a (possibly asymmetric) norm on $\mathfrak{a}$ invariant by the Weyl group $W$ by setting for $v\in \mathfrak{a}$:
$$|v|_\omega=\max_{w\in W} \omega(wv).$$
Such a $W$-invariant norm on $\mathfrak{a}$ defines a $G$-invariant Finsler metric on $G$, which we call a polyhedral metric since the unit ball of the norm $|\cdot|_\omega$ on $\mathfrak{a}$ is a polyhedron. 

Unlike the Riemannian metric, this metric $d_\omega$ has the remarkable property that typical Riemannian geodesics cannot intersect level sets of distance functions in a nontrivial interval. 
So we can consider a rich class of quasi-isometrically embedded subgroups whose orbits uniformly avoid such level sets.
The problematic directions in $\mathfrak{a}$ are those $v\in \mathfrak{a}$ satisfying $\omega (w\cdot v)=0$ for some $w\in W$.
This motivates the following definition:

\begin{definition}[$\omega$-undistorted subgroups]
	We say that a finitely generated subgroup
	$\Gamma < G$ is \emph{$\omega$-undistorted} if there exists a word metric on $\Gamma$ and constants $A,B >0$ such that for every $\gamma \in \Gamma$ and $w$ in the Weyl group:
	\begin{equation}
		\label{eq:omega-URU equation intro}
		\abs{\omega\left(w\cdot \vec{d}(o,\gamma\cdot o)\right)} \geq A \abs{\gamma}-B.
	\end{equation}
	Here $\vec{d}$ stands for the vector valued distance on the symmetric space $\X$.
	Note that $\vec{d}(o,g \cdot o)$ agrees with the Cartan projection of $g \in G$ for the basepoint $o \in \X$.
\end{definition}

In rank 1 symmetric spaces, every $G$-invariant Finsler metric is the unique (up to scaling) invariant Riemannian metric, so in that case $\omega$-undistorted subgroups are the same as convex cocompact subgroups.
In higher rank, $\omega$-undistorted subgroups are a strict subclass of those groups whose orbit maps are quasi-isometric embeddings with respect to $d_\omega$. 
Indeed, the metric $d_\omega$ is biLipschitz to the Riemannian metric, so has the same quasi-isometric embeddings. Rather, the condition tells us that diverging sequences in the orbit also diverge linearly away from a natural class of hypersurfaces through the basepoint. 

\medskip

The definition of $\omega$-undistorted subgroups bears a striking similarity to the definition of $\Theta$-Anosov subgroups, see Definition \ref{def: Anosov}.
If $\Gamma$ is $\omega$-undistorted and not virtually cyclic, then it is $\Theta$-Anosov for some $\Theta$, see Proposition \ref{prop:omega-undistorted implies Anosov}.
However, the precise set $\Theta$ is not determined only by $\omega$, see Example \ref{example: sp6r}. 
On the other hand, when $G$ is simple and $\Theta$ is the set of long simple roots (resp.\ the set of short simple roots), we may set $\omega_\Theta$ to be any long (resp.\ short) root, and in this case the $\Theta$-Anosov subgroups are precisely $\omega_\Theta$-undistorted subgroups, see Remark \ref{rmk: weyl group orbits}.

\medskip

In order to make sense of the notion of properly finite-sided Dirichlet domains, we need to work in the appropriate compactification of the symmetric space $\X$, which is given by the \emph{horofunction compactification} $\overline{\X}^\omega$ of $\X$ with respect to the Finsler metric $d_\omega$, see Section \ref{Sec:Finsler metrics and compactifications}. 
This is defined by embedding $\X$ into the space of functions $\X\to \R$ modulo constant functions via the map $x\in \X \mapsto [d_\omega(\cdot,x)]$, equipped with the quotient of the compact-open topology. 
With this topology the image of $\X$ is relatively compact so $\overline{\X}^\omega=\X\cup\partial_{\rm horo}^\omega\X$ is defined to be the closure of the image of $X$ with respect to this embedding. 
Elements of $\partial_{\rm horo}^\omega\X$ are called \emph{horofunctions}.

\medskip

We prove that {$\omega$-undistorted subgroups} have finite-sided Dirichlet domains $\mathcal{D}^\omega_\Gamma(o)$ in the same strong sense as in rank 1.

\begin{theorem}[{Theorem \ref{thm:omega-URU implies properly finite-sided}}]
	Let $\Gamma<G$ be an {$\omega$-undistorted} subgroup, and let $o \in \X$.
	The Dirichlet domain $\mathcal{D}_\Gamma^\omega(o)$ is \emph{properly finite-sided}, i.e.\ there exist a neighborhood of the Dirichlet domain in $\overline{\X}^\omega$ that meets only finitely many of the bisectors appearing in its definition.
\end{theorem}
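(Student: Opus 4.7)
The plan is to argue by contradiction using compactness of the horofunction compactification $\overline{\X}^\omega$. Suppose no such neighborhood of $\mathcal{D}_\Gamma^\omega(o)$ exists. Then, passing to subsequences, one extracts distinct elements $\gamma_n\in \Gamma$ with $\abs{\gamma_n}\to\infty$ and points $[h_n]\in \overline{\X}^\omega\setminus \overline{\mathcal{H}(o,\gamma_n\cdot o)}$ converging to some $[h^*]\in\mathcal{D}_\Gamma^\omega(o)$. In terms of suitable representatives, this reads $h_n(\gamma_n\cdot o)<h_n(o)$ for every $n$, while $h^*(\gamma\cdot o)\ge h^*(o)$ for every $\gamma\in\Gamma$.

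The crucial step is to pin down the limit of the normalized Finsler distances $x\mapsto d_\omega(x,\gamma_n\cdot o)-d_\omega(o,\gamma_n\cdot o)$. The $\omega$-undistorted hypothesis gives $\abs{\omega(w\cdot\vec{d}(o,\gamma_n\cdot o))}\ge A\abs{\gamma_n}-B\to\infty$ for every $w$ in the Weyl group. Since the Weyl orbit of $\omega$ spans $\mathfrak{a}^*$, this forces the Cartan projection of $\gamma_n$ to move linearly into the interior of the $\omega$-regular Weyl chamber. After passing to a further subsequence, the orbit points $\gamma_n\cdot o$ then converge in $\overline{\X}^\omega$ to a horofunction that coincides with the Busemann function $b_{a_\infty,o}$ of some $a_\infty$ in the embedded flag manifold $\mathcal{F}_\omega\subset\partial_{\rm horo}^\omega\X$.

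The final step is to apply the characterization of the $\omega$-undistorted property by disjoint half-spaces (Theorems \ref{thm:URU implies disjoint half-spaces} and \ref{thm:disjoint half-spaces implies URU}), which exhibits a fixed open neighborhood of $a_\infty$, and consequently of $[h^*]$, that is eventually contained in $\overline{\mathcal{H}(o,\gamma_n\cdot o)}$. Concretely the limiting bisectors degenerate to a Busemann hypersurface through $a_\infty$, and the Dirichlet inequalities $h^*(\gamma\cdot o)\ge h^*(o)$ prevent $[h^*]$ from lying on the wrong side of that hypersurface in the limit. This provides a neighborhood $U$ of $[h^*]$ with $U\subset \overline{\mathcal{H}(o,\gamma_n\cdot o)}$ for all large $n$, contradicting $[h_n]\to[h^*]$ together with $[h_n]\notin \overline{\mathcal{H}(o,\gamma_n\cdot o)}$.

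The main obstacle I anticipate is the middle step: identifying the limit horofunction as belonging to the embedded copy of $\mathcal{F}_\omega$, rather than to a lower-dimensional stratum of the horoboundary. This requires the full strength of $\omega$-undistortion, not merely Anosov-ness, to control the quantitative rate at which the Cartan projections of the $\gamma_n$ escape every wall $\ker(w\cdot\omega)\subset\mathfrak{a}$; the remaining steps should then follow from the continuity of the horofunction structure on $\overline{\X}^\omega$ together with the disjoint half-space characterization of the hypothesis.
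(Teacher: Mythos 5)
Your proposal has a genuine gap in the middle step, and you partly anticipated this but misdiagnosed the way around it.

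You claim that $\omega$-undistortion ``forces the Cartan projection of $\gamma_n$ to move linearly into the interior of the $\omega$-regular Weyl chamber,'' so that after a subsequence $\gamma_n\cdot o$ converges to a point $[b_{a_\infty,o}]$ in the embedded $\mathcal{F}_\omega\subset\partial_{\rm horo}^\omega\X$. This is false. The condition $\abs{\omega(w\cdot\vec{d}(o,\gamma_n\cdot o))}\geq A\abs{\gamma_n}-B$ for all $w\in W$ only keeps the Cartan projection away from the hyperplanes $w\cdot\Ker(\omega)$; it does not keep it away from the genuine walls $\Ker(\alpha)$ of the Weyl chamber. Concretely, for $\SL(4,\mathbb{R})$ with $\omega=\omega_1$, a sequence with logarithmic singular values $(t,t,-t,-t)$ is fully $\omega_1$-undistorted yet lies on the wall $\sigma_1=\sigma_2$; in the Satake picture $\gamma_n\cdot o$ converges to a rank-$2$ point, not to $\mathcal{F}_{\omega_1}\simeq\mathbb{RP}^3$. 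So the limit horofunction is generically a \emph{mixed} Busemann function $b^\omega_{\nu,o}$ for a lower-dimensional simplex $\nu$, precisely the stratum you flagged as a worry and then incorrectly dismissed.

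Step three also does not close up. You identify a neighborhood of $a_\infty$ (the limit of $\gamma_n\cdot o$) with a neighborhood of $[h^*]$ (the limit of $[h_n]$), but these are a priori unrelated points of $\overline{\X}^\omega$, and there is no reason the neighborhoods coincide. Moreover, the disjoint half-space characterization (Theorems \ref{thm:disjoint half-spaces implies URU} and \ref{thm:URU implies disjoint half-spaces}) concerns pairs of half-spaces along geodesics in $\Gamma$ and does not directly produce the uniform estimate you need on a neighborhood of $[h^*]$. What actually does that job in the paper is quite different: Proposition \ref{prop:characterization of domains} identifies $\mathcal{D}^\omega_\Gamma(o)$ as a subset of $\X\cup\Omega^\omega_{\rm horo}$ by showing that any $[h]$ in the domain must be bounded below on the orbit, and then Lemma \ref{lem:LocalUnif} (locally uniform properness, proved via the Morse Lemma \ref{lem:Morse Lemma} and the dichotomy Lemma \ref{lem:Dichotomy}) gives, for each $[h_0]\in\Omega^\omega_{\rm horo}$, a neighborhood $U$ and constant $A$ with $h(\gamma\cdot o)-h(o)\geq C\,d(o,\gamma\cdot o)-A$ for all $[h]\in U$ and $\gamma\in\Gamma$. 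Covering a compact neighborhood $K$ of $\mathcal{D}^\omega_\Gamma(o)$ by finitely many such $U$ gives a uniform bound that forces $K\subset\mathcal{H}^\omega(o,\gamma\cdot o)$ for all but finitely many $\gamma$. Your compactness/contradiction framing could be salvaged, but only by proving an analogue of Lemma \ref{lem:LocalUnif}; as written, the argument is missing that key estimate and replaces it with the incorrect convergence-to-$\mathcal{F}_\omega$ assertion.
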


As a special case, we note that when $G=\SL(d,\R)$, there is a choice $\omega= \omega_\Delta$ so that the Borel-Anosov condition in $\SL(d,\R)$ is equivalent to being $\omega_\Delta$-undistorted, and the Finsler metric $d_{\omega_\Delta}$ is proportional to the Hilbert metric, see Example \ref{subsubsec:omega URU in SLdR}.
In particular we obtain this corollary as a special case of Theorem \ref{thm:omega-URU implies properly finite-sided}:

\begin{corollary}
		If $\Gamma < \SL(d,\R)$ is Borel-Anosov, then any Dirichlet domain in $\mathcal{X}$ with respect to the Hilbert metric is properly finite-sided.
\end{corollary}

While the converse to Theorem \ref{thm:omega-URU implies properly finite-sided} holds in rank 1 symmetric spaces, it fails in higher rank: uniform lattices have properly finite-sided Dirichlet domains, but cannot be $\omega$-undistorted, since they are not Gromov hyperbolic when the real rank of $G$ is at least $2$.

We show that $\omega$-undistorted subgroups can be characterized as subgroups admitting sufficiently many disjoint bisectors in the symmetric space (Theorem \ref{thm:disjoint half-spaces implies URU}). This characterization leads to a local-to-global principle for $\omega$-undistorted subgroups, and as a corollary, this gives a new proof that the class of $\omega$-undistorted subgroups is stable (sufficiently small deformations of the inclusion representation have image which remains $\omega$-undistorted). The same result has been obtained by Kassel-Tholozan \cite{KasselTholozan}, and also follows from the more general local-to-global principle for Anosov subgroups due to Kapovich-Leeb-Porti \cite{KLP18a}. 

\subsection{Domains of discontinuity}

Guichard-Wienhard \cite{GW12} have shown that Anosov representations give rise to geometric structures via cocompact domains of discontinuity in flag manifolds. The construction has since been systematized by Kapovich-Leeb-Porti \cite{KLP18a} in terms of balanced thickenings of the Weyl group.

We show that for $\omega$-undistorted subgroups, the Dirichlet domains for the distance $d_\omega$ extend at infinity to a fundamental domain $\mathcal{D}_{\rm horo}$ for the action of $\Gamma$ on a cocompact domain of discontinuity $\Omega_{\rm horo}^\omega$ in the horoboundary $\partial_{\rm horo}^\omega \X$. The horoboundary $\partial_{\rm horo}^\omega \X$ compactification contains a certain flag manifold $\mathcal{F}_\omega$ as a closed orbit, and  $\mathcal{D}_{\rm horo}$ intersects $\mathcal{F}_\omega$ in a fundamental domain $\mathcal{D}_{\rm flag}$ for the action of $\Gamma$ on a cocompact domain of discontinuity $\Omega_{\rm flag} \subset \mathcal{F}_\omega$, which is one of the domains constructed by Kapovich-Leeb-Porti, associated to a metric thickening, see Remark \ref{rem:KLP thickenings}.
This reproves the existence of such domains for certain Anosov subgroups and establishes a new characterization of the domains.

\begin{proposition}[Proposition \ref{prop:characterization of domains}]
	An element $[h]\in\partial_{\rm horo}^\omega \X$ belongs to $\Omega_{\rm horo}^\omega$ if and only if $h$ restricted to the $\Gamma$-orbit of $o$ is bounded from below. 
	In this case, the horofunction is proper on any $\Gamma$-orbit.
\end{proposition}

Let us illustrate this result with the special case of $G=\SL(2n,\R)$ and $\omega = \omega_1$, the first fundamental weight, see Example \ref{subsubsec:omega URU in SLdR}.
In this case the flag manifold $\mathcal{F}_{\omega_1}$ is simply the projective space $\mathbb{RP}^{2n-1}$ and every non-elementary $\omega_1$-undistorted subgroup is $n$-Anosov. 
For $\omega_1$-undistorted subgroups we obtain a fundamental domain $\mathcal{D}_{\rm flag} \subset \Omega_{\rm flag}$ of the domain of discontinuity in $\mathbb{RP}^{2n-1}$ originally constructed by Guichard-Wienhard \cite[Section 10.2.5]{GW12}. 

In Figure \ref{fig:Octogon} we consider a surface subgroup of $\SL(2,\R)$ whose fundamental domain in $\mathbb{H}^2$ is a regular octagon, which we embed diagonally into $\SL(4,\R)$. The domain of discontinuity associated to this $\omega_1$-undistorted subgroup in the flag manifold $\mathbb{RP}^3$ is the complement of the black hyperboloid, and we illustrate the eight sides of the fundamental domain associated with the Dirichlet domain for the most natural basepoint in the totally geodesic surface in $\X$ preserved by $\Gamma$. This domain has 2 connected components.

\begin{figure}[h]
	\centering
	\includegraphics[width=380pt]{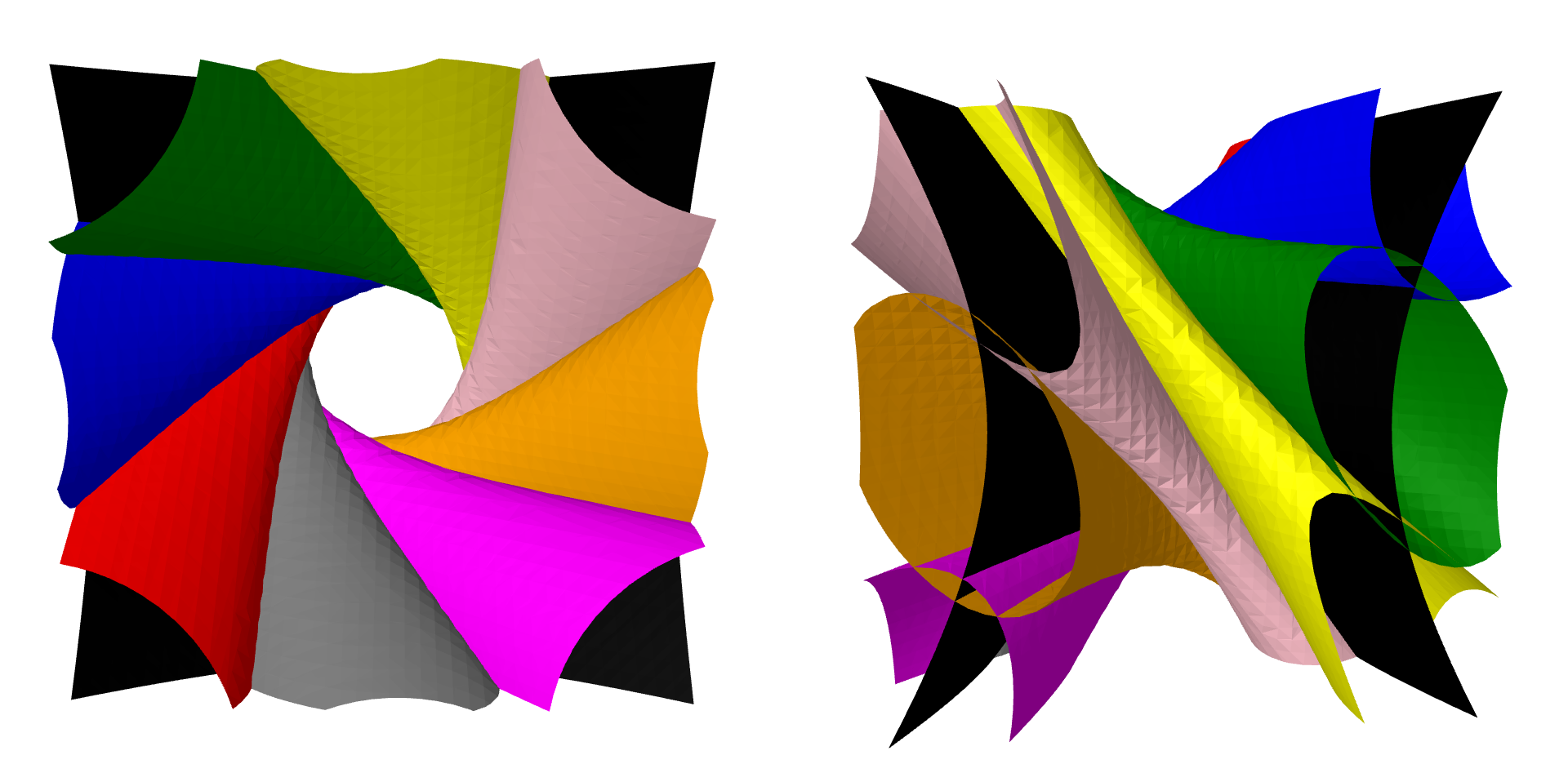}
	\caption{Extension of a Dirichlet domain to $\mathbb{RP}^3$ for a reducible surface subgroup in $\SL(4,\R)$.}
	\label{fig:Octogon}
\end{figure}

In Figure \ref{fig:Triangle} we consider a $(4,4,4)$ triangle group in $\SL(2,\R)$ irreducibly embedded into $\SL(4,\R)$. The domain of discontinuity associated to this $\omega_1$-undistorted subgroup in $\mathbb{RP}^3$ is the complement of the black hypersurface, and we illustrate the sides of the fundamental domain associated with the Dirichlet domain for the most natural basepoint in the totally geodesic surface in $\X$ preserved by $\Gamma$. This domain has 2 connected components, and we conjecture that it has $18$ sides. The green, yellow, orange and red sides are associated with elements $\gamma\in \Gamma$ of word length respectively $1,2,3,4$. 

\begin{figure}[h]
	\centering
	\includegraphics[width=380pt]{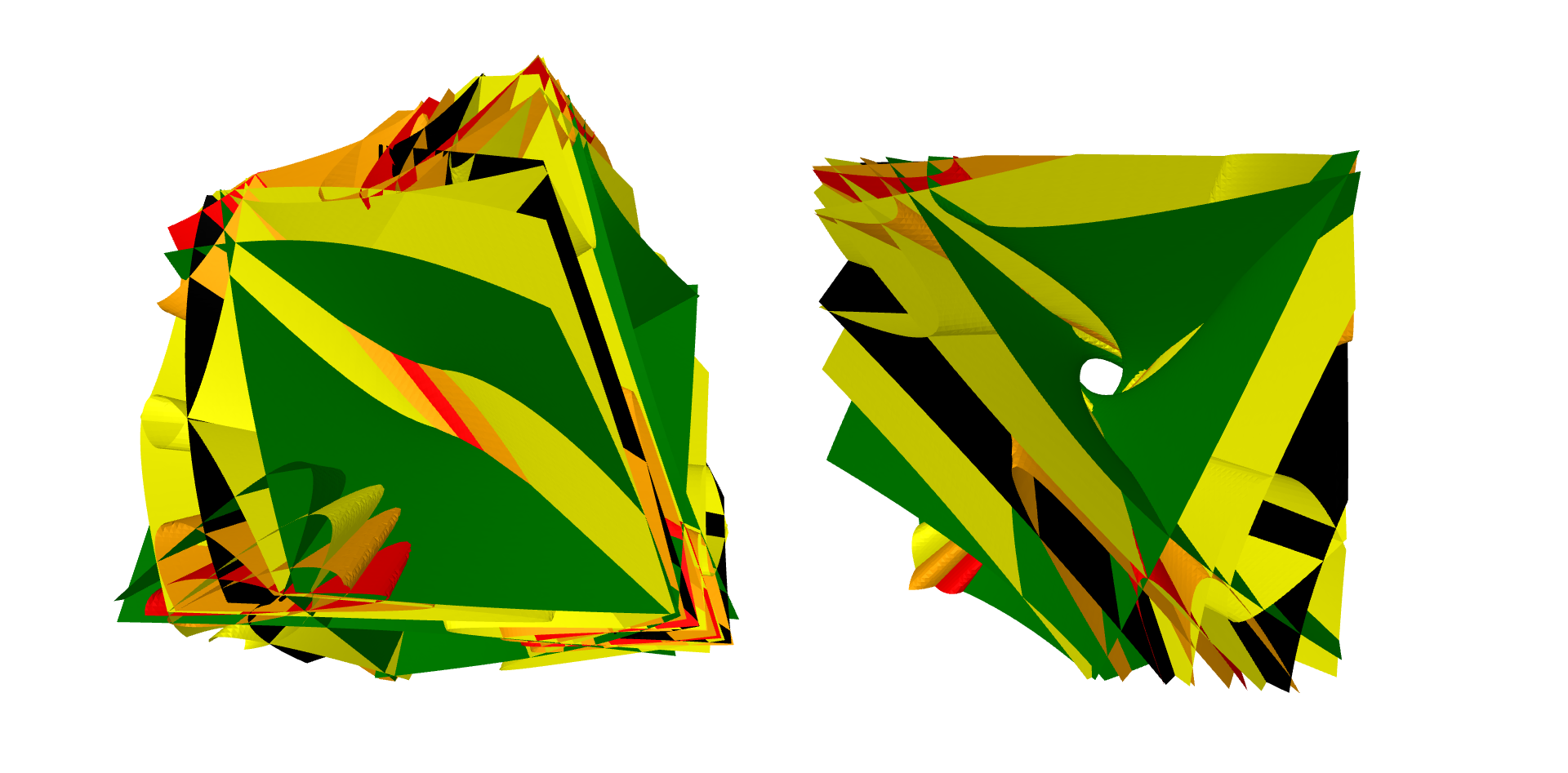}
	\caption{Extension of a Dirichlet domain to $\mathbb{RP}^3$ for a Hitchin $(4,4,4)$ triangle subgroup in $\SL(4,\R)$.}
	\label{fig:Triangle}
\end{figure}

\subsection{Dirichlet-Selberg domains}

Our results on Dirichlet domains for Finsler metrics also imply results for Dirichlet-Selberg domains, which are fundamental domains for subgroups of $\SL(d,\R)$ that have a particularly nice geometry.

Selberg introduced a construction of a fundamental domain for the action on the symmetric space $\SL(d,\R)/\SO(d)$ of discrete subgroups of $\SL(d,\R)$ \cite{Sel60}.
This symmetric space has a \emph{projective model}, denoted $\mathcal{X}_d$, given by the projectivization of positive definite symmetric real $d \times d$ matrices. 
Selberg modified the Dirichlet domain construction by replacing the distance function with:
$$ \Selberg([x],[y])=\log\left(\frac{1}{d}\Tr(x^{-1}y)\right) .$$
In this expression we choose positive definite representatives $x,y$ of the same determinant. 
This invariant is asymmetric and fails the triangle inequality, but nonetheless can be used in place of a metric for the sake of defining Dirichlet domains:
\begin{equation}\label{eqn:ds domain in X}
	\bigcap_{\gamma\in \Gamma}\left\{ x\in \mathcal{X}_d \mid \mathfrak{s}(o,x)\leq \mathfrak{s}(\gamma\cdot o,x) \right\}.
\end{equation}

The advantage of this approach is that the bisectors are linear hyperplanes in $\mathcal{X}_d$ and the resulting domain is convex with respect to projective line segments.
When $\Gamma$ is a uniform lattice in $\SL(d,\R)$, this construction gives rise to a finite-sided projective polyhedron which is a fundamental domain for the action of $\Gamma$ on $\mathcal{X}_d$.
Selberg used these domains to demonstrate local rigidity of uniform lattices in higher rank (an early precursor to Margulis' superrigity), and they are now called \emph{Dirichlet-Selberg domains}.
Recently, Dirichlet-Selberg domains have been studied by Kapovich \cite{Kap23} and Du \cite{du2024geometry,du2025busemannselbergfunctionscompletenessdirichletselberg}.

As before, we will be interested in a natural compactification of this domain. 
We consider the closure $\DS_\Gamma(o)$ of the domain described in (\ref{eqn:ds domain in X}) in the \emph{Satake compactification} $\overline{\mathcal{X}_d}$ of $\mathcal{X}_d$ i.e.\ the closure of the projective model in the projectivization of symmetric matrices. 

\medskip

Kapovich raised questions about which discrete subgroups of $\SL(d,\R)$ admit finite-sided Dirichlet-Selberg domains, and suggested investigating this question in the special case where the discrete subgroups are \emph{Anosov}.

\begin{question}[{Kapovich \cite[Question 11.3(3)]{Kap23}}]\label{q:Do Anosov subgroups admit finite-sided domains}
	Do Anosov subgroups admit finite-sided Dirichlet-Selberg domains?
\end{question}

While $s$ is not itself a metric, it is at bounded distance from the asymmetric Finsler metric $d_{\omega_1}$ associated to the first fundamental weight $\omega_1\in \mathfrak{a}^*$, see Lemma \ref{lem:comparing Finsler and Selberg}. 
Using Theorem \ref{thm:omega-URU implies properly finite-sided} we find many examples of Anosov subgroups admitting finite-sided Dirichlet-Selberg domains.

\begin{corollary}[{Corollary \ref{cor:omega1-URU implies properly finite sided}}]\label{thm:omega1-URU implies finitely-sided}
	Let $\Gamma < \SL(d,\R)$ be $\omega_1$-undistorted, and let $o \in \mathcal{X}_d$. 
	The Dirichlet-Selberg domain $\DS_\Gamma(o) \cap \mathcal{X}_d$ is properly finite-sided.
\end{corollary}

On the other hand, we find examples of Anosov subgroups giving rise to infinite-sided Dirichlet-Selberg domains.

\begin{theorem}[{Theorem \ref{thm:InfiniteSided}}]\label{thm:infinitely-sided example}
	Let $\Gamma$ be a lattice in $\SO(n,1)$, viewed as a subgroup of $\SL(n+1,\R)$. 
	There exists $o \in \mathcal{X}_{n+1}$ such that the Dirichlet-Selberg domain $\DS_\Gamma(o) \cap \mathcal{X}_{n+1}$ has infinitely many sides.
\end{theorem}
Theorem \ref{thm:infinitely-sided example} can be applied for instance to uniform lattices in $\SO(2,1)$ acting on $\mathcal{X}_3$.
Such an example is Borel Anosov but admits Dirichlet-Selberg domains with infinitely many sides, demonstrating that Corollary \ref{thm:omega1-URU implies finitely-sided} cannot hold for Anosov subgroups in general.
In the proof we take the basepoint $o$ to be on the totally geodesic copy of hyperbolic space preserved by $\SO(n,1)$. 
We consider the intersection of $\mathcal{DS}_\Gamma(o)$ with the space of rank one lines, which can be naturally identified with $\mathbb{RP}^{n}$.
Inside $\mathbb{RP}^{n}$ there is a projective hyperlane belonging to $\mathcal{DS}_\Gamma(o)$. For this special basepoint any bisector intersects the hyperplane in a codimension two subspace, see Figure \ref{fig:BissectorsSelberg}.

\medskip

This does not exactly answer Question \ref{q:Do Anosov subgroups admit finite-sided domains} for these subgroups and one can ask the following: 
\begin{question}
	Let $\Gamma<\SO(2,1)$ be convex cocompact subgroup which is not virtually cyclic, and consider its natural action on $\mathcal{X}_3$ from the inclusion $\SO(2,1) < \SL(3,\R)$. 
	Does there exist any $o \in \mathcal{X}_3$ such that $\DS_\Gamma(o)$ has finitely many sides?
\end{question}

\subsection{Locally symmetric spaces}

When $\omega$ is the highest restricted weight of a finite-dimensional irreducible representation $V$ of $G$ of dimension $d$, the Selberg invariant for points in $\mathcal{X}_d$ defines a notion of a \emph{restricted Selberg invariant} for points in the symmetric space $\X$ associated to $G$. 
When $\Gamma$ is an $\omega$-undistorted subgroup, the horofunction compactification of the locally symmetric space $\overline{\X/\Gamma}$ for this restricted Selberg invariant is related in a natural way to the horofunction compactification of $\X$ for the associated Finsler metric. 

\begin{theorem}[{Theorem \ref{thm:Comparison compactifications of locally sym space}}]
	Let $\omega$ be the highest restricted weight of a irreducible representation $V$ of a semisimple Lie group $G$, and let $\Gamma$ be a {$\omega$-undistorted} subgroup of $G$.
	Then the horofunction compactification $\overline{\X/\Gamma}$ of $\X/\Gamma$ for the restricted Selberg invariant is homeomorphic to $\left(\X\cup \Omega^\omega_\text{horo}\right)/\Gamma$ via the natural map $\phi:\left(\X\cup \Omega^\omega_\text{horo}\right)/\Gamma\to \overline{\X/\Gamma}$.
\end{theorem}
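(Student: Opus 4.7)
The plan is to construct $\phi$ explicitly and prove it is a continuous bijection between two compact Hausdorff spaces. The compactness of $(\X\cup \Omega^\omega_\text{horo})/\Gamma$ follows from the properly finite-sided Dirichlet domain of Theorem \ref{thm:omega-URU implies properly finite-sidedINTRO}: the closure of $\mathcal{D}^\omega_\Gamma(o)$ in $\overline{\X}^\omega$ is compact, misses the complement of $\X\cup \Omega^\omega_\text{horo}$ by the discussion following that theorem, and surjects onto the quotient. The codomain $\overline{\X/\Gamma}$ is compact by definition of the horofunction compactification.

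On the open part, $\phi$ is the identity $\X/\Gamma \hookrightarrow \overline{\X/\Gamma}$. For a class $[[h]] \in \Omega^\omega_\text{horo}/\Gamma$, I would define
$$\overline{h}([y]) \;:=\; \inf_{\gamma\in\Gamma} h(\gamma\cdot y).$$
By Theorem \ref{thm:characterization of domains INTRO} the function $h$ is bounded below and proper on every $\Gamma$-orbit, so the infimum is finite and attained on a finite subset of $\Gamma$; hence $\overline{h}$ is a well-defined continuous function on $\X/\Gamma$, independent of the representative of $[[h]]$ up to an additive constant. The restricted Selberg invariant on $\X/\Gamma$ is $\mathfrak{s}_V^{\X/\Gamma}([x],[y])=\inf_\gamma \mathfrak{s}_V(x,\gamma\cdot y)$, and since $\mathfrak{s}_V$ is at bounded distance from $d_\omega$ (see Lemma \ref{lem:comparing Finsler and Selberg}), I would verify that $[\overline{h}]$ indeed lies in the horofunction compactification for $\mathfrak{s}_V^{\X/\Gamma}$. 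Set $\phi([[h]]):=[\overline{h}]$.

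The heart of the argument is continuity. Take $z_n \to z_\infty$ in $\X\cup \Omega^\omega_\text{horo}$; I would act by $\Gamma$ to place the $z_n$ in a common compact neighborhood of $\mathcal{D}^\omega_\Gamma(o)$ in $\overline{\X}^\omega$. Writing the quotient Selberg invariants as infima over $\Gamma$, I would use the locally uniform dichotomy (Lemmas \ref{lem:Dichotomy} and \ref{lem:LocalUnif}): for every compact $K\subset \X$ there is a finite set $F\subset \Gamma$ such that the infimum $\inf_{\gamma}h(\gamma\cdot y)$ is achieved inside $F$ uniformly for all $y\in K$ and all horofunctions $h$ in a neighborhood of the limit. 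This reduces the swap of $\inf_\Gamma$ with $\lim_{n\to\infty}$ to a finite $\min$, which commutes with limits trivially, yielding $\mathfrak{s}_V^{\X/\Gamma}(\cdot, [z_n]) \to [\overline{h}]$ in the quotient compactification, hence $\phi(z_n)\to \phi(z_\infty)$. Surjectivity of $\phi$ then follows: any element of $\overline{\X/\Gamma}$ is the limit of some $[x_n]$; lifting into $\mathcal{D}^\omega_\Gamma(o)$ and using compactness extracts a convergent subsequence in $\X\cup \Omega^\omega_\text{horo}$, whose image under $\phi$ is the given limit. Injectivity: if two limits $[[h_1]],[[h_2]]$ give the same $[\overline{h}]$, comparing the finite sets of $\gamma$ realizing the infima on a large compact set of $\X/\Gamma$ recovers $h_1$ and $h_2$ up to a common additive constant on $\Gamma\cdot\{\text{basepoints}\}$, forcing them to lie in the same $\Gamma$-orbit.

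The main obstacle will be the continuity step, specifically verifying the uniform-in-$n$ finiteness of the set of $\gamma\in \Gamma$ realizing the infima defining $\mathfrak{s}_V^{\X/\Gamma}([y],[z_n])$ when $z_n$ approaches the boundary. This is where the $\omega$-undistorted hypothesis is essential: it provides the linear divergence rates of horofunctions along $\Gamma$-orbits that are needed to reduce these infima to finite minima locally uniformly, and is also what powers the finite-sidedness of $\mathcal{D}^\omega_\Gamma(o)$ used for compactness. Once continuity is in hand, the proof closes by the standard fact that a continuous bijection between compact Hausdorff spaces is a homeomorphism.
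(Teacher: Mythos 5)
The overall architecture is right and closely parallels the paper's: you define $\widetilde\phi$ by taking minima over the $\Gamma$-orbit, use the local uniformity of Lemma \ref{lem:LocalUnif} for continuity (which reduces the infimum to a finite minimum near any fixed point), establish surjectivity from compactness of the domain, and close with the continuous-bijection-of-compact-Hausdorff-spaces fact. That part of the proposal matches the paper's proof.

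However, there is a genuine gap in your injectivity argument. You write that comparing the finite sets realizing the infima ``recovers $h_1$ and $h_2$ up to a common additive constant on $\Gamma\cdot\{\text{basepoints}\}$, forcing them to lie in the same $\Gamma$-orbit.'' This does not follow. First, agreement on a $\Gamma$-orbit -- a discrete set -- does not determine a $1$-Lipschitz function on $\X$, so agreement of $h_1,h_2$ (after translating and shifting) on orbit points does not force $[h_1]=[\gamma\cdot h_2]$ in the horoboundary. Second, even to get agreement of $\gamma_1\cdot h_1$ and $\gamma_2\cdot h_2$ on a single \emph{open} set requires an argument: a Baire-category step, covering a compact set with nonempty interior by the finitely many closed sets $K_{\gamma_1,\gamma_2}=\{x:\gamma_1\cdot h_1(x)=\gamma_2\cdot h_2(x)\}$ and extracting one with nonempty interior. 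Third, and most crucially, passing from agreement on an open set to agreement everywhere requires \emph{analyticity} of the horofunctions for the restricted Selberg invariant (they are logarithms of linear functions restricted to the analytic submanifold $\X\subset\mathcal{X}$). This is precisely the point where the paper's argument is special to the Selberg invariant and does not extend to the Finsler metric $d_\omega$, as the authors remark explicitly after the proof. Without invoking analyticity (or some substitute), the injectivity step does not close.

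A minor point: to check that $[\overline{h}]$ lies in $\overline{\X/\Gamma}$, you invoke Lemma \ref{lem:comparing Finsler and Selberg}, but that lemma is stated only for $G=\SL(d,\R)$ and $\omega=\omega_1$. The cleaner route (and the paper's) is simply to note that $\widetilde\phi$ restricted to $\X$ lands in $\X/\Gamma$, so by continuity the image of the whole domain lands in the closure $\overline{\X/\Gamma}$.
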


In other words the compactification of the quotient is homeomorphic to the quotient of an open domain in the compactification of $\X$ where $\Gamma$ acts properly discontinuously.
As a corollary, the locally symmetric spaces associated to {$\omega$-undistorted} subgroups are topologically tame, which was proven more generally for Anosov subgroups by Guichard-Kassel-Wienhard \cite{GKW} and Kapovich-Leeb \cite[Theorem 1.5.(ii)]{KL18a}.

\subsection{Restricted Dirichlet-Selberg domains}
We also obtain some partial results about Dirichlet-Selberg domains for some Anosov subgroups of $\SL(d,\R)$ which are not $\omega_1$-undistorted.

\medskip

Let $V$ be a finite dimensional real vector space and let $S^2V$ be the vector space of symmetric tensors in $V\otimes V$. 
Let $S^2V^{\geq0}$ be the closed cone of semi-positive symmetric tensors. 
The Satake compactification of the associated symmetric space is $\overline{\mathcal{X}(V)} = \mathbb{P}(S^2V^{\geq 0})$.
As seen in Theorem \ref{thm:InfiniteSided}, the Dirichlet-Selberg domain can have infinitely many sides for certain Anosov subgroups $\Gamma$. 
We consider certain projectively convex subsets of $\overline{\mathcal{X}(V)}$ so that the intersection with the Dirichlet-Selberg domain is once again properly finite-sided. 
This provides some information about where the infinitely many sides of such domains can accumulate. 

To be more precise, we let $\mathcal{F}\subset\mathbb{P}(V)$ denote a $\Gamma$-invariant compact subset and write $S^2\mathcal{F}$ for the set of rank one lines in $\mathbb{P}(S^2V)$ corresponding to $\mathcal{F}$.
We define $\mathcal{DS}^\mathcal{F}_\Gamma(o)$ as the intersection of $\mathcal{DS}_\Gamma(o)$ with the convex set $\Hull(S^2\mathcal{F})\subset \overline{\mathcal{X}(V)}$. 
In Section \ref{sec:restricting ds domains} we prove a sufficient criterion for this intersection to be properly finite-sided (Theorem \ref{thm:Properly finite sided domains in Hull(F)}).
We describe here a few interesting applications of this criterion.

\begin{theorem}[{Theorem \ref{thm:Projective Anosov finite sided}}]
Let $\Gamma < \SL(d,\R)$ be a projective Anosov subgroup, and let $\Lambda$ denote the projective limit set of $\Gamma$, i.e.\ $\Lambda =\lbrace \xi^1_\Gamma(x)|x\in \partial\Gamma\rbrace\subset \mathbb{RP}^{d-1}$.
The domain $\mathcal{DS}_\Gamma^{\Lambda}(o)$ is properly finite-sided in $\Hull(S^2\Lambda)\subset \mathbb{P}(S^2\mathbb{R}^d)$ for all $o\in \mathcal{X}(V)$.
\end{theorem}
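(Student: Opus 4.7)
\emph{Strategy.} The plan is to deduce the theorem as an application of Theorem~\ref{thm:Properly finite sided domains in Hull(F)}, with the compact $\Gamma$-invariant set $\mathcal{F} = \Lambda \subset \mathbb{P}(\mathbb{R}^d)$. That criterion provides sufficient conditions on $\mathcal{F}$ ensuring proper finite-sidedness of $\DS_\Gamma^\mathcal{F}(o)$ inside $\Hull(S^2\mathcal{F})$. In the spirit of Theorem~\ref{thm:characterization of domains INTRO}, the hypothesis is expected to amount to saying that for every $\xi \in \mathcal{F}$ the rank-one Selberg Busemann function associated to $\xi$ is proper and bounded below on the orbit $\Gamma\cdot o$, with a suitable uniformity as $\xi$ varies in $\mathcal{F}$. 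The task thus reduces to verifying this hypothesis when $\mathcal{F}=\Lambda$.

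\emph{Spectral estimate of the Busemann function.} Writing the basepoint $o \in \mathcal{X}_d$ as a positive-definite symmetric matrix and fixing a unit representative $v_\xi \in \mathbb{R}^d$ of $\xi \in \Lambda$, the rank-one Selberg Busemann value at $\xi$ equals, up to an additive constant,
\[
b_\xi(\gamma\cdot o) \;=\; -\log\bigl(v_\xi^T (\gamma\cdot o)^{-1} v_\xi\bigr).
\]
Expanding $(\gamma\cdot o)^{-1}$ in the basis of left singular vectors of $\gamma$ shows that this quantity is bounded below by $2\log \sigma_1(\gamma)$ minus an error that becomes negligible whenever $v_\xi$ stays uniformly far from the repelling hyperplane $U^-_\gamma$ of $\gamma$ in $\mathbb{P}(\mathbb{R}^d)$. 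Since $\Gamma$ is projective Anosov, both boundary maps $\xi^1_\Gamma$ and $\xi^{d-1}_\Gamma$ exist, are continuous, and are transverse; together with compactness of $\partial\Gamma$ this yields a positive lower bound on the angle between $v_\xi$ and $\xi^{d-1}_\Gamma(y)$ whenever $\xi=\xi^1_\Gamma(x)$ with $x,y\in\partial\Gamma$ at uniformly positive distance.

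\emph{Properness, uniformity, and the main obstacle.} Combining the angle estimate with the Anosov inequality $\log(\sigma_1(\gamma)/\sigma_2(\gamma))\geq A|\gamma|-B$, which forces $\sigma_1(\gamma) \to +\infty$ at a definite rate along any sequence leaving finite subsets of $\Gamma$, produces $b_\xi(\gamma\cdot o) \to +\infty$; tracking the quantitative dependence yields the uniformity in $\xi$ required by Theorem~\ref{thm:Properly finite sided domains in Hull(F)}. I expect the main difficulty to be the case of a sequence $\gamma_n \to \infty$ in $\Gamma$ whose forward accumulation in $\partial\Gamma$ is precisely the point $x$ with $\xi^1_\Gamma(x)=\xi$: there the angle between $v_\xi$ and $U^-_{\gamma_n}$ may degenerate. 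This case is handled by passing to $\gamma_n^{-1}$, whose forward accumulation is a different boundary point $y \neq x$, and combining the Anosov growth of $\sigma_1(\gamma_n)$ with the resulting angle bound at $y$. Once this uniform properness is established, Theorem~\ref{thm:Properly finite sided domains in Hull(F)} applies directly, giving the claimed proper finite-sidedness of $\DS_\Gamma^\Lambda(o)$ in $\Hull(S^2\Lambda)$.
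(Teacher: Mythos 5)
Your high-level plan matches the paper's: apply Theorem~\ref{thm:Properly finite sided domains in Hull(F)} with $\mathcal{F}=\Lambda$. But you have guessed the wrong hypothesis for that theorem, and the bulk of your argument is aimed at proving a statement that is in fact false.

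The actual hypothesis of Theorem~\ref{thm:Properly finite sided domains in Hull(F)} is a transversality condition on $\mathcal{F}$: for each limit flag $\xi = \xi_\Theta(x)$, the set $\mathcal{F}$ must avoid $V^\xi_\geq \setminus V^\xi_+$. For a projective Anosov subgroup the limit flag is $(\xi^1_\Gamma(x),\xi^{d-1}_\Gamma(x))$, and $V^\xi_+=\xi^1_\Gamma(x)$ is a line while $V^\xi_\geq=\xi^{d-1}_\Gamma(x)$ is a hyperplane. The hypothesis therefore reads: if $\xi^1_\Gamma(y)\subset\xi^{d-1}_\Gamma(x)$ then $\xi^1_\Gamma(y)=\xi^1_\Gamma(x)$ --- exactly the transversality of the Anosov boundary maps. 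The paper's entire proof of Theorem~\ref{thm:Projective Anosov finite sided} is this one-sentence observation; no spectral estimate is needed.

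Your guessed reformulation --- that the rank-one horofunction $h_{[v_\xi\otimes v_\xi]}$ is proper and bounded below on $\Gamma\cdot o$ for each $\xi\in\Lambda$ --- is not the hypothesis, and moreover it is false. (The sign in your displayed formula is opposite to the paper's convention, cf.\ Example~\ref{example:Busemann functions for SL(n,R) and RP^n}: the correct formula is $h_{[v_\xi\otimes v_\xi]}(q)=\log\bigl(v_\xi^T Q^{-1} v_\xi\bigr)+c$.) Indeed, for $\xi=\xi^1_\Gamma(x)$ and $\gamma_n\to x$ in $\partial\Gamma$, the orbit $\gamma_n\cdot o$ converges in $\overline{\mathcal{X}}$ to the rank-one tensor $[v_\xi\otimes v_\xi]$ itself, $v_\xi^T Q_n^{-1} v_\xi$ is comparable to $\sigma_1(\gamma_n)^{-2}\to 0$, and so $h_{[v_\xi\otimes v_\xi]}(\gamma_n\cdot o)\to -\infty$. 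The rank-one points $S^2\Lambda$ all lie in the union of the thickenings, i.e.\ in the \emph{complement} of the domain $\Omega$ of Section~\ref{sec:Subsection where we prove the generalisation to Hull(F)}; properness and local uniform lower bounds (Lemma~\ref{lem:LocalUnif2}) are conclusions that hold \emph{only} on $\Omega$, not a property of rank-one points of $\Lambda$. Your ``main obstacle'' is therefore not a removable technicality: the degenerating angle when $\gamma_n\to x$ with $\xi^1_\Gamma(x)=\xi$ reflects the genuine divergence of $h_{[v_\xi\otimes v_\xi]}(\gamma_n\cdot o)$ to $-\infty$, and passing to $\gamma_n^{-1}$ does not rescue the bound, since $v_\xi$ then lies near the repelling hyperplane of $\gamma_n^{-1}$.
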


If $\Gamma$ is moreover convex-cocompact in the sense of \cite{DGK19,Zim21}, one can take $\mathcal{F}$ larger.

\begin{theorem}[{Theorem \ref{thm:Projective Anosov convex cocompact finite sided}}]
Let $\Gamma < \SL(d,\R)$ be a projective Anosov subgroup that is convex cocompact, i.e. that preserves a properly convex domain $\Omega\subset \mathbb{RP}^{d-1}$ and acts cocompactly on a convex set $\mathcal{C}\subset \Omega$. 
Let $\Lambda=\lbrace \xi^1_\Gamma(x)|x\in \partial\Gamma\rbrace$. 
The domain $\mathcal{DS}_\Gamma^{\mathcal{C}\cup\Lambda}(o)$ is properly finite sided in $\Hull\left(S^2\left( \mathcal{C}\cup \Lambda\right)\right)\subset \mathbb{P}(S^2\mathbb{R}^d)$ for all $o\in \mathcal{X}(V)$.
\end{theorem}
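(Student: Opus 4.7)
The plan is to deduce this statement directly from the general criterion \ref{thm:Properly finite sided domains in Hull(F)} applied to the set $\mathcal{F} := \mathcal{C}\cup\Lambda \subset \mathbb{RP}^{d-1}$, which already handles $\mathcal{F} = \Lambda$ in Theorem \ref{thm:Projective Anosov finite sided}. I would first verify that $\mathcal{F}$ is a $\Gamma$-invariant compact subset of $\mathbb{RP}^{d-1}$. Invariance is immediate: $\mathcal{C}$ is $\Gamma$-invariant by hypothesis and $\Lambda$ is $\Gamma$-invariant as a limit set. For compactness, I would check that $\overline{\mathcal{C}} = \mathcal{C}\cup\Lambda$. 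This is the standard picture for a convex cocompact projective Anosov subgroup: a sequence in $\mathcal{C}$ leaving every compact subset may be written as $\gamma_n \cdot x_n'$ with $x_n'$ in a fixed compact fundamental domain $K \subset \mathcal{C}$ and $\gamma_n \to \infty$ in $\Gamma$; the projective Anosov property then forces each accumulation point to lie in $\xi^1_\Gamma(\partial\Gamma) = \Lambda$.

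The substantive work is then to verify the dynamical hypothesis of \ref{thm:Properly finite sided domains in Hull(F)} at every point of $\mathcal{F}$. At points $\eta \in \Lambda$ this is the exact content of the proof of Theorem \ref{thm:Projective Anosov finite sided}: the projective Anosov condition yields exponential contraction of the $\gamma_n$ toward the limit set, which in turn produces the required escape of the Selberg bisectors to the complement of a neighborhood of the domain. For a point $p \in \mathcal{C}$ (not necessarily in $\Lambda$) I would use $\Gamma$-equivariance of the criterion to replace $p$ by a representative in $K$, and then combine two ingredients: (i) for any sequence $\gamma_n \to \infty$ in $\Gamma$, the images $\gamma_n \cdot p$ accumulate in $\Lambda$, and this convergence is uniform in $p \in K$ by the standard Anosov contraction estimates together with cocompactness; (ii) the rank-one tensors $[p]\otimes[p]$ and the associated half-spaces depend continuously on $p$, so that on the compact set $K$ the relevant estimates hold with uniform constants. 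Together these transfer the criterion from points of $\Lambda$ to all points of $\mathcal{F}$, with a bounded additive error.

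The main obstacle I expect is exactly the uniformity issue raised above: the set $\Hull\bigl(S^2(\mathcal{C}\cup\Lambda)\bigr)$ is much larger than $\Hull(S^2\Lambda)$, and one must control the Selberg bisector geometry not only at rank-one tensors $[p]\otimes[p]$ but at arbitrary convex combinations lying in the hull. Convex cocompactness is precisely the mechanism that makes this tractable: the hull $\Hull(S^2\mathcal{C})$ carries a natural $\Gamma$-action with compact quotient (inherited from the action on $\mathcal{C}$), and continuity of the bisector geometry on the quotient then upgrades the pointwise estimates of step (ii) to the uniform ones needed by \ref{thm:Properly finite sided domains in Hull(F)}. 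Once this uniformity is in place, applying the criterion yields a neighborhood of $\mathcal{DS}^{\mathcal{C}\cup\Lambda}_\Gamma(o)$ inside $\Hull\bigl(S^2(\mathcal{C}\cup\Lambda)\bigr)$ meeting only finitely many of the half-spaces, which is the desired proper finite-sidedness.
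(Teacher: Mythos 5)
Your high-level plan — apply Theorem \ref{thm:Properly finite sided domains in Hull(F)} to $\mathcal{F} = \mathcal{C}\cup\Lambda$, with compactness coming from $\overline{\mathcal{C}} = \mathcal{C}\cup\Lambda$ — is the right one, and the compactness/invariance discussion matches the paper. But you have misread what the hypothesis of Theorem \ref{thm:Properly finite sided domains in Hull(F)} actually asks you to verify, and this leads you to invent a much harder (and unnecessary) dynamical argument for the points of $\mathcal{C}$.

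The hypothesis of the criterion is static and algebraic: $\mathcal{F}$ must avoid $V^\xi_{\geq}\setminus V^\xi_+$ for each boundary flag $\xi=\xi_\Theta(x)$. For a projective Anosov ($\Theta = \{\alpha_1\}$) subgroup, with $\xi = (\xi^1_\Gamma(x), \xi^{d-1}_\Gamma(x))$, one has $V^\xi_+ = \xi^1_\Gamma(x)$ and $V^\xi_\geq = \xi^{d-1}_\Gamma(x)$, so the condition is simply that $\mathcal{C}\cup\Lambda$ avoids $\xi^{d-1}_\Gamma(x)\setminus\xi^1_\Gamma(x)$ for every $x \in \partial\Gamma$. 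Nothing about contraction rates, nothing about convex combinations in $\Hull(S^2\mathcal{F})$, and nothing that requires uniformity over a fundamental domain: the theorem already handles the passage from a condition on $\mathcal{F}\subset\mathbb{P}(V)$ to a statement about the whole hull. Your worry that "one must control the Selberg bisector geometry not only at rank-one tensors $[p]\otimes[p]$ but at arbitrary convex combinations" is exactly the work that Theorem \ref{thm:Properly finite sided domains in Hull(F)} is designed to do for you once the pointwise hypothesis on $\mathcal{F}$ is in place.

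With the hypothesis correctly identified, the verification for points of $\mathcal{C}$ is immediate and does not require any of the contraction or uniformity machinery you propose: each $\xi^{d-1}_\Gamma(x)$ is a supporting hyperplane of the properly convex domain $\Omega$, hence is disjoint from the open set $\Omega$, hence disjoint from $\mathcal{C}\subset\Omega$. Combined with the transversality argument already used for $\Lambda$ (as in the proof of Theorem \ref{thm:Projective Anosov finite sided}), this completes the check and the theorem applies directly. Your route (i)--(ii), even if it could be salvaged, would be re-deriving the engine rather than checking its input; the missing observation is the elementary one that $\mathcal{C}$ misses all the hyperplanes $\xi^{d-1}_\Gamma(x)$ because those hyperplanes miss $\Omega$.
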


We now consider Anosov subgroups of more general semisimple Lie groups $G$. 
These subgroups can be viewed as subgroups of $\SL(V)$ by choosing a linear representation $G$ on a finite dimensional vector space $V$.

Note that in the following results the set $\mathcal{F}$ does not depend on the discrete subgroup.

\begin{theorem}[{Theorem \ref{thm:restiction applied for URU representations in G}}]
\label{thm:restiction applied for URU representations in GINTRO}
Let $\omega$ be the highest restricted weight of a representation $V$ of a semisimple Lie group $G$.
There exists a non-empty compact $G$-invariant subset $\mathcal{F}\subset \mathbb{P}(V)$ such that for all {$\omega$-undistorted} subgroups $\Gamma$ of $G$ and all $o \in \mathcal{X}(V)$ the restricted Dirichlet-Selberg domain $\mathcal{DS}^\mathcal{F}_\Gamma(o)$ is properly finite-sided.
\end{theorem}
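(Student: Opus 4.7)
The plan is to take $\mathcal{F}$ to be the image in $\mathbb{P}(V)$ of the flag manifold $\mathcal{F}_\omega$ under the $G$-equivariant map sending each flag to its associated highest weight line. This map is well-defined since $\omega$ is the highest restricted weight of $V$, and its image is a single closed $G$-orbit in $\mathbb{P}(V)$, hence a non-empty compact $G$-invariant subset of $\mathbb{P}(V)$. Crucially, this definition is intrinsic to $G$ and $V$ and does not depend on the particular $\omega$-undistorted subgroup $\Gamma$, which is precisely what the theorem requires.

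With $\mathcal{F}$ fixed, the strategy is to reduce the statement to Theorem \ref{thm:Properly finite sided domains in Hull(F)}. To verify its hypotheses for an arbitrary $\omega$-undistorted $\Gamma < G$ acting via the representation on $V$, I would use the comparison between the Selberg invariant $\mathfrak{s}$ on $\mathcal{X}(V)$, pulled back via the orbit map $\X = G/K \to \mathcal{X}(V)$, and the Finsler metric $d_\omega$ on $\X$, in the spirit of Lemma \ref{lem:comparing Finsler and Selberg}: since $\omega$ is the highest restricted weight of $V$, these two invariants differ only by a bounded additive error along $G$-orbits. Consequently the Selberg bisector associated to a pair $(o,\gamma\cdot o)$ differs from the corresponding Finsler bisector in $\X$ by a controlled perturbation.

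I would then invoke Theorem \ref{thm:omega-URU implies properly finite-sidedINTRO}, which guarantees that the Finsler Dirichlet domain $\mathcal{D}^\omega_\Gamma(o)$ is properly finite-sided in $\overline{\X}^\omega$: there is a neighborhood of $\mathcal{D}^\omega_\Gamma(o)$ meeting only finitely many defining bisectors. To transport this into the Selberg setting inside $\Hull(S^2\mathcal{F})\subset \overline{\mathcal{X}(V)}$, I would rely on the identification (Theorem \ref{thm:Satake=Horofunction}) of the Satake compactification with the horofunction compactification for $d_\omega$, under which $\mathcal{F} \simeq \mathcal{F}_\omega$ corresponds exactly to the rank-one boundary stratum. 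The extremal structure of $\Hull(S^2\mathcal{F})$ is governed by $\mathcal{F}$, so a neighborhood of $\mathcal{DS}^\mathcal{F}_\Gamma(o)$ inside $\Hull(S^2\mathcal{F})$ corresponds to a neighborhood of $\mathcal{D}^\omega_\Gamma(o)$ in the relevant piece of $\overline{\X}^\omega$, and the two finite-sidedness statements translate into one another via the bounded comparison.

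The main obstacle I anticipate is verifying the precise hypotheses of Theorem \ref{thm:Properly finite sided domains in Hull(F)} with a single uniform choice of $\mathcal{F}$. This amounts to proving that, for every $\omega$-undistorted $\Gamma$, Selberg bisectors only accumulate on the thickening $\Th(\Lambda) \subset \mathcal{F}$ of the Anosov limit set, so that only finitely many meet any neighborhood of $\mathcal{DS}^\mathcal{F}_\Gamma(o)$ disjoint from this thickening. The key dynamical input is the dichotomy for horofunctions encoded in Theorem \ref{thm:characterization of domains INTRO} and Proposition \ref{prop:CoarseFibrationINTRO}: as $\gamma \in \Gamma$ tends to infinity, the Busemann-type behavior of $d_\omega(\gamma \cdot o, \cdot)$, and equivalently the bisectors, are dragged towards $\Th(\Lambda) \subset \mathcal{F}$, giving the required local finiteness on the complement.
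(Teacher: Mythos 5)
Your choice of $\mathcal{F}$ essentially matches the paper's (the paper takes $\mathcal{F}_{\{\omega\}}=\{[v]\mid v\in V_\omega^f\setminus\{0\},\ f\in\mathcal{F}_\Delta\}$, the union of highest restricted weight spaces over all full flags; your description assumes that weight space is always a line, which need not hold for non-split $G$, but that is minor). The real problem is the route you propose to verify finite-sidedness.

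You try to transport the conclusion of Theorem \ref{thm:omega-URU implies properly finite-sidedINTRO} from the Finsler Dirichlet domain $\mathcal{D}^\omega_\Gamma(o)\subset\overline{\X}^\omega$ to the restricted Dirichlet-Selberg domain $\mathcal{DS}^\mathcal{F}_\Gamma(o)\subset\Hull(S^2\mathcal{F})$, invoking Theorem \ref{thm:Satake=Horofunction} and a comparison of $\mathfrak{s}^V$ with $d_\omega$. This collapses the two ambient spaces into one, which only happens for the standard representation of $\SL(d,\R)$. In general $\overline{\X}^\omega$ is the closure of the totally geodesic submanifold $\X\subset\mathcal{X}(V)$ inside $\mathbb{P}(S^2V)$, a thin subset, while $\Hull(S^2\mathcal{F})$ is a much larger convex body in $\mathbb{P}(S^2V^{\geq 0})$ properly containing it. The comparison between $\mathfrak{s}^V$ and $d_\omega$ controls behavior along $\X$, but says nothing about the Selberg half-space functions $[X]\mapsto\Tr(X(O-\gamma\cdot O))$ on the bulk of $\Hull(S^2\mathcal{F})\setminus\overline{\X}^\omega$, which is exactly where the finite-sidedness has to be established. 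So the "transport" step does not go through, and the argument as proposed has a gap precisely at the point you flag as the "main obstacle."

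What the paper does instead is bypass any Finsler comparison and directly verify the hypothesis of Theorem \ref{thm:Properly finite sided domains in Hull(F)}: for every $[v]\in\mathcal{F}$ and every $\xi=\xi_\Theta(x)$, $x\in\partial\Gamma$, one picks a full flag $f$ with $v\in V_\omega^f$ and an opposite $g$ whose apartment contains $\xi$; in the resulting weight decomposition, $v$ lies in the $\omega$-weight space, and the $\omega$-undistorted hypothesis guarantees that $w\cdot\omega$ has definite sign on $W_\Theta\cdot\mathcal{C}$ for every $w\in W$, so $v$ lies in $V^\xi_+$ or outside $V^\xi_\geq$ and never in $V^\xi_\geq\setminus V^\xi_+$. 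That is a purely algebraic check on restricted weights, not a metric comparison, and it is the step your proposal leaves undone. The estimates (Lemma \ref{lem:Dichotomy2}, Lemma \ref{lem:LocalUnif2}, Theorem \ref{thm:finiteSidedURU2}) in the proof of Theorem \ref{thm:Properly finite sided domains in Hull(F)} are carried out entirely on $\Hull(S^2\mathcal{F})$ with functions of the form $\log\Tr(X^{-1}L)$, not on the Finsler horoboundary, so they cannot be deduced from Theorem \ref{thm:omega-URU implies properly finite-sidedINTRO}.
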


While Theorem \ref{thm:omega-URU implies properly finite-sided} already applies to $\omega$-undistorted subgroups, Theorem \ref{thm:restiction applied for URU representations in GINTRO} may be more useful in practice. 
Indeed, the Finsler metrics we consider are not smooth and their bisectors are difficult to understand. 
Instead, one can embed $\X$ as a totally geodesic submanifold of $\mathcal{X}(V)$ for a suitable representation $V$ of $G$ and then restrict the Dirichlet-Selberg construction. 
Since the Selberg invariant is smooth with hyperplane bisectors, the restricted Dirichlet-Selberg domain may be easier to understand than the Dirichlet domain for the Finsler metric $d_\omega$.

Let $\Delta$ denote the set of simple restricted roots of $G$.
We consider $\Delta$-Anosov subgroups and the adjoint representation $V = \mathfrak{g}$.
Let $\mathcal{N}\subset \mathbb{P}(\mathfrak{g})$ be the \emph{nilpotent cone}, i.e.\ the cone of nilpotent elements in the Lie algebra $\mathfrak{g}$ of $G$ and let $S^2\mathcal{N}\subset \mathbb{P}\left(S^2\mfg \right)$ be the corresponding space of rank one tensors.
The restricted Dirichlet-Selberg domain $\DS_\Gamma^\mathcal{N}(o)$ in $\mathbb{P}(S^2 \mathfrak{g})$ is obtained via the adjoint representation of $G$, intersected with the convex set $\Hull(S^2\mathcal{N})$.

\begin{theorem}[{Corollary \ref{cor:RestrictionBorel Anosov}}]
	Let $\Gamma < G$ be a $\Delta$-Anosov subgroup. 
	Every restricted Dirichlet-Selberg domain $\DS_\Gamma^\mathcal{N}(o)$ is properly finite-sided.
\end{theorem}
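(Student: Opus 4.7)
The plan is to derive this corollary from Theorem \ref{thm:restiction applied for URU representations in GINTRO} (in its detailed form, Theorem \ref{thm:Properly finite sided domains in Hull(F)}) applied to the adjoint representation $V=\mathfrak{g}$ of $G$. The highest restricted weight of the adjoint representation is the highest restricted root $\omega_\Delta$, and the associated flag manifold $\mathcal{F}_{\omega_\Delta}$ embeds into $\mathbb{P}(\mathfrak{g})$ as the $G$-orbit of a highest root vector; in particular $\mathcal{F}_{\omega_\Delta}\subset\mathfrak{n}$.

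First I would check the hypothesis on $\Gamma$. Since $\Gamma$ is $\Delta$-Anosov, it is $\alpha$-Anosov for every simple root $\alpha$; by the characterization recalled in Section \ref{subsubsec:omega URU in SLdR} (a subgroup is $\omega$-undistorted for a root $\omega$ if and only if it is Anosov along every simple root collinear to the Weyl orbit of $\omega$), this implies in particular that $\Gamma$ is $\omega_\Delta$-undistorted, which is the input required by the theorem. Next I would identify the invariant compact set $\mathcal{F}\subset\mathbb{P}(\mathfrak{g})$ as $\mathcal{F}=\mathfrak{n}$. This set is closed in $\mathbb{P}(\mathfrak{g})$ (it is cut out by the vanishing of the homogeneous $\mathrm{Ad}$-invariants), hence compact, $G$-invariant, and contains the flag manifold $\mathcal{F}_{\omega_\Delta}$ and therefore the Anosov limit set of $\Gamma$ inside $\mathbb{P}(\mathfrak{g})$.

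The main step, and the main obstacle, is verifying that this specific $\mathcal{F}=\mathfrak{n}$ fulfills the geometric hypothesis of Theorem \ref{thm:Properly finite sided domains in Hull(F)}. This amounts to controlling how the bisecting hyperplanes of $\DS_\Gamma(o)$ accumulate near $\Hull(S^2\mathfrak{n})$: one needs to show that every limit half-space meets $\Hull(S^2\mathfrak{n})$ only at rank-one tensors supported on $\mathfrak{n}$. The expected mechanism is that Busemann-function limits of half-spaces $\mathcal{H}(o,\gamma\cdot o)$ based at points of the Anosov limit set, which lie inside $\mathcal{F}_{\omega_\Delta}\subset\mathfrak{n}$, together with the $G$-invariance of $\mathfrak{n}$, confine the accumulation of half-spaces to directions already lying in $\Hull(S^2\mathfrak{n})$. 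Once this is verified, the criterion of Theorem \ref{thm:Properly finite sided domains in Hull(F)} applies directly and yields that $\DS_\Gamma^\mathfrak{n}(o)$ is properly finite-sided.
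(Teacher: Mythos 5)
Your proposal correctly identifies the overall strategy (apply the restricted Dirichlet--Selberg criterion to the adjoint representation with $\mathcal F=\mathfrak n$), but it departs from the paper's proof in a way that leaves a genuine gap, and the hypothesis you invoke is not strong enough to close it.

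The paper does \emph{not} deduce Corollary~\ref{cor:RestrictionBorel Anosov} directly from Theorem~\ref{thm:Properly finite sided domains in Hull(F)} with $\mathcal F=\mathfrak n$; it goes through the intermediate Theorem~\ref{thm:restiction applied for URU representations in G}, taking the ideal $I=\Sigma^+$ (all positive restricted roots). The set $\mathcal F_I$ attached to this ideal is exactly $\mathfrak n$, the nilpotent cone, and the hypothesis that theorem needs is that $\Gamma$ be $\omega$-undistorted \emph{for every} $\omega\in I$, i.e.\ for every positive root. This is precisely what $\Delta$-Anosov buys: the limit cone lies in the interior of $\sigma_{mod}$, where every root (long or short) is bounded away from its wall. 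In your proposal you only extract $\omega_\Delta$-undistortion from the $\Delta$-Anosov hypothesis and declare that "the input required by the theorem.'' That is true for the weaker Theorem~\ref{thm:restiction applied for URU representations in GINTRO}, but that version only produces the singleton ideal $I=\{\omega_\Delta\}$, whose associated set $\mathcal F_{\{\omega_\Delta\}}$ is the highest-root orbit $\mathcal F_{\omega_\Delta}\subset\mathbb P(\mathfrak g)$, not $\mathfrak n$. When $G$ is not simply laced, $\omega_\Delta$-undistortion controls only the long roots; the limit cone could still touch a short-root wall, and the zero-weight behaviour of nilpotents with short-root components would not be controlled. So your reduction to $\omega_\Delta$-undistortion actually throws away exactly the information needed.

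The other missing piece is the one you flag as "the main obstacle'' and then only sketch: verifying the disjointness condition of Theorem~\ref{thm:Properly finite sided domains in Hull(F)}, namely that $\mathfrak n$ avoids $V^\xi_{\ge}\setminus V^\xi_+$ for every $\xi=\xi_\Theta(x)$, $x\in\partial\Gamma$. Your "expected mechanism'' (limit half-spaces accumulating on the Anosov limit set inside $\mathcal F_{\omega_\Delta}$) is a heuristic about where the bisectors go, not a proof of the stated combinatorial condition. The paper's Theorem~\ref{thm:restiction applied for URU representations in G} proves it cleanly: any nilpotent $v$ lies in some positive nilpotent subalgebra $V^f_I=\bigoplus_{\lambda\in I}V^{f,g}_\lambda$ for a suitable full flag $f$; one chooses an opposite $g$ so that $\xi$ lies in the corresponding flat; then, because $\Gamma$ is $\omega$-undistorted for every $\omega\in I$, each weight $\lambda\in I$ lies in $\Phi^+_{\mathcal C,\Theta}\cup\Phi^-_{\mathcal C,\Theta}$ and never in $\Phi^0_{\mathcal C,\Theta}$. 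Hence $V^f_I$ meets $V^\xi_\ge$ only inside $V^\xi_+$, which is exactly the required disjointness. Without this ideal-structure argument, and without the full $\Delta$-Anosov hypothesis feeding it, the verification does not go through, so the proposal as written does not establish the corollary.
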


\subsection{Outline of the paper}
In Section \ref{Sec:background on symmetric spaces and Anosov subgroups} we recall the properties of Anosov subgroups that we need in the paper and give the definition of {$\omega$-undistorted} subgroups. 
In Section \ref{Sec:Finsler metrics and compactifications} we review the construction of Finsler metrics associated to linear forms $\omega \in \mathfrak{a}^\ast$ and properties of their horofunction compactifications.

Section \ref{sec:DomainProperHorofunctions} is the heart of the paper.
We show that any {$\omega$-undistorted} subgroup $\Gamma$ admits a cocompact domain of proper discontinuity which can be characterized as those horofunctions which are proper and bounded below on any/every $\Gamma$ orbit.
Such horofunctions satisfy an important stronger property: they are locally uniformly proper, see Lemma \ref{lem:LocalUnif}.

In Section \ref{sec:DirichletFinsler} we deduce that {$\omega$-undistorted} subgroups have properly finite-sided Dirichlet domains for the Finsler metric $d_\omega$. 
We also give an equivalent characterization of the {$\omega$-undistorted} property in terms of disjoint half-spaces, see Theorem \ref{thm:disjoint half-spaces implies URU}. 

In Section \ref{sec:Dirichlet-Selberg domains} we review the definition and basic properties of Dirichlet-Selberg domains and prove Theorem \ref{thm:infinitely-sided example}. 
In Section \ref{sec:locally symmetric spaces} we consider a restricted Selberg invariant, and show that locally symmetric spaces associated to {$\omega$-undistorted} subgroups are topologically tame. 
In Section \ref{sec:restricting ds domains} we consider restricted Dirichlet-Selberg domains for $\Theta$-Anosov subgroups $\Gamma < G$ and representations $V$ of $G$. 
We then discuss several applications. 

\subsection*{Acknowledgments.}
We thank Marit Bobb, Jeffrey Danciger, Fran\c{c}ois Gu\'{e}ritaud, Rylee Lyman, Beatrice Pozzetti, Gabriele Viaggi and Anna Wienhard for interesting discussions related to this project, and additionally thank Beatrice Pozzetti and Anna Wienhard for their remarks on a previous version of the paper.
We would also like to thank the anonymous reviewer for helpful suggestions which have improved the exposition of the paper.
C. Davalo was funded through the RTG 2229 “Asymptotic
Invariants and Limits of Groups and Spaces” and the DFG Emmy Noether project 427903332 of B. Pozzetti.
J.M.\ Riestenberg was supported by the RTG 2229 “Asymptotic Invariants and Limits of Groups and Spaces” and by the DFG under Project-ID 338644254 - SPP2026.

\tableofcontents

\section{Background on symmetric spaces and Anosov subgroups}\label{Sec:background on symmetric spaces and Anosov subgroups}

In this section we introduce the notion of an \emph{$\omega$-undistorted} subgroup of $G$, see Definition \ref{def: v-uru}. 
First we review some important properties of the visual boundary of a symmetric space of non-compact type, and fix some notation. 
We then recall the relevant properties of Anosov subgroups.

\subsection{The visual boundary of a symmetric space of non-compact type}

Let $G$ be a connected semisimple Lie group with finite center and let $\X$ be the associated symmetric space of non-compact type. 
The symmetric space $\X$ is a Hadamard manifold. 
Its \emph{visual boundary}, denoted $\partial_\text{vis}\X$, is the set of asymptote classes of geodesic rays. 
The visual boundary of a symmetric space has the structure of a thick spherical building, see \cite{KLP17,E96} for further discussion.

\medskip

Let $\mathfrak{a}$ be a maximal abelian subspace of $\mathfrak{p}$ where $\mathfrak{g} = \mathfrak{k}\oplus \mathfrak{p}$ is a Cartan decomposition of the Lie algebra $\mathfrak{g}$ of $G$. 
Let $\Sigma\subset \mathfrak{a}^*$ be the associated (restricted) root system, and $\Delta$ be a choice of simple roots. 
This choice defines a positive \emph{(Euclidean) Weyl chamber} $\mathfrak{a}^+ \coloneqq \lbrace \mathrm{v}\in \mathfrak{a} \mid \forall \alpha \in \Delta, \,\alpha(\mathrm{v})\geq 0\rbrace$. 

For $x,y \in \X$, there is an isometry $g \in G$ conjugating the transvection from $x$ to $y$ into $\exp(\mathfrak{a}^+)$. 
The corresponding element of $\mathfrak{a}^+$ is called the the vector-valued distance, and denoted by $\vec{d}(x,y)$.

The projecitivization $\mathbb{S}\mathfrak{a}^+$ is naturally identified with a subset of $\partial_{\rm vis} \X$ called a \emph{(spherical) Weyl chamber}. 
It is a fundamental domain for the natural action of $G$ on $\partial_{\rm vis} \X$. 
In particular, $\partial_\text{vis}\X$ is a union of Weyl chambers $\sigma$, and each is naturally identified with a \emph{model Weyl chamber} $\sigma_{mod}$. 
Every element of the visual boundary has a {\em type} in the model Weyl chamber:
\[ \type \colon \partialvisX \to \sigma_{mod}. \] 

Non-empty faces $\tau_{mod}$ of $\sigma_{mod}$ are in one-to one correspondence with non-empty subsets $\Theta$ of $\Delta$. 
To such a face/subset of simple roots one can associate a \emph{flag manifold} $\mathcal{F}_\Theta = \Flag(\tau_{mod})$ defined as the set of faces $\tau \subset \partial_{\rm vis} \X$ of type $\tau_{mod}$.
It can also be written as $\mathcal{F}_\Theta = G/P_\Theta$ where $P_\Theta$ is the standard parabolic subgroup associated to $\Theta$.
The {\em star} of a simplex $\tau$ in $\partial_\text{vis}\X$ is the union of chambers containing $\tau$, and denoted $\st(\tau)\subset \partialvis \X$.

\medskip

We often fix a subset $\C$ of $\sigma_{mod}$; which can be for instance the limit cone of some discrete subgroup, see below.\footnote{In \cite{KLP17}, $\C$ would be denoted $\Theta$, but we reserve that notation for a collection of simple roots.}
The {\em $\C$-star} of $\tau$ is the subset of the star of $\tau$ with types in $\C$:
\[ \st_\C(\tau) \coloneqq \st(\tau) \cap \type^{-1}(\C) .\]

We further consider certain subsets of $\X$ which appear as cones on subsets of $\partialvisX$. 
For $x \in \X$ and $A \subset \partialvisX$, we let $\mathcal{V}(x,A)$ denote the union of points on geodesic rays from $x$ to $A$. 
In particular, we will consider later the {\em Weyl cone} $\mathcal{V}(x,\st_\C(\tau))$ of a simplex $\tau$ in $\partialvisX$.

\medskip
 
To $\omega\in \mathfrak{a}^*$, one can associate its orthogonal vector $\omega^\perp\in \mathfrak{a}$ for the Killing form. 
Up to the action of the Weyl group $W$, we may assume that $\omega^\perp \in \mathfrak{a}^+$. 
We let $\mathcal{F}_\omega$ denote the flag manifold
\begin{equation}\label{eqn:flag manifold omega}
	\mathcal{F}_\omega \coloneqq \type^{-1}(\omega^\perp) = G \cdot [c_{\omega^\perp}] \subset \partialvisX
\end{equation}
where $c_{\omega^\perp}$ is a geodesic ray determined by $\omega^\perp$.
The flag manifold $\mathcal{F}_\omega$ is naturally identified with $\mathcal{F}_\Xi$ for the subset of simple roots $\Xi=\lbrace \alpha\in \Delta \mid \alpha(\omega^\perp)\neq 0\rbrace$.

\begin{remark}
	In the following sections, we will consider $\Theta$-Anosov subgroups which have limit maps with values in $\mathcal{F}_\Theta$ and domains of discontinuity in the flag manifold $\mathcal{F}_\omega = \mathcal{F}_\Xi$. 
	We emphasize that $\Xi$ is typically not equal to $\Theta$ in this setup. 
\end{remark}

\subsection{The \texorpdfstring{$\omega$}{omega}-undistorted condition}
Let $\Gamma$ be a discrete subgroup of $G$.
The \emph{limit cone} $\mathcal{C}_\Gamma$ of $\Gamma$, introduced by Benoist \cite{Benoist}, is given by:
\begin{equation}\label{def:limit cone}
	\mathcal{C}_\Gamma \coloneqq \bigcap _{n\in \mathbb{N}}\overline{\left\lbrace \frac{\vec{d}(o,\gamma \cdot o)}{d(o,\gamma  \cdot o)} |\gamma\in \Gamma,\, d(o,\gamma\cdot o)\geq n\right\rbrace}\subset \sigma_{mod} .
\end{equation}
This definition does not depend of the base-point $o\in \X$. 
The limit cone is non-empty when $\Gamma$ is unbounded and is compact in general.
Note that for a fixed $o \in \mathbb{X}$, the map $\mu \colon G \to \mathfrak{a}^+$ given by $\mu(g) \coloneqq \vec{d}(o,g\cdot o)$ is often called the \emph{Cartan projection}.

Before introducing the $\omega$-undistorted notion, we recall a similar condition that characterizes the Anosov property.
\begin{definition}[{\cite[Definition 5.17]{KLP17}}]\label{def: Anosov}
Let $\Theta\subset \Delta$ be a set of simple roots. 
A finitely generated subgroup $\Gamma$ is \emph{$\Theta$-Anosov} if and only if for one (and hence any) word metric $|\cdot|$ on $\Gamma$, there exist $\epsilon, C>0$ such that for all $\alpha\in \Theta$ and $\gamma\in \Gamma$:
$$\alpha\left(\vec{d}(o,\gamma\cdot o)\right)\geq \epsilon |\gamma|-C.$$

Equivalently, $\Gamma$ is \emph{$\Theta$-Anosov} if it is quasi-isometrically embedded and the limit cone $\mathcal{C}_\Gamma$ avoids $\Ker(\alpha)$ for all $\alpha\in \Theta$.
\end{definition}
 
 We introduce a similar notion.
 
\begin{definition}\label{def: v-uru}
	Let $\omega \in \mathfrak{a}^\ast$ be nonzero.
   	We say that a finitely generated subgroup $\Gamma <G$ is \emph{$\omega$-undistorted} if for one (and hence any) word metric $|\cdot|$ on $\Gamma$, there exist $\epsilon, C>0$ such that for all $w$ in the Weyl group and $\gamma\in \Gamma$: 
	   \begin{equation}
	   	\label{eq:DefURU}
	   	\abs{\omega\left(w\cdot \vec{d}(o,\gamma\cdot o)\right)} \geq \epsilon |\gamma|-C.
	   \end{equation}
 
Equivalently $\Gamma$ is $\omega$-undistorted if and only if it is quasi-isometrically embedded in $G$ and if its limit cone $\mathcal{C}_\Gamma$ avoids $w\cdot \Ker(\omega)$ for all $w$ in the Weyl group. 
\end{definition}
  
\begin{remark}\label{rmk: weyl group orbits}
	It turns out that for irreducible root systems, the set of simple roots in some Weyl group orbit $\lbrace \omega\circ w|w\in W\rbrace\cap \Delta$ is either the set of long or short roots in $\Delta$ when non-empty, see \cite[{Figure 4}]{Dav23}. 
	In this case, we set $\omega_\Theta$ to be such a root and then the set of $\omega_\Theta$-undistorted subgroups coincides with the set of $\Theta$-Anosov representations. 
	As a consequence, our results apply to many Anosov subgroups of interest; for instance those appearing in higher Teichm\"{u}ller theory: Hitchin and maximal subgroups, or the more general $\Theta$-positive subgroups introduced by Guichard-Wienhard \cite{GW18}, see \cite{GLW21}.
\end{remark}

\begin{proposition}\label{prop:omega-undistorted implies Anosov}
Let $\Gamma$ be a $\omega$-undistorted subgroup of $G$ that is not virtually cyclic.
There is a unique component $\sigma_\Gamma$ of $\sigma_\text{mod}\setminus \bigcup_{w\in W}w\cdot \Ker(\omega)$ containing $\mathcal{C}_\Gamma$. 
Let $\Theta(\sigma_\Gamma)\in \Delta$ be the set of simple roots whose associated walls do not intersect $\sigma_\Gamma$. 
The set $\Theta(\sigma_\Gamma)$ is nonempty and $\Gamma$ is $\Theta(\sigma_\Gamma)$-Anosov.
\end{proposition}

\begin{proof}
	Let $\Gamma$ be an $\omega$-undistorted subgroup of $G$ that is not virtually cyclic. 
	Since $\Gamma$ is discrete, it follows from the definition and \cite[Theorem 1.1]{Kassel}, see also \cite[Fact 4.10]{KasselTholozan}, that there is a unique component $\sigma_\Gamma$ of $\sigma_\text{mod}\setminus \bigcup_{w\in W}w\cdot \Ker(\omega)$ containing $\mathcal{C}_\Gamma$. 
	The fact that $\Theta(\sigma_\Gamma)\neq \emptyset$ is exactly \cite[Lemma 5.20]{Dav23}. 
	 Finally, since $\mathcal{C}_\Gamma\subset \sigma_\Gamma$, the limit cone $\mathcal{C}_\Gamma$ avoids the walls associated to the simple roots in $\Theta(\sigma_\Gamma)$, and $\Gamma$ is quasi-isometrically embedded, so $\Gamma$ is $\Theta(\sigma_\Gamma)$-Anosov.
\end{proof}

\begin{proposition}\label{prop:existence of omega-undistorted subgroups}
	There exists a Zariski dense $\omega$-undistorted subgroup $\Gamma$ of $G$ if and only if the fixed point set of the opposition involution $\iota \colon \mathfrak{a}^+ \to \mathfrak{a}^+$ is not contained in $w \ker \omega$ for some $w \in W$. 
	In this case, one can take $\Gamma$ to be a free group. 
\end{proposition}

\begin{proof}
	 If there is an $\omega$-undistorted subgroup $\Gamma$ of $G$ which is not virtually cyclic, then the component $\sigma_\Gamma$ of  $\sigma_\text{mod}\setminus \bigcup_{w\in W}w\cdot \Ker(\omega)$ containing $\mathcal{C}_\Gamma$ is invariant by the opposition involution and convex, so contains a nonzero vector fixed by the opposition involution. 
	 On the other hand, if there is a vector $v$ with $\iota(v)=v$ and not contained in any hyperplane $w \ker \omega$ for $w \in W$, then we may choose a convex neighborhood $U$ of $[v]$ in $\sigma_{mod}$ which is still in $\sigma_\text{mod}\setminus \bigcup_{w\in W}w\cdot \Ker(\omega)$. 
	 By \cite[Theorem 7.4]{Ben96} there is a Zariski dense free subgroup $\Gamma$ of $G$ with limit cone contained in $U$. Up to taking powers of its generators one can furthermore assume that $\Gamma$ is undistorted, by \cite[Proposition 6.4]{Ben96}.
\end{proof}

\subsection{Illustration of the \texorpdfstring{$\omega$}{omega}-undistorted condition}

In this subsection we consider a few examples to illustrate the $\omega$-undistorted condition, and its relation to the Anosov properties.

\subsubsection{Let $G=\PSL(d,\R)$.}\label{subsubsec:omega URU in SLdR}
Consider the element $\omega_1 \in \mathfrak{a}^\ast$ given by:
$$\omega_1:\Diag(\sigma_1,\cdots ,\sigma_d)\mapsto \sigma_1 ,$$
where $\mathfrak{a}$ is identified with traceless symmetric matrices.
Note that, for $g \in \PSL(d,\R)$, the set of logarithms of singular values $\{\log \sigma_i(g)\}_{i=1}^d$ coincides with the set of values $\omega_1\left(w\cdot \vec{d}(o,g \cdot o)\right)$ as $w$ varies over $W$.
So a subgroup $\Gamma < \PSL(d,\R)$ is $\omega_1$-undistorted exactly when 
	\[ \lvert \log \sigma_i(g) \rvert \ge A \abs{\gamma} - B \]
holds for some $A,B>0$ and for all $i=1,\dots,d$ for all $\gamma \in \Gamma$. 
Figure \ref{fig:SL4} illustrates the intersection of the model Weyl chamber $\sigma_{mod}$ with the hyperplanes $w \cdot \Ker(\omega_1)$ as dotted lines.
When $\Gamma$ is not virtually cyclic, the limit cone $\mathcal{C}_\Gamma$ is connected and invariant by the opposition involution, hence contained in the gray region.
More generally, we have the following. 

\begin{proposition}
	\label{prop:omega1ImpliesNAnosov}
	If $d=2n$, then any $\omega_1$-undistorted subgroup of $\SL(d,\R)$ that is not virtually cyclic is $n$-Anosov. 
	If $d$ is odd, there exist no $\omega_1$-undistorted subgroups of $\SL(d,\R)$ that is not virtually cyclic.
\end{proposition}

\begin{proof}
	For $1\leq k<d$, let $c_k$ denote the set of tuples $(\lambda_1,\lambda_2,\cdots, \lambda_d)$ such that $\lambda_1\geq \lambda_2\geq \cdots \geq \lambda_k>0>\lambda_{k+1}\geq \cdots \geq \lambda_d$.
	These are exactly the connected components of $\mathfrak{a}^+ \setminus \bigcup_{w\in W}w\cdot \Ker(\omega_1)$.
	The opposition involution maps $c_k$ to $c_{d-k}$.
	
	For $d$ odd there are no invariant connected components.
	For $d=2n$ there is only one invariant component, and this component avoids the wall $\lambda_n=\lambda_{n+1}$. 
	Hence $\Theta(\sigma_\Gamma)=\{\lambda_n-\lambda_{n+1}\}$.
\end{proof}

\begin{figure}[ht]
	\centering
	\begin{tikzpicture}	
		\node[circle,scale=0.5,fill=black,label=below:{$\tau_1$}] (t1) at (0,0) {};
		\coordinate (t2) at (4,3.46);
		\coordinate (t13) at (4,0);
		\node[circle,fill=black,scale=0.5,label=below:{$\tau_3$}] (t3) at (8,0) {};
		\draw (t1) -- coordinate[pos=0.5](t12) (t2) -- (t3) coordinate[pos=0.5](t23) -- (t1);
	\fill[gray!30] (t2) -- (t12) -- (t13)-- (t23) -- cycle;	
	\draw (t1) -- coordinate[pos=0.5](t12) (t2) -- (t3) coordinate[pos=0.5](t23) -- (t1);
		\draw[dashed] (t12) -- (t13) -- (t23);	
		\node[circle,fill=black,scale=0.5,label=above:{$\tau_2$}] at (t2) {};	
	\end{tikzpicture}
	\caption{An illustration of $\bigcup_{w} \ker w \omega_1$ for $\SL(4,\R)$.} 
	\label{fig:SL4}
\end{figure}

Now consider the following:
 $$\omega_\Delta:\Diag(\sigma_1,\cdots ,\sigma_d)\mapsto \sigma_1-\sigma_d.$$ 
An $\omega_\Delta$-undistorted subroup $\Gamma < \PSL(d,\R)$ is exactly a $\Delta$-Anosov subgroup of $\PSL(d,\R)$, also called \emph{Borel Anosov}. This linear form is interesting in particular as its associated metric in the symmetric space is the Hilbert metric, see Example \ref{rem:Hilbert metric}.

 \subsubsection{Let $G=\Sp(6,\R)$.} \label{example: sp6r}
In the following example we see that the set of roots $\Theta$ for which a $\omega$-undistorted subgroup is Anosov can vary. 

 We consider representations of convex cocompact subgroups $\Gamma\subset \SL(2,\R)$ that factor through a representation $f \colon \SL(2,\R)\to \Sp(6,\R)$. 
 Given a partition $\tau = \lbrace \tau_1,\tau_2,\cdots \tau_k\rbrace$ with repetition such that $\tau_1+\cdots+\tau_k=6$ and each odd integer appears an even number of times, we can define a representation $f_\tau \colon \SL(2,\R)\to \Sp(6,\R)$ as the direct sum of irreducible representations $\SL(2,\R)\to \SL(\tau_i,\R)$. 
 Let $j \colon \Gamma\to \SL(2,\R)$ be a Fuchsian representation of the fundamental group of a closed surface. 
 The representations $\rho_\tau = f_\tau \circ j$ are discrete and faithful for $\tau\neq \lbrace 1,1,1,1,1,1\rbrace$ and their limit cone is a single point $\{\mathrm{v}_\tau \}$.

 \medskip

 The positive Weyl chamber $\mathfrak{a}^+$ can be identified as the space of triples $(\lambda_1,\lambda_2,\lambda_3)$ with $\lambda_1\geq \lambda_2\geq \lambda_3\geq 0$. The three roots are $\alpha_1(\lambda_1,\lambda_2,\lambda_3)=\lambda_1-\lambda_2$, $\alpha_2(\lambda_1,\lambda_2,\lambda_3)=\lambda_2-\lambda_3$ and $\alpha_3(\lambda_1,\lambda_2,\lambda_3)=2\lambda_3$.
 
 \medskip

 Let $\omega_3=\lambda_1+\lambda_2+\lambda_3$ be the third fundamental weight, which is not in the Weyl group orbit of a simple root, even up to rescaling.

 \medskip

 We illustrate some of these points in Figure \ref{fig:Sp6}, in the projective chart defined by $\lambda_1=1$. In this picture, the blue line corresponds to the union of $\Ker(w\cdot\omega_3)$ for $w\in W$.

 \begin{figure}[ht]
 	\centering
 	\begin{tikzpicture}[scale=4]	
 	\coordinate (t1) at (1,1);
 	\coordinate (t2) at (1,0);
 	\coordinate (t3) at (0,0);
 	\node[scale=0.5,label=right:{$\lambda_1=\lambda_2$}] at (1,.5) {};
 	\node[scale=0.5,label=above left:{$\lambda_2=\lambda_3$}] at (.5,.5) {};
 	\node[scale=0.5,label=below:{$\lambda_3=0$}] at (0.5,0) {};
         \draw[thick] (t3) -- (t2) -- (t1) -- cycle;	
         \draw[very thick,blue] (.5,.5) -- (1,0);	
 	\node[circle,fill=black,scale=0.5,label=above:{$\mathrm{v}_{\lbrace 2,2,2\rbrace}$}] at (t1) {};
 	\node[circle,fill=black,scale=0.5,label=below right:{$\mathrm{v}_{\lbrace 2,2,1,1\rbrace}=\mathrm{v}_{\lbrace 3,3\rbrace}$}] at (t2) {};
 	\node[circle,fill=black,scale=0.5,label=below left:{$\mathrm{v}_{\lbrace 2,1,1,1,1\rbrace}$}] at (t3) {};

 \node[circle,fill=black,scale=0.5,label=above:{$\mathrm{v}_{\lbrace 6\rbrace}$}] at (.6,.2) {};
 \node[circle,fill=black,scale=0.5,label=left:{$\mathrm{v}_{\lbrace 4,2\rbrace}$}] at (.333,.333) {};
 \node[circle,fill=black,scale=0.5,label=above:{$\mathrm{v}_{\lbrace 4,1,1\rbrace}$}] at (.333,0) {};

 	\end{tikzpicture}
 	\caption{An illustration of the positive Weyl chamber $\mathbb{S}\mathfrak{a}^+=\sigma_{mod}$ for $\Sp(6,\R)$.} 
 	\label{fig:Sp6}
 \end{figure}

 In this picture, we see that $\rho_{\lbrace 6\rbrace},\rho_{\lbrace 4,2\rbrace},\rho_{\lbrace 2,1,1,1,1\rbrace},\rho_{\lbrace 4,1,1\rbrace},\rho_{\lbrace 2,2,2\rbrace}$ are $\omega_3$-undistorted. There are two connected components of $\mathfrak{a}^+$ minus the gray line. The first one contains $\mathrm{v}_{\lbrace 2,2,2\rbrace}$, and having a limit cone in this component implies being $\lbrace \alpha_3\rbrace$-Anosov. Undistorted representations whose limit cone lies in the other component are all $\lbrace \alpha_1\rbrace$-Anosov. 

\subsection{Boundary maps and the Morse property}

An important feature of Anosov subgroups is the existence of a boundary map, which can be characterized in the following way.
Let $\Theta$ be a non-empty set of simple roots. 

\begin{theorem}[{\cite{KLP17,BPS19}}]\label{thm:characterizing boundary maps}
Let $\Gamma$ be $\Theta$-Anosov subgroup of $G$. 
The group $\Gamma$ is hyperbolic, with Gromov boundary $\partial \Gamma$. 
There exists a unique continuous $\Gamma$-equivariant map $\xi_\Theta:\partial \Gamma\to \mathcal{F}_\Theta$ such that for all $o\in \X$ and every geodesic ray $(\gamma_n)$ in $\Gamma$ converging to $\zeta\in \partial \Gamma$, every limit point of $(\gamma_n\cdot o)$ belongs to  a Weyl chamber that contains $\xi_\Theta(\zeta)$, i.e. every limit point belongs to $\st\left(\xi_\Theta(\zeta)\right)\subset\partialvisX$.
\end{theorem} 

The map $\xi_\Theta \colon \partial \Gamma \to \mathcal{F}_\Theta$ is called the \emph{boundary map} of $\Gamma$. 

\medskip

Symmetric spaces of rank one satisfy the Morse Lemma: quasi-geodesics stay close to geodesics. 
That property fails in higher rank, but a suitable generalization holds: uniformly regular quasigeodesics stay close to Weyl cones.

\begin{theorem}[{\cite{KLP17,KLP18b,BPS19}}]
\label{thm:MorseLemmaKLP}
Let $\Gamma$ be $\Theta$-Anosov subgroup of $G$. Let $o\in \X$, and let us a fix a word metric $|\cdot|$ on $\Gamma$. 
There exist $D \ge 0$ such that if $\gamma \in \Gamma$ lies on a geodesic ray from $e \in \Gamma$ to $\zeta \in \partial \Gamma$ then the distance from $\gamma \cdot o$ to the Weyl cone $\mathcal{V}\left(o,\st(\xi_\Theta(\zeta))\right)$ is at most $D$.

\end{theorem}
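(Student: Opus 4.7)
The plan is to combine the Anosov condition with the higher-rank Morse lemma for uniformly regular quasi-geodesics, following \cite{KLP17,KLP18b,BPS19}. Fix a word metric on $\Gamma$, a base-point $o \in \X$, and consider a geodesic ray $(e = \gamma_0, \gamma_1, \gamma_2, \ldots)$ in $\Gamma$ converging to $\zeta \in \partial \Gamma$; it suffices to bound $d(\gamma_n \cdot o, \mathcal{V}(o, \st(\xi_\Theta(\zeta))))$ by a constant that is uniform in $n$ and $\zeta$.

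First, I would show that the orbit map $n \mapsto \gamma_n \cdot o$ is a uniformly $\Theta$-regular quasi-geodesic ray in $\X$. The quasi-isometric embedding property is part of the equivalent formulation of the $\Theta$-Anosov condition in Definition \ref{def: Anosov}. For uniform $\Theta$-regularity along every subsegment, apply the Anosov inequality to $\gamma_m^{-1} \gamma_n$ and use the $G$-invariance of the vector-valued distance to obtain
\[
\alpha\bigl(\vec{d}(\gamma_m \cdot o, \gamma_n \cdot o)\bigr) \geq \epsilon (n-m) - C'
\]
for every $m < n$ and every $\alpha \in \Theta$; here the constants are uniform because $(\gamma_m^{-1}\gamma_n)$ stays at uniform word-length distance from a geodesic in $\Gamma$, by hyperbolicity of $\Gamma$. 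This keeps the Cartan projection at uniform linear distance from each wall $\Ker(\alpha)$, $\alpha \in \Theta$.

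Next, I would invoke the higher rank Morse lemma of Kapovich-Leeb-Porti: any uniformly $\Theta$-regular quasi-geodesic ray in $\X$ lies within Hausdorff distance at most a constant $D_0$, depending only on the quasi-isometry and regularity constants, of a Weyl cone $\mathcal{V}(o', \st(\tau))$, where $o'$ is at bounded distance from $o$ and $\tau$ is a simplex in $\mathcal{F}_\Theta$. Since the accumulation set of $(\gamma_n \cdot o)$ in $\partialvisX$ is contained in $\st(\xi_\Theta(\zeta))$ by the characterization of $\xi_\Theta$ in Theorem \ref{thm:characterizing boundary maps}, the tip $\tau$ of the approximating cone must equal $\xi_\Theta(\zeta)$. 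Absorbing $d(o,o')$ and a bound coming from nestedness of Weyl cones under a basepoint change into the constant yields the desired uniform bound $D$.

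The main obstacle is the higher rank Morse lemma itself, which is the substantial technical input from \cite{KLP17,BPS19} and whose proof I would take as a black box. Once it is available, the remaining steps -- upgrading the pointwise Anosov inequality to uniform regularity on all subsegments via $G$-invariance, and identifying the limit simplex via the uniqueness portion of Theorem \ref{thm:characterizing boundary maps} -- are essentially bookkeeping, and the uniformity of $D$ across all $\zeta \in \partial \Gamma$ and all $\gamma$ on some geodesic ray from $e$ follows from the uniformity of the constants $\epsilon, C$ in Definition \ref{def: Anosov}.
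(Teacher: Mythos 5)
Your proposal is essentially correct, and it reconstructs the standard derivation from the cited literature. Note, however, that the paper does not supply its own proof of this theorem: it is stated as a citation to \cite{KLP17,KLP18b,BPS19}, so there is no in-paper argument to compare against. Your outline — (i) upgrade the Anosov inequality along subsegments of the geodesic ray via $G$-invariance of $\vec d$ to get a uniformly $\Theta$-regular quasi-geodesic in $\X$, (ii) invoke the higher rank Morse lemma as a black box to place the orbit near a Weyl cone $\mathcal{V}(o',\st(\tau))$ with $o'$ close to $o$, and (iii) identify $\tau = \xi_\Theta(\zeta)$ via the uniqueness in Theorem~\ref{thm:characterizing boundary maps} and then absorb the basepoint change into the constant — is exactly how the statement follows from the references.

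One small remark on your step (i): you appeal to hyperbolicity of $\Gamma$ to control the constants along subsegments, but this is not needed. Since $(\gamma_n)$ is itself a geodesic ray in $\Gamma$, every subword $\gamma_m^{-1}\gamma_n$ has word length exactly $n-m$, and Definition~\ref{def: Anosov} applies verbatim to $\gamma_m^{-1}\gamma_n$ with the same $(\epsilon,C)$; together with the quasi-isometric embedding this already gives uniform $\Theta$-regularity on all subsegments. Hyperbolicity of $\Gamma$ is a consequence of being Anosov, and is genuinely used elsewhere (e.g.\ in Lemma~\ref{lem:Morse Lemma}(2) of the paper to pass from arbitrary $\gamma$ to an element on a geodesic ray), but it plays no role in the uniformity here.
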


We observe that the orbit also stays close to Weyl cones on $\mathcal{C}$-stars.
This has important consequences throughout the paper.

\begin{lemma}\label{lem:Morse Lemma}
	Let $\Gamma$ be a $\Theta$-Anosov subgroup of $G$. 
	Let $\C$ be any compact neighborhood of the limit cone $\C_\Gamma$, let $o \in \X$ and fix some word metric $|\cdot |$ on $\Gamma$.
	There exists $D\ge 0$ such that:
	\begin{enumerate}
		\item If $(\gamma_n)_{n \in \mathbb{N}}$ is a geodesic ray in $\Gamma$ converging to $\zeta \in \partial \Gamma$ with $\gamma_0=e$, then for all $n \in \mathbb{N}$, the distance from $\gamma_n \cdot o$ to the Weyl cone $\mathcal{V}\left(o,\st_\mathcal{C}\left(\xi_\Theta(\zeta)\right)\right)$ is at most $D$,
		\item For all $\gamma \in \Gamma$, there exists $\zeta\in \partial \Gamma$ such that the distance from $\gamma \cdot o$ to the Weyl cone $\mathcal{V}\left(o,\st_\mathcal{C}\left(\xi_\Theta(\zeta)\right)\right)$ is at most $D$.
	\end{enumerate}
\end{lemma}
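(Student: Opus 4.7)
The plan is to combine Theorem~\ref{thm:MorseLemmaKLP} with the fact that orbit points $\gamma\cdot o$ with $d(o,\gamma\cdot o)$ large automatically have direction type in any preassigned neighborhood of $\mathcal{C}_\Gamma$, together with a CAT(0) angle estimate showing that a point at bounded distance from such an orbit point inherits this property. First I would apply Theorem~\ref{thm:MorseLemmaKLP} to obtain $D_0\ge 0$ and, for each $n$, a point $p_n\in\mathcal{V}(o,\st(\xi_\Theta(\zeta)))$ with $d(\gamma_n\cdot o,p_n)\le D_0$. The remaining goal is to show that $p_n$ actually lies in the restricted Weyl cone $\mathcal{V}(o,\st_\mathcal{C}(\xi_\Theta(\zeta)))$ for all $n$ sufficiently large.

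The key input is definition~(\ref{def:limit cone}): since $\mathcal{C}$ is a neighborhood of $\mathcal{C}_\Gamma$, I would choose an open set $U$ with $\mathcal{C}_\Gamma\subset U\subset\overline{U}\subset\mathrm{int}(\mathcal{C})$, and then extract $R>0$ such that every $\gamma\in\Gamma$ with $d(o,\gamma\cdot o)\ge R$ satisfies $\vec{d}(o,\gamma\cdot o)/d(o,\gamma\cdot o)\in U$. Since $\Gamma$ is $\Theta$-Anosov it is quasi-isometrically embedded, so $d(o,\gamma_n\cdot o)\to\infty$ linearly in $n$. Next, exploiting that $\X$ is CAT(0), the visual angle at $o$ between the rays to $\gamma_n\cdot o$ and to $p_n$ is bounded by a quantity of order $D_0/\min(d(o,\gamma_n\cdot o),d(o,p_n))$, hence tends to $0$. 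Since the type map on the visual sphere at $o$ is continuous, this forces the type of the direction from $o$ to $p_n$ into $\mathcal{C}$ once $d(o,\gamma_n\cdot o)$ is large enough; thus there exists $N$ such that $p_n\in\mathcal{V}(o,\st_\mathcal{C}(\xi_\Theta(\zeta)))$ for all $n\ge N$.

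For the remaining indices $n<N$, I would use that the orbit map is Lipschitz, giving $d(o,\gamma_n\cdot o)\le LN$ for some constant $L$, and that $o$ itself lies in the nonempty Weyl cone $\mathcal{V}(o,\st_\mathcal{C}(\xi_\Theta(\zeta)))$ as the basepoint of every ray, so these points are trivially within $LN$ of the restricted cone. Setting $D=\max(D_0,LN)$ then proves part (1). For part (2), given any $\gamma\in\Gamma$, I would extend a Cayley graph geodesic from $e$ through $\gamma$ to an infinite geodesic ray converging to some $\zeta\in\partial\Gamma$, which is possible since $\Gamma$ is infinite hyperbolic, and apply part (1). The main technical point is making the CAT(0) angle estimate quantitative and combining it with uniform continuity of $\type$ on the compact visual sphere at $o$; this is elementary but is what ensures that the choice of $N$, and hence of $D$, depends only on $\mathcal{C}$, $o$, and the word metric.
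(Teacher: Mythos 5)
Your argument follows the same route as the paper: apply Theorem~\ref{thm:MorseLemmaKLP} to get an approximating point in the unrestricted Weyl cone, then use that orbit points far from $o$ have direction type deep inside $\mathcal{C}$ and transfer this to the approximating point, handling small $n$ trivially and deducing~(2) from~(1) via hyperbolicity. Where the paper's proof argues the type transfer tersely (implicitly via the $1$-Lipschitz property of $\vec{d}(o,\cdot)$), you spell it out with a CAT(0) comparison angle bound plus uniform continuity of $\type$ on the compact visual sphere; these are equivalent and both correct.

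The one place you overstate is the deduction of~(2) from~(1): you assert that a geodesic segment $[e,\gamma]$ in a hyperbolic group always extends to an infinite geodesic ray through $\gamma$. This can fail when $\gamma$ is a \emph{dead end}: for example, in $\Z$ with generating set $\{\pm 2,\pm 3\}$, the element $1$ has word length $2$, but every neighbor of $1$ has word length $\leq 2$, so no geodesic from $0$ through $1$ can be prolonged. The correct (and what the paper uses) statement is the weaker, always-true fact that there is a constant $D'$ such that every $\gamma \in \Gamma$ lies within distance $D'$ of some element on a geodesic ray from $e$; this follows from thinness of ideal triangles with vertex $e$ and two boundary points of a bi-infinite geodesic passing near $\gamma$. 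With that substitution, and $D$ enlarged by $L D'$, your argument goes through unchanged.
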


\begin{proof}

We first remark that since $\Gamma$ is hyperbolic, there exist $D' \ge 0$ such that every $\gamma \in \Gamma$ is at distance at most $D'$ from some element $\gamma'$ that belongs to an infinite geodesic ray starting from $e\in \Gamma$. 
Hence, since Anosov subgroups are quasi-isometrically embedded, (2) follows from (1).

\medskip

Now let $\gamma$ lie on a geodesic ray $(\gamma_n)$ in $\Gamma$ with $\gamma_0=e$.
By Theorem \ref{thm:MorseLemmaKLP} we know that there exist a point $x\in \mathcal{V}\left(o,\st \left(\xi_\Theta(\zeta)\right)\right)$ for some $\zeta\in \partial \Gamma$ that is at distance at most $D$ from $\gamma\cdot o$. 
Since $\mathcal{C}$ contains a neighborhood of $\mathcal{C}_\Gamma$, it contains $\vec{d}(o,\gamma\cdot o)$ for all $\gamma$ large enough. 
Moreover since $d(o, \gamma\cdot o)$ goes to $+\infty$, $\mathcal{C}$ must contain $\vec{d}(o,x)$ if $\gamma$ is large enough. 
Hence there exist $D''$ such that either $d(o, \gamma\cdot o)\leq D''$ or $x\in \mathcal{V}\left(o,\st_\mathcal{C}\left(\xi_\Theta(\zeta)\right)\right)$. 
This concludes the proof.
\end{proof}

\begin{example}[Taking a neighborhood of $\mathcal{C}_\Gamma$ is necessary]
	We discuss an example where $\Gamma$ orbits fail to stay at uniform distance from Weyl cones on $\mathcal{C}_\Gamma$.	
	Indeed, let $\gamma$ be an isometry of $\HH^2 \times \HH^2$ which decomposes as a hyperbolic isometry on the first factor and a parabolic\footnote{i.e. a unipotent element of $\PSL(2,\R)$.} isometry on the second. Then $\Gamma = \langle \gamma \rangle$ is Anosov with respect to the first factor. Let $o=(o_1,o_2)$ be a basepoint in $\HH^2 \times \HH^2$. The fixed points of $\gamma$ on $\partial \HH^2 \times \partial \HH^2$ are $(\xi_1^{\pm},\xi_2)$; let $\tau$ be the simplex in $\partial_{\rm vis} (\HH^2 \times \HH^2)$ corresponding to $\xi_1^+$. While the (forward) orbits of $\gamma$ uniformly fellow travel the Weyl cone 
	$$ \mathcal{V}(o,\st(\tau)) = \{(p,q) \in \HH^2 \times \HH^2 \mid o_1p(+\infty)=\xi_1^+ \},$$
	they drift logarithmically away from
	$$ \mathcal{V}(o,\st_{\mathcal{C}_\Gamma}(\tau)) = \mathcal{V}(o,\tau) = \{(p,o_2) \in \HH^2 \times \HH^2 \mid o_1p(+\infty)=\xi_1^+ \}.$$
	On the other hand, for any neighborhood $\mathcal{C}$ of $\mathcal{C}_\Gamma$, the orbits of $\gamma$ uniformly fellow travel 
	$$ \mathcal{V}(o,\st_{\mathcal{C}}(\tau)) = \{(p,q) \in \HH^2 \times \HH^2 \mid o_1p(+\infty)=\xi_1^+, d(o_2,q) \le C d(o_1,p) \} $$
	where $C$ is a constant depending on $\mathcal{C}$. 
\end{example}

\section{Finsler metrics and horofunction compactifications}\label{Sec:Finsler metrics and compactifications}
In this section we review a class of $G$-invariant polyhedral Finsler metrics on the symmetric space $\mathbb{X}$ as well as their horofunction compactifications, which were previously studied by Kapovich-Leeb \cite{KL18a}.
In the sequel we will study Dirichlet domains for these Finsler metrics by considering their closure in the horofunction compactification, which are closely related to Satake compactifications, see \cite{HSWW}.  

\subsection{A family of Finsler metrics on the symmetric space}\label{sec:finsler metrics}

To a non-zero element $\omega\in \mathfrak{a}^*$ one can associate a seminorm $\lVert\cdot\rVert_\omega$ on the model Cartan subalgebra for $\vv\in \mathfrak{a}$ by:
$$\lVert \vv\rVert_\omega=\max_{w\in W} \omega(w\cdot\vv).$$
The seminorm only depends on the Weyl group orbit $W \cdot \omega$, and is symmetric if and only if $W\cdot \omega = W\cdot (-\omega)$.

When $W\cdot \omega$ spans $\mfa^\ast$, $\lVert \cdot \rVert_\omega$ is moreover definite. 
This is always the case when $G$ is simple. 
We will assume from now on that $\omega$ is chosen in such a way.

The seminorm on $\mathfrak{a}$ defines a $G$-invariant Finsler metric $\lVert\cdot\rVert_\omega$ on $\X$, that can be characterized for $\vv\in T_o\X$ by:
$$\lVert \vv\rVert_\omega=\max_{a\in \mathcal{F}_\omega} -\mathrm{d}b_{a,o}(\vv).$$
Here for $o,x,y\in \X$ and $a\in \mathcal{F}_\omega$, $b_{a,o}:\X\to \R$ is the Busemann function associated to $a$ and based at $o$ and $\mathcal{F}_\omega$ denotes the $G$ orbit of an ideal point dual to $\omega$, see \eqref{eqn:flag manifold omega}. 

This defines a \emph{Finsler distance} on $\X$, characterized for $x,y\in \X$ by:
$$d_\omega(x,y)=\lVert\vec{d}(x,y)\rVert_\omega=\max_{a\in \mathcal{F}_\omega} b_{a,y}(x).$$

\begin{nonexample}
	Let $G=\PSL(2,\R)^n$ with $n \ge2$ and let $\omega \colon \mathfrak{a}\to \R$ be the linear form coming from the projection onto the first factor. 
	For this functional  $W\cdot \omega$ does not span $\mfa^\ast$ and the seminorm $\lVert \cdot \rVert_\omega$ is not definite. 
	Indeed, the degenerate pseudo-metric $d_\omega$ on $\left(\mathbb{H}^2\right)^n$ is the composition of projection onto the first coordinate with the distance in $\mathbb{H}^2$.
	In general, we only consider those functionals $\omega$ which lead to nondegenerate metrics, so this example is ruled out.
\end{nonexample}

\begin{example}\label{rem:Hilbert metric}
	Let $G=\PSL(d,\R)$ and $\omega=\omega_\Delta$ from Section \ref{subsubsec:omega URU in SLdR}. Then $d_{\omega_\Delta}$ is the \emph{Hilbert metric} on the projective model $\mathcal{X}$ of $\X$.
\end{example}

\subsection{Horofunction compactification}

We review the construction of a horofunction compactification for an asymmetric metric, see \cite{Wal14,KL18a,HSWW} for further details. 
Let $\mathcal{Y}$ be the space of $1$-Lipschitz functions $f:\X\to \R$ for a $G$-invariant Riemannian metric on $\X$, modding out the line of constant functions. 
This space is endowed with the compact open topology: a basis of neighborhoods of $[f]\in \mathcal{Y}$ is defined by the open sets of the form: 
$$U_{K,\epsilon}=\lbrace [g] \mid \forall x\in K, \,(g-f)(x)<\epsilon\rbrace,$$ 
for $K\subset \X$ compact and $\epsilon>0$.

\medskip
One can define a topological embedding $\iota:\X \to\mathcal{Y}$ by setting $\iota(x_0): x\in \X\mapsto d_\omega(x,x_0)$ for $x_0\in \X$.
Since $\mathcal{Y}$ is compact Hausdorff, the closure of $\iota(\X)$ in $\mathcal{Y}$ is compact Hausdorff and we denote it by $\partial_{\omega}\X = \overline{\iota(\X)} \setminus \iota(\X)$.
The functions representing points in $\partial_{\omega}\X$ are called \emph{horofunctions}.

\subsection{Satake compactification}
\label{sec:representation}
 
Let $G$ be a semisimple real Lie group and $V$ an irreducible real representation of $G$ with finite kernel, and let $\rho:\mathfrak{g}\to \mathfrak{gl}(V)$ be the induced Lie algebra representation. 

We define as previously the space $S^2V$ of symmetric bilinear tensors $Q:V^*\to V$, and the subset $\mathcal{X} = \mathcal{X}(V) \subset \mathbb{P}(S^2 V)$ of projectivizations of positive definite elements, see Section \ref{sec:Dirichlet-Selberg domains}. 
The space $\mathcal{X}$ is the projective model for the symmetric space of $\SL(V)$. 
The symmetric space $\X$ associated to $G$ can be identified with a unique totally geodesic submanifold of the symmetric space of $\SL(V)$, hence it can be seen as a subset $\X\subset \mathcal{X}$ \cite{Kar53,Mos55}.  
Note that this subspace is not in general a linear subspace.

\begin{definition}
	The \emph{Satake compactification} of $\X$ associated to $\rho$ is the closure of $\X\subset \mathcal{X}$ inside the compact space $\mathbb{P}(S^2V)$.
\end{definition}

Let $\mathfrak{a}\subset \mathfrak{p}$ be a maximal abelian subspace of $\mathfrak{p}$ where $\mathfrak{g} = \mathfrak{k} \oplus \mathfrak{p}$ is a Cartan decomposition. 
Given $\lambda \in \mathfrak{a}^\ast$, let $V_\lambda=\lbrace x \mid \forall h\in \mathfrak{a},\, \rho(h)\cdot x=\lambda(h)x\rbrace$. 
The \emph{restricted weight system} associated to $\rho$ is the set $\Phi_\rho\subset \mathfrak{a}^*$ of elements $\lambda$ such that $V_\lambda \neq \lbrace 0\rbrace$. We have the following \emph{weight space decomposition}:
$$V=\bigoplus_{\lambda\in \Phi_\rho} V_\lambda.$$
The \emph{highest weight} $\omega\in \Phi_\rho$ of the representation $\rho$ is the unique element such that for any $\lambda\in \Phi_\rho$, $\omega-\lambda\geq 0$ on $\mathfrak{a}^+$. 

\medskip

Since $\rho \colon \mfg \to \mathfrak{sl}(V)$ is injective, the kernel of $\omega$ cannot contain a simple factor of $\mfg$, so the Finsler metric $d_\omega$ on $\X$ is nondegenerate.
In this case the Satake compactification associated to $\rho$ coincides with the horofunction compactification of $\X$ with respect to $d_\omega$.

\begin{theorem}[{\cite[Theorem 5.5]{HSWW}}]
\label{thm:Satake=Horofunction}
The Satake compactification of $\X$ associated to the representation $\rho$ is $G$-equivariantly homeomorphic to the horofunction compactification $\X\cup \partial_{\omega}\X$.
\end{theorem}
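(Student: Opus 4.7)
The plan is to exhibit both compactifications as closures of $\X$ inside natural ambient compact spaces, and to construct a $G$-equivariant continuous bijection between them.

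I would first compare the Selberg invariant on $\X \subset \mathcal{X}(V)$ with the Finsler metric $d_\omega$. Using the weight decomposition $V = \bigoplus_{\lambda \in \Phi_\rho} V_\lambda$ and Proposition \ref{prop:SelbergInv}, for $x,y \in \X$ with $\vec{d}(x,y) = v \in \mathfrak{a}^+$,
\[ \mathfrak{s}(x,y) = \log\Bigl(\tfrac{1}{\dim V}\sum_{\lambda\in\Phi_\rho} \dim(V_\lambda)\,e^{\lambda(v)}\Bigr). \]
Since $\omega$ is the highest weight and $v \in \mathfrak{a}^+$, $\omega(v)$ is the maximum of $\lambda(v)$ over $\lambda \in \Phi_\rho$, and by definition $d_\omega(x,y) = \max_{w\in W}\omega(w \cdot v) = \omega(v)$. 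Factoring $e^{\omega(v)}$ out of the sum yields $0 \leq d_\omega(x,y) - \mathfrak{s}(x,y) \leq \log(\dim V/\dim V_\omega)$, so $\mathfrak{s}|_{\X}$ and $d_\omega$ differ by a uniformly bounded amount.

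For a boundary point $[S]$ of the Satake compactification of $\X$, I would then associate a function
\[ h_{[S]}(x) = \log \frac{\Tr(Q_x^{-1}S)}{\Tr(Q_o^{-1}S)}, \]
using positive representatives $Q_x, Q_o \in S^2V^{>0}$ of $x, o \in \X \subset \mathcal{X}(V)$. This function is independent of representatives, jointly continuous in $(x, [S])$, and $G$-equivariant. By the bounded-distance estimate above, for any sequence $x_n \to [S]$ in the Satake topology, the normalized Finsler distances $d_\omega(\cdot, x_n) - d_\omega(o, x_n)$ form an equi-Lipschitz family in $\mathcal{Y}$ at bounded distance from the corresponding Selberg limits $h_{[x_n]}$. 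A direct computation on a maximal flat using the weight decomposition identifies the limit explicitly and shows it depends only on $[S]$, yielding a $G$-equivariant continuous map $\Phi$ from the Satake compactification of $\X$ to $\X \cup \partial_\omega \X$.

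Finally I would verify $\Phi$ is a homeomorphism. Both spaces are compact Hausdorff, so it suffices to show that $\Phi$ is a continuous bijection. Surjectivity follows by construction, since every horofunction arises as a limit of normalized Finsler distances along some sequence, and every such sequence has subsequences converging in the Satake compactification by compactness of $\mathbb{P}(S^2V)$. The main obstacle is injectivity on the boundary: distinct $[S] \neq [S']$ must yield horofunctions that do not differ by a global additive constant. This reduces to showing that the trace pairing $(Q, S) \mapsto \Tr(Q^{-1}S)$ separates projective classes in $S^2V^{\geq 0}$ after restriction to $Q \in \X$; by $G$-equivariance and irreducibility of $\rho$, this can be checked on a single maximal $\R$-split flat, where the weights $\lambda \in \Phi_\rho$ provide a finite family of linear functionals separating generic points.
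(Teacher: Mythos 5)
The paper does not prove this statement: it is quoted directly from \cite[Theorem 5.5]{HSWW}, so there is no in-paper argument to compare against. Judging your proposal on its own merits, the overall framework is reasonable — compare the Selberg invariant with $d_\omega$, realize both compactifications as closures of $\X$ in compact Hausdorff spaces, and aim for a continuous bijection. Your computation $\mathfrak{s}(x,y)=\log\bigl(\tfrac{1}{\dim V}\sum_\lambda \dim(V_\lambda)e^{\lambda(\vec{d}(x,y))}\bigr)$ and the resulting two-sided bound $0\le d_\omega(x,y)-\mathfrak{s}(x,y)\le\log(\dim V/\dim V_\omega)$ are both correct (the paper records the special case $G=\SL(d,\R)$ in Lemma \ref{lem:comparing Finsler and Selberg} and the general formula in Section \ref{sec:locally symmetric spaces}).

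However, there are two genuine gaps. First, the bounded-distance estimate is too coarse to define $\Phi$: if $x_n\to[S]$ in the Satake topology, boundedness of $d_\omega(\cdot,x_n)-d_\omega(o,x_n)-\mathfrak{s}_o(\cdot,x_n)$ only shows that any sublimit of the Finsler quantities lies in a uniformly bounded tube around the Selberg limit $\mathfrak{s}_o(\cdot,[S])$. Two horofunctions that differ by a bounded function of $x$ are in general distinct points of $\partial_\omega\X$, so this does not prove the limit exists, is unique, or depends only on $[S]$. The clause ``a direct computation on a maximal flat\ldots identifies the limit explicitly'' is exactly where the substance of the theorem lives (one needs the explicit classification of Finsler horofunctions as mixed Busemann functions $b^\omega_{\nu,o}$, Proposition \ref{prop:CharacHorofunctions}, and a matching stratification of the Satake boundary); leaving it as a remark skips the proof. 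Second, the injectivity argument conflates the Selberg compactification with the Finsler one. You argue that the trace pairing separates projective classes $[S]\neq[S']$, which shows $\mathfrak{s}_o(\cdot,[S])\neq\mathfrak{s}_o(\cdot,[S'])$ modulo constants. But $\Phi([S])$ is a Finsler horofunction, not the Selberg function, and they only agree up to a bounded error. To conclude $\Phi([S])\neq\Phi([S'])$ from the Selberg side you would need the stronger statement that $\mathfrak{s}_o(\cdot,[S])-\mathfrak{s}_o(\cdot,[S'])$ is \emph{unbounded} on $\X$ whenever $[S]\neq[S']$, which mere separation does not give and which you do not prove.
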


See also \cite[Remark 5.6]{HSWW}. 
Note that \cite[Theorem 5.5]{HSWW} also applies in the more general case of \emph{generalized Satake compactifications} of reducible representations.
In the present paper we restrict attention to the irreducible case, since the Finsler metrics we want to consider are defined by a single weight. 

\subsection{Description of horofunctions}
Horofunctions for the polygonal Finsler metric $d_\omega$ can be constructed from Busemann functions associated to elements in $\mathcal{F}_\omega$.

\medskip

Recall that a flag of $\nu$ of any type corresponds to a simplex in the visual boundary. 

\begin{definition}[Incidence]\label{def:incidence}
	We say that $\zeta \in \partial_{\rm vis} \X$ and a simplex $\nu \subset \partial_{\rm vis} \X$ are \emph{incident}, denoted $\zeta \sim \nu$, if there exists a chamber $\sigma \subset \partial_{\rm vis} \X$ such that $\zeta \in \sigma$ and $\nu \subset \sigma$. 
	We let $I_\nu^\omega\subset \mathcal{F}_\omega$ denote the set of $\zeta \in \mathcal{F}_\omega$ incident to $\nu$. 
\end{definition} 

Note that $I_\nu^\omega$ can also be written as $\st(\nu) \cap \mathcal{F}_\omega$.

\begin{proposition}[{\cite[Section 5]{KL18a}}]
\label{prop:CharacHorofunctions}
Every horofunction of $(\X,d_\omega)$ is of the form $b^\omega_{\nu,o}$ for some flag $\nu$ of any type and some point $o\in \X$:
$$b^\omega_{\nu,o} \coloneqq \max \left\{ b_{\zeta,o} \mid \zeta\in I_\nu^\omega \right\}. $$
\end{proposition}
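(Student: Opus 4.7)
The plan is to take an arbitrary horofunction, realize it as a limit of normalized Finsler distance functions along a sequence $y_n \in \X$ diverging to the visual boundary, and read off both the simplex $\nu$ and the basepoint $o$ from the asymptotic geometry of the sequence. First I would rewrite the Finsler distance using Busemann functions based at a fixed reference point $o_0 \in \X$: since $b_{a,y}(x) = b_{a,o_0}(x) - b_{a,o_0}(y)$, one has
\[ d_\omega(x,y) - d_\omega(o_0,y) = \max_{a \in \mathcal{F}_\omega}\bigl[b_{a,o_0}(x) + \tilde{f}_y(a)\bigr], \qquad \tilde{f}_y(a) \coloneqq -b_{a,o_0}(y) - d_\omega(o_0,y) \leq 0, \]
so the horofunction question becomes a question about limits of the functions $\tilde{f}_{y_n} \colon \mathcal{F}_\omega \to (-\infty,0]$.

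Next, given $\iota(y_n) \to [h]$, the sequence is necessarily unbounded, so after passing to a subsequence $y_n$ converges in the visual compactification to some $\xi \in \partialvisX$; let $\nu$ be the unique simplex whose relative interior contains $\xi$. The family $a \mapsto -b_{a,o_0}(y_n)$ is equicontinuous on $\mathcal{F}_\omega$ (Busemann functions depend continuously on the ideal point, with uniform modulus on compacts), so a further subsequence makes $\tilde{f}_{y_n}$ converge pointwise to a function $\tilde{f}\colon \mathcal{F}_\omega \to [-\infty,0]$. The key estimate is that $-b_{a,o_0}(y)/d(o_0,y)$ is controlled by the angular relationship between $a$ and the direction of the geodesic $o_0 \to y$: this ratio achieves its maximum precisely on the set of flags incident to that direction. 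Passing to the limit, for $a \notin I_\nu^\omega$ there is a uniform linear defect, so $\tilde{f}(a) = -\infty$, while on $I_\nu^\omega$ the limit is finite and continuous. Hence
\[ h(x) = \max_{a \in I_\nu^\omega}\bigl[b_{a,o_0}(x) + \tilde{f}(a)\bigr]. \]

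The final step is to identify $\tilde{f}\restriction_{I_\nu^\omega}$ with $-b_{\cdot,o_0}(o)$ for a single point $o \in \X$, which then yields $h = b^\omega_{\nu,o}$ as required. Using the horospherical decomposition associated with the parabolic subgroup $P_\nu$ stabilizing $\nu$, the sequence $y_n$ can be written as a product of a diverging component along the direction of $\nu$ and a transverse component; the latter, after suitable normalization along the subsequence, converges to a point $o \in \X$ whose Busemann cocycles on $I_\nu^\omega$ recover the limit function $\tilde{f}$. Modding out the additive constants absorbed into the class $[h]$, one obtains the claimed form.

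The main obstacle is the final step: extracting an honest point $o \in \X$ from the limiting profile $\tilde{f}$ on $I_\nu^\omega$. General bounded functions on $I_\nu^\omega$ need not arise from a basepoint, so one must use specifically that $\tilde{f}_n$ comes from points of $\X$ and exploit the rigidity provided by the Levi decomposition of $P_\nu$, together with control of the cross-ratio-type differences $b_{a,o_0}(y_n) - b_{a',o_0}(y_n)$ for $a,a' \in I_\nu^\omega$. This amounts to proving that, modulo translation along the horospherical unipotent radical (which shifts $\tilde{f}$ by an additive constant and thus disappears upon projectivization), the transverse projection of $y_n$ converges in a slice modeled on the Levi quotient's symmetric space.
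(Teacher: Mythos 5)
The paper cites this proposition from \cite[Section 5]{KL18a} without reproducing an argument, so there is no internal proof to compare against; I am assessing your attempt on its own. It has a genuine gap at the step where you fix the simplex $\nu$: you let $\nu$ be the simplex whose relative interior contains the visual limit $\xi$ of a subsequence $(y_n)$, and claim $\tilde f$ is finite exactly on $I_\nu^\omega$. This is wrong. The visual limit only records the \emph{ratio} $-b_{a,o_0}(y_n)/d(o_0,y_n)$, i.e.\ the asymptotic slope, whereas finiteness of $\tilde f(a)$ is the strictly finer \emph{additive} condition that $-b_{a,o_0}(y_n)$ stay within bounded distance of $d_\omega(o_0,y_n)$; subexponential corrections can destroy the additive bound without changing the slope. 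Concretely, in $\SL(4,\R)$ with $\omega=\omega_1$, let $o_0=[\mathrm{Id}]$ and $y_n=[\Diag(e^{2n}n,\ e^{2n},\ e^{-2n},\ e^{-2n}/n)]\in\mathcal{X}_4$. Then $\vec d(o_0,y_n)=(n+\tfrac12\log n,\ n,\ -n,\ -n-\tfrac12\log n)$, so $y_n$ converges visually to $\xi$ with $\type(\xi)$ proportional to $(1,1,-1,-1)$; your prescription gives $\nu'=\mathrm{span}(e_1,e_2)$ and $I_{\nu'}^{\omega_1}=\mathbb{P}(\nu')\cong\RP^1$, with $\tilde f$ finite on all of $\RP^1$. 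But in $\mathbb{P}(S^2\R^4)$ one has $y_n\to[\Diag(1,0,0,0)]$, the rank-one Busemann function of the \emph{single} line $\R e_1$, so the correct data are $\nu=\R e_1$ and $I_\nu^{\omega_1}=\{\R e_1\}$. Indeed $\tilde f_{y_n}(\R e_2)=-\tfrac12\log n+O(1)\to-\infty$, and $b_{\R e_1,o_0}$ is not equal to $b^{\omega_1}_{\nu',o}$ for any $o$ (on the diagonal flat the former is unbounded below along the ray toward $\R e_1$, while the latter, being $\max(-a_1,-a_2)$ up to an additive constant, is bounded below there).

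What actually pins down $\nu$ is the stabilized set $\Lambda \coloneqq \{\lambda\in W\cdot\omega : d_\omega(o_0,y_n)-\lambda(\vec d(o_0,y_n)) = O(1)\}$, extracted along a subsequence on which it is constant; $\nu$ is then read off from the face of the dual polytope of $\lVert\cdot\rVert_\omega$ spanned by $\Lambda$, and this simplex can be strictly smaller than the one containing $\xi$. This extra flexibility is precisely why the statement allows $\nu$ to be ``a flag of any type.'' Your step 1 (rewriting $d_\omega(\cdot,y)-d_\omega(o_0,y)$ as a Legendre-type transform over $\mathcal{F}_\omega$) and step 4 (recovering $o$ from the finite part of $\tilde f$ via horospherical coordinates for $P_\nu$) are in the right spirit and track what Kapovich--Leeb do, but the argument only closes once $\nu$ is identified by the additive criterion above rather than by the visual limit.
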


\begin{example}
Let $G=\PSL(d,\R)$ and $\omega=\omega_1$, so that $\mathcal{F}_{\omega_1}\simeq \mathbb{RP}^{d-1}$. 
A flag $\tau$ of any type corresponds to a tuple $(E^{i_1},E^{i_2},\cdots, E^{i_k})$ of subspaces of $\R^d$ such that $E^{i_1}\subsetneq E^{i_2} \subsetneq\cdots \subsetneq E^{i_k}$ and $\dim(E^{i_\ell})=i_\ell$ for all $1\leq \ell\leq k$. 
A line $\ell\in \mathcal{F}_{\omega_1}\subset \partialvis \X$ satisfies $\ell\sim \tau$ if and only if $\ell \subset E^{i_1}$. 
Indeed this is equivalent to the existence of a full flag $(F^1,F^2,\cdots, F^{d-1})$ in $\R^d$ such that $F^1=\ell$ and $F^{i_\ell}=E^{i_\ell}$ for all $1\leq \ell\leq k$.

\medskip

Let $G=\PSL(d,\R)$ and $\omega=\omega_\Delta$, so that $\mathcal{F}_\omega\simeq \mathcal{F}_{1,d-1}$. 
A flag $\tau$ of any type as before and a pair $(\ell,H)\in \mathcal{F}_{\omega_\Delta}\subset \partialvis \X$ satisfies $(\ell,H)\sim \tau$ if and only if $\ell \subset E^{i_1}$ and $E^{i_k}\subset H$. 
Indeed this is equivalent to the existence of a full flag $(F^1,F^2,\cdots, F^{d-1})$ in $\R^d$ such that $F^1=\ell$, $F^{d-1}=H$ and $F^{i_\ell}=E^{i_\ell}$ for all $1\leq \ell\leq k$.
\end{example}

\section{Domain of proper horofunctions}
\label{sec:DomainProperHorofunctions}
In this section we consider an $\omega$-undistorted subgroup $\Gamma$ of $G$ and study two related domains of discontinuity: $\Omegaom$ in the flag manifold $\mathcal{F}_{\omega}$ and $\OmegaomH$ in the horoboundary $\partial_\omega \X$. 
We show that when $\Gamma$ is $\omega$-undistorted, these are cocompact domains of proper discontinuity for $\Gamma$.
In fact, the domains are constructed from a balanced metric thickening naturally associated to $\omega$ and the limit cone $\C_\Gamma$.
The proper discontinuity and cocompactness of the domains can be deduced from \cite{KLP18a,KL18a} and in some cases \cite{GW12}, but we give a simpler proof in the present case, see Remark \ref{rmk: comparing dods to KL,KLP,GW} for a precise comparison.
For $\omega$-undistorted subgroups, the domains can be characterized as a space of horofunctions which are proper and bounded below on $\Gamma$-orbits, see Proposition \ref{prop:characterization of domains}.

\subsection{Thickenings in flag manifolds and horoboundaries}
\label{sec:Slopes and thickenings}

Recall that we have fixed an $\omega \in \mfa^\ast$, which defines a flag manifold $\mathcal{F}_\omega \subset \partial_{\rm vis}\X$ and a definite Finsler metric $d_\omega$ on $\X$. 

\begin{definition}[{\cite{KLM09,KL06}}]
	The \textit{asymptotic slope} of a convex Lipschitz function $f \colon \X \to \R$ is 
	$$ \asym_f \colon \partial_{\rm vis} \X \to \R, \quad \asym_f(\eta) \coloneqq \lim_{t \to \infty} \frac{f\circ c_\eta(t)}{t} .$$
	This limit always exists for convex Lipschitz functions and is independent of the basepoint of the geodesic ray $c_\eta$.
\end{definition}

For a Riemannian Busemann function associated to $\xi \in \partial_{\rm vis}\X$, the asymptotic slope is given by the Tits angle:
$$\asym_{b_{o,\xi}}(\eta) =-\cos \angle_{\rm Tits}(\xi,\eta).$$

\begin{lemma}
	The slope of a mixed Busemann function $b^{\omega}_{\nu,o}$ is given by
	$$ \asym_{b^{\omega}_{\nu,o}}(\eta) = \max \left\{  -\cos \angle_{\rm Tits}(\xi,\eta) \mid \xi\in I_\nu^\omega \right\}. $$
\end{lemma}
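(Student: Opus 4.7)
The plan is to combine the characterization of $b^\omega_{\nu,o}$ as a maximum of Riemannian Busemann functions indexed by $I_\nu^\omega$ (Proposition \ref{prop:CharacHorofunctions}) with the well-known formula $\asym_{b_{\zeta,o}}(\eta) = -\cos \angle_{\rm Tits}(\zeta,\eta)$ for ordinary Busemann functions. The task then reduces to commuting the asymptotic slope with the pointwise maximum taken over the compact set $I_\nu^\omega \subset \mathcal{F}_\omega$. Normalize so that $b_{\zeta,o}(o)=0$. Since $b_{\zeta,o}$ is convex and $1$-Lipschitz with $b_{\zeta,o}(o)=0$, the ratio $t \mapsto b_{\zeta,o}(c_\eta(t))/t$ is monotonically nondecreasing on $(0,\infty)$ and converges from below to $-\cos \angle_{\rm Tits}(\zeta,\eta)$.

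\textbf{Lower bound.} For each $\zeta \in I_\nu^\omega$, the pointwise inequality $b^\omega_{\nu,o} \geq b_{\zeta,o}$ yields $\asym_{b^\omega_{\nu,o}}(\eta) \geq \asym_{b_{\zeta,o}}(\eta) = -\cos \angle_{\rm Tits}(\zeta,\eta)$. Taking the supremum over $\zeta \in I_\nu^\omega$ gives one direction of the desired equality.

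\textbf{Upper bound.} For each $t > 0$, the map $\zeta \mapsto b_{\zeta,o}(c_\eta(t))$ is continuous on the compact set $I_\nu^\omega$, so the maximum in Proposition \ref{prop:CharacHorofunctions} is attained by some $\zeta_t \in I_\nu^\omega$. Using the monotonicity above,
\[
\frac{b^\omega_{\nu,o}(c_\eta(t))}{t} \;=\; \frac{b_{\zeta_t,o}(c_\eta(t))}{t} \;\leq\; -\cos \angle_{\rm Tits}(\zeta_t,\eta) \;\leq\; \sup_{\xi \in I_\nu^\omega} \bigl(-\cos \angle_{\rm Tits}(\xi,\eta)\bigr).
\]
Sending $t \to \infty$ gives the reverse inequality.

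\textbf{Attainment of the supremum.} To justify writing $\max$ rather than $\sup$ in the statement, I would note that $I_\nu^\omega = \st(\nu) \cap \mathcal{F}_\omega$ is closed in the compact flag manifold $\mathcal{F}_\omega$, hence compact in the visual topology, and that $\angle_{\rm Tits}(\cdot,\eta)$ is lower semicontinuous with respect to the visual topology on $\partial_{\rm vis}\X$ (a standard CAT(0) fact, realizing $\angle_{\rm Tits}(\cdot,\eta)$ as a supremum of continuous functions via comparison angles). Therefore $-\cos\angle_{\rm Tits}(\cdot,\eta)$ is upper semicontinuous, and so attains its maximum on $I_\nu^\omega$.

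The argument is essentially soft: the convexity-based monotonicity of $b_{\zeta,o}(c_\eta(t))/t$ in $t$ replaces any need for uniform control as $\zeta$ varies, so there is no serious technical obstacle; the only point requiring care is ensuring that the supremum is attained, which is handled by the semicontinuity remark above.
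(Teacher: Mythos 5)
Your proof is correct and follows essentially the same approach as the paper's: both use convexity of Busemann functions to replace the defining limit by a supremum over $t$, then commute this supremum with the maximum over $I_\nu^\omega$. Your version is more careful, splitting the argument into two inequalities and explicitly justifying that the supremum is attained via compactness of $I_\nu^\omega$ and upper semicontinuity of $-\cos\angle_{\rm Tits}(\cdot,\eta)$ — a point the paper's terse exchange-of-extrema chain leaves implicit.
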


Here $I_\nu^\omega\subset \mathcal{F}_\omega$ refers to the set of $\xi\in \mathcal{F}_\omega$ incident to $\nu$, see Definition \ref{def:incidence}.

\begin{proof}
	Since $b^\omega_{\nu,o}$ and $b_{\xi,o}$ for $\xi\in \mathcal{F}_\omega$ are convex, we can replace the limits by a supremum in the definition of the slope. 
	Therefore:
	$$ \asym_{b^\omega_{\nu,o}}(\eta) = \sup_{t \ge 0} \frac{b^\omega_{\nu,o}\circ c_{o,\eta}}{t} = \sup_{t \ge 0} \max_{\xi \in I^\omega_\nu} \frac{b_{o, \xi}\circ c_{o,\eta}}{t} = \max_{\xi \in I^\omega_\nu} \sup_{t \ge 0} \frac{b_{o,\xi}\circ c_{o,\eta}}{t} ,$$
	$$\sup_{t \ge 0} \frac{b_{o,\xi}\circ c_{o,\eta}}{t}=\asym_{b^\omega_{\nu,o}}(\eta)=-\cos \angle_{\rm Tits}(\xi,\eta) .$$
\end{proof}

\medskip

To a point $\xi\in \partial_\text{vis}\X$ such that $\omega\left(w\cdot\type(\xi)\right)\neq 0$ for all $w\in W$, one can associate its thickening in $\mathcal{F}_\omega$:
\[  \Thomega(\xi) \coloneqq \left\{ a \mid \angle_{\rm Tits}(a,\xi) < \pi/2 \right\} \subset \mathcal{F}_\omega. \]

Similarly we can define a thickening in the horoboundary:
\[ \ThomegaH(\xi) \coloneqq \left\{ [h] \mid \asym_{h}(\xi)<0) \right\} \subset \partial_{horo}^\omega \X .\]

Note that the intersection of $\ThomegaH(\xi)$ with $\mathcal{F}_\omega$ in $\partial_\omega\X$ coincides with $\Thomega(\xi)$.

\begin{lemma}
\label{lem:ThickeningVariation}
Let $\xi_1,\xi_2 \in \sigma$ be two points in an ideal Weyl chamber whose types belong to the same connected component of $\sigma_{mod}\setminus \bigcup_{w\in W}w\cdot \Ker(\omega)$. The associated thickenings coincide: 
$$ \Thomega(\xi_1)=\Thomega(\xi_2),$$ 
$$\ThomegaH(\xi_1)=\ThomegaH(\xi_2).$$
\end{lemma}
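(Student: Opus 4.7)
The plan is to first reduce the horofunction equality to the flag-manifold equality, and then to prove the flag-manifold equality by a direct computation of Tits angles inside a common apartment of the spherical building at infinity.

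For the reduction, I would combine Proposition \ref{prop:CharacHorofunctions} with the asymptotic slope formula $\asym_{b^\omega_{\nu,o}}(\xi) = \max_{\zeta \in I_\nu^\omega} \bigl(-\cos \angle_{\rm Tits}(\zeta,\xi)\bigr)$ recalled earlier. This shows that $[b^\omega_{\nu,o}] \in \ThomegaH(\xi)$ if and only if $\angle_{\rm Tits}(\zeta,\xi) < \pi/2$ for every $\zeta \in I_\nu^\omega$, i.e., $I_\nu^\omega \subset \Thomega(\xi)$. Hence $\ThomegaH(\xi)$ is entirely determined by $\Thomega(\xi)$, and the horofunction equality will follow at once from $\Thomega(\xi_1) = \Thomega(\xi_2)$.

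For the flag case, I would fix $a \in \mathcal{F}_\omega$. By the axioms for the thick spherical building $\partialvisX$, there exists an apartment $s$ containing both the chamber $\sigma$ and the simplex of the building whose interior contains $a$. Identify $(s,\sigma)$ isometrically with $(\mathbb{S}\mathfrak{a},\sigma_{mod})$; this identification is unique because $W$ acts simply transitively on chambers. Under it, $a$ corresponds to some translate $w \cdot \omega^\perp$ with $w \in W$ depending on $a$ (and on $s$) but not on $\xi$. Since Tits angles agree with spherical distances inside an apartment, one obtains, via the Killing duality between $\mathfrak{a}$ and $\mathfrak{a}^*$, the formula
\[ \cos \angle_{\rm Tits}(a,\xi) \;=\; \frac{(w \cdot \omega)(\type(\xi))}{\lVert\omega^\perp\rVert \, \lVert\type(\xi)\rVert} \]
for all $\xi \in \sigma$. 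The key observation is that for fixed $a$ this is a linear function of $\type(\xi)$ whose zero set is exactly $w \cdot \Ker(\omega)$.

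To conclude, note that $\type(\xi_1)$ and $\type(\xi_2)$ lie in a common connected component $U$ of the open subset $\sigma_{mod}\setminus \bigcup_{w\in W} w \cdot \Ker(\omega)$. Since $\sigma_{mod}$ is locally path-connected, so is $U$, so there exists a path $v(t) \subset U$ joining $\type(\xi_1)$ to $\type(\xi_2)$. Lifting through the $\type$-bijection $\sigma \to \sigma_{mod}$ produces a path $\xi(t) \subset \sigma$. For every $a \in \mathcal{F}_\omega$, the expression $\cos \angle_{\rm Tits}(a, \xi(t))$ is then a nonvanishing linear function of $v(t)$ by the previous paragraph, and therefore keeps a constant sign along the path. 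Hence $a \in \Thomega(\xi_1) \iff a \in \Thomega(\xi_2)$, which proves $\Thomega(\xi_1) = \Thomega(\xi_2)$ and, by the first step, $\ThomegaH(\xi_1) = \ThomegaH(\xi_2)$. The main technical subtlety to pin down is the existence of an apartment containing $\sigma$ together with the simplex of $a$; this is a standard consequence of the thick spherical building structure on $\partialvisX$ but should be cited explicitly, for example from \cite{KLP17}.
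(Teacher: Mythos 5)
Your proof is correct and follows essentially the same outline as the paper's: reduce the horoboundary case to the flag case via the characterization $[b^\omega_{\nu,o}] \in \ThomegaH(\xi) \iff I_\nu^\omega \subset \Thomega(\xi)$, then use continuity of $\cos\angle_{\rm Tits}(a,\cdot)$ along a path in $\sigma$ whose types avoid $\bigcup_w w\cdot\Ker(\omega)$. The one place where you go beyond the paper is in spelling out the apartment computation $\cos\angle_{\rm Tits}(a,\xi) = (w\cdot\omega)(\type(\xi))/(\|\omega^\perp\|\,\|\type(\xi)\|)$ and identifying its zero locus with $w\cdot\Ker(\omega)$; the paper relies on the existence of a flat containing the chamber and $a$ to get continuity, and leaves this identification implicit, so your version usefully makes the nonvanishing step explicit.
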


\begin{proof}
The fact that $\type(\xi_1),\type(\xi_2)$ lie in the same connected component of $\sigma_{mod}\setminus \bigcup_{w\in W}w\cdot \Ker(\omega)$ implies that the segment $c$ between $\xi_1$ and $\xi_2$ in $\sigma$ contains only elements whose types do not belong to $w\cdot \Ker(\omega)$ for any $w\in W$.

\medskip

The Tits angle with a point $a\in \partial_\text{vis}\X$ is a function on $\partial_\text{vis}\X$ that is continuous on any ideal Weyl chamber: indeed given a Weyl chamber of $\partial_\text{vis}\X$, there exist a flat containing it as well as $a$ in its boundary. On this flat the Tits angle is just the standard Euclidean angle. 

\medskip

Now let $a\in \Thomega(\xi_1)$: on the segment $c$ the Tits angle $\angle_{\rm Tits}(a,\xi)$ is never equal to $\frac{\pi}{2}$, and it varies continuously, so $a\in  \Thomega(\xi_2)$, and vice versa. 
Hence $\Thomega(\xi_1)=\Thomega(\xi_2)$. 
The set $\ThomegaH(\xi_i)$ can be characterized as the set of mixed Busemann functions $b^\omega_{\nu,o}$ such that $I_\nu^\omega\subset \Thomega(\xi_i)$ for $i=1,2$. 
Hence also $\ThomegaH(\xi_1)=\ThomegaH(\xi_2)$.
\end{proof}

Let $\mathcal{C}$ be a subset of a connected component of $\sigma_{mod}\setminus \bigcup_{w\in W}w\cdot \Ker(\omega)$, and let $\Theta\subset \Delta$ be the set of roots which do not vanish on $\mathcal{C}$. Given a flag $\tau \in \mathcal{F}_\Theta$ we define therefore its thickenings:
\[  \Thomega(\tau,\mathcal{C}) \coloneqq \left\{ \eta \mid \angle_{\rm Tits}(\eta,\zeta) < \pi/2 \right\} \subset \mathcal{F}_\omega. \]
\[ \ThomegaH(\tau,\mathcal{C}) \coloneqq \left\{ [h] \mid \asym_{h}(\zeta)<0) \right\} \subset \partial_{\rm horo}^\omega \X .\]
For this definition we chose some $\zeta\in \st_\mathcal{C}(\tau) \in \partial_\text{vis}\X$. The definition does not depend of this choice because of Lemma \ref{lem:ThickeningVariation}. In particular one has the following:
\[ \Thomega(\tau,\C) = \left\{ \eta \mid \forall \xi \in \st_{\C}(\tau), \, \angle_{Tits}(\xi,\eta) < \pi/2 \right\} \subset \mathcal{F}_\omega \]
\[ \ThomegaH(\tau,\C) = \left\{ [b^{\omega}_{\nu,x}] \mid I_\nu^\omega \subset \Thomega(\tau,\C) \right\} \subset \partial_{\rm horo}^\omega \X. \]
These thickenings are closely related to the \emph{metric thickenings} considered by Kapovich-Leeb-Porti, see \cite[Section 8.3]{KL18a} and Remark \ref{rem:KLP thickenings}.

Note that the thickenings $\Thomega(\tau,\mathcal{C})$ and $\ThomegaH(\tau,\mathcal{C})$ depend only on $\tau$ and on the connected component of $\sigma_{mod}\setminus \bigcup_{w\in W}w\cdot \Ker(\omega)$ in which $\mathcal{C}$ lies. 
In practice we will apply this to the case when $\Gamma$ is $\omega$-undistorted and $\C$ is an auxiliary neighborhood of the limit cone $\C_\Gamma$ in the same connected component. 

\begin{remark}\label{rem:KLP thickenings}
	In the present paper we directly define thickenings as subsets of flag manifolds and horoboundaries.
	Kapovich-Leeb-Porti \cite{KLP18a} define thickenings to be subsets of the Weyl group $W$ and use such subsets to construct thickenings in flag manifolds and horoboundaries.
	When $\omega = \omega_1$ and $G=\SL(2n,\R)$, we have $W=S_{2n}$ and the thickening is the subset of $S_{2n}$ taking $1 \in \{1,\dots,2n\}$ into $\{1,\dots n\}$.
	This thickening is balanced, left-invariant for the subgroup of $W$ stabilizing the subset $\{1,\dots,n\}$ (equivalently, the vertex of $\sigma_{mod}$ corresponding to $\Gr(n,2n)$), and right-invariant for the subgroup of $W$ stabilizing $\{1\}$ (equivalently, the vertex of $\sigma_{mod}$ corresponding to $\Gr(1,2n) = \RP^{2n-1}$).
	In general, the thickening is the metric thickening $\Th_{\zeta,\mathrm{w},\pi/2}$ of \cite{KLP18a}
	where $\mathrm{w}$ is the $W$-translate of the dual to $\omega$ in $\sigma_{mod}$ and $\zeta$ is a point in the simplex $\tau_{mod}$ corresponding to $\Theta(\sigma_\Gamma)$, see Proposition \ref{prop:omega-undistorted implies Anosov}.
\end{remark}

\subsection{Behaviour of horofunctions along geodesic rays}

For a semi-simple Lie groups of real rank at least $2$, for every geodesic in the symmetric space there exist a Busemann function that is constant on this geodesic. 
However if we restrict to certain types of geodesics and if we consider only Busemann functions associated to points in $\mathcal{F}_\omega$ we can rule out this phenomenon. 
In this section we prove that the behavior of a Busemann function associated to a point $a\in \mathcal{F}_\omega$, and more generally the behavior of a horofunction in $\partial_\omega\X$ along the orbits of $\Gamma$ are subject to a dichotomy. 

\medskip

Let $ \mathcal{C} \subset \mathfrak{a}^+$ be a closed subset that avoids $w\cdot \Ker(\omega)$ for all $w\in W$. 
Let 
$$ C_{\mathcal{C},\omega}=\inf \left\{ \frac{\abs{\omega(w\cdot\vv)}}{\lVert \vv\rVert \lVert \omega\rVert} \mid \vv\in \mathcal{C}, w\in W \right\},$$
which is a positive constant. 

For $o \in \X$ and $\eta \in \partial_{\rm vis}\X$ we let $c_{o,\eta} \colon [0,\infty) \to \X$ denote the geodesic ray emanating from $o$ asymptotic to $\eta$. 

	\begin{lemma}
	\label{lem:Dichotomy}
	Let $[h]\in \partial_\omega \X$ be a horofunction, let $o\in \X$ be a basepoint and let $f\in \mathcal{F}_\Theta$. 
	Exactly one of the following holds:
	\begin{itemize}
		\item[(i)] $[h]\in \ThomegaH(f,\mathcal{C})$ and for every $\eta \in \st_{\mathcal{C}}(f)$ the geodesic ray $c_{o,\eta}$ satisfies 
		$$ h(c_{o,\eta}(t))-h(o)\leq -C_{{\mathcal{C}},\omega}t .$$ 
		\item[(ii)] $[h]\notin \ThomegaH(f,\mathcal{C})$ and for all $\epsilon>0$ there exist $A>0$ such that for every $\eta \in \st_{\mathcal{C}}(f)$ the geodesic ray $c_{o,\eta}$ satisfies 
		$$ h(c_{o,\eta}(t))-h(o)\geq (C_{\mathcal{C},\omega}-\epsilon)t-A .$$
	\end{itemize}
\end{lemma}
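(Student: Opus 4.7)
The plan is to reduce the statement to a dichotomy for Tits-angle cosines, then combine it with convexity in case (i) and with a compactness-based uniformity argument in case (ii).

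First I would write $h = b^\omega_{\nu,o}$ for some flag $\nu$ using Proposition~\ref{prop:CharacHorofunctions}, normalized so $h(o)=0$, so that
\[ \asym_h(\eta) = \max_{\xi \in I_\nu^\omega} \bigl(-\cos \angle_{\rm Tits}(\xi,\eta)\bigr) \]
by the slope formula preceding this lemma. The geometric core is a dichotomy on Tits cosines: for $\xi \in \mathcal{F}_\omega$ and $\eta$ of type $u \in \mathcal{C}$, I choose any flat whose visual boundary contains both (which exists by the apartment axiom for the spherical building at infinity) and identify it with $\mathfrak{a}$; writing the unit directions of $\xi$ and $\eta$ in this flat as $w_1 \cdot (\omega^\perp/\lVert\omega^\perp\rVert)$ and $w_2 \cdot (u/\lVert u \rVert)$ for some $w_1,w_2 \in W$ gives
\[ \cos \angle_{\rm Tits}(\xi,\eta) = \frac{\omega(w_1^{-1}w_2\cdot u)}{\lVert \omega \rVert\,\lVert u \rVert}, \]
whose absolute value is at least $C_{\mathcal{C},\omega}$ by definition. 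Taking the maximum over $\xi \in I_\nu^\omega$, this forces $\asym_h(\eta)$ into $(-\infty,-C_{\mathcal{C},\omega}]\cup [C_{\mathcal{C},\omega},+\infty)$.

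Next I would use that the thickening $\ThomegaH(f,\mathcal{C})$ is independent of the representative $\zeta \in \st_\mathcal{C}(f)$ (as discussed after Lemma~\ref{lem:ThickeningVariation}), so the condition $\asym_h(\zeta)<0$ either holds throughout $\st_\mathcal{C}(f)$ or fails everywhere on it. Combined with the cosine dichotomy, this sharpens case (i) to $\asym_h(\eta) \leq -C_{\mathcal{C},\omega}$ for every $\eta \in \st_\mathcal{C}(f)$ and case (ii) to $\asym_h(\eta)\geq C_{\mathcal{C},\omega}$ for every such $\eta$. Case (i) follows immediately: convexity of $h$ together with $h(o)=0$ implies that $t \mapsto h(c_{o,\eta}(t))/t$ is monotonically increasing on $(0,\infty)$ with limit $\asym_h(\eta)\leq -C_{\mathcal{C},\omega}$, yielding $h(c_{o,\eta}(t))\leq -C_{\mathcal{C},\omega}t$ for every $t>0$.

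The main obstacle is case (ii), where monotone convergence of $h(c_{o,\eta}(t))/t$ to $\asym_h(\eta)\geq C_{\mathcal{C},\omega}$ from below only yields pointwise bounds. I plan to upgrade this to a uniform estimate by exploiting compactness of $\st_\mathcal{C}(f)$: given $\epsilon>0$, for each $\eta$ pick $T_\eta$ with $h(c_{o,\eta}(T_\eta))/T_\eta\geq C_{\mathcal{C},\omega}-\epsilon/2$; joint continuity of $(\eta',t)\mapsto h(c_{o,\eta'}(t))$ extends this bound to a neighborhood $V_\eta$ of $\eta$ at the cost of replacing $\epsilon/2$ by $\epsilon$. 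A finite subcover $\{V_{\eta_i}\}$ of $\st_\mathcal{C}(f)$ then produces a uniform threshold $T=\max_i T_{\eta_i}$; monotonicity in $t$ of the ratio propagates the inequality to all $t\geq T$, while the $1$-Lipschitz property gives $h(c_{o,\eta}(t))\geq -t$ for $t\leq T$, and these two regimes combine into the desired estimate $h(c_{o,\eta}(t))-h(o)\geq (C_{\mathcal{C},\omega}-\epsilon)t - A$ for a suitable additive constant $A$ depending on $T$, $\epsilon$, and $C_{\mathcal{C},\omega}$.
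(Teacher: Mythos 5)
Your proof is correct and follows essentially the same route as the paper: express $h$ as a mixed Busemann function via Proposition~\ref{prop:CharacHorofunctions}, convert the thickening condition into a Tits-angle bound that forces the slope outside $(-C_{\mathcal{C},\omega},C_{\mathcal{C},\omega})$, and then use convexity in case (i) and compactness of $\st_\mathcal{C}(f)$ together with convexity and the Lipschitz bound in case (ii). The only cosmetic difference is that you spell out the apartment computation of $\cos\angle_{\rm Tits}$ explicitly and organize case (i) via the slope of the max function $h$ rather than term-by-term bounds on each $b_{x,\xi}$, but the content is the same.
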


We emphasize that $A$ depends on $o,[h]$, and $f$.

\begin{proof}
By Proposition \ref{prop:CharacHorofunctions} we may write $h=b^{\omega}_{\nu,x}$ for some simplex $\nu$, i.e.\ some flag of any type, and some $x\in \X$. 
Let $c=c_{o,\eta} \colon \R_{\geq 0}\to \X$ be a geodesic ray based at $o$, corresponding to a point $\eta\in \partial_\text{vis}\X$. 

Suppose that $[h]\in \ThomegaH(f,\mathcal{C})$ and $\eta \in \st_{\mathcal{C}}(f)$, i.e.\ $I_\nu^\omega \subset \Thomega(f,\mathcal{C})$ by the discussion after Lemma \ref{lem:ThickeningVariation}.
This means that every $\xi \in \mathcal{F}_\omega$ incident to $\nu$ satisfies $\angle_{\rm Tits}(\xi,\eta)<\pi/2$.
The slope of $b_{x,\xi}$ along $\eta$ is then $-\cos \angle_{\rm Tits}(\xi,\eta)$, which is at most $-C_{\mathcal{C},\omega}$. 
So each $b_{x,\xi} \circ c_\eta$ is bounded above by $-C_{\mathcal{C},\omega}t$, and the same applies to their maximum, $h=b^{\omega}_{\nu,x}$. 

\medskip
Suppose now that $[h]\notin \ThomegaH(f,\mathcal{C})$ , i.e.\ there exists some $\xi\in \mathcal{F}_\omega$ incident to $\nu$ and $\eta \in \st_{\mathcal{C}}(f)$ such that $\angle_\text{Tits}(\xi, \eta)> \frac{\pi}{2}$.
The asymptotic slope of the convex function $h\circ c_{o,\eta}$ is greater than or equal to $C_{\mathcal{C},\omega}$; in particular there exist $E>0$ such that $h(c(E))-h(c(0))\geq (C_{\mathcal{C},\omega}-\epsilon)E$. 
The constant $E$ can be chosen uniformly for all geodesic rays $c_{o,\eta}$ since $h$ is continuous and the set of geodesic rays based at $o$ with $\eta \in \st_{\mathcal{C}}(f)$ is compact.
Since $h\circ c$ is convex, $h(c(t))-h(o)\geq (C_{\mathcal{C},\omega}-\epsilon)t$ for $t>E$. 
Since $h$ is $1$-Lipshitz one has in particular $h(c(t))-h(o)\geq (C_{\mathcal{C},\omega}-\epsilon)t-E$ for all $t\geq 0$, which concludes the proof.
\end{proof}

\begin{example}\label{example:Busemann functions for SL(n,R) and RP^n}
	For $G=\SL(d,\R)$, the Busemann functions $b_{o,\zeta}$ with $\zeta = \R w \in \mathcal{F}_{\omega_1} = \mathbb{RP}^{d-1}$ are given by 
	\[ b_{o,\zeta}(q)= \log \left( \frac{ \abs{w}_q}{\abs{w}_o} \right) .\]
	A maximal flat containing $o$ corresponds to a line decomposition which is orthogonal with respect to $o$. 
	$\R w$ is in the boundary of this flat if and only if it is one of these lines. 
	If it is, then the Busemann function along a ray in this flat is linear.
\end{example}	

\subsection{Characterization of the domain of discontinuity}
\label{sec: Subsection characterizing domains of disontinuity.}

In this section we consider an $\omega$-undistorted subgroup $\Gamma$ of $G$ that is not virtually cyclic.
In particular $\Gamma$ is Anosov for a set of roots $\Theta=\Theta(\sigma_\Gamma)$ by Proposition \ref{prop:omega-undistorted implies Anosov}.
We show that analogous results still hold for virtually cyclic subgroups in Section \ref{sec:CyclicCase}.

\medskip

Recall that $\mathcal{C} \subset \mathfrak{a}^+$ denotes a compact neighborhood of $\mathcal{C}_\Gamma$ that avoids $w\cdot \Ker(\omega)$ for all $w\in W$.
The limit map $\xi \colon \partial \Gamma \to \mathcal{F}_\Theta$ and the thickenings from the previous section provide the data to define domains in $\mathcal{F}_\omega$ and $\partial_\omega\X$, following \cite{KLP18a,KL18a}.

\begin{definition}
Let us define the following domains:
$$ \Omega^\omega_{\rm flag} \coloneqq \mathcal{F}_\omega\setminus\bigcup_{x\in \partial\Gamma} \Thomega(\xi_\Theta(x),\mathcal{C}), $$
$$ \Omega_\text{horo}^\omega \coloneqq \partial_\text{horo}^\omega\X\setminus\bigcup_{x\in \partial\Gamma} \ThomegaH(\xi_\Theta(x),\mathcal{C}). $$
\end{definition}

They can be characterized as domains of Busemann (resp.\ horofunctions) that are proper bounded from below on any/every $\Gamma$-orbit. 

\begin{proposition}
\label{prop:characterization of domains}
A point $\eta \in \mathcal{F}_\omega$ belongs to $\Omega^\omega_{\rm flag}$ if and only if the associated Busemann function $b_{\eta,o}$ restricted to the $\Gamma$-orbit of $o$ is bounded from below.

\medskip

An element $[h]\in\partial_{\rm horo}^\omega \X$ belongs to $\Omega_{\rm horo}^\omega$ if and only if $h$ restricted to the $\Gamma$-orbit of $o$ is bounded from below. In this case, the horofunction is proper on any $\Gamma$-orbit.
\end{proposition}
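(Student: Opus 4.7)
The flag statement reduces to the horofunction statement. Since the asymptotic slope of a pure Busemann function is $\asym_{b_{\eta,o}}(\cdot)=-\cos\angle_{\mathrm{Tits}}(\eta,\cdot)$, a direct unpacking of the definitions gives $[b_{\eta,o}]\in \ThomegaH(f,\mathcal{C})$ if and only if $\eta\in\Thomega(f,\mathcal{C})$, so the embedding $\eta\mapsto[b_{\eta,o}]$ identifies $\Omegaom$ with $\OmegaomH\cap \mathcal{F}_\omega$. Hence it suffices to prove the horofunction version.

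For the forward direction, fix $[h]\in\OmegaomH$. Then $[h]\notin\ThomegaH(\xi_\Theta(\zeta),\mathcal{C})$ for every $\zeta\in\partial\Gamma$, so case (ii) of the dichotomy Lemma \ref{lem:Dichotomy} applies at each $\zeta$. Invoking Lemma \ref{lem:LocalUnif}, which provides a locally uniform additive error, together with compactness of $\partial\Gamma$ and continuity of $\xi_\Theta$, I would extract a single constant $A>0$ such that
$$ h\bigl(c_{o,\eta}(t)\bigr)-h(o)\ge \tfrac{1}{2}C_{\mathcal{C},\omega}\,t-A $$
for all $\zeta\in\partial\Gamma$, all $\eta\in\st_\mathcal{C}(\xi_\Theta(\zeta))$ and all $t\ge 0$. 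Now for any $\gamma\in\Gamma$, Lemma \ref{lem:Morse Lemma}(2) furnishes $\zeta\in\partial\Gamma$ and a point $p=c_{o,\eta}(t)$ in the Weyl cone $\mathcal{V}(o,\st_\mathcal{C}(\xi_\Theta(\zeta)))$ with $d(p,\gamma\cdot o)\le D$. Since $h$ is $1$-Lipschitz and $\Gamma$ is quasi-isometrically embedded in $G$, combining these two ingredients yields a lower bound of the form $h(\gamma\cdot o)\ge A'|\gamma|-B'$ for constants $A',B'>0$, which simultaneously gives both the boundedness below and the properness of $h$ on the $\Gamma$-orbit.

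For the reverse direction I argue by contrapositive. Suppose $[h]\notin\OmegaomH$, i.e.\ $[h]\in\ThomegaH(\xi_\Theta(\zeta),\mathcal{C})$ for some $\zeta\in\partial\Gamma$. Pick a geodesic ray $(\gamma_n)$ in $\Gamma$ based at $e$ with $\gamma_n\to\zeta$. By Lemma \ref{lem:Morse Lemma}(1) there exist points $p_n\in\mathcal{V}(o,\st_\mathcal{C}(\xi_\Theta(\zeta)))$ with $d(p_n,\gamma_n\cdot o)\le D$ and $d(o,p_n)\to\infty$ (using the quasi-isometric embedding). Writing $p_n=c_{o,\eta_n}(t_n)$ with $\eta_n\in\st_\mathcal{C}(\xi_\Theta(\zeta))$, Lemma \ref{lem:Dichotomy}(i) gives $h(p_n)-h(o)\le -C_{\mathcal{C},\omega}t_n\to-\infty$, and the $1$-Lipschitz property of $h$ transfers this decay to $h(\gamma_n\cdot o)$.

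The main technical obstacle is the uniformity of the additive constant $A$ in case (ii) of Lemma \ref{lem:Dichotomy}: a priori $A$ depends on $f\in\mathcal{F}_\Theta$, whereas the argument above requires a single $A$ valid for every $\zeta\in\partial\Gamma$ simultaneously. Overcoming this is the purpose of the anticipated Lemma \ref{lem:LocalUnif} on local uniform properness, which together with compactness of $\partial\Gamma$ and continuity of the Anosov limit map $\xi_\Theta$ delivers the required global constant.
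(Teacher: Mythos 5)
Your reverse direction coincides with the paper's argument that $[h]\notin\Omegaom_{\rm horo}$ forces $h$ to be unbounded below on the orbit, and your identification of $\Omegaom$ with $\Omega^\omega_{\rm horo}\cap\mathcal{F}_\omega$ matches the paper's reduction. The genuine difference is in the forward implication. You invoke Lemma \ref{lem:LocalUnif} to get the quantitative bound $h(\gamma\cdot o)-h(o)\ge C_{\mathcal{C},\omega}\,d(o,\gamma\cdot o)-A$, from which boundedness below and properness follow at once. This is valid, and since the proof of Lemma \ref{lem:LocalUnif} relies only on Lemmas \ref{lem:Dichotomy} and \ref{lem:Morse Lemma} (not on Proposition \ref{prop:characterization of domains}), there is no circularity despite the Lemma's placement after the Proposition in the text; you should flag this ordering explicitly, however, rather than leave it implicit. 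The paper instead proves the contrapositive by a softer compactness argument: from a diverging sequence $(\gamma_n)$ with $h(\gamma_n\cdot o)$ bounded above, pass to a subsequential limit geodesic ray $[o,\eta)$ of the segments $[o,\gamma_n\cdot o]$, observe by the Morse Lemma that $\eta\in\st_\mathcal{C}(\xi_\Theta(z))$ for some $z\in\partial\Gamma$, use convexity of $h$ to transfer the upper bound to the ray, and then conclude via Lemma \ref{lem:Dichotomy} that $[h]\in\ThomegaH(\xi_\Theta(z),\mathcal{C})$. This route entirely sidesteps the non-uniformity of the additive constant in Lemma \ref{lem:Dichotomy}(ii) — which you correctly identify as the main obstacle — because only a single limit ray is ever considered. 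Your route confronts the non-uniformity head-on via the locally uniform estimate, trading elementariness for a stronger intermediate statement; a minor presentational point is that your forward step conflates the proof of Lemma \ref{lem:LocalUnif} (which is where compactness of $E$ and a discretized ray estimate appear) with its conclusion, which already delivers the orbit bound directly without any further work.
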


\begin{proof} 
Since $\mathcal{F}_\omega$ includes into $\partialomX$, the first statement follows from the second.

Let $[h]\in \partial_\text{horo}^\omega \X$ be a horofunction. 
If $[h]\in \partial_\text{horo}^\omega \X$ is not in $\Omega_\text{horo}^\omega$, then it belongs to $\ThomegaH(\xi_\Theta(z),\mathcal{C})$ for some $z\in \partial \Gamma$. 
If $(\gamma_n)$ is a geodesic ray in $\Gamma$ converging to $z$, Lemma \ref{lem:Morse Lemma} implies that there exist a constant $D\ge 0$ such that for all $n\geq 0$, $\gamma_n\cdot o$ is at distance at most $D$ from a point $x_n\in \mathcal{V}(o, \st_{\mathcal{C}}(\xi_\Theta(z)))$.

Lemma \ref{lem:Dichotomy} implies that $h(x_n)-h(o)\leq -C_{\mathcal{C},\omega} d(o, x_n)$. Since $h$ is $1$-Lipshitz, this implies that $h(\gamma_n\cdot o)$ goes to $-\infty$, so $h$ is unbounded from below.

To conclude the proof, we need to show that if $h$ is unbounded from below or fails to be proper on a $\Gamma$-orbit, then $h$ does not belong to $\Omega_{horo}^\omega$.
In either case, there is a diverging sequence of elements $(\gamma_n)_{n\in \mathbb{N}}$ of $\Gamma$ such that $h(\gamma_n\cdot o)$ is bounded from above by a constant $D \ge h(o)$.
The sequence of geodesic segments $([o, \gamma_n\cdot o])$ converges up to subsequence to a geodesic ray $[o,\eta)$ with $\eta \in \partial_\text{vis}\X$. 
Since $\Gamma$ is $\Theta$-Anosov with limit cone inside $\mathcal{C}$ one has $\eta\in \st_{ \mathcal{C}}(\xi_\Theta(z))$ for some $z \in \partial \Gamma$ by Lemma \ref{lem:Morse Lemma}.

Note that since $h$ is convex, it is bounded from above by $D$ on all the geodesic segments  $([o, \gamma_n\cdot o])$ and hence also on the geodesic ray $[o,\eta)$.
Lemma \ref{lem:Dichotomy} therefore implies that $h\in \ThomegaH(\xi_\Theta(\zeta),\mathcal{C})$, so $[h]$ does not belong to $\Omega_\text{horo}^\omega$.
\end{proof}

We show that the horofunctions belonging to $\X\cup\Omega_\text{horo}^\omega$ are not only proper and bounded from below on $\Gamma$-orbits, but are moreover locally uniformly proper. 
More precisely, the constant $A$ from Lemma \ref{lem:Dichotomy} can be chosen to be uniform on a neighborhood of $[h]$. 

\begin{lemma}
\label{lem:LocalUnif}
Let $[h_0]\in \X \cup \Omega_{\rm horo}^\omega$, and let $o\in \X$. 
There exists a neighborhood $U\subset \X\cup \partial_{\rm horo}^\omega\X$ of $[h_0]$ and a constant $A>0$ such that for $[h]\in U$ and $\gamma\in \Gamma$:
$$h(\gamma\cdot o)-h(o)\geq C_{\mathcal{C},\omega}d(o, \gamma\cdot o)-A.$$  
\end{lemma}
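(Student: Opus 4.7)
The idea is to combine the horofunction dichotomy (Lemma \ref{lem:Dichotomy}) with the Morse Lemma (Lemma \ref{lem:Morse Lemma}(2)) and a compactness/continuity argument in the compact-open topology. I focus on the interesting case $[h_0] \in \OmegaomH$; the case $[h_0] \in \X$ reduces to estimating the Finsler distance $d_\omega(\gamma \cdot o, o)$ from below via the triangle inequality for $d_\omega$, since a small enough neighborhood of $[h_0]$ lies inside $\X$ and the orbit-point $\gamma \cdot o$ eventually has vector-valued distance in $\C$. As a preliminary step, I shrink the auxiliary cone: choose a compact $\C' \subset \C$ still containing a neighborhood of $\C_\Gamma$ and with $C_{\C', \omega} > C_{\C, \omega}$. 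Because $\ThomegaH(\cdot, \C') \subset \ThomegaH(\cdot, \C)$, the point $[h_0]$ remains outside $\ThomegaH(\xi_\Theta(\zeta), \C')$ for every $\zeta \in \partial \Gamma$, and the extra slack $C_{\C', \omega} - C_{\C, \omega} > 0$ will absorb the $\epsilon$-loss coming from Lemma \ref{lem:Dichotomy}(ii).

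The core of the proof is to establish a uniform radial estimate over the compact set
\[ K = \bigcup_{\zeta \in \partial \Gamma} \st_{\C'}(\xi_\Theta(\zeta)) \subset \partialvisX, \]
which is compact (closed in $\partialvisX$, arising from the compact space $\partial \Gamma$ through the continuous map $\xi_\Theta$ and compact fibers $\st_{\C'}(\cdot)$). By Lemma \ref{lem:Dichotomy}(ii), $\asym_{h_0}(\eta) \geq C_{\C', \omega}$ for every $\eta \in K$. Fixing $\epsilon \in (0, \tfrac{1}{2}(C_{\C', \omega} - C_{\C, \omega}))$ and using continuity of $h_0$ together with compactness of $K$, one finds a uniform $E > 0$ such that
\[ h_0(c_{o, \eta}(E)) - h_0(o) \geq (C_{\C', \omega} - \epsilon) E \quad\text{for every } \eta \in K. \]
Continuity of the evaluation $[h] \mapsto h(c_{o, \eta}(E)) - h(o)$ in the compact-open topology, combined with compactness of $\{c_{o, \eta}(E) : \eta \in K\}$, yields a neighborhood $U$ of $[h_0]$ in $\X \cup \partialomX$ such that the same inequality (with $2\epsilon$ in place of $\epsilon$) holds for every $[h] \in U$ and $\eta \in K$. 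The convexity of $h$ along the ray $c_{o, \eta}$ implies that $t \mapsto (h(c_{o, \eta}(t)) - h(o))/t$ is non-decreasing on $(0, \infty)$, so for $t \geq E$ and $[h] \in U$,
\[ h(c_{o, \eta}(t)) - h(o) \geq (C_{\C', \omega} - 2\epsilon) t \geq C_{\C, \omega} t. \]
The Lipschitz bound on $h$ (with some universal constant $L$ depending only on $d_\omega$ and the Riemannian metric) handles the range $t < E$, producing a uniform constant $A_1 > 0$ such that $h(c_{o, \eta}(t)) - h(o) \geq C_{\C, \omega} t - A_1$ for all $[h] \in U$, $\eta \in K$, and $t \geq 0$.

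To conclude, I apply Lemma \ref{lem:Morse Lemma}(2) with the cone $\C'$ (which is still a neighborhood of $\C_\Gamma$): for every $\gamma \in \Gamma$ there is $\zeta \in \partial \Gamma$ and a point $x_\gamma \in \mathcal{V}(o, \st_{\C'}(\xi_\Theta(\zeta)))$ with $d(\gamma \cdot o, x_\gamma) \leq D$. Writing $x_\gamma = c_{o, \eta_\gamma}(t_\gamma)$ with $\eta_\gamma \in K$ and $t_\gamma \geq d(o, \gamma \cdot o) - D$, the radial estimate of the previous step gives
\[ h(x_\gamma) - h(o) \geq C_{\C, \omega} t_\gamma - A_1 \geq C_{\C, \omega} \, d(o, \gamma \cdot o) - A_1 - C_{\C, \omega} D, \]
while the Lipschitz bound $|h(\gamma \cdot o) - h(x_\gamma)| \leq L D$ yields the conclusion with $A = A_1 + (L + C_{\C, \omega}) D$. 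The main technical obstacle is achieving uniformity simultaneously over the compact set $K$ of boundary directions and over a neighborhood $U$ of horofunctions without any degradation of the slope constant; the shrink-the-cone trick is what allows us to recover exactly $C_{\C, \omega}$ rather than $C_{\C, \omega} - \epsilon$ in the final bound.
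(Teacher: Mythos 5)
Your proof is correct and follows essentially the same route as the paper's: shrink the cone to $\C'$ to gain slack in the slope constant, obtain a uniform radial estimate over the compact set $E=\bigcup_{\zeta}\st_{\C'}(\xi_\Theta(\zeta))$ by combining the dichotomy of Lemma~\ref{lem:Dichotomy} with compactness and continuity of evaluation in the compact-open topology, then invoke the Morse Lemma \ref{lem:Morse Lemma} for $\C'$ and finish via convexity and the Lipschitz bound. The only cosmetic difference is that you split off the case $[h_0]\in\X$, whereas the paper applies the dichotomy uniformly to all of $\X\cup\Omega_{\rm horo}^\omega$; your alternative treatment via the Finsler triangle inequality is a harmless variant.
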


\begin{proof}	
Let $\mathcal{C}'$ be a compact neighborhood of $\mathcal{C}_\Gamma$ that lies in the interior of $\mathcal{C}$. 
Note that $C_{\mathcal{C}',\omega}> C_{\mathcal{C},\omega}$. 
We consider the following subset of the visual boundary:
$$E=\bigcup_{\zeta\in \partial \Gamma} \st_{\mathcal{C}'}(\xi_\Theta(\zeta)).$$

Applying Lemma \ref{lem:Dichotomy} for the subset $\mathcal{C}'$ implies that for every $\xi\in E$ there exist $t_0 > 0$ large enough that any point $x$ on the geodesic ray $[o,\xi)$ at distance $t \ge t_0$ from $o$ satisfies:
$$\frac{h_0(x)-h_0(o)}{t}> C_{\mathcal{C},\omega} .$$
Moreover the same property holds for every $\xi'$ close enough to $\xi$ and for $[h]$ close enough to $[h_0]$. Since $E$ is compact, there exist a real number $t_1$ and a neighborhood $U$ of $[h_0]$ such that for all $[h]\in U$ and $\xi\in E$ the point $x$ on the geodesic ray $[o,\xi)$ at distance $t_1$ from $0$ satisfies:
$$\frac{h(x)-h(o)}{t_1}> C_{\mathcal{C},\omega}.$$

Lemma \ref{lem:Morse Lemma} implies that for some $D>0$, for every $\gamma\in \Gamma$, $\gamma\cdot o$ is at distance at most $D$ from some point $y\in [o, \xi)$ for some $\xi\in E$. Let $[h]\in U$; the function $h-h(o)$ is convex on the geodesic ray $[o, \xi)$ and greater than $C_{\mathcal{C},\omega}t_1$ at the point $x\in [o,\xi)$ such that $d(o,x)=t_1$. 
Moreover it is $1$-Lipshitz, which implies that for all $y\in [o,\xi)$, $h(y)-h(o)\geq d(o,y)C_{\mathcal{C},\omega}-t_1C_{\mathcal{C},\omega} -t_1 $. 
Using again the fact that $h$ is $1$-Lipshitz we get:
$$h(\gamma\cdot o)-h(o)\geq C_{\mathcal{C},\omega}d(o, \gamma\cdot o) -(t_1C_{\mathcal{C},\omega}+t_1+D) .$$ 
\end{proof}

\subsection{Elementary subgroups}
\label{sec:CyclicCase}

In the previous subsection we required that the subgroup $\Gamma$ is not virtually cyclic.
The fact that such $\omega$-undistorted groups are Anosov, Proposition \ref{prop:omega-undistorted implies Anosov}, relies on the fact that their limit cone $\mathcal{C}_\Gamma$ is connected, which often fails for virtually cyclic groups.
For instance the following subgroup of $\SL(3,\R)$ has disconnected limit cone and is not Anosov, but is $\omega_1$-undistorted:
$$ \left\{ \begin{pmatrix}
	4^n&&\\
	&2^{-n}&\\
	&&2^{-n}
\end{pmatrix} \mid n \in \mathbb{Z} \right\} .$$

\medskip

In this subsection we adapt the previous results for elementary subgroups. 
Note that the only non-trivial case is the case of infinite virtually cyclic groups.

\medskip

Let $\Gamma\subset G$ be an infinite virtually cyclic group that is quasi-isometrically embedded, and $\langle \gamma\rangle\subset \Gamma$ be a finite index infinite cyclic subgroup.
The element $\gamma\in G$ admits a Jordan decomposition into $\gamma_t\gamma_e\gamma_u$ where $\gamma_t$ is a transvection, $\gamma_e$ is elliptic, and $\gamma_u$ is unipotent, and the factors commute \cite{E96}. 

\medskip

The element $\gamma_t$ is nontrivial since $\Gamma$ is quasi-isometrically embedded, so $\gamma_t$ is the transvection corresponding to an oriented geodesic axis $c \colon \R\to \X$ parametrized with speed one with endpoints $\eta_\pm \in \partial_{\rm vis}\X$. 
Let $m_g$ denote the translation length of $g \in G$. 

\begin{lemma}
	\label{lem:Logarithmic distance Cyclic case}
	Let $o \in \X$ and $\gamma \in G$ with $\gamma_t$ non-trivial. 
	There exists a constant $C$ depending on $o,\gamma$ such that:
	For all $n\in \mathbb{Z}$
	$$ d(\gamma^n \cdot o, c(n m_\gamma)) \le 2 \log(n) + C .$$
\end{lemma}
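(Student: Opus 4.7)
The plan is to exploit the fact that $\gamma_t, \gamma_e, \gamma_u$ commute, so that $\gamma^n = \gamma_t^n \gamma_e^n \gamma_u^n$ for all $n \in \mathbb{Z}$, and then estimate the contribution of each factor separately. Set $o_0 \coloneqq c(0)$. Since $\gamma_t^n$ is an isometry with $\gamma_t^n \cdot o_0 = c(nm_\gamma)$, we get immediately
$$ d(\gamma_t^n \cdot o, c(nm_\gamma)) = d(\gamma_t^n \cdot o, \gamma_t^n \cdot o_0) = d(o, o_0). $$
Two applications of the triangle inequality then give
$$ d(\gamma^n \cdot o, c(nm_\gamma)) \le d(\gamma_e^n \gamma_u^n \cdot o, o) + d(o,o_0) \le d(\gamma_e^n \cdot o, o) + d(\gamma_u^n \cdot o, o) + d(o, o_0), $$
using commutativity in the second step.

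Next I would bound each remaining term. The elliptic element $\gamma_e$ is contained in a compact subgroup of $G$ and in particular fixes some point $p \in \mathbb{X}$, so
$$ d(\gamma_e^n \cdot o, o) \le d(\gamma_e^n \cdot o, p) + d(p, o) = 2\, d(o, p), $$
a constant depending only on $o$ and $\gamma_e$. Absorbing this and $d(o,o_0)$ into the constant $C$, the lemma reduces to the claim that the unipotent orbit satisfies
$$ d(\gamma_u^n \cdot o, o) \le 2\log|n| + C' $$
for some constant $C'$ depending on $o$ and $\gamma_u$.

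This logarithmic growth is the key technical input, and it is the place where the specific leading coefficient $2$ appears. I would establish it by invoking the Jacobson--Morozov theorem: $\gamma_u = \exp(X)$ for some nilpotent $X$ that embeds in an $\mathfrak{sl}_2$-triple, generating a Lie subgroup $H \cong \mathrm{PSL}(2,\mathbb{R})$ (or a cover) whose associated symmetric space is a totally geodesic hyperbolic plane $\mathbb{H}^2 \subset \mathbb{X}$. Inside this copy of $\mathbb{H}^2$, the iterates $\gamma_u^n$ act as horocyclic translations, for which the elementary upper half plane computation gives $d_{\mathbb{H}^2}(\gamma_u^n \cdot q, q) = 2\,\mathrm{arcsinh}(|n|/2) \le 2\log|n| + C''$ for a basepoint $q \in \mathbb{H}^2$. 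Comparing with the original basepoint $o$ via another triangle inequality (contributing a further additive constant to $C'$) finishes the bound.

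The main subtlety is precisely this last step: one must verify that the ambient symmetric space metric on $\mathbb{X}$ restricted to the totally geodesic $\mathbb{H}^2$ is bounded above by the intrinsic hyperbolic metric (so that the factor $2$ survives) rather than being rescaled in a way that changes the leading coefficient. This follows from the fact that totally geodesic embeddings of symmetric spaces are isometric for a suitable normalization, and the appropriate normalization for our metric $d$ makes the inequality hold; the remaining error terms are easily absorbed into $C$.
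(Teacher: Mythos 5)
Your reduction is sound and essentially matches the paper's: using commutativity of the Jordan factors, you peel off $\gamma_t^n$ (an isometry mapping $c(0)$ to $c(nm_\gamma)$) and $\gamma_e^n$ (bounded displacement via a fixed point) to reduce to estimating $d(\gamma_u^n\cdot o, o)$. The paper does the same (routing the triangle inequality through the fixed point $p$ of $\gamma_e$ instead of $o$, an inessential difference). Where you diverge is in the key technical input: the paper simply cites the proof of {\cite[Claim 2.28]{GGKW17}} for the logarithmic bound on the unipotent orbit, whereas you attempt a self-contained argument via Jacobson--Morozov.

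The gap is precisely the one you flag and then wave away. The totally geodesic $\mathbb{H}^2 \subset \X$ generated by an $\mathfrak{sl}_2$-triple $(X,H,Y)$ carries the restriction of the fixed metric on $\X$, and this restriction is a constant multiple $\lambda$ of the curvature $-1$ metric, with $\lambda$ determined by the Dynkin index of the triple --- i.e.\ by the Jordan structure of $\gamma_u$, not by any normalization you are free to choose. Your claim that $\lambda\le 1$ (so that ``the factor $2$ survives'') is false. For instance, take $\gamma_u$ a principal unipotent in $\SL(3,\R)$ and equip $\X$ with the trace-form metric. The semisimple element of the triple is (conjugate to) $H=\mathrm{diag}(2,0,-2)$, so $\lVert H\rVert_{\mathrm{tr}} = 2\sqrt 2$, while in the curvature $-1$ metric on $\mathbb{H}^2$ one has $\lVert H\rVert = 2$; hence $\lambda=\sqrt 2 > 1$ and $d(\gamma_u^n\cdot o, o)\sim 2\sqrt 2\,\log n$. (Direct computation confirms this: $\gamma_u^n$ has largest singular value $\sim n^2/2$ and smallest $\sim 2/n^2$, so $\mu(\gamma_u^n)\approx(2\log n, 0, -2\log n)$, of trace-form norm $\approx 2\sqrt 2\,\log n$.) More generally, for a size-$k$ Jordan block the coefficient grows with $k$. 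So the assertion ``the appropriate normalization for our metric $d$ makes the inequality hold'' cannot be right: the metric on $\X$ is fixed once and for all, while the rescaling depends on $\gamma_u$. Your argument therefore establishes $d(\gamma_u^n\cdot o,o)\le D(\gamma_u)\log n + C$ with a $\gamma_u$-dependent leading coefficient, not the universal $2$ asserted in the lemma. (That weaker bound happens to be all the paper actually uses downstream, e.g.\ in Lemma \ref{lem:LocalUnif CyclicCase} where the log term appears with an unspecified constant $D$, but it does not prove the lemma as stated.)
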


\begin{proof}
	Let $\gamma=\gamma_t\gamma_u\gamma_e$ be the Jordan decomposition. 
	Let $p$ be a fixed point of $\gamma_e$.
	Then 
	$$ d(\gamma^n \cdot o, c(n m_\gamma)) \le
	d(o, p) + d(\gamma_u^n \cdot p, p)+ d(p,c(0)) .$$
	By the proof of \cite[Claim 2.28]{GGKW17}, there exists $C'$ depending only on $\gamma_u$ and $p$ such that $d(\gamma_u^n\cdot p,p) \le 2 \log(n) + C'$. 
	We may set $C= C' + d(o,p) + d(p,c(0))$ to see the desired result.
\end{proof}

Lemma \ref{lem:Logarithmic distance Cyclic case} is weaker than the Morse Lemma, but it will be sufficient to generalize our results to cyclic groups.

\medskip

Now  let $\omega\in \mathfrak{a}^*$, and let us assume that for all $w\in W$, $\type(\eta_+)$ and $\type(\eta_-)$ do not belong to $\Ker(w\cdot \omega)$. 
This property is the equivalent to the $\omega$-undistorted condition, since the limit cone $\mathcal{C}_{\Gamma}$ consist of the two points $\type(\eta_+)$ and $\type(\eta_-)$.

In the virtually cyclic case we adapt our definitions as follows:
$$\Omega^\omega_{\rm flag}=\mathcal{F}_\omega\setminus \left\lbrace \xi\in \mathcal{F}_\omega \mid \angle_\text{Tits}(\xi, \eta_+)<\frac{\pi}{2}\;\; \text{or}\;\; \angle_\text{Tits}(\xi, \eta_-)<\frac{\pi}{2}\right\rbrace ,$$
$$\Omega_\text{horo}^\omega=\partial_\text{horo}^\omega \X \setminus \left\lbrace [h]\in \mathcal{F}_\omega \mid \;\text{slope}_h(\eta_+)<0\;\; \text{or}\;\;\text{slope}_h(\eta_-)<0 \right\rbrace .$$

We now adapt the following results.

\begin{proposition}[{Analog of Proposition \ref{prop:characterization of domains}}]
\label{prop:uniformlyProper CyclicCase}
	An element $[h]\in\partial_{\rm horo}^\omega \X$ belongs to $\Omega_{\rm horo}^\omega$ if and only if $h$ restricted to the $\Gamma$-orbit of $o$ is bounded from below. In this case, the horofunction is proper on any $\Gamma$-orbit.
\end{proposition}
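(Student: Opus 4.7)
The plan is to mirror the non-elementary argument of Proposition \ref{prop:characterization of domains}, substituting the weaker Lemma \ref{lem:Logarithmic distance Cyclic case} for the Morse Lemma \ref{lem:Morse Lemma}. Fix a generator $\gamma$ of an infinite cyclic finite-index subgroup of $\Gamma$, with Jordan decomposition $\gamma = \gamma_t \gamma_e \gamma_u$, and let $c \colon \R \to \X$ be the axis of the transvection $\gamma_t$, with $\eta_\pm := c(\pm\infty)$ and $\vv_\pm := \type(\eta_\pm)$. The $\omega$-undistorted hypothesis on $\Gamma$ says precisely that $\vv_\pm \notin w \cdot \Ker(\omega)$ for every $w \in W$, so we may fix a compact neighborhood $\C$ of $\{\vv_+, \vv_-\}$ in $\mathfrak{a}^+$ avoiding all those walls; the associated constant $C_{\C, \omega}$ from Section \ref{sec:Slopes and thickenings} is then strictly positive. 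Let $f_\pm$ denote the simplex in the appropriate flag manifold $\mathcal{F}_{\Theta_\pm}$ having $\eta_\pm$ in its interior.

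For the ``only if'' direction, suppose $[h] \notin \OmegaomH$, so that $\asym_h(\eta_\epsilon) < 0$ for some $\epsilon \in \{+, -\}$. Applying case (i) of Lemma \ref{lem:Dichotomy} to $(f_\epsilon, \C)$ forces $\asym_h(\eta_\epsilon) \le -C_{\C, \omega}$. Since the asymptotic slope is basepoint-independent, $h \circ c$ decays linearly in the $\epsilon$-direction. Because $h$ is $1$-Lipschitz, Lemma \ref{lem:Logarithmic distance Cyclic case} gives $|h(\gamma^n \cdot o) - h(c(n m_\gamma))| \le 2 \log |n| + C'$, and the logarithmic error is dominated by linear decay, so $h(\gamma^n \cdot o) \to -\infty$ as $n \to \epsilon \infty$; in particular $h$ is unbounded below on $\Gamma \cdot o$.

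For the ``if'' direction, suppose $[h] \in \OmegaomH$, so that $\asym_h(\eta_\pm) \ge 0$. Applying Lemma \ref{lem:Dichotomy} separately to $(f_+, \C)$ and $(f_-, \C)$: the first case would force $\asym_h(\eta_\pm) \le -C_{\C, \omega} < 0$, contradicting the domain assumption, so we are in the second case and $\asym_h(\eta_\pm) \ge C_{\C, \omega}$. Therefore along the axis $c$ we have $h(c(t)) \ge C_{\C, \omega} |t| - A$ for some constant $A$ depending on $o$ and $h$. Combining with Lemma \ref{lem:Logarithmic distance Cyclic case} yields $h(\gamma^n \cdot o) \ge C_{\C, \omega} m_\gamma |n| - 2 \log |n| - (A + C')$, which tends to $+\infty$ as $n \to \pm\infty$. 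This proves that $h$ is proper on $\langle \gamma \rangle \cdot o$, and in particular bounded below; since $\langle \gamma \rangle$ has finite index in $\Gamma$, the same holds on every $\Gamma$-orbit.

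The main subtlety is upgrading the nonnegativity $\asym_h(\eta_\pm) \ge 0$ encoded in the definition of $\OmegaomH$ to the strict bound $\asym_h(\eta_\pm) \ge C_{\C, \omega} > 0$ needed to overwhelm the $2 \log |n|$ error from Lemma \ref{lem:Logarithmic distance Cyclic case}. Without this strict gap, the orbit could drift along $c$ at an unbounded sublinear rate and properness could fail. The required gap is provided by the $\omega$-undistortion hypothesis: the quantity $|\omega(w \cdot \vv)|/(\|\vv\|\,\|\omega\|)$ is uniformly bounded below on $\C \times W$, which forces each value $-\cos \angle_{\rm Tits}(\xi, \eta_\pm)$ with $\xi \in \mathcal{F}_\omega$ to avoid a neighborhood of zero, yielding the strict dichotomy in Lemma \ref{lem:Dichotomy}.
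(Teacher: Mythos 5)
Your proof is correct and follows essentially the same route as the paper's: the paper also substitutes Lemma \ref{lem:Logarithmic distance Cyclic case} for the Morse Lemma when controlling the drift of $\gamma^n\cdot o$ from the axis of $\gamma_t$, applies Lemma \ref{lem:Dichotomy} to pass between sublinear growth of $h$ along $c$ and membership in a thickening, and exploits exactly the strict gap $C_{\mathcal{C},\omega}>0$ supplied by the $\omega$-undistorted hypothesis to overwhelm the $2\log|n|$ error. The only organizational difference is that you prove both implications directly while the paper spells out one contrapositive (bounded above on a diverging sequence $\Rightarrow$ not in $\OmegaomH$) in detail and refers back to the non-elementary proof for the other; this is cosmetic.

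One small point worth noting for completeness: when passing from properness on $\langle\gamma\rangle\cdot o$ to properness on the full $\Gamma$-orbit, the reduction by finite index works because Lemma \ref{lem:Logarithmic distance Cyclic case} holds for every basepoint (with a basepoint-dependent constant), so the same estimate applies to each of the finitely many cosets $\delta_i\langle\gamma\rangle\cdot o$ with axis $c$ unchanged. You invoke finite index but do not say this explicitly; it is not a gap, just a step the reader must supply.
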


\begin{proof}
Let $[h]\in \partial^\omega_{\rm horo} \X$ be a horofunction.
Let $(\gamma_n)_{n\in \mathbb{N}}$ be a diverging sequence of elements of $\Gamma$ such that $h(\gamma_n\cdot o)$ is bounded from above. 
In our case one can assume that $\gamma_n=\gamma^n$, or $\gamma^{-n}$. 
We consider the first case, as the other one is identical. 
By Lemma \ref{lem:Logarithmic distance Cyclic case}, $\gamma^n\cdot o$ is at logarithmic distance from $c(nm_g)$. 
Hence $h$ grows t most logarithmically on $c$, but then Lemma \ref{lem:Dichotomy} implies that $h$ has non-positive slope, and hence negative slope on $c$, hence $h$ does not belong to $\Omega_{\rm horo}^\omega$. 

The other part of the proof works as in Proposition \ref{prop:characterization of domains}.
\end{proof}

\begin{lemma}[{Analog of Lemma \ref{lem:LocalUnif}}]
\label{lem:LocalUnif CyclicCase}
	Let $[h_0]\in \X \cup \Omega_{\rm horo}^\omega$, and let $o\in \X$. 
	There exists a neighborhood $U\subset \X\cup \partial_{\rm horo}^\omega\X$ of $[h_0]$ and a constant $A>0$ such that for $[h]\in U$ and $n\in \mathbb{Z}$:
	$$h(\gamma^n\cdot o)-h(o)\geq C_{\mathcal{C}_\Gamma,\omega}d(o, \gamma^n\cdot o)-A.$$  
\end{lemma}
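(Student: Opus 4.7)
The plan is to adapt the proof of Lemma \ref{lem:LocalUnif}, substituting Lemma \ref{lem:Logarithmic distance Cyclic case} for the Morse Lemma (Lemma \ref{lem:Morse Lemma}). The compactness step simplifies dramatically because the limit cone $\mathcal{C}_\Gamma$ consists only of the two points $\type(\eta_+)$ and $\type(\eta_-)$, so only the two geodesic rays $c_{o,\eta_+}$ and $c_{o,\eta_-}$ need to be controlled rather than the full set $E=\bigcup_{\zeta\in\partial\Gamma}\st_{\mathcal{C}'}(\xi_\Theta(\zeta))$.

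First, since $[h_0]\in\X\cup\Omega_{\rm horo}^\omega$, Lemma \ref{lem:Dichotomy}(ii) applied to each of the rays $c_{o,\eta_\pm}$ with $\mathcal{C}=\mathcal{C}_\Gamma$ guarantees that the asymptotic slope of $h_0$ along $\eta_+$ and $\eta_-$ is at least $C_{\mathcal{C}_\Gamma,\omega}$. Fix a small $\epsilon>0$. Following the intermediate estimate in the proof of Lemma \ref{lem:Dichotomy}(ii) and invoking continuity of horofunctions in the compact-open topology exactly as in Lemma \ref{lem:LocalUnif}, I would find a neighborhood $U$ of $[h_0]$ and $t_1>0$ such that for every $[h]\in U$ the point $x$ at distance $t_1$ from $o$ along $c_{o,\eta_\pm}$ satisfies $(h(x)-h(o))/t_1\geq C_{\mathcal{C}_\Gamma,\omega}-\epsilon$. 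Convexity of $h\circ c_{o,\eta_\pm}$ together with the $1$-Lipschitz property of $h$ then produce a constant $A_1>0$ such that for all $[h]\in U$ and all $y$ on either ray,
$$h(y)-h(o)\geq (C_{\mathcal{C}_\Gamma,\omega}-\epsilon)\,d(o,y)-A_1.$$

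Next, I would transfer this estimate to the orbit $\gamma^n\cdot o$. The axis $c$ and the ray $c_{o,\eta_+}$ (respectively $c_{o,\eta_-}$) are asymptotic geodesic rays in the Hadamard manifold $\X$, so they stay at bounded distance $D$; hence $c(nm_\gamma)$ lies within distance $D$ of the point $y_n$ on $c_{o,\eta_\pm}$ with $d(o,y_n)=|n|m_\gamma$. Combining Lemma \ref{lem:Logarithmic distance Cyclic case}, the inequality $d(\gamma^n\cdot o,c(nm_\gamma))\leq 2\log|n|+C$, and the $1$-Lipschitz property of $h$ with the displayed inequality,
$$h(\gamma^n\cdot o)-h(o)\geq (C_{\mathcal{C}_\Gamma,\omega}-\epsilon)|n|m_\gamma -2\log|n|-A_2.$$
Rewriting the linear term via $d(o,\gamma^n\cdot o)=|n|m_\gamma+O(\log|n|)$ then finishes the argument.

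The main obstacle is the logarithmic slack $2\log|n|$ coming from the unipotent part of $\gamma$ in Lemma \ref{lem:Logarithmic distance Cyclic case}: unlike in the non-cyclic case where the Morse Lemma gives bounded error, here an unbounded term appears. Because this logarithmic correction is dominated by any positive fraction of the linear growth in $|n|$, one absorbs it by lowering the rate constant slightly below $C_{\mathcal{C}_\Gamma,\omega}-\epsilon$ and enlarging the additive constant $A$. Since the downstream applications only require a positive linear lower bound holding locally uniformly in $[h]$, this is enough to conclude; the statement as written should be read as holding for any rate strictly less than $C_{\mathcal{C}_\Gamma,\omega}$.
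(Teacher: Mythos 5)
Your proposal is correct and follows essentially the same path as the paper: replace the Morse Lemma by Lemma \ref{lem:Logarithmic distance Cyclic case}, obtain the slope estimate along the two rays $c_{o,\eta_\pm}$, and transfer it to the orbit points using the $1$-Lipschitz property of horofunctions. The compactness argument of Lemma \ref{lem:LocalUnif} indeed collapses to the two rays, as you note.

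What you add, and what is genuinely valuable, is a careful look at the rate constant. You correctly observe that the logarithmic drift of $\gamma^n\cdot o$ from the axis produces an unbounded $O(\log\abs{n})$ error, so one cannot conclude the inequality at the exact rate $C_{\mathcal{C}_\Gamma,\omega}$ with bounded additive constant; the correct reading is that the estimate holds for any rate strictly below $C_{\mathcal{C}_\Gamma,\omega}$ (equivalently, at rate $C_{\mathcal{C},\omega}$ for any compact neighborhood $\mathcal{C}\supsetneq\mathcal{C}_\Gamma$ of the limit cone avoiding the walls), with the slack between the two rates used to absorb the logarithmic term. The paper's own proof reaches this exact juncture and invokes a comparison ``$C_{\mathcal{C},\omega}>C_{\mathcal{C}_\Gamma,\omega}$'', which is backwards when $\mathcal{C}$ is a neighborhood of $\mathcal{C}_\Gamma$ (larger set gives smaller infimum); this appears to be a typo, and your fix is the correct resolution. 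You also correctly observe that every downstream application (Theorem \ref{thm:finiteSidedURU}, Proposition \ref{prop:CoarseFibration}, Theorem \ref{thm:URU implies disjoint half-spaces}) only uses that the rate is some fixed positive constant, so nothing is lost. One small stylistic point: rather than comparing the orbit to the ray $c_{o,\eta_\pm}$ via the general CAT(0) fact that asymptotic rays stay at bounded Hausdorff distance, it is slightly cleaner to apply Lemma \ref{lem:Dichotomy} directly along the axis $c$ (a geodesic through $c(0)$, at bounded distance from $o$) and then transfer the base point by Lipschitzness, but either route works.
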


\begin{proof}
Here again we replace the Morse property by Lemma \ref{lem:Logarithmic distance Cyclic case}.
Let $\mathcal{C}$ be a neighborhood of $\mathcal{C}_\Gamma$ avoiding $w \cdot \ker(\omega)$ for all $w \in W$.  
We obtain for some $D,E,t_0>0$:
$$h(\gamma^n\cdot o)-h(o)\geq C_{\mathcal{C},\omega}d(o, \gamma^n\cdot o) -(t_0 C_{\mathcal{C},\omega}+D \log(n)+E) .$$
This is still enough to get the desired result for $A$ large enough since $C_{\mathcal{C},\omega}>C_{\mathcal{C}_\Gamma,\omega}$ and $d(o, \gamma^n\cdot o)$ grows linearly in $n$.
\end{proof}

\section{Dirichlet domains for Finsler metrics}

\label{sec:DirichletFinsler}
In this section, we consider Dirichlet domains associated to Finsler metrics.
Using the results of the previous section, we show in Theorem \ref{thm:finiteSidedURU} that such Dirichlet domains with respect to $d_\omega$ are properly finite-sided for $\omega$-undistorted subgroups.
Moreover we deduce Theorem \ref{thm:omega1-URU implies finitely-sided} in Corollary \ref{cor:omega1-URU implies properly finite sided}.
In the rest of the section we demonstrate some partial converse results. 
In Theorem \ref{thm:properly finite sided implies quasi-isometrically embedded} we show that any discrete group admitting a properly finite-sided Dirichlet domain is quasi-isometrically embedded in $\mathbb{X}$. 
In Section \ref{sec:disjoint half-spaces and URU} we show that the $\omega$-undistorted condition is equivalent to the \emph{disjoint half-space property}, see Definition \ref{def:disjoint half-space property}. 

Recall that we assume throughout the paper that $\omega \in \mfa^\ast$ defines a definite Finsler metric $d_\omega$ on $\X$. 

\subsection{\texorpdfstring{$\omega$}{omega}-undistorted implies properly finite-sided}
For $x,y \in \X$, the \emph{Finsler half-space} is 
$$ \mathcal{H}^\omega(x,y) \coloneqq \{[h] \in \X \cup \partialomX \mid h(x) \le h(y) \} .$$
It is the closure in $\X \cup \partialomX$ of the set of points $z \in \X$ satisfying $d_\omega(x,z) \le d_\omega(y,z)$.

\medskip

Let $\Gamma$ be a discrete subgroup of $G$.
\begin{definition}
	The \emph{Dirichlet domain} associated to $\Gamma$ based at $o$ with respect to the Finsler distance $d_\omega$ is given by:
	$$ \mathcal{D}^\omega_\Gamma(o) \coloneqq \left\{ [h]  \mid \forall \gamma \in \Gamma, \, h(o)\leq h(\gamma\cdot o)\right\} = \bigcap_{\gamma\in \Gamma\setminus \Gamma_o}\mathcal{H}^\omega(o, \gamma\cdot o)\subset \X\cup \partialomX. $$ 
\end{definition}

We call a Dirichlet domain $\D^\omega_\Gamma(o)$ \emph{properly finite-sided} if there exists a neighborhood $U$ of $\mathcal{D}^\omega_\Gamma(o)$ in $\X \cup \partialomX$ and a finite set $F\subset \Gamma$ such that for all $\gamma \in \Gamma\setminus F$, $U \subset \mathcal{H}^\omega(o,\gamma \cdot o)$.

\begin{theorem}
\label{thm:omega-URU implies properly finite-sided}
\label{thm:finiteSidedURU}

If $\Gamma$ is $\omega$-undistorted then for all $o\in \X$, the Dirichlet domain $\mathcal{D}^\omega_\Gamma(o)$ is properly finite-sided. 

\medskip

Moreover for any $A>0$ one can find a finite set $S\subset \Gamma$ and a neighborhood $U$ of $\mathcal{D}_\Gamma^\omega(o)$ such that for all $[h]\in U$ and $\gamma\in \Gamma\setminus S$, $h(\gamma\cdot o)> h(o)+A$. 
\end{theorem}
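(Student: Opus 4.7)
The plan is to combine two ingredients already assembled in the preceding sections: the characterization of the domain of discontinuity $\Omegaom_{\rm horo}$ as the set of horofunctions bounded below on $\Gamma$-orbits (Proposition \ref{prop:characterization of domains}), and the locally uniform properness of such horofunctions (Lemma \ref{lem:LocalUnif}). The strategy is to show that $\mathcal{D}_\Gamma^\omega(o)$ is a compact subset of $\X \cup \Omegaom_{\rm horo}$, apply the local uniform bound on a finite subcover of $\mathcal{D}_\Gamma^\omega(o)$ to obtain a \emph{global} lower bound of the form $h(\gamma\cdot o) - h(o) \ge C_{\mathcal{C},\omega} d(o,\gamma\cdot o) - A'$ valid on a whole neighborhood of $\mathcal{D}_\Gamma^\omega(o)$, and then use proper discontinuity of the $\Gamma$-action on $\X$ to conclude.

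First I would verify that $\mathcal{D}_\Gamma^\omega(o) \subset \X \cup \Omegaom_{\rm horo}$. By definition any $[h] \in \mathcal{D}_\Gamma^\omega(o)$ satisfies $h(\gamma\cdot o) \ge h(o)$ for every $\gamma \in \Gamma$, so $h$ is bounded below on the $\Gamma$-orbit of $o$; Proposition \ref{prop:characterization of domains} then places $[h]$ inside $\X \cup \Omegaom_{\rm horo}$. Since $\mathcal{D}_\Gamma^\omega(o)$ is an intersection of closed half-spaces in the compact horofunction compactification $\X \cup \partialomX$, it is compact there.

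Next, for each $[h_0] \in \mathcal{D}_\Gamma^\omega(o)$, Lemma \ref{lem:LocalUnif} furnishes a neighborhood $U_{h_0} \subset \X \cup \partialomX$ and a constant $A_{h_0} > 0$ with
\[ h(\gamma \cdot o) - h(o) \ge C_{\mathcal{C},\omega}\, d(o, \gamma \cdot o) - A_{h_0} \qquad \text{for all } [h] \in U_{h_0},\ \gamma \in \Gamma. \]
By compactness of $\mathcal{D}_\Gamma^\omega(o)$ finitely many such $U_i$ cover it; their union $U$ is an open neighborhood of $\mathcal{D}_\Gamma^\omega(o)$ in $\X \cup \partialomX$, and on $U$ the inequality above holds with $A' = \max_i A_{i}$. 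Given any $A > 0$, proper discontinuity of $\Gamma$ on $\X$ guarantees that only finitely many $\gamma \in \Gamma$ satisfy $C_{\mathcal{C},\omega}\, d(o,\gamma\cdot o) - A' \le A$; call this finite set $S$. Then for every $[h] \in U$ and every $\gamma \in \Gamma \setminus S$ we obtain $h(\gamma\cdot o) > h(o) + A$, which is the quantitative statement. Specializing to $A = 0$ yields $U \subset \mathcal{H}^\omega(o, \gamma\cdot o)$ for all $\gamma \notin S$, giving proper finite-sidedness.

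The only real subtlety is that Lemma \ref{lem:LocalUnif} was formulated under the non-elementary assumption used to deduce the Anosov property and invoke the Morse lemma. For a virtually cyclic $\omega$-undistorted subgroup one instead applies the analogue Lemma \ref{lem:LocalUnif CyclicCase} proved in Section \ref{sec:CyclicCase}; the subsequent compactness-plus-proper-discontinuity argument is identical, since it only uses that $\Gamma$ acts properly on $\X$ and that $\mathcal{D}_\Gamma^\omega(o)$ is compact in $\X \cup \partialomX$. Thus no genuine obstacle arises from the dichotomy; the heart of the argument is the passage from the pointwise local bound of Lemma \ref{lem:LocalUnif} to a uniform bound on a neighborhood of $\mathcal{D}_\Gamma^\omega(o)$, which is precisely what compactness allows.
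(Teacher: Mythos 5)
Your proof is correct and follows essentially the same route as the paper: place $\mathcal{D}_\Gamma^\omega(o)$ inside $\X\cup\Omega^\omega_{\rm horo}$ via Proposition \ref{prop:characterization of domains}, extract a finite cover of the compact Dirichlet domain by the neighborhoods from Lemma \ref{lem:LocalUnif} (or its cyclic analogue), and use discreteness of the $\Gamma$-orbit of $o$ to discard all but finitely many half-spaces. The only cosmetic difference is that the paper first passes to a compact neighborhood $K$ of the domain before extracting the cover, whereas you cover $\mathcal{D}_\Gamma^\omega(o)$ directly and take $U$ to be the union of the finitely many patches; both are valid.
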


\begin{proof}
Every horofunction in $\mathcal{D}^\omega_\Gamma(o)$ is bounded from below on the $\Gamma$-orbit of $o$. 
Hence if $\Gamma$ is $\omega$-undistorted one has $\mathcal{D}^\omega_\Gamma(o)\subset \X\cup \Omega_\text{horo}^\omega$ by Proposition \ref{prop:characterization of domains} (or Proposition \ref{prop:uniformlyProper CyclicCase} in the elementary case).
Let $K \subset \X \cup \Omega^\omega _\text{horo}$ be a compact neighborhood of $\mathcal{D}_\Gamma^\omega(o)$. By considering an open cover of $K$, Lemma \ref{lem:LocalUnif} (or Lemma \ref{lem:LocalUnif CyclicCase}) implies the existence of constants $B,C>0$ such that for $[h]\in K$ and $\gamma\in \Gamma$:
$$h(\gamma\cdot o)-h(o)\geq C d(o, \gamma\cdot o)-B.$$

Hence for all $\gamma\in \Gamma$ such that $d(o, \gamma\cdot o)> \frac{A+B}{C}$, one has $h(\gamma\cdot o)-h(o)>A$. In particular the half-space $\mathcal{H}^\omega(o, \gamma\cdot o)$ contains $K$ for all but finitely many $\gamma\in \Gamma$, so $\mathcal{D}_\Gamma^\omega(o)$ is properly finite-sided.
\end{proof}

For $\omega$-undistorted subgroups, we can also give a direct proof that the domains $\Omega_{\rm flag}^\omega$ and $\Omega_{\rm horo}^\omega$ are properly discontinuous and cocompact.

\begin{proposition}
\label{prop:CoarseFibration}
The action of an $\omega$-undistorted subgroup $\Gamma$ on $\Omega^\omega_{\rm flag}$ and $\Omega_{\rm horo}^\omega$ is properly discontinuous and cocompact.
\end{proposition}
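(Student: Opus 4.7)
The plan is to deduce both conclusions directly from Proposition \ref{prop:characterization of domains} (characterizing the domains by boundedness/properness of horofunctions on $\Gamma$-orbits) and from the locally uniform linear lower bound in Lemma \ref{lem:LocalUnif}. The Dirichlet domain $\mathcal{D}^\omega_\Gamma(o)$, which is properly finite-sided by Theorem \ref{thm:finiteSidedURU}, will serve as a compact fundamental domain, and the two-sided use of Lemma \ref{lem:LocalUnif} will yield proper discontinuity.

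For \emph{cocompactness} on $\Omega^\omega_{\rm horo}$, I would first observe that each $[h] \in \mathcal{D}^\omega_\Gamma(o)$ satisfies $h(\gamma\cdot o)\ge h(o)$ for all $\gamma \in \Gamma$, so $h$ is bounded below on $\Gamma\cdot o$; by Proposition \ref{prop:characterization of domains} this places $\mathcal{D}^\omega_\Gamma(o) \subset \X\cup\Omega^\omega_{\rm horo}$. Being closed in the compact space $\X\cup\partial^\omega_{\rm horo}\X$, it is compact. For any $[h]\in\Omega^\omega_{\rm horo}$ the function $h$ is proper on $\Gamma\cdot o$, so it attains its minimum on the orbit at some $\gamma_0\cdot o$. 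Using the convention $(\gamma\cdot h)(x)=h(\gamma^{-1}\cdot x)$, the translate $\gamma_0^{-1}\cdot[h]$ then lies in $\mathcal{D}^\omega_\Gamma(o)$, so $\mathcal{D}^\omega_\Gamma(o)\cap\Omega^\omega_{\rm horo}$ is a compact set whose $\Gamma$-translates cover $\Omega^\omega_{\rm horo}$.

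For \emph{proper discontinuity}, let $K\subset\Omega^\omega_{\rm horo}$ be compact. Applying Lemma \ref{lem:LocalUnif} at each $[h_0]\in K$ and extracting a finite subcover of $K$ gives a uniform constant $A>0$ such that any $[h']\in K$ with representative $h'$ satisfies $h'(\gamma\cdot o)-h'(o)\ge C_{\mathcal{C},\omega}\,d(o,\gamma\cdot o)-A$ for all $\gamma\in\Gamma$. Now suppose $[h]\in K$ and $\gamma\cdot[h]\in K$ for some $\gamma\in\Gamma$. Applying the bound to $[h]$ with $\gamma^{-1}$ gives
\[ h(\gamma^{-1}\cdot o)-h(o)\ge C_{\mathcal{C},\omega}\,d(o,\gamma^{-1}\cdot o)-A, \]
while applying it to $\gamma\cdot[h]$, represented by $h\circ\gamma^{-1}$, with $\gamma$ gives
\[ h(o)-h(\gamma^{-1}\cdot o)\ge C_{\mathcal{C},\omega}\,d(o,\gamma\cdot o)-A. \]
Adding the two inequalities, the left-hand side telescopes to $0$, whence $d(o,\gamma^{-1}\cdot o)+d(o,\gamma\cdot o)\le 2A/C_{\mathcal{C},\omega}$. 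Since the auxiliary Riemannian distance on $\X$ used in Lemma \ref{lem:LocalUnif} is symmetric, this forces $d(o,\gamma\cdot o)\le A/C_{\mathcal{C},\omega}$, so only finitely many $\gamma\in\Gamma$ qualify by discreteness.

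Finally, $\Omega^\omega_{\rm flag}=\mathcal{F}_\omega\cap\Omega^\omega_{\rm horo}$ is a closed $\Gamma$-invariant subspace of $\Omega^\omega_{\rm horo}$, so proper discontinuity is inherited, and its image in the compact quotient $\Omega^\omega_{\rm horo}/\Gamma$ is closed and hence compact. In the elementary case, Proposition \ref{prop:uniformlyProper CyclicCase} and Lemma \ref{lem:LocalUnif CyclicCase} substitute for the above ingredients without changing the argument. The main point requiring care is the bookkeeping with the action $(\gamma\cdot h)(x)=h(\gamma^{-1}\cdot x)$ on horofunctions modulo additive constants: one must apply Lemma \ref{lem:LocalUnif} once to $[h]$ and once to $\gamma\cdot[h]$ at compatible points so that the telescoping works out; beyond this verification, the proposition is a direct consequence of the locally uniform properness guaranteed by the $\omega$-undistorted hypothesis.
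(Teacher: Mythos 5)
Your proof is correct. The cocompactness argument coincides with the paper's: both observe that every $[h]\in\Omega^\omega_{\rm horo}$ attains its minimum on the orbit (by Proposition \ref{prop:characterization of domains}), so translating by a minimizer lands in the Dirichlet domain, and $\mathcal{D}^\omega_\Gamma(o)\cap\Omega^\omega_{\rm horo}=\mathcal{D}^\omega_\Gamma(o)\cap\partial^\omega_{\rm horo}\X$ is compact (closed in a compact Hausdorff space). Your only unnecessary detour is invoking Theorem \ref{thm:finiteSidedURU}; compactness of $\mathcal{D}^\omega_\Gamma(o)$ is already automatic as an intersection of closed half-spaces.

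For proper discontinuity, you take a slightly different route. The paper packages the argument as a ``coarse fibration'' over $\Gamma$ via the finite set of minima $M_{[h]}\subset\Gamma$: Lemma \ref{lem:LocalUnif} shows $M_{[h]}\subset M_K$ for a single finite set $M_K$ as $[h]$ ranges over a compact $K$, equivariance gives $M_{\gamma\cdot[h]}=\gamma M_{[h]}$, and only finitely many $\gamma$ satisfy $\gamma M_K\cap M_K\neq\emptyset$. You instead extract the same uniform constant $A$ from Lemma \ref{lem:LocalUnif} over $K$ and apply the lower bound twice --- once to $[h]$ at $\gamma^{-1}$ and once to $\gamma\cdot[h]$ at $\gamma$ --- so that the left-hand sides telescope to zero and bound $d(o,\gamma\cdot o)$ directly. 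Both arguments draw their strength entirely from Lemma \ref{lem:LocalUnif} plus discreteness of $\Gamma$; yours is somewhat more compressed, while the paper's bookkeeping via $M_{[h]}$ gives a unified narrative (and a clean description of the fundamental domain as $\{[h]\mid e\in M_{[h]}\}$) at the cost of introducing an auxiliary object. Your handling of $\Omega^\omega_{\rm flag}$ as a closed $\Gamma$-invariant subspace and of the elementary case via Proposition \ref{prop:uniformlyProper CyclicCase} and Lemma \ref{lem:LocalUnif CyclicCase} matches the paper's intent.
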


The idea is that these domain \emph{coarsely fiber} over $\Gamma$, via the map that associates to a horofunction its minimum on the $\Gamma$-orbit of $o\in \X$.

\begin{proof}
To a horofunction $[h]\in \Omega^{\omega}_{\rm horo}$ we associate the non-empty finite \emph{set of minima} $M_{[h]}\subset \Gamma$ of elements $\gamma_0$ such that $h(\gamma_0\cdot o)=\min_{\gamma\in \Gamma}h(\gamma\cdot o)$. 
This set is well defined and finite since $h$ is proper and bounded from below by Proposition \ref{prop:characterization of domains}. 
Moreover this association is equivariant, i.e. $M_{\gamma\cdot [h]}=\gamma M_{[h]}$ for all $\gamma\in \Gamma$.

Let $K\subset \Omega^{\omega}_{\rm horo}$ be a compact set. 
For each $[h_0]\in K$, by Lemma \ref{lem:LocalUnif} there is a neighborhood $U$ of $[h_0]$ and a finite set $M\subset \Gamma$  such that for all $[h]\in U$,  $M_{[h]}\subset M$. 
Since $K$ is compact, one can therefore find a finite set $M_K$ such that for all $[h]\in K$,  $M_{[h]}\subset M_K$. 
All but finitely many $\gamma\in \Gamma$ satisfy $\gamma\cdot M_K\cap M_K=\emptyset$. 
Therefore for all such $\gamma\in \Gamma$, $\gamma\cdot K\cap K=\emptyset$. 
Hence $\Gamma$ acts properly on $\Omega^{\omega}_{\rm horo}$.

\medskip

A horofunction $[h]\in \Omega^{\omega}_{\rm horo}$ belongs to the Dirichlet domain $\mathcal{D}^\omega_\Gamma(o)$ if (and only if) the neutral element $e\in \Gamma$ belongs to $M_{[h]}$. 
For every $[h']\in \Omega^{\omega}_{\rm horo}$, there exists some $\gamma'\in M_{[h']}$, and one has $(\gamma')^{-1}\cdot [h']\in\mathcal{D}_\Gamma^{\omega}(o)$. 
Therefore $\mathcal{D}_\Gamma^{\omega}(o) \cap \Omega^{\omega}_{\rm horo} = \mathcal{D}_\Gamma^{\omega}(o) \cap \partial_{\rm horo}^{\omega} \X$ is a fundamental domain for the action of $\Gamma$. 
Moreover it is closed in the metrizable compact $\partial_{\rm horo}^{\omega} \X$, hence it is compact. 
So this is a compact fundamental domain for the action of $\Gamma$ on $\Omega^{\omega}_{\rm horo}$, and the action is cocompact.
\end{proof}

\begin{remark}\label{rmk: comparing dods to KL,KLP,GW}
	The fact that $\Omegaom$ is a cocompact domain of proper discontinuity is a special case of a result due to Kapovich-Leeb-Porti \cite{KLP18a}, and Guichard-Wienhard \cite{GW12} for certain cases, such as $\omega_1$-undistorted subgroups of $\PSL(2n,\R)$.
	The construction of $\OmegaomH$ from a thickening is similar to a construction due to Kapovich-Leeb \cite{KL18a}, where they consider the case when $\omega$ is dual to a regular point of $\sigma_{mod}$. 
	They prove proper discontinuity and cocompactness when $\Gamma$ is an arbitrary Anosov subgroup. 
	Since their Finsler compactification is the maximal Satake compactification, it dominates the compactification we consider here, and the proper discontinuity and cocompactness follows.
	In fact, it follows from Proposition \ref{prop:characterization of domains} and their Theorem that the Dirichlet domains we consider are properly finite-sided for $\omega$-undistorted subgroups, without the use of Lemma \ref{lem:LocalUnif}.	
\end{remark}

\subsection{Properly finite-sided implies undistorted}
We now prove the following necessary condition for a group to admit a properly finite-sided Dirichlet domain. 

\begin{theorem}
\label{thm:properly finite sided implies quasi-isometrically embedded}
Let $\Gamma$ be a discrete subgroup of $G$ and suppose that a Dirichlet domain $\D^\omega_\Gamma(o)$ is properly finite-sided. 
The orbit map $\Gamma \to \X$ is a quasi-isometric embedding.
\end{theorem}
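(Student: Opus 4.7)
The plan is to convert properly finite-sidedness into a proper, cocompact action of $\Gamma$ on a $\Gamma$-invariant open subset $\X \cup \Omega$ of the compactification $\X \cup \partialomX$, and then to derive the quasi-isometric embedding by counting the Dirichlet tiles visited by a Finsler geodesic in $\X$.

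First I would fix an open neighborhood $V$ of $\mathcal{D}^\omega_\Gamma(o)$ with $\overline{V} \subset U$ compact in $\X \cup \partialomX$. If $g \in \Gamma$ satisfies $g \cdot V \cap V \neq \emptyset$, then any $[h] \in V \cap g \cdot V$ has $[h]$ and $g^{-1} \cdot [h]$ both in $U$, and the defining property of $U$ forces any $\gamma' \in \Gamma$ minimizing $\gamma' \mapsto h(\gamma' \cdot o)$ to lie both in $F \cup \{e\}$ and in $gF \cup \{g\}$, so $g \in F \cdot F^{-1}$. Setting $\Omega \coloneqq \bigcup_{\gamma \in \Gamma} \gamma \cdot (\mathcal{D}^\omega_\Gamma(o) \cap \partialomX)$, this yields a properly discontinuous action of $\Gamma$ on $\X \cup \Omega$ with the compact fundamental domain $\mathcal{D}^\omega_\Gamma(o)$, and in particular shows that $\Gamma$ is finitely generated by $F' \coloneqq F \cdot F^{-1}$.

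For the quasi-isometric embedding, I would fix $\gamma \in \Gamma$ and consider a Finsler geodesic $c \colon [0, L] \to \X$ from $o$ to $\gamma \cdot o$, where $L = d_\omega(o, \gamma \cdot o)$ is comparable to $d(o, \gamma \cdot o)$. At each time $t$, $c(t)$ lies in some Dirichlet tile $g_t \cdot \mathcal{D}^\omega_\Gamma(o)$, and the piecewise-constant map $t \mapsto g_t$ changes only when $c$ crosses a wall of the tiling; a standard Poincar\'{e}-polyhedron argument shows that consecutive values differ by an element of $F$, so the total number of wall-crossings dominates $|\gamma|_F$.

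The main obstacle is to show that this number of wall-crossings is at most linear in $L$. Finite-sidedness alone is insufficient, as the parabolic cyclic example in $\mathbb{H}^2$ shows: the bisectors cluster near the parabolic fixed point and produce infinitely many crossings along a geodesic of only logarithmic length. Properly finite-sidedness rules this out, since $U$ is an open neighborhood in the compactification and $\mathcal{D}^\omega_\Gamma(o)$ is compact in $\X \cup \partialomX$, so the bisectors indexed by $\gamma \notin F$ are uniformly separated from the smaller neighborhood $V$. A compactness-and-covering argument then gives a uniform bound on the number of walls meeting any unit-length subarc of $c$, yielding $|\gamma|_F \leq C L + C$ and hence the desired quasi-isometric embedding.
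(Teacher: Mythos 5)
Your proposal takes essentially the same approach as the paper: both reduce the claim to a Milnor--Schwarz-type argument that bounds the number of Dirichlet tiles a geodesic from $o$ to $\gamma\cdot o$ can visit, and both extract the needed uniform spacing from the fact that $\D^\omega_\Gamma(o)$ is compact in $\X\cup\partialomX$ and is contained in an open $U$ avoiding all but finitely many Finsler bisectors. The paper packages this as Lemma \ref{lem:SchwarzMilnor} (the adapted Milnor--Schwarz lemma) together with a compactness argument showing that the Riemannian $\epsilon$-neighborhood of $\D^\omega_\Gamma(o)\cap\X$ lies in $U\cap\X$; your ``compactness-and-covering'' step for the wall-crossing bound is the same fact in different clothing, and your Poincar\'{e}-polyhedron observation (that side pairings of $\D^\omega_\Gamma(o)$ lie in $F$, since for $\gamma\notin F$ the openness of $U\subset\mathcal{H}^\omega(o,\gamma o)$ forces $U$ to miss the bisector entirely) is exactly what drives Lemma \ref{lem:SchwarzMilnor}.

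One step in your first paragraph is not quite right. You assert that for $[h]\in U$, the properly-finite-sided condition forces \emph{every} minimizer of $\gamma'\mapsto h(\gamma'\cdot o)$ to lie in $F\cup\{e\}$. The condition only gives $h(o)\le h(\gamma\cdot o)$ for $\gamma\notin F$; from this one concludes that if $\gamma^*\notin F\cup\{e\}$ is a minimizer then $h(o)=h(\gamma^*\cdot o)$ so $e$ is \emph{also} a minimizer, but $\gamma^*$ itself need not lie in $F$. Thus you may only claim that \emph{some} minimizer lies in $F\cup\{e\}$, and the conclusion $g\in F\cdot F^{-1}$ does not follow as written, since the two intersections $M_{[h]}\cap(F\cup\{e\})$ and $M_{[h]}\cap(gF\cup\{g\})$ could pick out different elements of $M_{[h]}$. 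Fortunately this paragraph is inessential to the theorem: finite generation already follows from the wall-crossing count in your main argument, and the proper-discontinuity claim is not part of the statement being proved.
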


This result is an adaptation of the Milnor-Schwarz Lemma, replacing the cocompactness of the action by the fact that the Dirichlet domain is \emph{tame at infinity}.

\begin{lemma}
\label{lem:SchwarzMilnor}
Let $\Gamma$ be a group acting by isometries on a geodesic metric space $X$. 
Suppose that there exists a subset $D\subset X$ such that:
$$X=\bigcup_{\gamma\in \Gamma}\gamma\cdot D.$$
Suppose moreover that there exist a finite subset $F\subset \Gamma$ and $\epsilon>0$ such that the $\epsilon$-neighborhood $D_\epsilon$ of $D$ satisfies for all $\Gamma\in \Gamma\setminus F$:
$$ \gamma \cdot D \cap D_\epsilon=\emptyset.$$
For any $o\in X$ the orbit map $\gamma\in \Gamma \mapsto \gamma\cdot o\in X$ is a quasi-isometric embedding. 
\end{lemma}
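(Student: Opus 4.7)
The plan is to carry out a Milnor-Schwarz type argument, where the role of cocompactness is played by the hypothesis $X = \bigcup_{\gamma \in \Gamma} \gamma \cdot D$ and the role of proper discontinuity is played by the tameness-at-infinity hypothesis on $D_\epsilon$. The output will be that $\Gamma$ is generated by $F$, and that the orbit map gives a quasi-isometric embedding with respect to the word metric induced by $F$.

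As a first reduction I would arrange that $o \in D$. Since $X = \bigcup_\gamma \gamma \cdot D$, there exists $\gamma_\ast$ with $o \in \gamma_\ast \cdot D$; replacing $D$ by $\gamma_\ast \cdot D$ and $F$ by $\gamma_\ast F \gamma_\ast^{-1}$ preserves the hypotheses (direct verification using $\gamma_\ast^{-1}(\gamma \cdot \gamma_\ast D \cap \gamma_\ast D_\epsilon) = (\gamma_\ast^{-1}\gamma\gamma_\ast)\cdot D \cap D_\epsilon$) and places $o$ in $D$. This does not affect whether the orbit map is a quasi-isometric embedding.

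The heart of the proof is the lower bound. Given $\gamma \in \Gamma$, set $L \coloneqq d(o,\gamma\cdot o)$ and let $c \colon [0,L] \to X$ be a geodesic from $o$ to $\gamma\cdot o$. Fix $n = \lceil L/\epsilon \rceil$ and points $0 = t_0 < t_1 < \cdots < t_n = L$ with $t_{i+1}-t_i \le \epsilon$. Choose $\gamma_i \in \Gamma$ with $c(t_i) \in \gamma_i \cdot D$, taking $\gamma_0 = e$ and $\gamma_n = \gamma$ (possible since $o \in D$ and $\gamma\cdot o \in \gamma \cdot D$). Because $d(c(t_i),c(t_{i+1})) \le \epsilon$, the point $c(t_{i+1})$ lies in $\gamma_i \cdot D_\epsilon \cap \gamma_{i+1}\cdot D$, so $\gamma_i^{-1}\gamma_{i+1} \cdot D \cap D_\epsilon \neq \emptyset$, and the tameness hypothesis forces $s_i \coloneqq \gamma_i^{-1}\gamma_{i+1} \in F$. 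Telescoping gives $\gamma = s_1 s_2 \cdots s_n$ with $s_i \in F$, so $\Gamma = \langle F \rangle$ is finitely generated and its word length satisfies $\abs{\gamma}_F \le n \le L/\epsilon + 1$. Conversely, setting $M \coloneqq \max_{s \in F \cup F^{-1}} d(o,s\cdot o)$, the triangle inequality yields $d(o,\gamma\cdot o) \le M \abs{\gamma}_F$ for every $\gamma$. Together these two bounds give the desired quasi-isometric embedding.

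Honestly, no step here is a serious obstacle; this is a standard adaptation of Milnor-Schwarz. The mildly delicate point, and the one I would write most carefully, is the translation $\gamma_i^{-1}\gamma_{i+1} \cdot D \cap D_\epsilon \neq \emptyset$ derived from the two consecutive sample points on the geodesic, since it is exactly here that the hypothesis on $D_\epsilon$ (rather than $D$ itself, which might not even be closed) is used. Everything else is triangle inequality and a counting argument.
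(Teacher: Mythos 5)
Your proof is correct and follows essentially the same Milnor--Schwarz-type argument as the paper: subdivide a geodesic from $o$ to $\gamma\cdot o$ into steps of size at most $\epsilon$, cover each sample point by a translate of $D$, and use the finiteness hypothesis on $F$ to bound the word length. The one place you are more careful than the paper is the preliminary reduction to $o \in D$: the paper sets $\gamma_0 = e$ and $\gamma_n = \gamma$ without checking that $o \in D$ (which is needed so that $x_0 \in \gamma_0\cdot D$ and $x_n \in \gamma_n\cdot D$, both of which the telescoping step relies on); your conjugation trick with $\gamma_\ast$ fills this in cleanly and your verification that it preserves the hypotheses is correct.
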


\begin{proof}
We consider the word metric on $\Gamma$ defined for $\gamma\in \Gamma$ by:
$$|\gamma|=\min\{n|\gamma=s_1s_2\cdots s_n,\, s_i\in F\} .$$
Any other word metric with respect to a finite generating set is quasi-isometric to this one. 
Let $o\in X$. 
Let $A=\max_{s\in F}d(o,s\cdot o)$. 

Let $\gamma\in \Gamma$ be any element. 
First note that $d(o, \gamma\cdot o)\leq A|\gamma|$. 
Now let $n=\lceil\frac{d(o, \gamma\cdot o)}{\epsilon} \rceil$. We consider a sequence $x_0, x_1, \cdots, x_n$ of points on a geodesic in $X$ between $o$ and $\gamma\cdot o$ with $x_0=o, x_n=\gamma\cdot o$ and such that $d(x_i,x_{i+1})\leq \epsilon$ for $0\leq i<n$. 
Since $X=\bigcup_{\gamma\in \Gamma}\gamma\cdot D$, there exist for all $1\leq i<n$ an element $\gamma_i\in \Gamma$ such that $x_i\in \gamma_i\cdot D$. 
We set $\gamma_0=e$ and $\gamma_n=\gamma$. 

By the definition of $F$ we know that for all $1\leq i<n$, $\gamma_i^{-1}\gamma_{i+1}\in F$. 
Indeed $x_{i+1}\in \gamma_{i+1}\cdot D\cap \gamma_i\cdot D_\epsilon$ and hence $\gamma_{i}^{-1}\cdot x_{i+1}\in \gamma_i^{-1}\gamma_{i+1}\cdot D\cap D_\epsilon$. 
Therefore $\gamma$ can be written as the product of $n$ elements of $F$, so $|\gamma|\leq n$. 
Hence:
$$\epsilon |\gamma|-\epsilon\leq d(o, \gamma\cdot o)\leq A|\gamma|. $$
This concludes the proof.
\end{proof}

\begin{proof}[Proof of Theorem \ref{thm:properly finite sided implies quasi-isometrically embedded}]
Let $U$ be an open neighborhood of $\mathcal{D}^\omega_\Gamma(o)\subset \X\cup \partial_\omega \X$ such that there exist only finitely many $\gamma\in \Gamma$ such that $\gamma\cdot \mathcal{D}^\omega_\Gamma(o) \cap U\neq \emptyset$. 
In order to apply Lemma \ref{lem:SchwarzMilnor}, it suffices to prove that for some $\epsilon>0$, the intersection $U\cap \X$ contains the $\epsilon$-neighborhood of $\mathcal{D}^\omega_\Gamma(o) \cap \X$. Let $U^c$ be the complement of $U$ in $\X\cup \partial_\omega \X$, which is a compact set.

\medskip

Suppose the contrary; then there exists a sequence $(x_n)$ of points in $\mathcal{D}^\omega_\Gamma(o)\cap \X$ and a sequence $(y_n)$ of points in $U^c\cap \X$ such that $d(x_n,y_n)$ converges to zero (note that we consider here the Riemannian metric). 
Up to taking a subsequence, one can assume that the sequences converge to $x_\infty$ in $\mathcal{D}^\omega_\Gamma(o)$ and to $y_\infty$ in $U^c$ respectively. 
But since $d(x_n,y_n)$ converges to zero, the function $x\in \X\mapsto d_\omega(x,x_n)-d_\omega(x,y_n)$ also converges to zero, uniformly on $\X$. Therefore $x_\infty=y_\infty$, which is not possible since $\mathcal{D}^\omega_\Gamma(o)\subset U$.
\end{proof}

\subsection{Disjoint half-spaces and the \texorpdfstring{$\omega$}{omega}-undistorted condition}
\label{sec:disjoint half-spaces and URU}
In this section, we show that $\omega$-undistorted subgroups can be characterized by having sufficiently disjoint half-spaces for the Finsler distance $d_\omega$.

\begin{definition}\label{def:disjoint half-space property}
We say that a finitely generated subgroup $\Gamma$ satisfies the \emph{$\omega$-disjoint half-space property} if for some $o \in \X$, some word metric on $\Gamma$ and some integer $D>0$, for all triples $(x,y,z)$ in $\Gamma$ that lie in this order on a geodesic such that $d_\Gamma(x,y)=d_\Gamma(y,z)=D$, the half-space $\mathcal{H}^\omega(x\cdot o,y\cdot o)$ is disjoint from  $\mathcal{H}^\omega(z\cdot o,y\cdot o)$.

We say that $\Gamma$ satisfies the \emph{$\omega$-flag disjoint half-space property} if for some $o \in \X$, some word metric on $\Gamma$ and some integer $D>0$, for all triples $(x,y,z)$ in $\Gamma$ that lie in this order on a geodesic such that $d_\Gamma(x,y)=d_\Gamma(y,z)=D$, the intersection of $\mathcal{F}_\omega$ with the half-space $\mathcal{H}^\omega(x\cdot o,y\cdot o)$ is disjoint from $\mathcal{H}^\omega(z\cdot o,y\cdot o)$. 
\end{definition}

\medskip

\newcommand{\hgline}[2]{
\pgfmathsetmacro{\thetaone}{#1}
\pgfmathsetmacro{\thetatwo}{#2}
\pgfmathsetmacro{\theta}{(\thetaone+\thetatwo)/2}
\pgfmathsetmacro{\phi}{abs(\thetaone-\thetatwo)/2}
\pgfmathsetmacro{\close}{less(abs(\phi-90),0.0001)}
\ifdim \close pt = 1pt
    \draw[blue] (\theta+180:1) -- (\theta:1);
\else
    \pgfmathsetmacro{\R}{tan(\phi)}
    \pgfmathsetmacro{\distance}{sqrt(1+\R^2)}
    \draw[blue] (\theta:\distance) circle (\R);
\fi
}

\begin{figure}
\begin{center}
\begin{tikzpicture}[scale=2]
\draw (0,0) circle (1);
\clip (0,0) circle (1);
\hgline{30}{-30}
\hgline{180}{270}
\fill (0,0) circle (.02);
\node[above] () at (0,0) {$y\cdot o$};
\fill (0.8,0) circle (.02);
\node[above] () at (0.8,0) {$z\cdot o$};
\fill (-.5,-.5) circle (.02);
\node[above] () at (-.5,-.5) {$x\cdot o$};
\end{tikzpicture}
\end{center}
\caption{Illustration of the disjoint half-space property.}
\label{fig:DisjointBissector}

\end{figure}

Recall that the half-spaces are closed subsets of $\X\cup \partial_\omega\X$. 
A priori the flag property is weaker, but we see later that when $\omega$ is symmetric, the two are equivalent.

\begin{remark}
If the (flag) disjoint half-space property holds for all triples $(x,id,z)$ in $\Gamma$ with $x,z$ of word length $D$ then it holds for all triples in $\Gamma$.
In particular the (flag) disjoint half-space property can be verified on a finite subset of triples in $\Gamma$. 

When $G=\SL(d,\R)$, and $\omega=\omega_1$, the flag disjoint half-space property can be rephrased in terms of nestedness of quadric hypersurfaces. Indeed the intersection of the corresponding bisectors with $\mathcal{F}_{\omega_1}=\mathbb{RP}^{d-1}$ is the zero set of a quadratic form, as in Figure \ref{fig:Octogon}. This intersection also coïncide with the intersection of the Selberg bisectors with $\mathbb{RP}^{d-1}$, introduced Section \ref{sec:Dirichlet-Selberg domains}.
\end{remark}

\begin{theorem}\label{thm:URU implies disjoint half-spaces}
	Let $\Gamma$ be an $\omega$-undistorted subgroup. 
	It satisfies the $\omega$-disjoint half-space property and the $(-\omega)$-disjoint half-space property.
\end{theorem}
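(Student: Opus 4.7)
I argue by contradiction: suppose $\Gamma$ is $\omega$-undistorted but the $\omega$-disjoint half-space property fails. Then for each $n$ one finds a triple $(x_n, e, z_n)$ on a geodesic of $\Gamma$ with $\abs{x_n} = \abs{z_n} = D_n \to \infty$ and a horofunction $[h_n] \in \mathcal{H}^\omega(x_n \cdot o, o) \cap \mathcal{H}^\omega(z_n \cdot o, o)$, so that $h_n(x_n \cdot o) \le h_n(o)$ and $h_n(z_n \cdot o) \le h_n(o)$. Passing to subsequences, $[h_n] \to [h_\infty] \in \X \cup \partialomX$ while $x_n \to \zeta_-$ and $z_n \to \zeta_+$ in $\partial \Gamma$; since $d_\Gamma(x_n, z_n) = 2 D_n \to \infty$, the limits $\zeta_+ \neq \zeta_-$ are endpoints of a bi-infinite geodesic in $\Gamma$. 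The plan is to rule out both possibilities $[h_\infty] \in \X \cup \OmegaomH$ and $[h_\infty] \in \partialomX \setminus \OmegaomH$.

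In the first case, Lemma \ref{lem:LocalUnif} furnishes a neighborhood $U$ of $[h_\infty]$ and constants $A, C>0$ such that $h(\gamma \cdot o) - h(o) \ge C\, d(o, \gamma \cdot o) - A$ for every $[h] \in U$ and $\gamma \in \Gamma$. Since $[h_n] \in U$ eventually and the $\omega$-undistorted condition forces $\Gamma \to \X$ to be a quasi-isometric embedding (so $d(o, x_n \cdot o) \to \infty$), the estimate gives $h_n(x_n \cdot o) - h_n(o) > 0$ for large $n$, contradicting $h_n(x_n \cdot o) \le h_n(o)$. The elementary case is handled identically using Lemma \ref{lem:LocalUnif CyclicCase}.

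In the second case, fix an $\iota$-invariant compact neighborhood $\mathcal{C}$ of the limit cone $\mathcal{C}_\Gamma$ avoiding every $w \cdot \Ker(\omega)$, which is possible since $\mathcal{C}_\Gamma$ is $\iota$-invariant. By the Morse lemma (Lemma \ref{lem:Morse Lemma}, or Lemma \ref{lem:Logarithmic distance Cyclic case} in the elementary case), $z_n \cdot o$ lies at distance $o(d(o, z_n \cdot o))$ from a geodesic ray $c_n^+$ from $o$ in a direction $\xi_n^+ \in \st_\mathcal{C}(\xi_\Theta(\zeta_+))$, and similarly for $x_n$ and $\zeta_-$. Combining the $d_\omega$-Lipschitz property of $h_n$ with convexity of $h_n \circ c_n^+$, one sees that $h_n$ is bounded above by $h_n(o) + o(d(o, z_n \cdot o))$ on the relevant segment; passing to the limit along a convergent direction $\xi^+ \in \st_\mathcal{C}(\xi_\Theta(\zeta_+))$, the function $h_\infty$ is bounded above on the whole ray from $o$ in direction $\xi^+$. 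The dichotomy Lemma \ref{lem:Dichotomy} then places $[h_\infty] \in \ThomegaH(\xi_\Theta(\zeta_+), \mathcal{C})$, and symmetrically $[h_\infty] \in \ThomegaH(\xi_\Theta(\zeta_-), \mathcal{C})$. Since $\Theta$-Anosov boundary maps are antipodal, $\xi_\Theta(\zeta_+)$ and $\xi_\Theta(\zeta_-)$ are antipodal simplices, and the $\iota$-invariance of $\mathcal{C}$ yields antipodal points $\eta^\pm \in \st_\mathcal{C}(\xi_\Theta(\zeta_\pm))$ lying on a common bi-infinite geodesic in $\X$. Convexity of $h_\infty$ along this geodesic forces $\asym_{h_\infty}(\eta^+) + \asym_{h_\infty}(\eta^-) \ge 0$, but the two thickening conditions force both slopes to be at most $-C_{\mathcal{C}, \omega} < 0$, a contradiction. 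The $(-\omega)$-disjoint half-space property follows by the identical argument applied with $\omega$ replaced by $-\omega$, using that the undistorted condition is symmetric in the sign of $\omega$. I expect the main subtlety to be in combining the Morse estimate with convexity to produce the required upper bound on $h_\infty$ along the limiting ray and in choosing antipodal points $\eta^\pm$ cleanly inside the respective compact stars.
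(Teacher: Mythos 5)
Your argument is sound and reaches the correct conclusion, but it is organized quite differently from the paper's. The paper gives a \emph{direct} construction: it parametrizes the compact space $\mathcal{G}$ of bi-infinite geodesics through $e$, and for each $\eta \in \mathcal{G}$ separates the two disjoint thickenings $\ThomegaH(\xi_\Theta(\eta^\pm),\mathcal{C}_\Gamma)$ by an open set $U$; then a locally uniform growth estimate (a variant of Lemma \ref{lem:LocalUnif} applied to a single flag) shows that $\mathcal{H}^\omega(\eta(n)\cdot o, o)\subset U$ and $\overline U \subset \mathcal{H}^\omega(\eta(-n)\cdot o,o)^c$ for $n$ large, and finally compactness of $\mathcal{G}$ in the topology generated by the sets $V_{\eta,m}$ yields the uniform constant $D$. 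You instead argue by contradiction, extracting a convergent sequence $[h_n]\to[h_\infty]$ from a family of offending triples; the uniformity is obtained by subsequence extraction rather than explicit compactness of $\mathcal{G}$. Both proofs rest on exactly the same pillars — Lemma \ref{lem:Dichotomy}, Lemma \ref{lem:LocalUnif}, the Morse Lemma \ref{lem:Morse Lemma}, transversality of the Anosov boundary map, and the elementary case handled via Lemmas \ref{lem:Logarithmic distance Cyclic case} and \ref{lem:LocalUnif CyclicCase} — so the mathematical content is the same even though the logical scaffolding differs.

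Two remarks on your version. First, your Case 1 is logically redundant: the derivation in Case 2 that $[h_\infty]$ belongs to both $\ThomegaH(\xi_\Theta(\zeta_+),\mathcal{C})$ and $\ThomegaH(\xi_\Theta(\zeta_-),\mathcal{C})$ proceeds from the Morse/convexity/limit argument and does not use the assumption $[h_\infty]\notin \OmegaomH$; and the ensuing antipodality contradiction already applies whether or not $[h_\infty]\in\X\cup\OmegaomH$. (Indeed, if you land in Case 1, the Case 2 derivation simply produces a contradiction with $[h_\infty]\in\OmegaomH$.) Second, the subtleties you flag are genuine but fillable: the Morse Lemma only says $z_n\cdot o$ lies near $\mathcal{V}(o,\st_\mathcal{C}(\xi_\Theta(\zeta'_n)))$ for some $\zeta'_n$ on a ray through $z_n$, and one must argue $\zeta'_n\to\zeta_+$ (which holds because $(z_n\mid \zeta'_n)_e\ge D_n\to\infty$ in the hyperbolic group) and that $\st_\mathcal{C}(\,\cdot\,)$ behaves well under limits, before passing to the limiting ray; and the antipodal choice of $\eta^\pm$ works exactly as you sketch because $\xi_\Theta(\zeta_\pm)$ are opposite in a common flat and $\iota$-invariance of $\mathcal{C}$ ensures the opposite of $\eta^+\in\st_\mathcal{C}(\xi_\Theta(\zeta_+))$ lies in $\st_\mathcal{C}(\xi_\Theta(\zeta_-))$. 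So what you wrote is a correct outline with different bookkeeping from the paper's, not a genuinely different argument, and the details you anticipate are indeed the ones that need to be filled.
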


\begin{proof}
Let $\mathcal{G}$ be the space of bi-infinite geodesic $\eta:\Z\to \Gamma$ such that $\eta(0)=e$.
This is a compact set for the standard compact-open topology.

Let $\eta\in \mathcal{G}$.
We denote by $\eta^+$ and $\eta^-$ respectively the endpoints in $\partial \Gamma$ of the geodesic ray when $n$ goes respectively to $+\infty$ and $-\infty$. 
The thickenings $\text{Th}^\omega_\text{horo}(\xi_\Theta(\eta^+),\mathcal{C}_\Gamma)$ and $\text{Th}^\omega_\text{horo}(\xi_\Theta(\eta^-),\mathcal{C}_\Gamma)$ are disjoint since the flags $\xi_\Theta(\eta^+)$ and $\xi_\Theta(\eta^-)$ are transverse. 

Let $U\subset \X\cup \partialomX$ be an open set containing $\text{Th}^\omega_\text{horo}(\xi_\Theta(\eta^+),\mathcal{C}_\Gamma)$ whose closure is disjoint from $\text{Th}^\omega_\text{horo}(\xi_\Theta(\eta^-),\mathcal{C}_\Gamma)$.
Lemma \ref{lem:LocalUnif} implies that all $[h]$ in the complement of $U$ go to $+\infty$ locally uniformly along $\eta(n)$, and all $[h]$ in $\overline{U}$ go to $+\infty$ locally uniformly along $\eta(-n)$. 
If $\Gamma$ is elementary we can apply Lemma \ref{lem:LocalUnif CyclicCase} instead.
Since $\overline{U}$ and $U^c$ are compact, the local uniform behavior is global.
Hence there exists $n_0\in \mathbb{N}$ such that for all $n\geq n_0$, $\mathcal{H}^\omega(\eta(n)\cdot o,o)\subset U$ and $\overline{U}$ is contained in the complement of $\mathcal{H}^\omega(\eta(-n)\cdot o,o)$.

We write $V_{\eta,m}\subset \mathcal{G}$ for the open and closed set of geodesics $\eta':\mathbb{Z}\to \Gamma$ such that $\eta'_{|[-m,m]}=\eta_{|[-m,m]}$, for the compact open topology. 
The collection of neighborhoods $\{V_{\eta,n_0}\mid \eta \in \mathcal{G}\}$ covers $\mathcal{G}$, so it admits a finite subcover.
Hence there exists $m_0 \in \mathbb{N}$ such that for all $\eta\in \mathcal{G}$, the half-spaces $\mathcal{H}^\omega(\eta(m_0)\cdot o,o)$ and $\mathcal{H}^\omega(\eta(-m_0)\cdot o,o)$ are disjoint.
Therefore $\Gamma$ satisfies the $\omega$-disjoint half-space property. 
Since $\Gamma$ is also $(-\omega)$-undistorted, $\Gamma$ also satisfies the $(-\omega)$-disjoint half-space property.
\end{proof}

The following result can be seen as a coarse analogue of \cite[{Proposition 5.19}]{Dav23}, and has a similar proof.

\begin{theorem}
\label{thm:disjoint half-spaces implies URU}
Let $\Gamma$ be a finitely generated subgroup of $G$.
If $\Gamma$ satisfies the $\omega$-flag disjoint half-space property and the $(-\omega)$-flag disjoint half-space property, it is $\omega$-undistorted.
\end{theorem}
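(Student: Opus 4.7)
The plan is to argue the contrapositive: assume $\Gamma$ satisfies both the $\omega$- and $(-\omega)$-flag disjoint half-space properties but fails to be $\omega$-undistorted. By Definition \ref{def: v-uru} and finiteness of $W$, there exists a sequence $\gamma_n \in \Gamma$ with $|\gamma_n| \to \infty$ and, after passing to a subsequence, a fixed $w \in W$ such that $|\omega(w \cdot \vec{d}(o, \gamma_n \cdot o))|/|\gamma_n| \to 0$; the goal is to derive a contradiction.

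First I upgrade the flag disjoint half-space properties to a uniform quantitative form. Since each triple $(x,y,z)$ at distance $D$ on a geodesic in $\Gamma$ is determined modulo $\Gamma$-translation by the pair $(y^{-1}x, y^{-1}z)$ in the finite $D$-ball of $\Gamma$, and since $\mathcal{F}_\omega \cup \mathcal{F}_{-\omega}$ is compact, there exists $\epsilon > 0$ such that for every such triple and every flag $a \in \mathcal{F}_\omega \cup \mathcal{F}_{-\omega}$,
\[ \max\bigl(b_{a, o}(x \cdot o),\, b_{a, o}(z \cdot o)\bigr) \geq b_{a, o}(y \cdot o) + \epsilon. \]
Along any geodesic $(x_0, \dots, x_L)$ in $\Gamma$, this says that $i \mapsto b_{a, o}(x_i \cdot o)$ admits no strict local maximum at a middle index.

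Next I implement a monotone path argument. Starting at any middle index $i_0$ and repeatedly stepping to the neighbor at distance $\pm D$ with larger $b_{a, o}$-value, the value strictly increases by at least $\epsilon$ per step. This forces monotonicity in $i$: if the path ever reversed direction at an index $i_j$, the previously visited neighbor $i_{j-1}$ would have to satisfy $b_{a, o}(x_{i_{j-1}} \cdot o) \geq b_{a, o}(x_{i_j} \cdot o) + \epsilon$, contradicting the strict increase along the path. The path thus extends to an endpoint, yielding for every $a \in \mathcal{F}_\omega \cup \mathcal{F}_{-\omega}$:
\[ \max\bigl(b_{a, o}(x_0 \cdot o),\, b_{a, o}(x_L \cdot o)\bigr) \geq b_{a, o}(x_{\lfloor L/2 \rfloor} \cdot o) + \epsilon L/(2D). \]

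Applying this to geodesics from $e$ to $\gamma_n$ and, symmetrically, to $\gamma_n^{-1}$, I pass to subsequences so that $\gamma_n \cdot o \to \eta^+$ and $\gamma_n^{-1} \cdot o \to \eta^-$ in $\partial_{\rm vis}\X$. The failure of the $\omega$-undistorted condition, combined with the relation between $\vec{d}(o,\gamma_n^{-1} o)$ and $\vec{d}(o,\gamma_n o)$ via the opposition involution, forces the types $\type(\eta^\pm)$ to lie on walls $w_\pm \cdot \ker(\omega)$ for some $w_\pm \in W$. Consequently there exist flags $a_\pm \in \mathcal{F}_\omega \cup \mathcal{F}_{-\omega}$ at Tits angle $\pi/2$ from $\eta^\pm$; their Busemann functions have asymptotic slope zero along $\eta^\pm$, so $b_{a_\pm, o}(\gamma_n^{\pm 1} \cdot o) = o(|\gamma_n|)$. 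Combined with the monotone path estimates on the two geodesics (which force linear-rate divergence of $b_{a_\pm, o}$ at the midpoints), and using $G$-invariance of Busemann functions to relate the midpoint contributions of the forward and reverse geodesics, this yields the desired contradiction.

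The main obstacle is the final synthesis: precisely identifying the limit flags $a_\pm$ and assembling the monotone-path estimates on the forward and reverse geodesics into a genuine contradiction, rather than two independent asymptotic growth statements. This requires carefully tracking the interplay between the Cartan projection of $\gamma_n$ (asymptotically in the wall $w \cdot \ker(\omega)$), the Tits geometry at the visual limits $\eta^\pm$, and the behavior of Busemann functions under $G$-translation.
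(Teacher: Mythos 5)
Your steps 1--4 reproduce, in slightly different language, the content of the paper's Lemma~\ref{lem:epsilonConvex}: the uniform $\epsilon$ you extract by compactness makes $n\mapsto b_a(\gamma_{Dn}\cdot o)$ ``$\epsilon$-convex at critical points,'' and your monotone-path observation is the elementary consequence of that convexity. (One omission: the $\omega$- and $(-\omega)$-flag disjoint half-space properties may hold at \emph{different} basepoints $o,o'$, which is why the paper needs Lemma~\ref{lem:coarsifying a convex sequence} to merge them; you silently assume a common basepoint.)

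The genuine gap is step 5, and it is twofold. First, deducing that $\type(\eta^\pm)$ lies on a wall from $|\omega(w\cdot\vec d(o,\gamma_n\cdot o))|=o(|\gamma_n|)$ is circular: you would need $|\omega(w\cdot\vec d(o,\gamma_n\cdot o))|=o(d(o,\gamma_n\cdot o))$, and passing between the two normalizations requires $d(o,\gamma_n\cdot o)\asymp|\gamma_n|$, i.e.\ the quasi-isometric embedding that is part of what you are trying to prove. Second, even granting limit flags $a_\pm$ with slope zero, the monotone-path estimate you derive runs \emph{from the midpoint}; it bounds $\max\bigl(b_a(x_0\cdot o),b_a(x_L\cdot o)\bigr)$ below by the midpoint value plus $\epsilon L/(2D)$. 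Combined with $b_{a_+,o}(\gamma_n\cdot o)=o(L)$ this only says the midpoint value is very negative, which is fully compatible with the $1$-Lipschitz bound and yields no contradiction. What the undistortion inequality requires is control of the \emph{endpoint difference} $b_a(\gamma\cdot o)-b_a(o)$, and the midpoint estimate gives no such control, since the minimum of an $\epsilon$-convex sequence could well sit near the middle.

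The missing idea is the paper's pointwise argument, which avoids limits entirely. For each $\gamma$ and $w\in W$, choose a maximal flat through $o$ and $\gamma\cdot o$ and take antipodal $\xi_1\in\mathcal{F}_\omega$, $\xi_2\in\mathcal{F}_{-\omega}$ in its visual boundary so that $b_{\xi_1}(\gamma\cdot o)-b_{\xi_1}(o)=\omega(w\cdot\vec d(o,\gamma\cdot o))$ and $b_{\xi_2}(\gamma\cdot o)-b_{\xi_2}(o)=-\omega(w\cdot\vec d(o,\gamma\cdot o))$. The function $x\mapsto b_{\xi_1}(x)-b_{\xi_1}(o)+b_{\xi_2}(x)-b_{\xi_2}(o)$ is convex, vanishes at $o$, and has vanishing gradient at $o$ (the antipodal gradients cancel), so it is nonnegative on $\X$. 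Hence at the first coarse step $\gamma_{D'}\cdot o$ at least one of $b_{\xi_1},b_{\xi_2}$ is nondecreasing, and applying $\epsilon$-convexity at critical points \emph{from the endpoint $o$} forces that Busemann function to grow linearly all the way to $\gamma\cdot o$, giving $|\omega(w\cdot\vec d(o,\gamma\cdot o))|\ge\epsilon\lfloor|\gamma|/D'\rfloor-C$ directly. Replace the limit argument of step 5 with this.
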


To prove this theorem, we first relate the disjoint half-space property to the convexity of Busemann functions.
Let $\epsilon>0$. 
We say that a sequence $(s_n)_{n\in \Z}$ is \emph{$\epsilon$-convex at critical points} if for all $n\in \Z$ such that $s_{n+1}-s_n\geq -\epsilon$ one has $s_{n+2}-s_{n+1}\geq \epsilon$. 
If for such a sequence one has $s_{-1}\leq s_0$, then for all $n\in \mathbb{N}$: 
$$s_n\geq \epsilon n +s_0 .$$

\begin{lemma}
\label{lem:epsilonConvex}
Suppose that for some $o\in \X$, $\Gamma$ satisfies the flag disjoint half-space property. Then there exist $D\in \mathbb{N}$ and $\epsilon>0$ such that for every geodesic sequence $(\gamma_n)_{n\in \Z}$ in $\Gamma$ and every $[b_\xi]\in \mathcal{F}_\omega\subset\partial^\omega_\text{horo}\X$, the sequence $\left(b_\xi(\gamma_{Dn}\cdot o)\right)$ is $\epsilon$-convex at critical points.
\end{lemma}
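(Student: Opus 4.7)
The plan is to upgrade the qualitative flag disjoint half-space property into a uniform quantitative bound, and then extract the $\epsilon$-convexity condition by elementary manipulation. For a geodesic triple $(x,y,z)$ in $\Gamma$ with $d_\Gamma(x,y)=d_\Gamma(y,z)=D$ (where $D$ is the integer provided by the hypothesis), the flag disjoint half-space property says exactly that the continuous function
\[ F_{x,y,z}(\xi) \coloneqq \max\bigl(b_\xi(x\cdot o)-b_\xi(y\cdot o),\; b_\xi(z\cdot o)-b_\xi(y\cdot o)\bigr) \]
is strictly positive at every $\xi\in\mathcal{F}_\omega$: indeed, $F_{x,y,z}(\xi)\le 0$ would place $[b_\xi]$ in both $\mathcal{H}^\omega(x\cdot o,y\cdot o)$ and $\mathcal{H}^\omega(z\cdot o,y\cdot o)$. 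Since $\mathcal{F}_\omega$ is compact, each $F_{x,y,z}$ attains a positive minimum; the first objective is to find a single positive lower bound valid for all geodesic triples simultaneously.

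To achieve this, I will use the $G$-equivariance identity $b_{g\xi}(g p)-b_{g\xi}(g q)=b_\xi(p)-b_\xi(q)$ for differences of Busemann functions. Left-translating by $y^{-1}\in \Gamma$ shows that $F_{x,y,z}$ is determined, up to the change of variable $\xi\mapsto y^{-1}\xi$ on $\mathcal{F}_\omega$, by the pair $(y^{-1}x,y^{-1}z)$, which lies in the finite word-ball of radius $D$ about the identity. Hence only finitely many distinct functions $F_{x,y,z}(\,\cdot\,)$ arise on $\mathcal{F}_\omega$, all strictly positive and continuous on the compact flag manifold. Taking the minimum of these finitely many positive minima yields $\epsilon'>0$ such that $F_{x,y,z}(\xi)\geq\epsilon'$ for every geodesic triple of spacing $D$ and every $\xi\in\mathcal{F}_\omega$.

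For the final step, fix a geodesic $(\gamma_n)_{n\in\mathbb{Z}}$ in $\Gamma$, pick $\xi\in\mathcal{F}_\omega$, and set $s_n = b_\xi(\gamma_{Dn}\cdot o)$. Applying the uniform bound to the geodesic triple $(\gamma_{Dn},\gamma_{D(n+1)},\gamma_{D(n+2)})$ gives $\max(s_n-s_{n+1},\; s_{n+2}-s_{n+1})\geq \epsilon'$. Choosing $\epsilon \coloneqq \epsilon'/2$: if $s_{n+1}-s_n\geq -\epsilon$ then $s_n-s_{n+1}\leq \epsilon<\epsilon'$, which forces $s_{n+2}-s_{n+1}\geq \epsilon'\geq \epsilon$. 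This is exactly the $\epsilon$-convexity at critical points property, and the proof is complete. The only genuine content is the compactness-plus-equivariance argument producing the uniform $\epsilon'$; the passage to the $\epsilon$-convexity statement is purely formal.
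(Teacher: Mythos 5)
Your proposal is correct and uses essentially the same ingredients as the paper's proof: compactness of $\mathcal{F}_\omega$ combined with $G$-equivariance of Busemann cocycles to reduce to the finitely many pairs in the word-ball of radius $D$, then an elementary manipulation to pass from a uniform positive lower bound to the $\epsilon$-convexity condition. The only difference is organizational — you establish a direct quantitative lower bound for the max-function $F_{x,y,z}$ on all of $\mathcal{F}_\omega$, whereas the paper argues by contradiction, extracting a subsequence with constant $(x,y,z)$ and deriving a contradiction from the positive minimum of $[b]\mapsto b(z\cdot o)-b(o)$ on the compact set $\mathcal{F}_\omega\cap\mathcal{H}^\omega(x\cdot o,o)$; the mathematical content is identical.
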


\begin{proof}
We assume that $\mathcal{D}_\Gamma^\omega(o)$ satisfies the flag disjoint half-space property and let $D$ be the constant from the definition.
Suppose to the contrary that there exist sequences $(x_n)$, $(y_n)$ and $(z_n)$ in $\Gamma$ and a sequence $[b_n]\in \mathcal{F}_\omega\subset\partial^\omega_\text{horo}\X$ such that for all $n\in \mathbb{N}$, $d(x_n,y_n)=d(y_n,z_n)=D$ while also $\liminf b_n(y_n\cdot o)-b_n(x_n\cdot o)\geq 0$ and $\limsup b_n(z_n\cdot o)-b_n(y_n\cdot o)\leq 0$.

Up to acting by $\Gamma$, one can assume that the sequence $(y_n)$ is constant and equal to the identity element of $\Gamma$.
Up to taking a subsequence one can assume that the sequences $(x_n)$ and $(z_n)$ are constant and equal to $x$ and $z$ respectively. 

By the flag disjoint half-space property, $\mathcal{F}_\omega \cap \mathcal{H}^\omega(x\cdot o,o)$ is disjoint from 
$$\mathcal{H}^\omega(z\cdot o,o) = \{[h] \mid h(z\cdot o)-h(o) \le 0 \}.$$
In particular, the function $[b] \mapsto b(o)-b(z\cdot o)$ is continuous and positive on the compact set $\mathcal{F}_\omega \cap \mathcal{H}^\omega(x\cdot o,o)$, so has a positive minimum $\eta$. 
Hence if any $[b]$ in $\mathcal{F}_\omega$ satisfies $b(o)-b(x\cdot o) \le 0$, then $b(z \cdot o)-b(o) \le -\eta$.  
This contradicts the assumptions on $(x_n)$, $(y_n)$ and $(z_n)$.
\end{proof}

In order to handle the disjoint half-space property at two different basepoints, we need to improve the constant $\epsilon$ by coarsifying the sequence. 

\begin{lemma}\label{lem:coarsifying a convex sequence}
	Suppose the sequence $(x_n)$ is $\epsilon$-convex at critical points.
	For any positive integer $N$, the sequence $(x_{nN})$ is $(N\epsilon)$-convex at critical points.
\end{lemma}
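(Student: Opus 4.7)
The plan is to reformulate the definition in terms of consecutive increments and then combine an averaging argument with an absorbing-tail observation. Set $a_m \coloneqq x_{m+1}-x_m$; the hypothesis of $\epsilon$-convexity at critical points becomes the implication
\[
a_m \geq -\epsilon \;\Longrightarrow\; a_{m+1} \geq \epsilon.
\]
The first useful consequence to record is a propagation statement: since $\epsilon>0$, the conclusion $a_{m+1}\geq\epsilon$ is itself $\geq -\epsilon$, so by induction $a_{m+k}\geq\epsilon$ for every $k\geq 1$. In words, once the increments cease to be sharply negative, they remain bounded below by $\epsilon$ forever.

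With this in hand, I would analyze the block increments $b_k \coloneqq x_{(k+1)N}-x_{kN} = \sum_{j=0}^{N-1} a_{kN+j}$ and prove directly that if $b_k \geq -N\epsilon$ then $b_{k+1}\geq N\epsilon$. First, by a pigeonhole/averaging argument, the hypothesis $\sum_{j=0}^{N-1}a_{kN+j}\geq -N\epsilon$ forces at least one index $j^\star\in\{0,\dots,N-1\}$ with $a_{kN+j^\star}\geq -\epsilon$ (otherwise each term would be strictly less than $-\epsilon$ and the sum strictly less than $-N\epsilon$). Invoking the propagation observation at this index $j^\star$, I get $a_m\geq\epsilon$ for every $m\geq kN+j^\star+1$. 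Since $j^\star\leq N-1$, all indices $m\in\{(k+1)N,\dots,(k+2)N-1\}$ satisfy this bound, and summing the $N$ inequalities yields $b_{k+1}\geq N\epsilon$, which is exactly the claim that $(x_{nN})$ is $(N\epsilon)$-convex at critical points.

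There is no real obstacle: the argument is entirely combinatorial, and the only subtlety is being careful that the bad index $j^\star$ is guaranteed to occur strictly before the next block starts (which is automatic because $j^\star\leq N-1$, so $j^\star+1\leq N$). The proof therefore amounts to two lines once the reformulation in terms of $a_m$ is written down.
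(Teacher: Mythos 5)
Your proof is correct and follows essentially the same route as the paper's: both use the pigeonhole argument to extract a single increment $\geq -\epsilon$ inside the block, then the propagation fact (once an increment is $\geq -\epsilon$, all subsequent increments are $\geq\epsilon$) to bound the next block increment below by $N\epsilon$. The reformulation in terms of $a_m = x_{m+1}-x_m$ and $b_k$ is just notation; no substantive difference.
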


\begin{proof}
	If $x_{(n+1)N} - x_{nN} \ge -N\epsilon$ then there exists an integer $nN \le k < (n+1)N$ such that $x_{k+1}-x_k \ge -\epsilon$.
	For an $\epsilon$-convex sequence, if there exists $k$ such that $x_{k+1}-x_k \ge -\epsilon$, then for all $k''>k'>k$, it holds that $x_{k''}-x_{k'} \ge (k''-k')\epsilon$.
	In particular, $x_{(n+2)N}-x_{(n+1)N} \ge N \epsilon$. 
\end{proof}

\begin{proof}[{Proof of Theorem \ref{thm:disjoint half-spaces implies URU}}] 	
Let $\Gamma$ be a finitely generated subgroup of $G$ satisfying the $\omega$-flag disjoint half-space property for the basepoint $o$ and the $(-\omega)$-flag disjoint half-space property for another basepoint $o'$.

We first show that one can assume $o'=o$, up to replacing $D$ by some $D'>0$.
By Lemma \ref{lem:epsilonConvex}, there exists $D$ such that for every $[b] \in \mathcal{F}_{-\omega}$, and geodesic $(\gamma_n)$ the sequence $(b(\gamma_{nD}\cdot o'))$ is $\epsilon$-convex at critical points. 
By Lemma \ref{lem:coarsifying a convex sequence}, for any positive integer $N$, the sequence $(b(\gamma_{nDN}\cdot o'))$ is $\epsilon N$-convex at critical points. 
If $\epsilon N - 2d(o,o') \ge \epsilon$, then the sequence $(b(\gamma_{nDN}\cdot o))$ is also $\epsilon$-convex at critical points, since Busemann functions are $1$-Lipschitz. We set $D'=DN$.
	
\medskip
	
	Now let $\gamma\in \Gamma$ and let $e=\gamma_0, \gamma_1,\cdots, \gamma_{N}=\gamma$ be a geodesic sequence. 
	We consider a maximal flat passing through $o$ and $\gamma\cdot o$.
	In the visual boundary of this flat there exist $\xi_1\in \mathcal{F}_\omega$ and $\xi_2\in \mathcal{F}_{-\omega}$ such that the associated Busemann functions $[b_{\xi_1}]$ and $[b_{\xi_2}]$ satisfy:
	$$\omega(\vec{d}(o, \gamma\cdot o)=b_{\xi_1}(\gamma\cdot o)-b_{\xi_1}(o),$$
	$$-\omega(\vec{d}(o, \gamma\cdot o)=b_{\xi_2}(\gamma\cdot o)-b_{\xi_2}(o).$$
	Note also that $x\mapsto b_{\xi_1}(x)-b_{\xi_1}(o)+b_{\xi_2}(x)-b_{\xi_2}(o)$ is nonnegative on $\X$, since it is a convex function that vanishes at $o$ and whose gradient also vanishes at $o$. 
	Hence up to exchanging $\omega$ and $-\omega$, one can assume that:
	$$b_{\xi_1}(\gamma_{D'}\cdot o)-b_{\xi_1}(o)\geq 0.$$
	Letting $n$ be the integer part of $\frac{N}{D'}$, we have that 
	$$b_{\xi_1}(\gamma_{D'n}\cdot o)-b_{\xi_1}(o)\geq \epsilon (n-1).$$
	Let $E>0$ be the maximum distance between $o$ and $\gamma_0\cdot o$ for $\gamma_0\in \Gamma$ at distance at most $D'$ from the identity.
	Then:
	$$\omega\left(\vec{d}(o, \gamma\cdot o)\right)=b_{\xi_1}(\gamma\cdot o)-b_{\xi_1}(o)\geq b_{\xi_1}(\gamma_{nD'}\cdot o)-b_{\xi_1}(o)-E\geq \epsilon n-E-\epsilon.$$
	
	Hence $\Gamma$ is $\omega$-undistorted.
\end{proof}

\section{Dirichlet-Selberg domains}

\label{sec:Dirichlet-Selberg domains}
\subsection{Selberg's construction}

We recall a construction that goes back to Selberg \cite{Sel60} of a projectively convex fundamental domain for discrete subgroups $\Gamma < \PSL(d,\R)$ acting on the projective model of the associated symmetric space. These are called Dirichlet-Selberg domains and have been recently studied by Kapovich \cite{Kap23} and Du \cite{du2024geometry,du2025busemannselbergfunctionscompletenessdirichletselberg}. In the present section, we give some examples of discrete, even Anosov, subgroups admitting infinitely-sided Dirichlet-Selberg domains, and also show that $\omega_1$-undistorted subgroups have properly finite-sided Dirichlet-Selberg domains.

\medskip

Let $V$ be a $d$-dimensional real vector space. 
Let $S^2V$ be the space of symmetric bilinear tensors $Q:V^*\to V$ with $Q^\ast=Q$. 
We consider the subspace $\mathcal{X}=\mathcal{X}(V)=\mathbb{P}(S^2V^{>0})$ of $\mathbb{P}(S^2V)$ consisting of positive symmetric $2$-tensors, i.e.\ positive definite symmetric bilinear forms on $V^\ast$.
The Lie group $\SL(V)\simeq \SL(d,\R)$ acts naturally on $S^2V$ so that for $Q\in S^2V$ and $g\in \SL(V)$:
$$g\cdot Q=g \circ Q\circ g^\ast.$$
Hence $\PSL(V)$ acts on $\mathbb{P}(S^2V)$, and preserves $\mathcal{X}$. 
This action is moreover transitive on $\mathcal{X}$, and the stabilizer of an element $[Q]\in \mathcal{X}$ is equal to the subgroup $\PSO(Q)\simeq \PSO(d,\R)$ of $\PSL(V)\simeq\PSL(d,\R)$. 
Hence $\mathcal{X}$ can be identified with the symmetric space $\X=\PSL(d,\R)/\PSO(d,\R)$ associated to $\PSL(d,\R)$. 
The space $\mathcal{X}$ is called the \emph{projective model} for this symmetric space.

\medskip

Given $x_1,x_2\in \mathcal{X}$, we choose any representatives $Q_1,Q_2\in S^2V$ of the corresponding lines so that $\det(Q_1^{-1}Q_2)=1$. 
The \emph{Selberg invariant} is given by:
$$\mathfrak{s}(x_1,x_2) \coloneqq \log\left(\frac1{d} \Tr(Q_1^{-1}Q_2)\right).$$

It is asymmetric and fails the triangle inequality, but has other good properties in common with metrics.

\begin{proposition}
	\label{prop:SelbergInv}
	Let $x_1,x_2\in \mathcal{X}$ and $g \in \PSL(V)$:
	\begin{itemize}
		\item[-] $\mathfrak{s}(x_1,x_2)=0$ if and only if $x_1=x_2$.
		\item[-] $\mathfrak{s}(x_1,x_2)\geq 0$.
		\item[-] $\mathfrak{s}(gx_1,gx_2)=\mathfrak{s}(x_1,x_2)$.
		\item[-] $\mathfrak{s}(x_1,x_2)=\log\left(\frac1{d}\sum_{i=1}^d e^{\lambda_i}\right)$, where $Q_1^{-1}Q_2:V\to V$ is conjugate to:
		$$\begin{pmatrix}
			e^{\lambda_1}&&\\
			&\ddots&\\
			&&e^{\lambda_d}
		\end{pmatrix}.$$
	\end{itemize}
\end{proposition}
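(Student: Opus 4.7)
The plan is to establish the four items in the order (iii), (iv), (ii), (i), since each step simplifies the next.

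First I would verify (iii) by unwinding the action: given representatives $Q_1,Q_2$ with $\det(Q_1^{-1}Q_2) = 1$, the products $g\cdot Q_i = g Q_i g^\ast$ are again representatives of $g\cdot x_i$ whose ``ratio''
\[
(g\cdot Q_1)^{-1}(g\cdot Q_2) = (g^\ast)^{-1} Q_1^{-1} Q_2\, g^\ast
\]
is conjugate to $Q_1^{-1}Q_2$, in particular has the same determinant $1$ and the same trace. This shows $\mathfrak{s}$ is $\PSL(V)$-invariant.

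Next, for (iv), the key point is that $Q_1^{-1}Q_2$ is diagonalizable over $\R$ with positive real eigenvalues even though it is not itself symmetric. Since $Q_1$ is symmetric positive definite, it admits a symmetric positive definite square root $Q_1^{1/2}$, and
\[
Q_1^{-1}Q_2 = Q_1^{-1/2}\,\bigl(Q_1^{-1/2}\, Q_2\, Q_1^{-1/2}\bigr)\, Q_1^{1/2}
\]
is conjugate to the symmetric positive definite matrix $Q_1^{-1/2} Q_2 Q_1^{-1/2}$. Writing its eigenvalues as $e^{\lambda_1},\dots,e^{\lambda_d}$ immediately gives the stated formula $\mathfrak{s}(x_1,x_2) = \log\bigl(\tfrac{1}{d}\sum_i e^{\lambda_i}\bigr)$, and the normalization $\det(Q_1^{-1}Q_2) = 1$ translates into $\sum_i \lambda_i = 0$.

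Given (iv), items (ii) and (i) follow from AM-GM applied to the positive reals $e^{\lambda_i}$: one has
\[
\frac{1}{d}\sum_{i=1}^d e^{\lambda_i} \;\geq\; \Bigl(\prod_{i=1}^d e^{\lambda_i}\Bigr)^{1/d} = e^{(\lambda_1+\cdots+\lambda_d)/d} = 1,
\]
so $\mathfrak{s}(x_1,x_2)\geq 0$, proving (ii). For (i), equality in AM-GM forces all $\lambda_i$ equal, and combined with $\sum_i \lambda_i = 0$ we get $\lambda_i = 0$ for all $i$, hence $Q_1^{-1/2}Q_2 Q_1^{-1/2} = I$, i.e.\ $Q_2 = Q_1$, so $x_1 = x_2$; the converse is immediate from the formula.

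I do not expect any step to be a real obstacle: the only mild subtlety is the simultaneous-diagonalization observation in (iv), and once that is in place everything else is a direct computation plus AM-GM. If one prefers, one can alternatively use (iii) to first normalize $Q_1 = I$ (after choosing representatives with equal determinant and rescaling), in which case $Q_2$ is a genuine symmetric positive definite matrix and (iv), (ii), (i) become transparent.
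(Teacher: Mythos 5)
The paper states Proposition \ref{prop:SelbergInv} without proof, treating it as a routine computation, so there is no argument in the source to compare against. Your proof is correct and complete: the $\PSL(V)$-invariance from conjugation-invariance of trace and determinant, the simultaneous diagonalization of $Q_1^{-1}Q_2$ via the symmetric positive definite $Q_1^{-1/2}Q_2Q_1^{-1/2}$, the normalization $\det(Q_1^{-1}Q_2)=1$ giving $\sum_i\lambda_i=0$, and AM-GM for nonnegativity with equality case all hold up. The only small caveat worth fixing if you write this out fully is the domain/codomain bookkeeping: the $Q_i$ are maps $V^\ast\to V$, so $Q_1^{-1}Q_2$ is an endomorphism of $V^\ast$ (the paper's ``$V\to V$'' is a mild abuse); none of your reasoning is affected, since one works in a basis where $Q_1=I$ anyway, as you note in your final remark.
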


The closure of the properly convex domain $\mathcal{X}$ in $\mathbb{P}(S^2V)$ is called the \emph{Satake compactification} of $\mathcal{X}$, denoted by $\overline{\mathcal{X}}$ with boundary $\partial \mathcal{X}$. 
For $h\in \overline{\mathcal{X}}=\partial \mathcal{X}\cup \mathcal{X}$ and $o,x\in \mathcal{X}$ one can not always define $\mathfrak{s}(x,h)$ but one can make sense of the difference $\mathfrak{s}(x,h)-\mathfrak{s}(o,h)$. Indeed let $S, Q_0,Q$ be representatives of $h,o,x$ respectively such that $\det(Q_o^{-1}Q)=1$:
$$\mathfrak{s}_{o}(x,h):=\log\left(\frac1{d}\Tr(Q^{-1}S)\right)- \log\left(\frac1{d}\Tr(Q_0^{-1}S)\right).$$
Note that this definition does not depend on the chosen representatives, and that it satisfies the cocycle condition $\mathfrak{s}_{o}(x,h)=\mathfrak{s}_{o'}(x,h)+\mathfrak{s}_{o}(o',h)$ for $o'\in \X$.

\medskip

The main advantage of the Selberg invariant over the invariant Riemannian metric of $\mathcal{X}$ is that the bisectors (resp.\ half-spaces) of $\mathfrak{s}$ are projective hyperplanes (resp.\ half-spaces) intersected with $\mathcal{X}$.

For $x_1\neq x_2$ in $\mathcal{X}$, we let $\mathcal{H}(x_1,x_2)$ denote the \emph{closed half-space} in $\overline{\mathcal{X}}$, defined as the set of $y\in \overline{\mathcal{X}}$ such that:
$$\Tr\left((X_1^{-1}-X_2^{-1})Y\right)\geq 0 $$
where $X_1$ and $X_2$ are positive definite representatives of $x_1$ and $x_2$ respectively such that $\det(X_1X_2^{-1})=1$ and $Y$ is a positive semidefinite representative of $y$. 
It is easy to check that the half-space is also given by 
$$ \mathcal{H}(x_1,x_2) = \lbrace y\in \overline{\mathcal{X}}  \mid \mathfrak{s}_o(x_1,y)\leq \mathfrak{s}_o(x_2,y)\rbrace, $$
and note that this is independent of the basepoint $o \in \mathcal{X}$. 
For $x_1 \ne x_2$ in $\mathcal{X}$, the \emph{Selberg bisector} is the set
$$ \Bis(x_1,x_2) \coloneqq \{y \in \overline{\mathcal{X}} \mid \mathfrak{s}_o(x_1,y) = \mathfrak{s}_o(x_2,y) \}, $$
and can also be written as the subset consisting of all $y \in \overline{\mathcal{X}}$ satisfying 
$$ \Tr(X_1^{-1}Y)= \Tr(X_2^{-1}Y) $$
where $X_1,X_2,Y$ are representatives of $x_1,x_2,y$ respectively with $X_1.X_2$ positive definite and $\det(X_1^{-1}X_2)=1$. 

\medskip

Given a discrete subgroup $\Gamma$ of $\PSL(V)$ we may consider a variation of the Dirichlet domain associated to the Selberg invariant. 
Let $o\in \mathcal{X}$, and define the \emph{Dirichlet-Selberg domain} based at $o$ by:
$$\mathcal{DS}_\Gamma(o) \coloneqq \lbrace x \in \overline{\mathcal{X}}\mid \forall \gamma \in \Gamma, \, \mathfrak{s}_o(o,x)\leq\mathfrak{s}_o(\gamma\cdot o,x)\rbrace= \bigcap_{\gamma\in \Gamma\setminus \Gamma_o} \mathcal{H}(o, \gamma\cdot o). $$ 
This domain is in general a compact convex subset of $\overline{\mathcal{X}} \subset \mathbb{P}(S^2V)$. 

\medskip

\begin{definition}
	\label{defn:PropSided}
	We call a Dirichlet-Selberg domain $\DS_\Gamma(o)$ \emph{properly finite-sided} if there exists a neighborhood $U$ of $\mathcal{DS}_\Gamma(o)$ in $\overline{\mathcal{X}}$ and a finite set $F\subset \Gamma$ such that for all $\gamma \in \Gamma\setminus F$, $U \subset \mathcal{H}(o,\gamma \cdot  o).$
\end{definition}

In particular if a Dirichlet-Selberg domain is properly finite-sided then there exists a finite set $F\subset \Gamma$ such that $$\mathcal{DS}_\Gamma(o) = \bigcap_{\gamma\in F}\mathcal{H}(o, \gamma\cdot o).$$

\medskip

These definitions can be related to purely geometric notions of convex subsets of $\mathcal{X}$.
We adapt the definitions of Ratcliffe \cite{Rat19} for real hyperbolic space to $\mathcal{X}$.
A \emph{side} of a convex subset $C$ of $\mathcal{X}$ is a nonempty maximal convex subset of $\partial C \subset \mathcal{X}$. 
A \emph{convex polyhedron in $\mathcal{X}$} is a nonempty closed convex subset of $\mathcal{X}$ such that the collection of its sides is locally finite in $\mathcal{X}$. 

\begin{proposition}[\cite{Kap23}]\label{prop:ds domains are convex polyhedra}
	For any discrete $\Gamma < \PSL(V)$, the Dirichlet-Selberg domain $\mathcal{DS}_\Gamma(o) \cap \mathcal{X}$ is a convex polyhedron in $\mathcal{X}$.
\end{proposition}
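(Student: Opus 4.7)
The plan is to verify the three defining properties of a convex polyhedron in $\mathcal{X}$: nonempty, closed and convex, and locally finitely-sided. The first two are essentially immediate from the defining intersection. The domain contains $o$ (indeed $\mathfrak{s}_o(o,o)=0 < \mathfrak{s}_o(\gamma \cdot o, o)$ for $\gamma \notin \Gamma_o$ since the Selberg invariant vanishes only on the diagonal, Proposition \ref{prop:SelbergInv}). It is closed and convex in $\overline{\mathcal{X}}$ as a (possibly infinite) intersection of projectively closed half-spaces, and therefore closed and convex in $\mathcal{X}$.

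The heart of the argument is local finiteness. I first observe that any side of $C := \DS_\Gamma(o) \cap \mathcal{X}$ must be contained in a bisector $\Bis(o,\gamma \cdot o) \cap \mathcal{X}$ for some $\gamma \in \Gamma \setminus \Gamma_o$: if $x \in \partial C$, then $x$ satisfies $\mathfrak{s}_o(o,x) \leq \mathfrak{s}_o(\gamma \cdot o, x)$ for all $\gamma$, and fails to lie in the interior of $C$ exactly when equality is achieved for some $\gamma$. Therefore it suffices to show that for every compact $K \subset \mathcal{X}$, only finitely many $\gamma \in \Gamma$ satisfy
\[
\Bis(o,\gamma \cdot o) \cap K \cap C \neq \emptyset .
\]

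To prove this, suppose $x \in K \cap C \cap \Bis(o,\gamma \cdot o)$. By $\PSL(V)$-invariance of the Selberg invariant,
\[
\mathfrak{s}(o,\gamma^{-1}\cdot x) = \mathfrak{s}(\gamma \cdot o, x) = \mathfrak{s}(o,x) \leq C_K := \sup_{y \in K} \mathfrak{s}(o,y) < \infty.
\]
The key geometric fact is that Selberg balls $\{y \in \mathcal{X} \mid \mathfrak{s}(o,y) \leq R\}$ are compact in $\mathcal{X}$: writing the diagonalization of Proposition \ref{prop:SelbergInv}, the condition $\frac{1}{d}\sum e^{\lambda_i} \leq e^R$ together with $\sum \lambda_i = 0$ forces each $\lambda_i$ to be bounded both above and below, hence the set is a compact subset of $\mathcal{X}$. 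Thus $\gamma^{-1}\cdot x$ lies in a fixed compact set $K' \subset \mathcal{X}$ depending only on $K$. Choosing a compact $K'' \supset K \cup K'$, every such $\gamma$ satisfies $\gamma^{-1} K'' \cap K'' \neq \emptyset$. Since $\PSL(V)$ acts on $\mathcal{X}$ with compact point stabilizers, the action of $\PSL(V)$ on $\mathcal{X}$ is proper, so the discrete subgroup $\Gamma$ acts properly discontinuously, and only finitely many $\gamma \in \Gamma$ meet this condition.

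The main subtlety I expect is bookkeeping around the definition of ``side'' (a maximal convex subset of $\partial C$), since sides may be lower-dimensional faces lying in intersections of several bisectors. However, once the finite set of bisectors meeting any compact $K$ is identified, every side meeting $K$ is contained in this finite collection, and local finiteness of sides follows from local finiteness of the bisector collection. The other point to be verified carefully is the compactness of Selberg sublevel sets in $\mathcal{X}$ (as opposed to $\overline{\mathcal{X}}$), but this follows from the eigenvalue estimate above.
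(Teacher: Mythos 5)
Your proof is essentially the same as the paper's, which argues (tersely) via comparability of the Selberg invariant with a $G$-invariant Finsler metric; your eigenvalue estimate for the compactness of Selberg sublevel sets is a self-contained version of that comparability, and the reduction to proper discontinuity of $\Gamma$ is the standard Dirichlet-domain argument. The one logical slip is in the opening observation: the claim that ``$x\in\partial C$ fails to lie in the interior of $C$ exactly when $\mathfrak{s}(o,x)=\mathfrak{s}(\gamma\cdot o,x)$ for some $\gamma$'' is not an a priori fact about infinite intersections of half-spaces --- e.g.\ $\bigcap_n\{x_1\le 1+1/n\}$ in $\R^d$ has boundary $\{x_1=1\}$, which meets none of the bounding hyperplanes. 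That statement becomes true only after you have established local finiteness of the bisector collection, which you do in the next paragraph. So the implication should be reversed: first prove that for each compact $K$ only finitely many $\Bis(o,\gamma\cdot o)$ meet $K$ (your compactness plus properness argument, which is correct --- and note you do not even need the intersection with $C$ there); then conclude that on a small ball around any $x\in C$ the domain agrees with a finite intersection of half-spaces, from which both the fact that each side lies on a single bisector and the local finiteness of sides follow. You flag this bookkeeping as a potential subtlety at the end; it is exactly the point to tidy up.
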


The proof follows from the fact that the Selberg invariant is comparable with the Riemannian metric or any $G$-invariant Finsler metric on the symmetric space $\mathcal{X}$, see Lemma \ref{lem:comparing Finsler and Selberg}.
Proposition \ref{prop:ds domains are convex polyhedra} may also be deduced from a result of Jaejeong Lee \cite{Lee08} which applies to more general properly convex domains, see also \cite[Section 2.5]{Mar09}.

\begin{proposition}\label{prop:equivalent definitions of finite-sided}
	The domain $\mathcal{DS}_\Gamma(o) \cap \mathcal{X}$ has finitely many sides if and only if there exists a finite subset $F\subset \Gamma$ such that:
	$$\mathcal{DS}_\Gamma(o) = \bigcap_{\gamma\in F}\mathcal{H}(o, \gamma\cdot o).$$
\end{proposition}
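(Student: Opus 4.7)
The plan is to dispose of the easy implication essentially by inspection and then focus on a first-exit-point argument for the nontrivial direction. If $\mathcal{DS}_\Gamma(o) = \bigcap_{\gamma \in F} \mathcal{H}(o, \gamma \cdot o)$ for a finite set $F$, then intersecting with $\mathcal{X}$ expresses $\mathcal{DS}_\Gamma(o) \cap \mathcal{X}$ as the intersection of at most $|F|$ closed half-spaces in $\mathcal{X}$, so it has at most $|F|$ sides.

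For the nontrivial direction, I would assume $D := \mathcal{DS}_\Gamma(o) \cap \mathcal{X}$ has finitely many sides $S_1, \ldots, S_k$ and use Proposition \ref{prop:ds domains are convex polyhedra} together with the fact that $D$ is an intersection of closed half-spaces to conclude that each side $S_i$, being a codimension-one face of $D$, is contained in at least one of the bounding bisectors $\Bis(o, \gamma_i \cdot o)$ for some $\gamma_i \in \Gamma \setminus \{e\}$. Setting $F = \{\gamma_1, \ldots, \gamma_k\}$ and $P := \bigcap_{\gamma \in F} \mathcal{H}(o, \gamma \cdot o)$, the inclusion $\mathcal{DS}_\Gamma(o) \subset P$ is immediate, so only the reverse inclusion requires work.

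For the reverse inclusion, I would take $y \in P$ and consider the projective segment $y_t = (1-t)o + ty$ for $t \in [0,1]$ in an affine chart containing $\overline{\mathcal{X}}$. Let $\ell_\gamma$ denote the affine linear functional defining $\mathcal{H}(o, \gamma \cdot o)$, so that $\ell_\gamma(o) > 0$ (because $o$ lies in the interior of each half-space $\mathcal{H}(o, \gamma \cdot o)$ for $\gamma \ne e$) and $\ell_{\gamma_i}(y) \ge 0$ for every $\gamma_i \in F$. Linearity then gives $\ell_{\gamma_i}(y_t) = (1-t)\ell_{\gamma_i}(o) + t\,\ell_{\gamma_i}(y) > 0$ for every $t \in [0,1)$. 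I would then argue that $y_t \in D$ for every $t \in [0,1)$ by a first-exit argument: otherwise, setting $t^\ast = \inf\{t \in [0,1) : y_t \notin D\}$, the point $y_{t^\ast}$ lies on $\partial D$, hence on some side $S_i \subset \Bis(o, \gamma_i \cdot o)$, which would force $\ell_{\gamma_i}(y_{t^\ast}) = 0$ and contradict the strict positivity above. Therefore $y_t \in \mathcal{DS}_\Gamma(o)$ for every $t < 1$, meaning $\ell_\gamma(y_t) \ge 0$ for every $\gamma \in \Gamma$; letting $t \to 1$ yields $\ell_\gamma(y) \ge 0$ for every $\gamma \in \Gamma$, hence $y \in \mathcal{DS}_\Gamma(o)$.

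The step that deserves the most care is the identification of each side $S_i$ with an active bisector $\Bis(o, \gamma_i \cdot o)$, since this is where the finite-sided hypothesis enters decisively: the sides cover $\partial D$, and each side, being a facet of a polyhedron cut out as an intersection of half-spaces, must lie on one of the bounding hyperplanes. A minor preliminary point is that $o$ must lie in the interior of $D$ so that $\ell_\gamma(o) > 0$ for every $\gamma \ne e$; this holds whenever the $\Gamma$-stabilizer of $o$ is trivial, which can be arranged generically.
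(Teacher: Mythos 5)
Your argument is essentially the same as the paper's: identify each side of $D = \mathcal{DS}_\Gamma(o) \cap \mathcal{X}$ with a bounding bisector (the paper cites \cite[Theorem 6.7.4(1)]{Rat19} for this step), extract the finite set $F$, and show that the corresponding finite intersection of half-spaces equals $\mathcal{DS}_\Gamma(o)$. The paper finishes by quoting \cite[Theorem 6.3.2]{Rat19} to get the equality inside $\mathcal{X}$ and then a general closure lemma ($Y = \overline{Y \cap \mathcal{X}}$ for closed convex $Y \subset \overline{\mathcal{X}}$ meeting $\mathcal{X}$) to pass to $\overline{\mathcal{X}}$; your first-exit segment argument followed by the limit $t \to 1$ reproves this combination inline and is a perfectly good, somewhat more self-contained substitute. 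The one misstep is your closing caveat about the stabilizer of $o$: as written it would be a genuine gap, because the proposition is asserted for an arbitrary fixed basepoint $o$ and you are not free to perturb it to arrange $\Gamma_o = \{e\}$. Fortunately the caveat is unnecessary. By definition $\mathcal{DS}_\Gamma(o) = \bigcap_{\gamma \in \Gamma \setminus \Gamma_o} \mathcal{H}(o, \gamma \cdot o)$, so only $\gamma$ with $\gamma \cdot o \ne o$ contribute; for each such $\gamma$, Proposition \ref{prop:SelbergInv} gives $\mathfrak{s}(\gamma \cdot o, o) > 0$, i.e.\ $\ell_\gamma(o) > 0$ unconditionally. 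In particular every $\gamma_i \in F$ is of this form, $o$ cannot lie on any side, $o$ is automatically interior to $D$, and your segment argument runs for every basepoint with no genericity assumption.
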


\begin{proof}
	A convex polyhedron in $\mathcal{X}$ has finitely many sides if and only if it is the intersection of $\mathcal{X}$ with finitely many closed half-spaces.
	This is proved by Ratcliffe for hyperbolic space, see \cite[Theorems 6.3.2]{Rat19}, but the proof goes through for properly convex domains in general.
	In particular if a Dirichlet-Selberg domain may be represented as a finite intersection of closed half-spaces, then its intersection with $\mathcal{X}$ has finitely many sides.
	
	\medskip
	
	On the other hand, suppose that $\mathcal{DS}_\Gamma(o) \cap \mathcal{X}$ has finitely many sides. 
	Each of its sides spans a bisector $\Bis(o,\gamma \cdot o)$, by the same proof as \cite[Theorem 6.7.4(1)]{Rat19}.
	Moreover, distinct sides span distinct bisectors by convexity. 
	It follows that there is a finite set $F \subset \Gamma$ such that 
	\begin{equation}\label{eqn:ds domain intersect X}
		\mathcal{DS}_\Gamma(o) \cap \mathcal{X} = \bigcap_{\gamma\in F}\mathcal{H}(o, \gamma\cdot o) \cap \mathcal{X}.
	\end{equation}
	In general, if $Y$ is a closed convex subset of $\overline{\mathcal{X}}$ and $Y$ contains a point in $\mathcal{X}$, then $Y = \overline{Y \cap \mathcal{X}}$.
	So by taking closures in (\ref{eqn:ds domain intersect X}) we can conclude the proof.
\end{proof}

\subsection{Infinitely-sided Dirichlet Selberg domains}

In this subsection we study Dirichlet-Selberg domains for lattices $\Gamma$ in the subgroup $\SO(1,n)<\SL(n+1,\R)$ of elements that preserve a symmetric bilinear form $\langle \cdot,\cdot \rangle$ of signature $(1,n)$. 
We show that the Dirichlet-Selberg domain is  infinitely-sided for some specific basepoints $o \in \mathcal{X}$.

\medskip

Let $q$ be the symmetric bilinear form of signature $(1,n)$ preserved by $\SO(1,n)$.
The subgroup $\SO(1,n)\subset \SL(n+1,\R)$ preserves a totally geodesic copy $\mathcal{H}\subset \mathcal{X}_{n+1} = \mathcal{X}(\mathbb{R}^{n+1})$ of the hyperbolic space $\mathbb{H}^n$. 
Indeed $\mathbb{H}^n$ can be seen as the space of lines on which the symmetric  bilinear form $q$ is positive. 
To such a line $\ell$ whose orthogonal for $q$ is $\ell^\perp$, we associate the inverse $x_\ell=q_\ell^{-1}:V^\ast \to V$ of the symmetric bilinear form $q_\ell:V\to V^\ast$ for which $\ell$ and $\ell^\perp$ are orthogonal, and such that $q_\ell=q$ on $\ell$ and $q_\ell=-q$ on $\ell^\perp$. 

\begin{theorem}
	\label{thm:InfiniteSided}
	Let $\Gamma$ be a lattice in $\SO(1,n)$ and let $o \in \mathcal{H}$.
	The Dirichlet-Selberg domain $\mathcal{DS}_\Gamma(o)\cap \mathcal{X}$ has infinitely many sides.
\end{theorem}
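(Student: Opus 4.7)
The plan is to exhibit a projective hyperplane inside $\mathcal{DS}_\Gamma(o)$ lying in the rank-one stratum of $\overline{\mathcal{X}}$, and then, for any finite collection of bisectors purportedly cutting out $\mathcal{DS}_\Gamma(o)$, to find a new bisector passing arbitrarily close to this hyperplane. Fix coordinates on $V = \mathbb{R}^{n+1}$ so that $q$ has matrix $\mathrm{diag}(1,-1,\ldots,-1)$ and $\ell_o = \mathbb{R} e_0$. A direct computation of the Selberg invariant shows that the half-space $\mathcal{H}(o,\gamma\cdot o)$ restricts to the rank-one locus $\mathbb{RP}^n \subset \partial\overline{\mathcal{X}}$ as $\{[v] : q(v, e_0)^2 \leq q(v, \gamma e_0)^2\}$, and hence the projective hyperplane $H_o := \mathbb{P}(e_0^\perp) \subset \mathbb{RP}^n$ lies in $\mathcal{DS}_\Gamma(o)$: for $[v] \in H_o$, $q(v, e_0)^2 = 0$, so the inequality holds for every $\gamma \in \Gamma$.

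A similar computation shows that, for $\gamma$ not stabilizing $\ell_o$, the Selberg bisector $\Bis(o,\gamma\cdot o)$ restricts to the rank-one locus as the union of two projective hyperplanes $H^\pm_\gamma := \{[v] : q(v, e_0 \pm \gamma e_0) = 0\}$, so $\Bis(o,\gamma\cdot o) \cap H_o = \mathbb{P}(e_0^\perp \cap (\gamma e_0)^\perp)$ has codimension two in $\mathbb{RP}^n$. I would then argue by contradiction using Proposition~\ref{prop:equivalent definitions of finite-sided}: suppose $\mathcal{DS}_\Gamma(o) = \bigcap_{\gamma\in F}\mathcal{H}(o,\gamma\cdot o)$ for some finite $F \subset \Gamma$. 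Using $n \geq 2$ so that $H_o$ is positive-dimensional, pick $[v] \in H_o$ with $q(v, \gamma e_0) \neq 0$ for every $\gamma \in F$. At such a point each of the finitely many linear forms defining these half-spaces is strictly nonzero, so a small open neighborhood $U$ of $[v\otimes v]$ in $\overline{\mathcal{X}}$ is contained in $\bigcap_{\gamma \in F}\mathcal{H}(o,\gamma\cdot o) = \mathcal{DS}_\Gamma(o)$.

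The contradiction then comes by finding $\gamma_0 \in \Gamma \setminus F$ whose Selberg bisector meets $U$: then $U$ is divided by this hyperplane into two open sub-pieces, only one of which lies in $\mathcal{H}(o, \gamma_0 \cdot o)$, contradicting $U \subset \mathcal{DS}_\Gamma(o) \subset \mathcal{H}(o,\gamma_0\cdot o)$. I expect this last step to be the main obstacle, as it requires a quantitative control on how bisectors degenerate toward the ideal boundary. Taking $v = e_1$ after rotation, an affine-chart computation gives that the distance from $[e_1]$ to $H^+_\gamma$ in $\mathbb{RP}^n$ is comparable to $|(\gamma e_0)_1|/(\gamma e_0)_0$. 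Since $\Gamma < \SO(1,n)$ is a lattice, its limit set is all of $\partial\mathbb{H}^n$; in particular the orbit $\Gamma \cdot \ell_o$ accumulates on the nonempty codimension-one sphere $\mathbb{P}(v^\perp) \cap \partial\mathbb{H}^n$ of isotropic directions $\xi$ with $\xi_1 = 0$. Along any sequence $\gamma_k \to \xi$ in this sphere, the distance estimate forces the bisectors $\Bis(o,\gamma_k \cdot o)$ to meet any preassigned neighborhood $U$ of $[e_1 \otimes e_1]$ for $k$ large enough, producing the required $\gamma_0 \in \Gamma \setminus F$.
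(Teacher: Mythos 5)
Your proof is correct and follows essentially the same strategy as the paper's: you exhibit the ideal hyperplane $\mathbb{P}(\ell_o^\perp)$ inside the Dirichlet--Selberg domain, observe that bisectors meet it in codimension two, use that the limit set of a lattice is all of $\partial\mathbb{H}^n$ to produce bisectors accumulating at a chosen rank-one point, and contradict finite-sidedness via Proposition \ref{prop:equivalent definitions of finite-sided}. The only difference is cosmetic: the paper packages the degeneration of bisectors toward $\mathbb{P}(u^\perp)$ as a Hausdorff-convergence statement (Lemma \ref{lem:InfiniteSidedBissectors}), whereas you derive it from an explicit coordinate estimate of the distance from $[e_1]$ to $H^\pm_\gamma$.
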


In particular, uniform lattices in $\SO(1,n)$ are projective Anosov subgroups of $\SL(n+1,\R)$, and these admit Dirichlet-Selberg domains in $\mathcal{X}_{n+1} = \SL(n+1,\R)/\SO(n+1)$ with infinitely many sides.  
When $n=2$, a uniform lattice in $\SO(1,2)$ includes as a Borel Anosov subgroup of $\SL(3,\R)$.

\medskip

The main ingredient of the proof is to understand the intersection of $\mathbb{RP}^{n}$ with the Selberg bisector between $x_\ell$ and $x_{\ell'}$ for $\ell\neq \ell'\in \mathbb{H}^n$.  Recall that the subset consisting of rank one symmetric tensors in $\overline{\mathcal{X}_{n+1}}$ is in one-to-one correspondence with $\mathbb{RP}^n$. We identify these two spaces in all this subsection. 
When $x,y \in S^2(\mathbb{R}^{n+1})$ are positive of the same determinant, the intersection of the Selberg bisector $\Bis([x],[y])$ with $\mathbb{RP}^n$ is the zero set of the quadratic form $x^{-1}-y^{-1}$.

\medskip

We can make the following two observations:

\begin{lemma}
	\label{lem:InfiniteSidedBissectors}
	For every $\ell\neq \ell'\in \mathbb{H}^n$, the intersection of the half-space $\mathcal{H}(x_{\ell'},x_{\ell})$ and $\mathbb{P}(\ell^\perp)\subset \mathbb{RP}^n$ is the hyperplane $\mathbb{P}\left(\ell^\perp\cap (\ell')^\perp\right)$. 
	
	If $\ell$ is fixed and $\ell'$ converges to $u\in \partial \mathbb{H}^n\subset \mathbb{RP}^n$, then the bisectors $\Bis(x_\ell,x_{\ell'}) \cap \mathbb{RP}^n$ converge for the Hausdorff topology to $\mathbb{P}(u^\perp)$. 
\end{lemma}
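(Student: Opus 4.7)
The plan is to analyze both statements through a direct computation with the quadratic form $q_\ell - q_{\ell'}$ on $\mathbb{R}^{n+1}$: the intersection of the bisector $\Bis(x_\ell, x_{\ell'})$ with $\mathbb{RP}^n$ is its projective zero set, and the intersection of the half-space $\mathcal{H}(x_{\ell'}, x_\ell)$ with $\mathbb{RP}^n$ is a projective sign locus. A useful preliminary observation is that $q_\ell = 2 P_\ell^\ast q P_\ell - q$, where $P_\ell$ denotes the $q$-orthogonal projection onto $\ell$; a short calculation then gives $\det q_\ell = (-1)^n \det q$ independently of $\ell \in \mathbb{H}^n$. Hence $q_\ell$ and $q_{\ell'}$ already have equal determinant and can be used directly as representatives in the Selberg formulas without further rescaling.

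For the first assertion, I restrict $q_\ell - q_{\ell'} = 2(P_\ell^\ast q P_\ell - P_{\ell'}^\ast q P_{\ell'})$ to a vector $v \in \ell^\perp$. Since $P_\ell v = 0$,
\[ q_\ell(v,v) - q_{\ell'}(v,v) \;=\; -2\,q(P_{\ell'} v, P_{\ell'} v) \;\leq\; 0, \]
with equality exactly when $P_{\ell'}v = 0$, i.e.\ when $v \in (\ell')^\perp$, because $\ell'$ is $q$-positive. Unwinding the definition of $\mathcal{H}(x_{\ell'}, x_\ell)$ as the set where $\mathfrak{s}_o(x_{\ell'}, \cdot) \leq \mathfrak{s}_o(x_\ell, \cdot)$, this forces the entire hyperplane $\mathbb{P}(\ell^\perp)$ onto the $x_\ell$-side of the bisector, and its intersection with $\mathcal{H}(x_{\ell'}, x_\ell)$ is precisely $\mathbb{P}(\ell^\perp \cap (\ell')^\perp)$.

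For the second assertion, I use the fact that $q_\ell - q_{\ell'}$ is a difference of two rank-one positive semidefinite forms:
\[ q_\ell - q_{\ell'} \;=\; 2\!\left(\frac{q(e_\ell,\cdot)^{2}}{q(e_\ell,e_\ell)} - \frac{q(w,\cdot)^{2}}{q(w,w)}\right), \]
where $e_\ell$ and $w$ span $\ell$ and $\ell'$ respectively and both denominators are strictly positive. As a difference of two squares this factors as a product of two linear forms, so the bisector is always a (possibly coincident) union of two projective hyperplanes, namely the $q$-orthogonals of the vectors $\sqrt{q(w,w)}\,e_\ell \pm \sqrt{q(e_\ell,e_\ell)}\,w$. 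Now taking $w \to u$ yields $q(w,w) \to q(u,u) = 0$, so both defining vectors converge to $\pm\sqrt{q(e_\ell,e_\ell)}\,u$, and each hyperplane converges in the Hausdorff topology on $\mathbb{RP}^n$ to $\mathbb{P}(u^\perp)$. The bisector, being the union of the two hyperplanes, converges to $\mathbb{P}(u^\perp)$ as well.

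The key insight, and essentially the only non-routine step, is to recognize the rank-two factorization of $q_\ell - q_{\ell'}$ as a difference of squares: this forces every bisector to be a union of two projective hyperplanes and explains the clean Hausdorff collapse onto $\mathbb{P}(u^\perp)$ as $\ell'$ approaches the boundary; the remaining verifications are bookkeeping.
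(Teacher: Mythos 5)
Your proof is correct and takes essentially the same route as the paper: identify $\Bis(x_\ell,x_{\ell'}) \cap \mathbb{RP}^n$ with the zero locus of $q_\ell - q_{\ell'}$, recognize it as a union of two projective hyperplanes, and track the Hausdorff limit as $\ell' \to u$. Your explicit rank-one factorization hands you the same pair of hyperplanes --- the $q$-orthogonals of $\sqrt{q(w,w)}\,e_\ell \pm \sqrt{q(e_\ell,e_\ell)}\,w$ --- that the paper obtains from the signature-$(1,1,n-1)$ observation and describes as the spans of $\ell^\perp\cap(\ell')^\perp$ together with $\mathrm{v}\mp\mathrm{w}$.
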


\begin{proof}
	In the present case, the intersection of $\Bis(q_\ell,q_{\ell'})$ and $\mathbb{RP}^n$ is equal to the quadratic form $q_\ell-q_{\ell'}$. 
	This bilinear form vanishes on $\ell^\perp\cap (\ell')^\perp$ but restricts to a form of signature $(1,1)$ on $\ell \oplus \ell'$.
	Hence this symmetric bilinear form has signature $(1,1,n-1)$, and the zero locus  of the corresponding quadric is the intersection of two distinct projective hyperplanes whose intersection is $\ell^\perp\cap (\ell')^\perp$. 
	Moreover neither $\ell$ nor $\ell'$ belongs to the intersection of $\Bis(q_\ell,q_{\ell'})$ and $\mathbb{RP}^n$.
	
	\medskip
	
	The intersection of the half-space $\mathcal{H}(x_{\ell'},x_{\ell})$ and $\mathbb{P}(\ell^\perp)\subset \mathbb{RP}^n$ is  therefore the set of lines on which $q_{\ell'} \ge q_{\ell}$, but $q_\ell= -q$ on $\ell^\perp$. Since $q_{\ell'}\geq-q$, it means that $\mathbb{P}(\ell^\perp)$ only intersects this half-space for lines on which $q_{\ell'}=-q$, i.e. on $\mathbb{P}\left(\ell^\perp\cap(\ell')^\perp\right)$. 
	
	\medskip
	
	The two hyperplanes that form $\Bis(q_\ell,q_{\ell'})$ are the hyperplanes $H^+$ and $H^-$ which are generated by $\ell^\perp\cap \ell'^\perp$ and respectively $\mathrm{v}+\mathrm{w}$ and $\mathrm{v}-\mathrm{w}$ where $\mathrm{v}\in \ell$ and $\mathrm{w}\in \ell'$ satisfy $q(\mathrm{v},\mathrm{v})=q(\mathrm{w},\mathrm{w})=1$. When $\ell'$ converges to $u\in \partial \mathbb{H}^n$, the intersection $\ell^\perp\cap \ell'^\perp$ converges to $\ell^\perp \cap u^\perp$ and the lines generated by $\mathrm{v}+\mathrm{w}$ and $\mathrm{v}-\mathrm{w}$ both converge to $u\subset u^\perp\setminus \ell^\perp$. Hence $H^+$ and $H^-$ both converge to $u^\perp$, so $\Bis(q_\ell,q_{\ell'})$ converges for the Hausdorff topology to $\mathbb{P}(u^\perp)$.
	
\end{proof}
\begin{figure}
	\begin{center}
		\begin{tikzpicture}
			
			\draw (0,0) circle (2);
			\draw[dotted] (-3,0) -- (6,0);
			
			\draw[blue,thick] (0.84040820577  ,-3) -- +(0,6);
			\draw[blue,thick] (4.75959179423,-3) -- +(0,6);
			
			\fill(0,0) circle (1pt);
			\node[above] () at (0, 0) {$\ell$};
			
			\fill(1.42857142857 ,0) circle (1pt);
			\node[above] () at (1.42857142857 , 0) {$\ell'$};
			
		\end{tikzpicture}
	\end{center}
	\caption{Illustration of the intersection of a Selberg bisector and $\mathbb{RP}^2$.}
	\label{fig:BissectorsSelberg}
\end{figure}

Figure \ref{fig:BissectorsSelberg} illustrates the intersection of $\Bis(q_\ell,q_{\ell'})$ and $\mathbb{RP}^n$ for $n=2$. 
The circle represents the isotropic lines for $q$. 
The two projective lines represent the intersection with $\Bis(q_\ell,q_{\ell'})$. They intersect the line at infinity $\ell^\perp$ at $\ell^\perp\cap (\ell')^\perp$. If $\ell'$ converges to $u$ on the circle, then Lemma \ref{lem:InfiniteSidedBissectors} tells us that the two blue lines converge to the tangent of the circle at $u$. 

\medskip

We can now prove Theorem \ref{thm:InfiniteSided}. 

\begin{proof}
	Suppose for the sake of contradiction that the Dirichlet-Selberg domain has finitely many sides.
	By Proposition \ref{prop:equivalent definitions of finite-sided} there is a finite set $F \subset \Gamma$ such that
	$$\mathcal{DS}_\Gamma(o)=\bigcap_{\gamma\in F}\mathcal{H}(o, \gamma\cdot o).$$
	Since $o$ is in $\mathcal{H}$, there is a unique point $\ell \in \HH^n$ so that $o = x_\ell$. 
	The interior of  $\mathcal{DS}_\Gamma(o)$ intersected with $\mathbb{P}(\ell^\perp)\subset \mathbb{RP}^n$ is the complement in $\mathbb{P}(\ell^\perp)$ of the union of $\mathcal{H}(x_{\gamma\cdot \ell},x_\ell)$ for all $\gamma\in F$. This is a finite union of hyperplanes by Lemma \ref{lem:InfiniteSidedBissectors}, hence this intersection is non-empty.
	
	\medskip
	
	Choose  a line $w \subset\ell^\perp$ in this intersection. 
	Let  $u \in \partial \HH^n$ be an isotropic line such that $w\subset u^\perp$.	
	Since the limit set of $\Gamma$ is all of $\partial \mathbb{H}^n$, there exist a sequence $(\gamma_n)$ in $\Gamma$ such that $(\gamma_n \cdot \ell)$ converges to $u$. 
	The bisectors $\Bis(o,\gamma_n o)$ converge to $u^\perp$ by Lemma \ref{lem:InfiniteSidedBissectors}. 
	In particular, these bisectors eventually meet the interior of the Dirichlet-Selberg domain. 
	Then, since the bisectors have empty interior in $\mathbb{RP}^n$, the complements of $\mathcal{H}(o,\gamma_n \cdot o)$ eventually intersect the Dirichlet-Selberg domain, yielding a contradiction. 
\end{proof}

\begin{remark}
	It is not clear if such groups admit finite-sided Dirichlet-Selberg domains for other basepoints $o\in \mathcal{X}$, so this example does not resolve Question \ref{q:Do Anosov subgroups admit finite-sided domains}. 
\end{remark}

\subsection{Properly finite-sided Dirichlet-Selberg domains}

When $G=\SL(d,\R)$, the Selberg invariant is close to the Riemannian metric for points that are close, but it also always stays at bounded distance from the Finsler distance $d_{\omega_1}$.
The identification $\X \simeq \mathcal{X}$ induces a continuous identification $\X\cup\partial_{\rm horo}^{\omega_1}X \simeq \mathcal{X}\cup \partial \mathcal{X}$, see Theorem \ref{thm:Satake=Horofunction}.

\begin{lemma}
	\label{lem:comparing Finsler and Selberg}
	Let $x_1,x_2\in \X \simeq \mathcal{X}$:
	$$d_{\omega_1}(x_1,x_2)-\log(d) \leq\mathfrak{s}(x_1,x_2)\leq d_{\omega_1}(x_1,x_2).$$
	
	Moreover for $[h]\in \X\cup\partial_{\rm horo}^{\omega_1}X \simeq \mathcal{X}\cup \partial \mathcal{X}$ and $o,x\in \X \simeq \mathcal{X}$ :
	$$h(x)-h(o)-\log(d) \leq \mathfrak{s}_o(x,[h])\leq h(x)-h(o)+\log(d).$$
\end{lemma}

\begin{proof}
	
	Let $\lambda_1,\cdots, \lambda_d$ be the eigenvalues of $x_1^{-1}x_2$. 
	Applying Proposition \ref{prop:SelbergInv} we see that:
	$$d_{\omega_1}(x_1,x_2)= \log\left(\max_{1\leq i\leq d} |\lambda_i|\right)\leq  \log\left(\sum_{1\leq i\leq d} |\lambda_i|\right) = \mathfrak{s}(x_1,x_2)+\log(d).$$
	$$d_{\omega_1}(x_1,x_2)= \log\left(\max_{1\leq i\leq d} |\lambda_i|\right)\geq  \log\left(\frac{1}{d}\sum_{1\leq i\leq d} |\lambda_i|\right) = \mathfrak{s}(x_1,x_2).$$
	
	In particular one gets for $x,y,o\in \mathcal{X}\simeq\X$:
	$$d_{\omega_1}(x,y)-d_{\omega_1}(o,y)-\log(d) \leq \mathfrak{s}_o(x,y)\leq d_{\omega_1}(x,y)-d_{\omega_1}(o,y)+\log(d).$$
	By passing to the limit as $y$ goes to $[h]$, we get the desired result.
\end{proof}

We may now deduce Theorem \ref{thm:omega1-URU implies finitely-sided} from Theorem \ref{thm:finiteSidedURU}.

\begin{corollary}\label{cor:omega1-URU implies properly finite sided}
	Let $\Gamma < \SL(2n,\R)$ be an $\omega_1$-undistorted subgroup. 
	The Dirichlet-Selberg domain $\mathcal{DS}_\Gamma(o)$ is properly finite-sided.
\end{corollary}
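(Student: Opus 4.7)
The plan is to reduce the claim to the Finsler statement already established, namely Theorem \ref{thm:finiteSidedURU} for $\omega = \omega_1$, by using Lemma \ref{lem:comparing Finsler and Selberg} to pass between the Selberg invariant and the Finsler distance $d_{\omega_1}$. Throughout, I identify $\overline{\mathcal{X}_{2n}} \simeq \mathbb{X} \cup \partial_{\omega_1} \mathbb{X}$ via Theorem \ref{thm:Satake=Horofunction}, and note that since $\Gamma$ is $\omega_1$-undistorted, Proposition \ref{prop:omega1ImpliesNAnosov} ensures $\Gamma$ is either $n$-Anosov or elementary, so the results of Section \ref{sec:DomainProperHorofunctions} apply (in the latter case via their elementary analogs in Section \ref{sec:CyclicCase}).

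The first step is to show the inclusion $\mathcal{DS}_\Gamma(o) \subset \mathbb{X} \cup \Omega^{\omega_1}_{\rm horo}$. Indeed, if $[h] \in \mathcal{DS}_\Gamma(o)$, then $\mathfrak{s}_o(o,[h]) \leq \mathfrak{s}_o(\gamma\cdot o,[h])$ for every $\gamma \in \Gamma$, i.e.\ $\mathfrak{s}_o(\gamma\cdot o,[h]) \geq 0$. By the second inequality of Lemma \ref{lem:comparing Finsler and Selberg} applied with $x = \gamma\cdot o$, this gives
\[
h(\gamma\cdot o) - h(o) \geq \mathfrak{s}_o(\gamma\cdot o,[h]) - \log(2n) \geq -\log(2n),
\]
so $h$ is bounded below on the orbit $\Gamma\cdot o$. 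Proposition \ref{prop:characterization of domains} (or Proposition \ref{prop:uniformlyProper CyclicCase} in the elementary case) then yields $[h] \in \mathbb{X} \cup \Omega^{\omega_1}_{\rm horo}$, as desired.

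Since $\mathbb{X} \cup \Omega^{\omega_1}_{\rm horo}$ is open in $\overline{\mathcal{X}_{2n}}$ and $\mathcal{DS}_\Gamma(o)$ is compact, I can choose a compact neighborhood $K$ of $\mathcal{DS}_\Gamma(o)$ contained in $\mathbb{X} \cup \Omega^{\omega_1}_{\rm horo}$. Covering $K$ by finitely many neighborhoods provided by Lemma \ref{lem:LocalUnif} (resp.\ Lemma \ref{lem:LocalUnif CyclicCase}), I obtain constants $B,C>0$ such that
\[
h(\gamma\cdot o) - h(o) \;\geq\; C\, d(o,\gamma\cdot o) - B \quad \text{for all } [h] \in K,\ \gamma \in \Gamma.
\]
Applying the other inequality of Lemma \ref{lem:comparing Finsler and Selberg}, this yields
\[
\mathfrak{s}_o(\gamma\cdot o,[h]) \;\geq\; C\, d(o,\gamma\cdot o) - B - \log(2n),
\]
which is strictly positive once $d(o,\gamma\cdot o) > (B + \log(2n) + 1)/C$. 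Since $\Gamma$ is quasi-isometrically embedded in $\mathbb{X}$ (being $\omega_1$-undistorted), only finitely many $\gamma \in \Gamma$ violate this inequality. Letting $F$ denote that finite set, I conclude that $K \subset \mathcal{H}(o,\gamma\cdot o)$ for every $\gamma \in \Gamma \setminus F$, so the interior of $K$ witnesses the proper finite-sidedness of $\mathcal{DS}_\Gamma(o)$.

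The argument is essentially mechanical once the two key inputs — the containment of $\mathcal{DS}_\Gamma(o)$ in the good domain and the locally uniform linear growth on that domain — are in place; the only delicate point is the $\log(2n)$ additive error in the comparison between $\mathfrak{s}$ and $d_{\omega_1}$, which is harmless because the linear growth in Lemma \ref{lem:LocalUnif} dominates any additive constant.
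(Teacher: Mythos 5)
Your proof is correct and follows the same strategy as the paper: combine Lemma \ref{lem:comparing Finsler and Selberg} with the locally uniform linear growth of horofunctions on $\X \cup \Omega^{\omega_1}_{\rm horo}$ (Lemma \ref{lem:LocalUnif}). However, your write-up is actually more careful on a subtle point. The paper's proof invokes Theorem \ref{thm:finiteSidedURU} to produce a neighborhood $U$ of the \emph{Finsler} Dirichlet domain $\mathcal{D}^{\omega_1}_\Gamma(o)$ on which the Finsler half-spaces are eventually satisfied, and then concludes that $\mathcal{DS}_\Gamma(o)$ is properly finite-sided; implicitly this requires $\mathcal{DS}_\Gamma(o) \subset U$, which is not addressed since $\mathcal{D}^{\omega_1}_\Gamma(o)$ and $\mathcal{DS}_\Gamma(o)$ do not contain one another in general (they agree only up to a $\log(d)$ error). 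You avoid this issue by proving directly that $\mathcal{DS}_\Gamma(o) \subset \X \cup \Omega^{\omega_1}_{\rm horo}$ (using the ``wrong-way'' inequality of Lemma \ref{lem:comparing Finsler and Selberg} to get boundedness from below, then Proposition \ref{prop:characterization of domains}), taking a compact neighborhood $K$ of $\mathcal{DS}_\Gamma(o)$ itself, and applying Lemma \ref{lem:LocalUnif} to $K$. This reproduces the mechanism inside Theorem \ref{thm:finiteSidedURU} rather than citing it as a black box, and it correctly produces a neighborhood of the Selberg domain. The only cosmetic remark: you appeal to quasi-isometric embedding to get finitely many exceptional $\gamma$, whereas discreteness of $\Gamma$ already suffices; this is harmless since $\omega_1$-undistorted subgroups are indeed quasi-isometrically embedded.
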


\begin{proof}
	Let $o\in \X$ be any basepoint. 
	Let $U$ be a neighborhood of $\mathcal{D}^\omega_\Gamma(o)$ in $\X\cup \partial_{\rm horo}^{\omega_1}X$ provided by Theorem \ref{thm:finiteSidedURU} for $A=\log(d)$. 
	There exists a finite set $F\subset \Gamma$ such that for all $[h]\in U$ and $\gamma\in \Gamma\setminus F$, $h(\gamma\cdot o)-h(o)> A$. 
	Hence $\mathfrak{s}_o(o, \gamma\cdot o)> 0$ by Lemma \ref{lem:comparing Finsler and Selberg}, so $U$ is contained in each of the projective half-spaces $\mathcal{H}(o, \gamma\cdot o)$ with $\gamma \in \Gamma \setminus F$.
	Therefore $\mathcal{DS}_\Gamma(o)$ is properly finite-sided. 
\end{proof}

We will give a second proof of Theorem \ref{thm:omega1-URU implies finitely-sided} using Theorem \ref{thm:Properly finite sided domains in Hull(F)} in Section \ref{sec:restricting ds domains}.

\begin{remark}
Du proves in \cite{du2024geometry} that the group generated by
	$$ \gamma = \begin{pmatrix}
		\lambda&&\\
		& 1 &\\
		&& \lambda^{-1} 
	\end{pmatrix}$$
	has a finite-sided Dirichlet-Selberg domain at $o \in \mathcal{X}_3$ if and only if $o$ is in the axis of $\gamma$, i.e.\ is diagonal.
	But even for such points $o\in \mathcal{X}$, the domain is not properly finite-sided. 
	One can deduce from this that an elementary subgroup of $\SL(3,\R)$ admits a properly finite-sided Dirichlet-Selberg domain if and only if it is $\omega_1$-undistorted, in which case every Dirichlet-Selberg domain is properly finite-sided.
\end{remark}

\section{Locally symmetric spaces}\label{sec:locally symmetric spaces}

In this section we consider any locally symmetric space $\mathbb{X} /\Gamma$ where $\Gamma$ is an $\omega$-undistorted subgroup of $G$. 
The Finsler distance $d_\omega$ on $\X$ descends to a natural metric on the quotient. 
We show that the horofunction compactification agrees with the quotient $(\X \cup \partialomX )/ \Gamma$.
As a consequence, we recover that such locally symmetric spaces are topologically tame, a special case of \cite[Theorem 1.4]{GKW} and \cite[Theorem 1.5.(ii)]{KL18a}. 

\medskip

Recall that the symmetric space of $\SL(V)$ embeds in $\mathbb{P}(S^2V)$ as the space $\mathcal{X}$ of positive tensors. 
The embedding $G\subset \SL(V)$ induces a totally geodesic embedding between the corresponding symmetric spaces, $\X \subset \mathcal{X}\subset \mathbb{P}(S^2V)$.

Given any $x,y\in \X\subset \mathcal{X}$ one can define the \emph{restricted Selberg invariant}. 
For this we choose positive definite
 representatives $X,Y:V^*\to V$ of $x,y$ such that $\det(X^{-1}Y)=1$, and we set:
$$\mathfrak{s}^V(x,y)=\log\Tr(X^{-1}Y) .$$ 

This restricted Selberg invariant has the following formula:

\begin{proposition}
Let $x,y\in \X$, the restricted Selberg invariant is equal to:
$$\mathfrak{s}^V(x,y)=\log\sum_{\alpha\in \Xi}n(\alpha)e^{\alpha(\vec{d}(x,y))}.$$
In this expression, $\Xi\subset \mathfrak{a}^*$ is the weight system associated to the representation $V$, and for $\alpha\in \Xi$, $n(\alpha)$ is the dimension of the associated weight space.
\end{proposition}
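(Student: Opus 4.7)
The plan is to use $G$-invariance of both sides to reduce the identity to a computation at a model basepoint, then expand the trace using the weight space decomposition of $V$, and finally identify the resulting expression with $\vec{d}(x,y)$ via the log-eigenvalue interpretation of the vector-valued distance.

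Both sides of the identity are $G$-invariant: the right side because $\vec{d}(g \cdot x, g \cdot y) = \vec{d}(x, y)$, and the left side because under the action $Q \mapsto \rho(g) Q \rho(g)^*$ the operator $X^{-1}Y$ is conjugated by $\rho(g)^*$, hence its trace and determinant are preserved. Thus it suffices to prove the identity for $x = o$ with $o \in \X$ a conveniently chosen basepoint. Pick $o \in \X \subset \mathcal{X}$ with representative $X_0 \in S^2 V^{>0}$ invariant under the stabilizer $K$ of $o$ in $G$; such $X_0$ exists by averaging over the compact group $K$. Equipping $V$ with the inner product dual to $X_0$, the Cartan decomposition lifts so that $\rho(K) \subset \mathrm{O}(V)$ and $\rho(H)$ is self-adjoint for each $H \in \mathfrak{a} \subset \mathfrak{p}$. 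Consequently, the weight spaces $V_\alpha$ are mutually $X_0$-orthogonal, and in a weight-adapted basis both $X_0$ and $\rho(\exp(H))$ are diagonal (and in particular commute).

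By $G$-invariance and the Cartan decomposition $G = K \exp(\mathfrak{a}^+) K$, we may further assume $y = \exp(H) \cdot o$ for some $H \in \mathfrak{a}^+$. Then $Y = \rho(\exp(H)) X_0 \rho(\exp(H))^*$, and using the commutation with $X_0$ and the self-adjointness $\rho(\exp(H))^* = \rho(\exp(H))$, we compute $X_0^{-1} Y = \rho(\exp(2H))$. Since this operator acts by multiplication by $e^{2\alpha(H)}$ on each weight space $V_\alpha$, we obtain
$$\Tr(X_0^{-1}Y) = \sum_{\alpha \in \Xi} n(\alpha) e^{2\alpha(H)}.$$
The normalization $\det(X_0^{-1}Y) = 1$ holds automatically because $\rho(\mathfrak{g}) \subset \mathfrak{sl}(V)$ makes $\rho(H)$ trace-free.

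The final step is to match the quantity $2H$ above with $\vec{d}(o, \exp(H) \cdot o)$: under the convention implicit in the formula, which is the one induced by the totally geodesic embedding $\X \hookrightarrow \mathcal{X}$ and used in the proof of Lemma \ref{lem:comparing Finsler and Selberg}, $\vec{d}(o,y)$ is the element of $\mathfrak{a}^+$ whose image under each weight $\alpha$ equals the logarithm of the corresponding eigenvalue of $X_0^{-1}Y$ on $V_\alpha$. Under this identification $\alpha(\vec{d}(o,y)) = 2\alpha(H)$, and taking logarithms of $\Tr(X_0^{-1}Y)$ yields the claimed formula. The main delicate point is consistently tracking this normalization between the generator $H$ of the transvection and the vector-valued distance encoded by the embedding; everything else is a direct computation in representation theory.
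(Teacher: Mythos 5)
Your proof follows essentially the same route as the paper's: use $G$-invariance and the Cartan decomposition to reduce to $y = \exp(H)\cdot o$ with $H \in \mathfrak{a}^+$, use the weight space decomposition to diagonalize $X_0^{-1}Y$, and read off the trace. The paper's proof is a two-sentence gloss that does precisely this; you have just filled in the (entirely standard) details about choosing a $K$-invariant $X_0$, adapted inner products, and self-adjointness of $\rho(H)$.

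The genuinely useful thing you do, which the paper's proof does not, is to explicitly track the normalization and observe that $X_0^{-1}Y = \rho(\exp(2H))$ rather than $\rho(\exp(H))$. This is a real concern. The paper states in Section 3 that $\vec{d}(o, g\cdot o)$ equals the Cartan projection $\mu(g)$, so that $\vec{d}(o, \exp(H)\cdot o) = H$; under that convention the log-eigenvalues of $X_0^{-1}Y$ are $2\alpha(H) = 2\alpha(\vec{d}(x,y))$, and the formula in the statement (and the paper's own one-line derivation via Proposition \ref{prop:SelbergInv}, which tacitly sets $Q_1^{-1}Q_2 = \rho(\exp(\vec{d}(x,y)))$) is off by a factor of two. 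You resolve this by appealing to "the convention implicit in the formula" and in Lemma \ref{lem:comparing Finsler and Selberg}, under which $\vec{d}(o,y)$ is the log-spectrum of $X_0^{-1}Y$, i.e.\ $2H$. That reading does make Lemma \ref{lem:comparing Finsler and Selberg} and the present Proposition internally consistent with each other, but it contradicts the paper's explicit Cartan-projection statement, so you should not present it as an uncontroversial convention; it is better to flag it as a normalization discrepancy in the source. Either way, your computation of the trace is correct, and you have been more careful here than the paper's own proof, which simply asserts the identity $Q_1^{-1}Q_2 = \rho(\exp(\vec{d}(x,y)))$ without the factor that your explicit calculation produces.
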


\begin{proof}
The restricted Selberg invariant $\mathfrak{s}^V$ is the restriction to the totally geodesic symmetric space $\X$ of the Selberg invariant defined for the symmetric space of $\SL(V)$. 
The eigenvalues of the element of $\SL(V)$ corresponding to $\exp(\mathrm{v})$ for $\mathrm{v}\in \mathfrak{a}$ are equal to $e^{\alpha(\mathrm{v})}$ for $\alpha\in \Xi$. 
Hence this formula follows from \ref{prop:SelbergInv}.
\end{proof}

This definition can be extended for any semi-positive $y\in \mathbb{P}(S^2V)$. If $y\in \mathbb{P}(S^2V)$ is semi-positive and if $o\in \X$ one can extend the Selberg invariant by taking representatives $O,X,Y$ of $o,x,y$ such that $\det(X^{-1}O)=1$:
$$\mathfrak{s}^V_o(x,y)=\log \Tr(X^{-1}Y)-\log\Tr(O^{-1}Y).$$

\begin{proposition}
The embedding $y\in \X\mapsto [\mathfrak{s}^V(\cdot,y)]\in \mathcal{Y}(\X)$ induces a horofunction compactification of $\X$, which is naturally identified with the generalized Satake compactification $\overline{\X}$ of $\X\subset \mathcal{X}$.
\end{proposition}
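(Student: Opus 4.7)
The plan is to extend the embedding $\iota^V \colon \X \to \mathcal{Y}(\X)$, $y \mapsto [\mathfrak{s}^V(\cdot,y)]$, to a continuous map $\phi \colon \overline{\X} \to \mathcal{Y}(\X)$ from the generalized Satake compactification, and then show that $\phi$ is a $G$-equivariant homeomorphism onto its image. That image will then automatically equal the closure of $\iota^V(\X)$, which by definition is the horofunction compactification. For $y \in \overline{\X} \subset \mathbb{P}(S^2V)$, choose a semi-positive representative $Y \in S^2V$ (nonzero, since $\overline{\X} \subset \overline{\mathcal{X}}$), and set
$$ \phi(y) := \bigl[\, x \mapsto \log \Tr(X^{-1}Y) - \log \Tr(O^{-1}Y) \,\bigr].$$
This is well-defined because $\Tr(X^{-1}Y) > 0$ whenever $X$ is positive definite and $Y$ positive semi-definite and nonzero, and the expression is independent of the choices of representatives. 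By construction $\phi|_\X = \iota^V$.

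Next I would verify continuity and uniform Lipschitz control. If $y_n \to y$ in $\overline{\X}$, choose semi-positive representatives $Y_n \to Y$; then $\Tr(X^{-1}Y_n) \to \Tr(X^{-1}Y)$ uniformly on compact subsets of $\X$, and since these traces remain uniformly bounded away from zero on such compacts, the same holds after taking logarithms. Hence $\phi$ is continuous on $\overline{\X}$. The preceding proposition, together with the fact that $\omega$ is the highest restricted weight of $V$ (so every weight $\alpha \in \Xi$ satisfies $\alpha(\mathrm{v}) \leq \omega(\mathrm{v})$ for $\mathrm{v} \in \mathfrak{a}^+$), yields the pointwise bound $d_\omega(x_1,x_2) \leq \mathfrak{s}^V(x_1,x_2) \leq d_\omega(x_1,x_2) + \log \dim V$; passing to the limit in $y$, I deduce that $\phi(y)$ is represented by a uniformly Lipschitz function on $\X$, so $\phi$ actually lands in $\mathcal{Y}(\X)$.

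The main obstacle is the injectivity of $\phi$. Suppose $\phi(y_1) = \phi(y_2)$. After rescaling the semi-positive representatives so that $\Tr(O^{-1}Y_1)=\Tr(O^{-1}Y_2)$, one obtains $\Tr(X^{-1}Y_1) = \Tr(X^{-1}Y_2)$ for every $X \in \X$, i.e.\ $\Tr\!\bigl(X^{-1}(Y_1-Y_2)\bigr)=0$ on all of $\X$. The involution $X \mapsto X^{-1}$ preserves $\X$ inside $\mathcal{X}$ (it is the $G$-equivariant geodesic inversion at the identity coset), so the linear span of $\{X^{-1} \mid X \in \X\}$ is the image, under this involution, of the linear span $W \subset S^2V$ of $\overline{\X}$. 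Both are $G$-invariant subspaces, and the $G$-invariant trace pairing on $S^2V$ is non-degenerate; restricted to the sum of irreducible $G$-subrepresentations of $S^2V$ in which $\overline{\X}$ sits non-trivially, the pairing remains non-degenerate, since any radical would constitute a proper $G$-invariant subspace complementing one of those summands by semisimplicity. This forces $Y_1 = Y_2$ in $S^2V$, hence $[y_1] = [y_2]$ in $\overline{\X}$.

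Since $\overline{\X}$ is compact and Hausdorff and $\phi$ is a continuous injection, $\phi$ is a homeomorphism onto its image. This image is a closed subset of $\mathcal{Y}(\X)$ containing $\iota^V(\X)$, hence contains the closure $\overline{\iota^V(\X)}$; the reverse inclusion follows from continuity of $\phi$ together with the density of $\X$ in $\overline{\X}$. Thus the horofunction compactification of $\X$ for $\mathfrak{s}^V$ is naturally identified with $\overline{\X}$, and the identification is $G$-equivariant by construction.
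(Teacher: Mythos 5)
Your overall strategy---extend the map continuously to $\overline{\X}$, prove injectivity, and use compactness to get a homeomorphism onto the closure, which is by definition the horofunction compactification---matches the paper's terse proof that the map ``is an embedding from a compact space with dense image.'' The continuity, Lipschitz and compactness steps are fine. But your injectivity argument has a genuine flaw. You reduce to showing the trace pairing is non-degenerate on $W' \times W$, where $W' = \mathrm{span}\{X^{-1} : X \in \X\} \subset S^2V^*$ and $W = \mathrm{span}\,\X \subset S^2V$, and you try to deduce this from semisimplicity together with $G$-invariance of the pairing. This inference is not valid: two $G$-invariant subspaces $A \subset M$ and $B \subset M^*$ of a semisimple module and its dual need not pair non-degenerately---$B$ can lie entirely inside the annihilator of $A$. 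The radical being a $G$-submodule is perfectly compatible with it being non-zero, and the appeal to ``complementing one of those summands'' does not produce a contradiction. (Moreover, the involution $X \mapsto X^{-1}$ is not linear, so the claim that the span of $\{X^{-1}\}$ is ``the image under this involution of the linear span $W$'' is not meaningful as stated.)

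The fix is more elementary and sidesteps semisimplicity entirely. Choose a basis of $V$ so that a representative of $O$ is the identity; this identifies $V \cong V^*$ and $S^2V \cong S^2V^*$. Since $\X$ is totally geodesic through $O$, the geodesic inversion $X \mapsto X^{-1}$ is a bijection of $\X$ onto itself, so under this identification the set $\{X^{-1} : X \in \X\}$ is exactly $\X$ and therefore spans the same subspace $W$. The trace pairing $\Tr(AB)$ on symmetric matrices then becomes the Hilbert--Schmidt inner product, which is \emph{positive definite} on all of $S^2V$; its restriction to $W$ is thus positive definite, hence non-degenerate. Since $Y_1,Y_2$ lie in $W$, the vanishing $\Tr\bigl(X^{-1}(Y_1-Y_2)\bigr)=0$ for all $X\in\X$ forces $Y_1 - Y_2 \in W \cap W^\perp = 0$.
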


\begin{proof}
Indeed the map $y\in \overline{\mathbb{X}} \mapsto [\mathfrak{s}_o^V(\cdot,y)]\in \mathcal{Y}(\X)$ is an embedding from a compact space with dense image, hence it is a homeomorphism onto the horofunction compactification of $\X$.
\end{proof}

If $\Gamma\subset G$ is a discrete subgroup, one can define the Selberg invariant on $\X/\Gamma$ to be:
$$\mathfrak{s}^V(\Gamma\cdot x,\Gamma\cdot y)=\min_{\gamma\in \Gamma}\mathfrak{s}^V(x,\gamma\cdot y).$$ 

Note that this minimum is reached because the action of $\Gamma$ on $\X$ is proper.
This map also defines an embedding $y\in \X/\Gamma\mapsto \mathfrak{s}^V(\cdot,y)\in \mathcal{Y}(\X/\Gamma)$, from which one can define a horofunction compactification of $\X/\Gamma$.

\begin{theorem}
\label{thm:Comparison compactifications of locally sym space}
Let $V$ be an irreducible representation of $G$ with highest restricted weight $\omega$.
Let $\Gamma$ be a torsion-free $\omega$-undistorted subgroup of $G$. 
The horofunction compactification $\overline{\X/\Gamma}$ of $\X/\Gamma$ for the restricted Selberg invariant is naturally identified with $\left(\X\cup \Omega^\omega_{\rm horo}\right)/\Gamma$.
\end{theorem}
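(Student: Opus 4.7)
The plan is to build a natural map
\[ \phi \colon (\X \cup \Omega^\omega_{\rm horo})/\Gamma \longrightarrow \overline{\X/\Gamma} \]
and show it is a continuous bijection between the compact space on the left (Proposition \ref{prop:CoarseFibration}) and the Hausdorff space on the right, hence a homeomorphism. Fix $\tilde o \in \X$ and, for each $[h] \in \X \cup \Omega^\omega_{\rm horo}$, work with the normalized representative $h \coloneqq \mathfrak{s}^V_{\tilde o}(\cdot, [h])$. Using $G$-invariance of $\mathfrak{s}^V$ and the cocycle property, a direct computation shows that as $y_n \to [h]$ in the Satake compactification of $\X$,
\[
\mathfrak{s}^V(\Gamma x, \Gamma y_n) - \mathfrak{s}^V(\tilde o, y_n) \;\longrightarrow\; \min_{\gamma \in \Gamma} h(\gamma x),
\]
where the minimum is finite by Proposition \ref{prop:characterization of domains}. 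This motivates setting
\[
\Psi([h])(\Gamma x) \coloneqq \Bigl[\min_{\gamma \in \Gamma} h(\gamma x) \Bigr] \in \mathcal{Y}(\X/\Gamma),
\]
which is $\Gamma$-invariant in $x$ and unchanged by $[h] \mapsto \delta \cdot [h]$ for $\delta \in \Gamma$, so it descends to $\phi$. On the open subset $\X/\Gamma$, $\phi$ recovers the tautological embedding via the identity $\mathfrak{s}^V(\Gamma x, \Gamma y) = \min_\gamma \mathfrak{s}^V(x, \gamma y)$.

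Continuity and surjectivity are relatively straightforward. For continuity, Lemma \ref{lem:LocalUnif} (and Lemma \ref{lem:LocalUnif CyclicCase} in the elementary case) furnishes, for each $[h_0] \in \X \cup \Omega^\omega_{\rm horo}$ and each compact $K \subset \X$, a neighborhood $U$ of $[h_0]$ and a finite $M \subset \Gamma$ such that the minimum defining $\Psi([h])(\Gamma x)$ is attained in $M$ for all $[h] \in U$ and $x \in K$, so $\Psi$ is locally the minimum of finitely many continuous functions. For surjectivity, the Finsler Dirichlet domain $\mathcal{D}_\Gamma^\omega(\tilde o) \subset \X \cup \Omega^\omega_{\rm horo}$ is compact by Theorem \ref{thm:omega-URU implies properly finite-sided} and contains representatives of every $\Gamma$-orbit (Proposition \ref{prop:CoarseFibration}); any $p \in \overline{\X/\Gamma}$, being a limit $\Gamma y_n \to p$ with $y_n \in \X$, admits such representatives $y_n \in \mathcal{D}_\Gamma^\omega(\tilde o)$ after $\Gamma$-translation and extraction, converging to some $[h] \in \mathcal{D}_\Gamma^\omega(\tilde o)$; continuity then gives $\phi(\Gamma[h]) = p$.

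The main obstacle is injectivity. Suppose $\phi(\Gamma[h]) = \phi(\Gamma[h'])$. After $\Gamma$-translating each class, I may assume $h(\tilde o) = h'(\tilde o) = 0 = \min_\gamma h(\gamma \tilde o) = \min_\gamma h'(\gamma \tilde o)$, which pins down the additive-constant ambiguity so that the hypothesis reads $\min_\gamma h(\gamma x) = \min_\gamma h'(\gamma x)$ on all of $\X$. Lemma \ref{lem:LocalUnif} confines the $\gamma$ attaining the minima on a small neighborhood $V$ of $\tilde o$ to the finite sets $S = \{\gamma : h(\gamma \tilde o) = 0\}$ and $S' = \{\gamma : h'(\gamma \tilde o) = 0\}$. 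The closed loci $V_\gamma \coloneqq \{x \in V : h(\gamma x) \le h(\gamma' x)\ \forall\, \gamma' \in S\}$ form a finite closed cover of $V$ whose interiors, together with their analogs for $S'$, cover an open dense subset of $V$ by Baire category; pigeonholing, some intersection $V^\circ_{\gamma_0} \cap V'^\circ_{\gamma_1}$ is nonempty and open, on which $h \circ \gamma_0 = h' \circ \gamma_1$. The restricted Selberg horofunction $\mathfrak{s}^V_{\tilde o}(\cdot, [h])$ is real-analytic on $\X$ --- a difference of logarithms of strictly positive real-analytic traces, using a semi-positive representative of $[h] \in \overline{\X}$ --- so the equality propagates globally and yields $\gamma_1^{-1}\gamma_0 \cdot [h] = [h']$, i.e.\ $\Gamma[h] = \Gamma[h']$. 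Since $\phi$ is then a continuous bijection between compact Hausdorff spaces, it is a homeomorphism.
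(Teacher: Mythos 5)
Your proof follows essentially the same approach as the paper: define $\widetilde\phi$ via $[h]\mapsto[\min_\gamma h(\gamma\cdot)]$, use Lemma \ref{lem:LocalUnif} for continuity, deduce surjectivity from compactness, and prove injectivity by locating an open set on which two translated horofunctions coincide and then invoking real-analyticity of the restricted Selberg horofunctions to propagate the identity globally. The only differences are cosmetic: you reach surjectivity via lifting to the compact Dirichlet domain rather than the paper's ``compact image contains the dense set $\X/\Gamma$'' argument, and for injectivity you decompose by the separate minimizing cells of $h$ and $h'$ (with an explicit Baire step) where the paper covers a compact set directly by the finitely many closed loci $\{\gamma_1\cdot h_1 = \gamma_2\cdot h_2\}$ -- both yield the same nonempty-interior conclusion.
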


In other words the compactification of $\X/\Gamma$ is equal to the quotient by $\Gamma$ of a domain of discontinuity in the compactification of $\X$.

\begin{proof}

 Let $\widetilde{\phi}:\X\cup \Omega^\omega_\text{horo} \to \mathcal{Y}(\overline{\X/\Gamma})$ be the map that associates to a class of functions $[h:\X\to \R]$ the class of functions $[\min_{\gamma\in \Gamma}\gamma \cdot h]$, where $\gamma\cdot h(x)=h(\gamma^{-1}\cdot x)$. Since every horofunction in $\X\cup \Omega^\omega_\text{horo}$ is proper and bounded from below on one and hence any $\Gamma$-orbit, $\widetilde{\phi}$ is well defined. Moreover on every open set $U\subset \X\cup \Omega^\omega_\text{horo} $, there exist a finite set  $S\subset \Gamma$ such that on $U$ one has $\min_{\gamma\in \Gamma}\gamma \cdot h=\min_{\gamma\in S}\gamma \cdot h$, see Lemma \ref{lem:LocalUnif}. Hence $\widetilde{\phi}$ is continuous. Moreover the image of $\X$ by $\tilde{\phi}$ lies in $\X/\Gamma$, so the image of $\tilde{\phi}$ lies in $\overline{\X/\Gamma}$.

\medskip

The map $\widetilde{\phi}$ is also $\Gamma$-invariant by definition, so it induces a map:
$$\phi:\left(\X\cup \Omega_\text{horo}^\omega\right)/\Gamma\to \overline{\X/\Gamma}.$$
The restriction of this map to $\X/\Gamma$ is the identity: indeed the restricted Selberg invariant between $\Gamma\cdot x$ and $\Gamma\cdot y$ is equal to $\min_{\gamma\in \Gamma} \mathfrak{s}(x,\gamma\cdot y)$. Since $\widetilde{\phi}$ is continuous, so is $\phi$ and because $\left(\X\cup \Omega\right)/\Gamma$ is compact, its image is compact. Hence $\phi$ is surjective.

\medskip

It remains to show that  $\phi$ is injective.
Let $[h_1],[h_2]\in \X\cup \Omega_\text{horo}^\omega$ be such that $\widetilde{\phi}([h_1])=\widetilde{\phi}([h_2])$.
Since $h_1$ and $h_2$ are proper and bounded from below, given any compact set $K$ there exist a finite set $S_K\subset \Gamma$ such that for $i=1,2$, $\min_{\gamma\in \Gamma}\gamma\cdot h_i=\min_{\gamma\in S_K} \gamma\cdot h_i$ on $K$.

\medskip

This implies that $K$ is covered by the closed sets $K_{\gamma_1,\gamma_2}=\lbrace x\in K \mid \gamma_1\cdot h_1(x)=\gamma_2\cdot h_2(x)\rbrace$ for $\gamma_1,\gamma_2\in S_K$. 
If we take a compact set $K$ with non-empty interior, then one of these sets $K_{\gamma_1,\gamma_2}$ must have non-empty interior for some $\gamma_1,\gamma_2\in S_K$. 
Hence $\gamma_1 \cdot h_1=\gamma _2\cdot h_2$ on an open set on $\X$.
Note that $h_1$ and $h_2$ are analytic as the restriction of the $\log$ of a linear map to an analytic submanifold. 
This implies that $[h_1]=[\gamma_1^{-1}\gamma_2\cdot h_2]$ and hence the two points must correspond to the same element in $\left(\X\cup \Omega^\omega_{\rm horo}\right)/\Gamma$.

\medskip

In conclusion $\phi$ is injective, so we have proven that it induces an homeomorphism.
\end{proof}

\begin{remark}
If we try to apply the same argument for the horofunction compatification of $\X/\Gamma$ using the Finsler distance, the proof of the injectivity does not immediately apply since horofunctions are not analytic. However the map $\phi$ is still well-defined and surjective.
\end{remark}

A consequence of this result is that the locally symmetric space $\X/\Gamma$ is topologically tame. A manifold is \emph{topologically tame} if it is the interior of a compact manifold with boundary.

\begin{proposition}[{\cite[Proposition 6.1]{GGKW17b}}]
\label{prop:Tameness}
Let $X$ be a real semi-algebraic set and $\Gamma$ a torsion-free discrete group acting on $X$ by real algebraic homeomorphisms. 
Suppose $\Gamma$ acts properly discontinuously and cocompactly on some open subset $\Omega$ of $X$. Let $U$ be a $\Gamma$-invariant real semi-algebraic subset of X contained in $\Omega$. 
If $U$ is a manifold and $\overline{U}\subset \Omega$, then $U/\Gamma$ is topologically tame.
\end{proposition}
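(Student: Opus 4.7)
The plan is to construct a $\Gamma$-invariant codimension-zero submanifold with boundary $N\subset U$ whose quotient $N/\Gamma$ is a compact manifold with boundary realizing $U/\Gamma$ as its interior. First observe that, since $\Gamma$ acts properly discontinuously and cocompactly on $\Omega$ and $\overline{U}\subset\Omega$, the restricted action on $\overline{U}$ is proper, the quotient $\overline{U}/\Gamma$ is compact Hausdorff, and $U/\Gamma$ is an open subset of it. The complement $\partial U := \overline{U}\setminus U$ is a $\Gamma$-invariant closed semi-algebraic subset of $X$.

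Next, I would use the semi-algebraic triangulation theorem together with the semi-algebraic collar theorem to obtain, locally near $\partial U$, a semi-algebraic collar: a semi-algebraic homeomorphism of a neighborhood of $\partial U$ in $\overline{U}$ onto $\partial U\times [0,1)$ sending $\partial U$ to $\partial U\times\{0\}$. This is a purely local statement and does not yet use the $\Gamma$-action.

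The essential step is to upgrade the construction to a $\Gamma$-equivariant one. I would produce a $\Gamma$-invariant continuous, locally semi-algebraic ``defining function'' $\phi\colon \overline{U}\to[0,\infty)$ with $\phi^{-1}(0)=\partial U$. Starting from a semi-algebraic function $\psi\colon X\to[0,\infty)$ vanishing exactly on $\partial U$ (built from the polynomial inequalities cutting out $\partial U$), set $\phi(x):=\inf_{\gamma\in\Gamma}\psi(\gamma x)$. The cocompact proper action on $\Omega$ combined with $\overline{U}\subset\Omega$ ensures that on any compact piece of $\overline{U}/\Gamma$ only finitely many $\gamma$ contribute to this infimum, so $\phi$ is continuous, $\Gamma$-invariant, locally a minimum of semi-algebraic functions, and strictly positive on $U$ (the positivity uses that the orbit $\Gamma x$ is discrete in $\Omega$ for $x\in U$, while $\psi$ vanishes only on $\partial U\subset\Omega$). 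Applying the semi-algebraic Sard theorem to the local representatives of $\phi$, for arbitrarily small $\epsilon>0$ the level set $\{\phi=\epsilon\}$ is a semi-algebraic submanifold and the sublevel set $\{\phi\le\epsilon\}$ is a $\Gamma$-equivariant collar of $\partial U$ homeomorphic to $\partial U\times[0,\epsilon]$. Setting $N:=\{\phi\ge\epsilon\}$ yields a $\Gamma$-invariant manifold with boundary; its quotient $N/\Gamma$ is closed in the compact space $\overline{U}/\Gamma$, hence compact, and has interior $U/\Gamma$, proving topological tameness.

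The main obstacle is the passage from the local semi-algebraic collar near $\partial U$ to a \emph{$\Gamma$-equivariant} collar, equivalently the construction of $\phi$ with good regularity of its small level sets. This is where the strict inclusion $\overline{U}\subset\Omega$ plays its essential role: it forces the collar of $\partial U$ to lie in the domain of proper discontinuity, so the finite-orbit character of the infimum defining $\phi$ and the semi-algebraic Sard argument descend cleanly to the quotient and yield a compact manifold with boundary whose interior is $U/\Gamma$.
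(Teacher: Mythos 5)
The paper does not supply a proof of Proposition \ref{prop:Tameness}; it imports it directly from \cite[Proposition 6.1]{GGKW17b}. I therefore assess your argument on its own.

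There is a genuine gap at the step invoking a ``semi-algebraic collar theorem'' to produce a neighborhood of $\partial U$ in $\overline{U}$ homeomorphic to $\partial U\times[0,1)$. No such theorem exists at this level of generality: $\overline{U}$ is \emph{not} assumed to be a manifold with boundary $\partial U$, and whether it is, after trimming, is essentially the content of the proposition you are trying to prove. The frontier $\partial U$ of a semi-algebraic set is in general a stratified set rather than a manifold (think of $U = \mathbb{R}^2\setminus\{0\}$, $\partial U=\{0\}$), and the local structure of a semi-algebraic set near a frontier point is \emph{conical} (a cone on the link), not a product; only if the link is a sphere and the cone structures glue coherently does one obtain a collar, and establishing that requires real work. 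This propagates into the final step: you then claim, ``applying the semi-algebraic Sard theorem,'' that $\{\phi\le\epsilon\}\cong\partial U\times[0,\epsilon]$. Sard-type statements for semi-algebraic maps only say that generic fibers are submanifolds; they say nothing about product structure, and a product with the (non-manifold) factor $\partial U$ cannot be correct.

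A repairable version of the last step would invoke Hardt's semi-algebraic local triviality theorem rather than Sard, to get a trivialization $\{0<\phi<\epsilon_0\}\cong\phi^{-1}(c)\times(0,\epsilon_0)$ over the \emph{open} interval for small $\epsilon_0$; one can then push along this product to identify $U=\{\phi>0\}$ with the interior of $\{\phi\ge c\}$, and Hardt also gives that $\{\phi=c\}$ is bicollared so that $\{\phi\ge c\}$ is a manifold with boundary. But two further points need care. First, your $\phi=\inf_{\gamma\in\Gamma}\psi(\gamma\,\cdot)$ is only \emph{locally} a minimum of semi-algebraic functions; Hardt's theorem requires a globally semi-algebraic map, so one must pass to $\overline{U}/\Gamma$ and justify that it can be given a semi-algebraic (or at least finite simplicial, via semi-algebraic triangulation) structure in which $\overline\phi$ becomes semi-algebraic. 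Second, the strict positivity of $\phi$ on $U$ is not automatic from ``$\Gamma x$ discrete in $\Omega$'' unless $\overline{U}$ is compact or $\psi$ is chosen with a uniform lower bound away from $\partial U$ on all of $\overline{U}$; when $X$ is noncompact the orbit could leave every compact and drive the infimum to zero even while staying in $U$. These are precisely the points where the cited argument in \cite{GGKW17b} does real work that your sketch skips.
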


We can apply this to semi-algebraic compactifications of $\X$.
Note that the Tarski principle implies that semi-algebraic set are closed under projection. 
Hence any orbit of the algebraic action of an algebraic group on a finite dimensional vector space is semi-algebraic. 
For instance, given an irreducible representation $V$ of the semi-simple group $G$ with highest restricted weight $\omega$, the totally geodesic embedding $\X\subset \mathcal{X}$ is algebraic. 
Note also that the closure of a semi-algebraic set is semi-algebraic.

\medskip

In particular we get the following result for every $\omega$ that is the highest restricted  weight of a representation:
\begin{corollary}
\label{cor:Locally symmetric space is topologically tame}
Let $\Gamma$ be a torsion-free $\omega$-undistorted subgroup of $G$. 
The locally symmetric space $\X/\Gamma$ is topologically tame. 
\end{corollary}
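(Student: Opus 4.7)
The plan is to apply Proposition \ref{prop:Tameness}, with all the necessary ingredients assembled earlier in the paper. Since $\omega$ is by hypothesis the highest restricted weight of a representation, I first fix such an irreducible representation $V$ of $G$; this realizes $\X$ as a totally geodesic submanifold of $\mathcal{X}(V) \subset \mathbb{P}(S^2V)$, with associated Satake compactification $X := \overline{\X} \subset \mathbb{P}(S^2V)$. This Satake compactification is real semi-algebraic as the closure of the algebraic $G$-orbit $\X$, and the inclusion $\Gamma \subset G \subset \SL(V)$ shows that $\Gamma$ acts on $X$ by real algebraic homeomorphisms.

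Next, I take $\Omega := \X \cup \Omega_{\rm horo}^\omega$. By Theorem \ref{thm:Satake=Horofunction}, the Satake compactification $X$ is $G$-equivariantly homeomorphic to the horofunction compactification $\X \cup \partial_{\rm horo}^\omega \X$ for the Finsler metric $d_\omega$, so $\Omega$ is open in $X$ as the complement of the closed thickening set $\bigcup_{z \in \partial \Gamma} \ThomegaH(\xi_\Theta(z),\mathcal{C})$. Proposition \ref{prop:CoarseFibration} asserts that $\Gamma$ acts properly discontinuously and cocompactly on $\Omega$. I then take $U := \X$, which is a $\Gamma$-invariant open semi-algebraic submanifold of $X$ cut out by positive definiteness and is connected. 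Applying Proposition \ref{prop:Tameness} to the triple $(X,\Omega,U)$ yields that $\X/\Gamma = U/\Gamma$ is topologically tame.

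The main technical point to check is the closure hypothesis $\overline{U} \subset \Omega$ in Proposition \ref{prop:Tameness}. Since the closure of $\X$ inside the Satake compactification is all of $\overline{\X}$, which fails to be contained in $\Omega$ whenever the limit set is non-empty, the straightforward choice $U = \X$ must be refined: I expect to replace $U$ by a $\Gamma$-invariant open semi-algebraic subset of $\X$ whose closure in $X$ avoids the closed thickening set, while still having the same $\Gamma$-quotient as $\X$ up to tameness (for instance, a $\Gamma$-saturation of a semi-algebraic region slightly shrinking away from the bad thickening at infinity, using the explicit control on horofunctions furnished by Lemma \ref{lem:LocalUnif}). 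Verifying this semi-algebraic shrinking, together with the algebraicity of the $\Gamma$-action, is the main obstacle to a clean application of Proposition \ref{prop:Tameness}.
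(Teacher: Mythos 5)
Your strategy follows the paper's: apply Proposition~\ref{prop:Tameness} with $X$ the Satake compactification (semi-algebraic since it is the closure of the algebraic orbit $\X \subset \mathbb{P}(S^2V)$), $\Omega = \X \cup \Omega_{\rm horo}^\omega$ (open, cocompact, and properly discontinuous by Proposition~\ref{prop:CoarseFibration} via the identification of Theorem~\ref{thm:Satake=Horofunction}), and $U = \X$ (open, semi-algebraic, a manifold). You are right to flag the closure hypothesis: the closure of $\X$ in $X$ is all of $\overline{\X}$, which is not contained in $\Omega$ once the limit set is nonempty, so Proposition~\ref{prop:Tameness} as quoted in the paper does not literally apply to $U=\X$. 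The paper's own treatment of the corollary is a single line (``we can apply this \ldots'') and does not address this point either, so your worry is not resolved by reading the paper alone.

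However, your proposed fix is incomplete in a way you should notice before pursuing it. If you shrink $U$ to a $\Gamma$-invariant $U' \subsetneq \X$ with $\overline{U'} \subset \Omega$, then Proposition~\ref{prop:Tameness} gives tameness of $U'/\Gamma$, not of $\X/\Gamma$. Tameness is a homeomorphism-invariant, not a homotopy-invariant, so the shrinking only helps if you additionally produce a homeomorphism $\X/\Gamma \cong U'/\Gamma$ (say by exhibiting a $\Gamma$-equivariant semi-algebraic collar of $\X \setminus U'$), or show that $\X/\Gamma$ is obtained from $U'/\Gamma$ by gluing on a compact manifold with boundary. That is considerably more than ``semi-algebraic shrinking.'' The more economical route is to double-check the hypotheses of GGKW's Proposition 6.1 against the source: note that as quoted the two conditions ``$U$ contained in $\Omega$'' and ``$\overline{U} \subset \Omega$'' appear side by side even though the second implies the first, which suggests the paper's transcription may not exactly match GGKW17b. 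If the source does not impose $\overline{U}\subset\Omega$, the direct application with $U=\X$ goes through as the paper intends and no shrinking is required; if it does, the paper itself has the same gap you identified, and one should simply cite \cite[Theorem~1.4]{GKW} as the paper also does.
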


This recovers a particular case of \cite[Theorem 1.4]{GKW} and \cite[Theorem 1.5.(ii)]{KL18a} since $\omega$-undistorted representations are Anosov. Note that the set of $\omega\in \mathfrak{a}$, considered up to positive scaling and the action of the Weyl group, that are the highest restricted weight of a representation is dense. Therefore any $\omega$-undistorted representation for $\omega\in \mathfrak{a}$ is $\omega'$-undistorted for such an $\omega'$, and hence the Theorem applies. 

This corollary is weaker than the result for Anosov subgroups for the following reason. 

\begin{proposition}
	Let $G$ be simple of rank at least $3$. 
	Then there exists $\alpha \in \Delta$ and a Zariski dense free subgroup $\Gamma < G$ which is $\{\alpha\}$-Anosov but not $\omega$-undistorted for any $\omega \in \mathfrak{a}^\ast$.
\end{proposition}

\begin{proof}
There exists $\alpha,\beta \in \Delta$ and $w \in W$ so that $w \alpha = \beta$ but $\beta$ is not equal to $\alpha$ nor $\iota \alpha$. 
To find this, look for an $A_3$ subdiagram of the Dynkin diagram for the restricted root system and let $\alpha$ be the middle root and let $\beta$ be either of the others.
If there is no $A_3$ subdiagram, the restricted root system has type $B_3$, $C_3$, or $F_4$, and the opposition involution is trivial, so we can take $\alpha$ and $\beta$ to be two simple roots of equal length.

Now we find a convex set $\mathcal{C}\subset \overline{\mathfrak{a}^+}$ which is invariant by the opposition involution and disjoint from $\ker \alpha$ whose orbit $W \cdot \mathcal{C}$ intersects every hyperplane in $\mathfrak{a}$. 
Fix $\epsilon >0$ and let $\mathcal{C}_\epsilon$ contain the set of unit vectors in $\mathfrak{a}^+$ at distance at least $\epsilon$ from $\Ker(\alpha)\cup \iota\Ker(\alpha)$. 
We claim that for $\epsilon$ small enough, we may take $\mathcal{C} = \mathcal{C}_\epsilon$. 
Otherwise, there is a sequence of linear forms $\omega_n$ in $\mathfrak{a}^\ast$ so that $(W \cdot \ker \omega_n) \cap \overline{\mathfrak{a}^+}$ is in the $1/n$-neighborhood of $\ker \alpha \cup \ker \iota \alpha$, and then up to passing to a subsequence we may assume that $\omega_n$ limits to a positive multiple of $\alpha$ or $\iota \alpha$. 
But then $w \cdot \omega_n$ converges to $\beta$ or $\iota \beta$ and for $n$ large enough $(W \cdot \ker \omega_n) \cap \overline{\mathfrak{a}^+}$ becomes close to $\ker \beta \cup \ker \iota \beta$.
This leads to a contradiction since $\{\alpha, \iota \alpha\}$ is disjoint from $\{\beta, \iota \beta\}$.

We can then use the construction of Benoist \cite{Benoist} as in Proposition \ref{prop:existence of omega-undistorted subgroups} to construct an undistorted free group with limit cone contained in $\mathcal{C}$ and disjoint from $\ker \alpha$.
Such a subgroup $\Gamma$ is $\alpha$-Anosov but for every $\omega \in \mathfrak{a}^\ast$, the subgroup $\Gamma$ cannot be $\omega$-undistorted.
\end{proof}

\section{Restriction of Selberg's construction}\label{sec:restricting ds domains}

In this section we consider Anosov subgroups $\Gamma\subset \SL(V)$ which are not necessarily $\omega_1$-undistorted, and we try to find a smaller domain of $\overline{\mathcal{X}}$ on which the Dirichlet-Selberg domain is nevertheless properly finite-sided.

\subsection{The general statement} 

We first introduce a general but technical statement that we will then apply to more specific situations.

\medskip

Let $G$ be a semisimple real Lie group and $V$ a finite-dimensional linear real representation of $G$.
Recall from Section \ref{sec:representation} the associated restricted weight space decomposition:
$$ V=\bigoplus_{\lambda\in \Phi} V_\lambda $$
with $\Phi$ denoting the set of restricted weights.

\medskip

More precisely for every pair of transverse full flags $f,g\in \mathcal{F}_\Delta$ for $G$, one has an identification of the model maximal abelian subspace $\mathfrak{a}\subset \mathfrak{p}$ with another $\mathfrak{a}_{f,g}\subset \mathfrak{g}$.
For each such choice we get a restricted weight decomposition:
$$V=\bigoplus_{\lambda\in \Phi} V^{f,g}_\lambda.$$

\medskip

Given a closed subset $\mathcal{C}\subset \sigma_\text{mod}=\mathbb{S} \mathfrak{a}^+$, and a subset $\Theta\subset\Delta$ of simple roots of $G$, we define the following subsets of the space of restricted weights:
$$\Phi^+_{\mathcal{C},\Theta}=\lbrace \lambda\in \Phi\mid \lambda>0 \text{ on } W_\Theta\cdot\mathcal{C}\rbrace, $$
$$\Phi^-_{\mathcal{C},\Theta}=\lbrace \lambda\in \Phi\mid \lambda<0 \text{ on } W_\Theta\cdot\mathcal{C}\rbrace, $$
$$\Phi^0_{\mathcal{C},\Theta}=\Phi\setminus \left(\Phi^+_{\mathcal{C},\Theta}\cup\Phi^-_{\mathcal{C},\Theta}\right).$$

Here $W_\Theta$ is the subgroup of the Weyl group generated by the involutions $s_\alpha$ associated to the simple roots $\alpha\in \Delta\setminus \Theta$ and $W_\Theta\cdot \mathcal{C}\subset \mathbb{S}\mathfrak{a}$.

\begin{lemma}
\label{lem: V_+^f is well defined since it is an ideal.}
Let us fix some $\mathcal{C} \subset \sigma_{mod}$ and $\Theta \subset \Delta$, and let $\xi \in \mathcal{F}_\Theta$. 
The following subspaces are independent of the choice of transverse flags $f,g\in \mathcal{F}_\Delta$ such that the simplex corresponding to $\xi$ is included in the simplex corresponding to $f$:
$$ V^\xi_+ \coloneqq \bigoplus_{\lambda\in \Phi^+_{\mathcal{C},\Theta}}V^{f,g}_\lambda, \;\; V^\xi_{\geq} \coloneqq \bigoplus_{\lambda\in \Phi^+_{\mathcal{C},\Theta}\cup \Phi^0_{\mathcal{C},\Theta}}V^{f,g}_\lambda.
$$
\end{lemma}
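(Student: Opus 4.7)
My plan is to show that $V^\xi_+$ and $V^\xi_\geq$ are each invariant under the parabolic subgroup $P_\xi \subset G$ stabilizing the simplex $\xi$. This will suffice, since two Cartan subalgebras $\mathfrak{a}_{f,g}$ and $\mathfrak{a}_{f',g'}$ corresponding to pairs $(f,g), (f',g')$ with $f, f'$ both extending $\xi$ lie in $P_\xi$ and are $P_\xi$-conjugate; under this conjugation the weight spaces permute, but thanks to the $W_\Theta$-invariance of $\Phi^+_{\mathcal{C},\Theta}$, the sum defining $V^\xi_+$ is preserved as a subspace of $V$ provided $V^\xi_+$ itself is $P_\xi$-invariant. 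Write $P_\xi = L \ltimes U$ for the Levi decomposition.

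For $U$-invariance, I use two facts. First, $\Phi^+_{\mathcal{C},\Theta}$, $\Phi^-_{\mathcal{C},\Theta}$ and $\Phi^0_{\mathcal{C},\Theta}$ are $W_\Theta$-invariant subsets of $\mathfrak{a}^\ast$ because $W_\Theta \cdot \mathcal{C}$ is $W_\Theta$-invariant by construction. Second, the roots of $\mathfrak{u} = \Lie U$ are the positive roots of $\mathfrak{g}$ that do not vanish on $\mathfrak{a}^\Theta$, and $W_\Theta$ permutes them among themselves (a standard fact, since $W_\Theta$ stabilizes the parabolic). Hence for any root $\alpha$ of $\mathfrak{u}$ and any $w \in W_\Theta, v \in \mathcal{C}$, the value $\alpha(wv) = (w^{-1}\alpha)(v)$ is positive, as $w^{-1}\alpha$ is again a positive root and $v$ lies in the positive chamber. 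Adding such an $\alpha$ to $\lambda \in \Phi^+_{\mathcal{C},\Theta}$ gives $(\lambda+\alpha)(wv) > 0$ for all $w, v$, so $\lambda + \alpha \in \Phi^+_{\mathcal{C},\Theta}$ whenever it is a weight of $V$; exponentiating root groups yields that $V^\xi_+$ is $U$-stable. A symmetric argument covers $V^\xi_\geq$: if $\lambda \in \Phi^+ \cup \Phi^0$ but $\lambda + \alpha \in \Phi^-$, then $\lambda(wv) < -\alpha(wv) < 0$ everywhere on $W_\Theta \cdot \mathcal{C}$, forcing $\lambda \in \Phi^-$, contrary to hypothesis.

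For $L$-invariance, the Levi $L$ commutes with its central torus $Z(L) = \exp(\mathfrak{a}^\Theta)$, so $L$ preserves the eigenspace decomposition $V = \bigoplus_\mu V^{\mathrm{cent}}_\mu$, where $V^{\mathrm{cent}}_\mu = \bigoplus_{\lambda|_{\mathfrak{a}^\Theta}=\mu} V^{f,g}_\lambda$ collects the weight spaces whose weights share a common restriction to $\mathfrak{a}^\Theta$. It therefore suffices to show that $\Phi^+_{\mathcal{C},\Theta}$ and $\Phi^+_{\mathcal{C},\Theta} \cup \Phi^0_{\mathcal{C},\Theta}$ are unions of central-weight classes. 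The main technical obstacle lies in this last step: two weights $\lambda, \lambda'$ with the same central weight differ by an element of the span of the roots of $L$, which vanishes on $\mathfrak{a}^\Theta$ and thus has vanishing $W_\Theta$-average. I will complete this step by averaging inequalities such as $\lambda(wv) > 0$ over $w \in W_\Theta$ to extract a condition depending only on the central weight, then combining this with the $W_\Theta$-invariance of $\Phi^+$ to propagate membership within each central class; this last averaging-plus-symmetry step is where the bulk of the work will lie.
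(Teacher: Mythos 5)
Your approach mirrors the paper's in structure (reduce to $P_\xi$-invariance, handle the unipotent and Levi pieces separately), and your treatment of $U$-invariance is correct --- indeed a bit sharper than the paper's: the paper changes $g$ by the full unipotent radical of a Borel and so must invoke an ``ideal'' property for all positive roots, whereas you correctly confine attention to roots of $\mathfrak{u}$, for which $w^{-1}\alpha$ stays positive for all $w \in W_\Theta$.

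The $L$-invariance step --- which you flag as ``the main technical obstacle'' --- is not completed, and the reduction you propose cannot work as stated: $\Phi^+_{\mathcal{C},\Theta}\cup\Phi^0_{\mathcal{C},\Theta}$ is in general not a union of central-weight classes. Take $G=\SL(3,\R)$, $V=S^2\R^3$, $\Theta=\{\alpha_1\}$ (so $W_\Theta=\{1,s_{\alpha_2}\}$) and $\mathcal{C}$ containing the direction $(4,1,-5)\in\mathfrak{a}^+$. The weights $2e_3$ and $e_2+e_3$ differ by $\alpha_2$ and hence restrict identically to $\mathfrak{a}^\Theta$, yet $2e_3\in\Phi^0_{\mathcal{C},\Theta}$ while $e_2+e_3\in\Phi^-_{\mathcal{C},\Theta}$ (evaluate at $(4,1,-5)$ and $(4,-5,1)$). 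Correspondingly $V^\xi_\geq$ is not $P_\xi$-invariant: $\exp(tE_{23})\in P_\xi$ moves $e_3\otimes e_3$ off $V^\xi_\geq$. The same example breaks the paper's own key claim that $\Phi^+_{\mathcal{C},\Theta}\cup\Phi^0_{\mathcal{C},\Theta}$ is an ideal for the dominance order (here $e_2+e_3 = 2e_3 + \alpha_2$ with $\alpha_2 \ge 0$ on $\mathfrak{a}^+$, yet $2e_3\in\Phi^0$ and $e_2+e_3\in\Phi^-$), so the difficulty appears to be in the lemma itself rather than in your write-up; under the additional hypotheses of Theorem~\ref{thm:Properly finite sided domains in Hull(F)} the argument may be salvageable, but the lemma as stated seems to require more care at exactly the step you identified.
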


\begin{proof}
Note that $A=\Phi^+_{\mathcal{C},\Theta}$ or $\Phi^+_{\mathcal{C},\Theta}\cup \Phi^0_{\mathcal{C},\Theta}$ are ideals for the order relation of $\Phi$: for all $\lambda\in A$, if $\lambda'\in \Phi$ satisfies $\lambda'-\lambda\geq 0$ on $\mathfrak{a}^+$ then $\lambda'\in A$.

\medskip

If $g'$ is an other full flag transverse to $f$, then $g'=u\cdot g$ for some unipotent element of $G$ fixing $f$, which in turn is the exponential of an element $\mathrm{u}\in \mathfrak{g}$ that belongs to the sum $\mathfrak{u}_\Delta$ of all positive root spaces $\mathfrak{g}^{f,g}_\alpha$ for $\alpha$ a positive root.
However if $\mathrm{u}\in \mathfrak{g}^{f,g}_\alpha$ and $\mathrm{v}\in V^{f,g}_\lambda$ in the root space associated to the root $\alpha$ satisfies $[\mathrm{u},\mathrm{v}]\in V^{f,g}_{\lambda+\alpha}$. 
Since $A$ is an ideal, $\lambda\in A$ implies that $\lambda+\alpha\in A$.

\medskip

The fact that this is independent of the choice of $f$ is due to the fact that $A$ is $W_\Theta$-invariant.

\end{proof}

\medskip

Let $\mathcal{F}\subset \mathbb{P}(V)$ be a compact $\Gamma$-invariant subset. 
We define $S^2\mathcal{F}\subset \mathbb{P}(S^2V)$ to be the corresponding set of rank one tensors. 
We call $\Hull(S^2\mathcal{F})\subset \mathcal{X}(V)= \mathbb{P}(S^2V^{\geq 0})$ the convex hull of these points. 
We denote by $\Hull(S^2\mathcal{F})^*$ the open dual convex domain, i.e. the set of linear form that do not vanish on $S^2\mathcal{F}$.
This space contains the space $\mathcal{X}(V)^*$ of projectivizations of positive definite bilinear forms on $V$.
We note that there is a natural identification of $\mathcal{X}$ with $\mathcal{X}^\ast$ given by $[X \colon V^\ast \to V] \mapsto [X^{-1} \colon V \to V^\ast].$

\medskip

Given $[O]=o\in \Hull(S^2\mathcal{F})^*$, we define $\mathcal{DS}^\mathcal{F}_\Gamma(o)$ to be the set of elements $[X]\in\Hull(S^2\mathcal{F})$ such that for all $\gamma\in \Gamma$:
$$\text{Tr}\left(X (O-\gamma\cdot O)\right)\geq 0.$$

Here we chose the signs of the representatives of $X$ and $O$ so that $\text{Tr}(XO)>0$. 
When $o\in \mathcal{X}^*\simeq \mathcal{X}$, this domain coincides with $\mathcal{DS}_\Gamma(o^{-1})\cap \Hull(S^2\mathcal{F})$.
\medskip

\begin{definition}
	Let $[O]=o \in \Hull(S^2\mathcal{F})^*$.
	We say that $\mathcal{DS}^\mathcal{F}_\Gamma(o)$ is \emph{properly finite-sided} in $\Hull(S^2\mathcal{F})$ if there exists a neighborhood $U$ of $\mathcal{DS}^\mathcal{F}_\Gamma(o)$ in $\Hull(S^2\mathcal{F})$ and a finite set $F \subset \Gamma$ such that for all $\gamma$ not in $F$, $U$ is contained in the set 
	$$ \mathcal{H}_\mathcal{F}(o,\gamma \cdot o) \coloneqq \{ [X] \in \Hull(S^2\mathcal{F}) \mid \text{Tr}\left(X (O-\gamma\cdot O)\right)\geq 0 \},$$
	with representatives of $X$ and $O$ chosen so that $\text{Tr}(XO)>0$.
\end{definition}

\begin{theorem}
\label{thm:Properly finite sided domains in Hull(F)}
Let $\Gamma$ be a $\Theta$-Anosov subgroup of $G$, and let $\mathcal{C}=\mathcal{C}_\Gamma$. 
Let $V$ be a representation of $G$, and let $\mathcal{F} \subset \mathbb{P}(V)$ be a $\Gamma$-invariant compact subset.
Suppose that $\mathcal{F}$ is disjoint from $V^\xi_{\geq}\setminus V^\xi_+$ for all $\xi=\xi_\Theta(x)$ for $x\in \partial \Gamma$ (see Lemma \ref{lem: V_+^f is well defined since it is an ideal.}).
Then $\mathcal{DS}^\mathcal{F}_\Gamma(o)$ is properly finite-sided in $\Hull(S^2\mathcal{F})$ for all $o\in \Hull(S^2\mathcal{F})^*$.
\end{theorem}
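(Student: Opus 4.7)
The plan is to adapt the argument of Theorem \ref{thm:finiteSidedURU}: I will establish a dichotomy for the asymptotic behavior of $\Tr(X \cdot \gamma \cdot O)$ as $\gamma \to \infty$, promote it to a locally uniform estimate, and then conclude by a compactness argument.

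\emph{Weight-space dichotomy along Weyl cones.} I will fix $\zeta \in \partial\Gamma$, let $\xi = \xi_\Theta(\zeta)$, and use the decomposition $V = V^\xi_+ \oplus V^\xi_0 \oplus V^\xi_-$ from Lemma \ref{lem: V_+^f is well defined since it is an ideal.}. For $\mathrm{v} \in \st_\mathcal{C}(\xi)$ and $g_t = \exp(t\mathrm{v})$, the action on $O$ (a bilinear form on $V$) satisfies $(g_t \cdot O)(x,x) = O(g_t^{-1}x, g_t^{-1}x)$, which on weight vectors $x \in V_\lambda$ is $e^{-2t\lambda(\mathrm{v})}O(x,x)$. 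The compactness of $\mathcal{C}$ together with its separation from the walls of weights in $\Phi^\pm_{\mathcal{C},\Theta}$ gives a constant $c > 0$ so that, for unit-speed $\mathrm{v}\in \st_\mathcal{C}(\xi)$: for $x \in V^\xi_+$ one has $(g_t \cdot O)(x,x) \leq e^{-2ct}O(x,x)$, while for $x$ with $x_- \neq 0$ the positive-definiteness of $O$ forces $(g_t \cdot O)(x,x) \geq O(x_-,x_-) e^{2ct}$ after possibly subtracting a lower-order term. The hypothesis $\mathcal{F} \cap (V^\xi_\geq \setminus V^\xi_+) = \emptyset$ is exactly what excludes the borderline case $x \in V^\xi_0 \setminus \{0\}$ where the pairing would stay bounded but not decay. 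Writing $[X] = [\sum_i \lambda_i x_i \otimes x_i] \in \Hull(S^2\mathcal{F})$ with $\lambda_i > 0$ and $[x_i] \in \mathcal{F}$, the pairing $\Tr(X \cdot g_t \cdot O) = \sum_i \lambda_i (g_t \cdot O)(x_i,x_i)$ will therefore either decay exponentially (if all $[x_i] \in \mathbb{P}(V^\xi_+)$) or grow exponentially (if some $[x_i] \notin \mathbb{P}(V^\xi_\geq)$).

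\emph{Transfer to $\Gamma$-orbits and local uniformity.} By Lemma \ref{lem:Morse Lemma}, every $\gamma \cdot o$ lies within uniformly bounded distance of a Weyl cone $\mathcal{V}(o, \st_\mathcal{C}(\xi_\Theta(\zeta)))$ for some $\zeta \in \partial \Gamma$; this bounded perturbation changes the pairing only by a bounded multiplicative factor. Consequently, $[X]$ in the decay regime for every $\zeta \in \partial \Gamma$ satisfies $\Tr(X \cdot \gamma \cdot O) \to 0$ as $|\gamma| \to \infty$, while any $[X]$ in the growth regime for some $\zeta$ has $\Tr(X \cdot \gamma_n \cdot O) \to \infty$ along some sequence $\gamma_n \to \zeta$. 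Since elements of $\mathcal{DS}^\mathcal{F}_\Gamma(o)$ satisfy $\Tr(X \cdot \gamma \cdot O) \leq \Tr(X \cdot O)$ for all $\gamma$, they must lie in the decay regime for every $\zeta$. Analogously to Lemma \ref{lem:LocalUnif}, I will then upgrade this pointwise statement to a local uniform estimate: around each $[X_0] \in \mathcal{DS}^\mathcal{F}_\Gamma(o)$ there exists a neighborhood $U$ and constants $B,C>0$ with $\Tr(X \cdot \gamma \cdot O) \leq B e^{-C|\gamma|}$ for all $[X] \in U$ and $\gamma \in \Gamma$.

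\emph{Conclusion and main obstacle.} Because $O \in \Hull(S^2\mathcal{F})^*$ does not vanish on the compact set $S^2\mathcal{F}$, the pairing $\Tr(X \cdot O)$ has a positive lower bound on the compact $\Hull(S^2\mathcal{F})$; combined with the local uniform decay, this produces a neighborhood $U$ of each $[X_0] \in \mathcal{DS}^\mathcal{F}_\Gamma(o)$ and a finite $F \subset \Gamma$ with $U \subset \mathcal{H}_\mathcal{F}(o, \gamma \cdot o)$ for all $\gamma \notin F$. Covering the compact set $\mathcal{DS}^\mathcal{F}_\Gamma(o)$ by finitely many such $U$ and taking the union of the corresponding finite sets will conclude the proof of proper finite-sidedness. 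The main obstacle I anticipate is the local uniformity step: the decomposition $V = V^\xi_+ \oplus V^\xi_0 \oplus V^\xi_-$ depends on $\zeta$ through $\xi = \xi_\Theta(\zeta)$, so both the ``decay regime'' and the exponential constants must be patched uniformly as $\zeta$ varies. The plan is to handle this by exploiting compactness of $\partial\Gamma$ together with continuity of $\xi_\Theta$ and of the Grassmannian-valued maps $\zeta \mapsto V^\xi_+, V^\xi_\geq$, in the same spirit as the open-cover argument used to prove Lemma \ref{lem:LocalUnif}.
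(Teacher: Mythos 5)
Your proposal follows essentially the same strategy as the paper's proof: a weight-space dichotomy along Weyl cones (matching the paper's Lemma \ref{lem:Dichotomy2}), transfer to $\Gamma$-orbits via the Morse Lemma \ref{lem:Morse Lemma}, an upgrade to a locally uniform estimate (the analogue of Lemma \ref{lem:LocalUnif}), and a compactness conclusion. Working directly with the trace pairing instead of the log-normalized ``horofunctions'' $h_\ell$ is a cosmetic difference, and your instinct that the local-uniformity step is where the work lies is correct.

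There is, however, a genuine gap concerning the basepoint. Your dichotomy explicitly invokes ``the positive-definiteness of $O$'' to obtain $(g_t\cdot O)(x,x)\geq O(x_-,x_-)e^{2ct}$, and implicitly also relies on the weight decomposition being orthogonal for $O$ so that the estimate $(g_t\cdot O)(x,x)\leq e^{-2ct}O(x,x)$ on $V^\xi_+$ holds without cross terms. But the theorem's hypothesis only gives $o\in \Hull(S^2\mathcal{F})^*$, which is strictly larger than $\mathcal{X}^*$: a general $O$ is only required to be positive on $S^2\mathcal{F}$, not on all of $V$, so $O(x_-,x_-)$ can vanish or be negative even when $x\in\mathcal{F}$ (the weight projection $x_-$ need not lie near $\mathcal{F}$, and $\mathcal{F}$ is only $\Gamma$-invariant, not $G$-invariant). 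Your concluding paragraph uses the weaker hypothesis $O\in\Hull(S^2\mathcal{F})^*$ for the lower bound on $\Tr(XO)$, but this is not consistent with having assumed positive-definiteness in the dichotomy. The paper closes this gap by first proving a quantitative finite-sidedness for $o\in\X$ (allowing an arbitrary buffer $A>0$ in the half-space inclusion, as in Theorem \ref{thm:finiteSidedURU2}), then observing that the quantity $\abs{\log\abs{\Tr(XO^{-1})}-\log\abs{\Tr(XO')}}$ is uniformly bounded by some $B$ on the compact $\Hull(S^2\mathcal{F})$ whenever $o^{-1},o'\in\Hull(S^2\mathcal{F})^*$, and finally applying the $o\in\X$ case with $A=2B$. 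Without some such basepoint-comparison step, your argument only proves the statement for $o\in\mathcal{X}^*$, not for the full class $\Hull(S^2\mathcal{F})^*$ claimed in the theorem.
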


\begin{remark}
One can consider $\mathcal{F}_\text{max}$ the largest subset of $\mathbb{P}(V)$ that avoids $V^\xi_{\geq}\setminus V^\xi_+$ for all $\xi=\xi_\Theta(x)$ for $x\in \partial \Gamma$. 
This subset is not always closed, so one can only apply our result to compact $\Gamma$-invariant subsets $\mathcal{F}\subset \mathcal{F}_\text{max}$.
\end{remark}

The proof of Theorem \ref{thm:Properly finite sided domains in Hull(F)} will be done in Section \ref{sec:Subsection where we prove the generalisation to Hull(F)}.

\subsection{The general argument} 
\label{sec:Subsection where we prove the generalisation to Hull(F)}

Throughout Section \ref{sec:Subsection where we prove the generalisation to Hull(F)} we assume that $\Gamma$ and $\mathcal{F}$ satisfy the assumptions of Theorem \ref{thm:Properly finite sided domains in Hull(F)}. 
Namely, we assume that $\Gamma$ is $\Theta$-Anosov and take $\mathcal{F}$ to be a compact $\Gamma$-invariant subset of $\mathbb{P}(V)$ disjoint from $V^\xi_{\geq}\setminus V^\xi_+$ for all $\xi=\xi_\Theta(x)$ for $x\in \partial \Gamma$ with respect to $\mathcal{C}_\Gamma$. 

For every closed subset $\mathcal{C}\subset \sigma_\text{mod}=\mathbb{S}\mathfrak{a}^+$ we define $C_{\mathcal{C},\Phi}$ to be the infimum of $\frac{\abs{\lambda(v)}}{\lVert v \rVert}$ for $v\in \mathfrak{a}$ such that $[v]\in \mathcal{C}$ and $\lambda\in \Phi^+_{\mathcal{C},\Theta}\cup \Phi^-_{\mathcal{C},\Theta}$.
We will consider sufficiently small neighborhoods $\mathcal{C}$ of $\mathcal{C}_\Gamma$ so that the sets of weights $\Phi^+_{\mathcal{C},\Theta}$ etc.\ are unchanged. 

\medskip

We will first focus on the case when the basepoint $o^{-1}$ belongs to a totally geodesic $\X\subset \mathcal{X}$ corresponding to the symmetric space of $G$, and then we will see that the result still holds for other basepoints.

\medskip

Each line $\ell \in \overline{\mathcal{X}(V)}$ defines a function $h_\ell \colon \mathcal{X} \to \R$, up to an additive constant, by setting:
$$ h_{[L]}([X]) \coloneqq \log \left( \frac{1}{d} \Tr(X^{-1}L) \right),$$ 
for representatives $X$ satisfying $\det(X^{-1}O)=1$ where $O$ is a positive definite representative of a basepoint $o \in \mathcal{X}$.  
Here $d$ is the dimension of $V$.
Each such line is a convex combination of some rank $1$ lines in $S^2V$; i.e.\ for each $L$ there exists $v_i \in V, i \in I$ such that $L = \sum v_i \otimes v_i$. 
The corresponding functions are then related by 
\begin{equation}
	h_{[L]}(x)= \log( \frac{1}{d} \sum_{i\in I} e^{h_{v_i}(x)-h_{v_i}(o)}) .
\end{equation}
At rank $1$ points, these functions are exactly the Busemann functions on $\mathcal{X}$ centered at the minimal flag manifold $\mathbb{P}(V)$.
The rest of the projective boundary $\partial \mathcal{X}$ can be interpreted as a sort of horoboundary with respect to the Selberg invariant. 
Note that these horofunctions are hence equal to, for some functions $f_i$ that are convex and $1$-Lipshitz with respect to the Riemannian metric on $\mathcal{X}$:
 $$h=\log\left( \frac{1}{d} \sum_{i=1}^d e^{f_i}\right)$$
Therefore these functions are also $1$-Lipschitz and convex. 

\medskip

If we fix $\xi\in \mathcal{F}_\Theta$, and we take a line $\ell\in \mathbb{P}(S^2V)$ exactly one of the three possibilities occur:

\begin{itemize}
\item[(a)] $\ell \in \Hull\left(S^2\mathbb{P}(V_+^\xi)\right)$
\item[(b)] $\ell \in \Hull\left(S^2\mathbb{P}(V_\geq^\xi)\right)$ and $\ell \notin \Hull\left(S^2\mathbb{P}(V_+^\xi)\right)$,
\item[(c)] $\ell \notin \Hull\left(S^2\mathbb{P}(V_\geq^\xi)\right)$.
\end{itemize}

\smallskip

The hypothesis that we put on $\mathcal{F}$ in the statement of Theorem \ref{thm:Properly finite sided domains in Hull(F)} implies that case (b) never occurs for $[\mathrm{v}]\in S^2\mathcal{F}$. 
The subset that will play the role of the thickening here will be 
$$ \text{Th}(\xi) \coloneqq \Hull\left(S^2\mathbb{P}(V_+^\xi)\right) \cap\Hull(S^2\mathcal{F}). $$
We obtain the following dichotomy:

\begin{lemma}
	\label{lem:Dichotomy2}
	Let $\ell\in \Hull(S^2\mathcal{F})\subset \mathbb{P}(S^2V)$, with $\mathcal{F}$ as in Theorem \ref{thm:Properly finite sided domains in Hull(F)}. 
	Let $o\in \mathbb{X}$ be a basepoint and let $\tau \in \mathcal{F}_\Theta$. 
	Exactly one of the following holds:
	\begin{itemize}
		\item[(i)] $\ell\in \text{Th}(\tau)$ and for every $\eta \in \st_\mathcal{C}(\tau)$ the geodesic ray $c_{o,\eta}$ satisfies 
		$$ h_\ell(c_{o, \eta}(t))-h(o)\leq -C_{\mathcal{C},\omega}t .$$ 
		\item[(ii)] $\ell\notin \text{Th}(\tau)$ and for all $\epsilon>0$ there exist $A>0$ such that for every $\eta \in \st_\mathcal{C}(\tau)$ the geodesic ray $c_{o,\eta}$ satisfies 
		$$ h_\ell(c_{o,\eta}(t))-h(o)\geq (C_{\mathcal{C},\omega}-\epsilon)t-A .$$
	\end{itemize}
\end{lemma}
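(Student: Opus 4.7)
The plan is to follow the structure of the proof of Lemma \ref{lem:Dichotomy}, substituting the logarithmic functions $h_\ell$ for mixed Busemann functions. I would begin by writing $\ell = [\sum_{i=1}^N t_i\, v_i \otimes v_i]$ with $t_i > 0$ and $[v_i] \in \mathcal{F}$, so that the identity
\[
h_\ell(x) - h_\ell(o) = \log\left( \frac{\sum_i t_i\, e^{h_{v_i}(x) - h_{v_i}(o)}}{\sum_i t_i} \right)
\]
reduces the dichotomy to understanding each Busemann function $h_{v_i}$, which is convex and $1$-Lipschitz on $\mathcal{X}$.

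Next I would compute the slope of $h_v$ along the ray $c_{o,\eta}$ for $[v] \in \mathbb{P}(V)$ and $\eta \in \st_{\mathcal{C}}(\tau)$. Picking a flat in $\X$ through $o$ with $\eta$ in its boundary, associated to a pair of transverse full flags $f, g \in \mathcal{F}_\Delta$ with $f \supset \tau$, and decomposing $v = \sum_\lambda v_\lambda$ in the weight-space decomposition $V = \bigoplus_\lambda V_\lambda^{f,g}$, a direct computation shows that $h_v \circ c_{o,\eta}$ is convex with asymptotic slope $\max\{-\lambda(\vec\eta) : v_\lambda \neq 0\}$, where $\vec\eta \in \mathfrak{a}^+$ is the unit direction of the ray. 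By Lemma \ref{lem: V_+^f is well defined since it is an ideal.}, the subspaces $V_+^\tau$ and $V_\geq^\tau$ are independent of the choice of $f, g$. Hence if $v \in V_+^\tau$, every weight of $v$ lies in $\Phi^+_{\mathcal{C},\Theta}$ and satisfies $\lambda(\vec\eta) \geq C_{\mathcal{C},\Phi}$, so convexity gives $h_v(c_{o,\eta}(t)) - h_v(o) \leq -C_{\mathcal{C},\Phi}\, t$ for all $t \geq 0$. Conversely, if $v \notin V_\geq^\tau$, some weight of $v$ lies in $\Phi^-_{\mathcal{C},\Theta}$, and the asymptotic slope of $h_v \circ c_{o,\eta}$ is at least $C_{\mathcal{C},\Phi}$.

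Case (i) will then follow immediately: since $\ell \in \Hull(S^2 \mathbb{P}(V_+^\tau))$, I can rewrite $\ell = [\sum_i s_i w_i \otimes w_i]$ with $w_i \in V_+^\tau$, and the uniform bound on each $h_{w_i}$ passes to $h_\ell$ by monotonicity of $\log$ applied to the weighted-average formula above. For case (ii), $\ell \notin \Hull(S^2 \mathbb{P}(V_+^\tau))$ forces at least one $v_j \notin V_+^\tau$ in the decomposition of $\ell$, and the hypothesis on $\mathcal{F}$ (applied with $\tau = \xi_\Theta(x)$) excludes $V_\geq^\tau \setminus V_+^\tau$, so $v_j \notin V_\geq^\tau$. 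The pointwise asymptotic slope bound on $h_{v_j}$, combined with convexity and the compactness of $\st_{\mathcal{C}}(\tau)$, should upgrade to a uniform estimate $h_{v_j}(c_{o,\eta}(t)) - h_{v_j}(o) \geq (C_{\mathcal{C},\Phi} - \epsilon)\, t - A'$ valid for all $\eta \in \st_{\mathcal{C}}(\tau)$ and $t \geq 0$; bounding the log-average below by its $j$-th summand will then yield the desired inequality with $A = A' - \log(t_j / \sum_i t_i)$. I expect this uniform-in-$\eta$ step of case (ii) to be the main obstacle, exactly as in the proof of Lemma \ref{lem:Dichotomy}.
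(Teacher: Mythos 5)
Your proposal is correct and follows essentially the same route as the paper's own proof: reduce to the log-sum-exp of rank-one terms $h_{v_i}$, compute the asymptotic slope of each via the restricted-weight-space decomposition along a flat containing the ray (with Lemma \ref{lem: V_+^f is well defined since it is an ideal.} guaranteeing independence of choices), observe that the hypothesis on $\mathcal{F}$ kills the ambiguous case $v_j \in V^\tau_\geq \setminus V^\tau_+$, and use convexity plus compactness of $\st_{\mathcal{C}}(\tau)$ to make the lower bound in case~(ii) uniform. The one thing worth making fully explicit, which the paper also glosses over, is that the two cases use different decompositions of $\ell$: in case~(i) you decompose via $\ell \in \Hull(S^2\mathbb{P}(V^\tau_+))$ with $w_i \in V^\tau_+$ but not necessarily in $\mathcal{F}$, while in case~(ii) you decompose via $\ell \in \Hull(S^2\mathcal{F})$ with $v_i \in \mathcal{F}$ (so that the hypothesis on $\mathcal{F}$ can be applied to the offending $v_j$); your write-up does draw this distinction, so the argument goes through.
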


\begin{proof}
We first consider the case when $\ell\in S^2\mathcal{F}$. 
We want to compute the asymptotic slope for $h_\ell$ and $\eta \in \st_\mathcal{C}(\tau)$, as defined in Section \ref{sec:Slopes and thickenings}. 
Let us fix two opposite full flags $f,g\in \mathcal{F}_\Delta$ such that such that $\eta$ belongs to the ideal Weyl chamber associated to $f$; note that $\tau$ belongs to this chamber as well. 
Let $\zeta$ be the projection of $\eta$ to $\sigma_\text{mod}=\mathbb{S}\mathfrak{a}^+$. Let $\mathrm{v}\otimes \mathrm{v}\in \ell$ be non-zero, we can decompose $\mathrm{v}$ for some $\mathrm{v}_\lambda\in V^{f,g}_\lambda$ as:
$$\mathrm{v}=\sum_{\lambda\in \Phi}\mathrm{v}_\lambda.$$

The basepoint $o\in \X\subset \mathcal{X}$ determines a norm $\lVert \cdot \rVert$ on $\R^n$, and one has:
$$ h_\ell(c_{o, \eta}(t))-h(o)=\frac{1}{d}\log\left(\left\lVert \sum_{\lambda\in \Phi}e^{\lambda(\zeta)t}\mathrm{v}_\lambda \right\rVert\right)-\frac{1}{d}\log\left(\lVert \mathrm{v}\rVert\right).$$

Since $\zeta\in \mathcal{C}$, the behavior of this quantity depends on the same case distinction as before:

\begin{itemize}
\item[(a)] if $\mathrm{v}\in \Hull\left(S^2\mathbb{P}(V_+^\xi)\right)$, for all $t\ge 0$, $h_\ell(c_{o, \eta}(t))-h(o)\leq -C_{\mathcal{C},\omega}t $ ,
\item[(b)] if $\mathrm{v}\in \Hull\left(S^2\mathbb{P}(V_\geq^\xi)\right)$ but $\mathrm{v}\notin \Hull\left(S^2\mathbb{P}(V_+^\xi)\right)$, the situation is unclear,
\item[(c)] if $\mathrm{v}\notin \Hull\left(S^2\mathbb{P}(V_\geq^\xi)\right)$, for all $\epsilon>0$ there exist $A>0$ such that for all $t>0$
 $$ h_\ell(c_{o,\eta}(t))-h(o)\geq (C_{\mathcal{C},\omega}-\epsilon)t-A .$$
\end{itemize}

Here case (b) cannot occur by the hypothesis that was put on $\mathcal{F}$. 
Note that case (a) means exactly that $\ell\in \Th(\tau)$. 
Hence we got the desired result for $\ell\in \Hull(S^2\mathcal{F})$.

\medskip

We now consider an arbitrary $\ell=[v]\in \Hull(S^2\mathcal{F})$.
Then $v$ can be written as a convex combination of extremal points of $\Hull(S^2\mathcal{F})$. 
The way we defined the associated function on $\X$ was taking the log of a linear expression, so the associated function can be written for some $p_i\in \mathcal{F}$, $\lambda_i>0$ and $o_i\in \X$ for $i\in I $ as:
$$h_\ell(x)= \frac{1}{d}\log(\sum_{i\in I}\lambda_i e^{h_{p_i}(x)-h_{p_i}(o)})=\frac{1}{d}\log(\sum_{i\in I} e^{h_{p_i}(x)-h_{p_i}(o_i)}).$$

\medskip

If $\ell\in \text{Th}(\tau)$, one can choose $(p_i)_{i\in I}$ such that for all $i\in I$ one has $p_i\in V^\tau_+$. 
Therefore for all $t\geq 0$, $b_{p_i,o}(c_{o,\eta}(t))-b_{p_i,o}(o)\leq -C_{\mathcal{C},\Phi}t$. Hence $h_\ell(c_{o,\eta}(t))-h(o)\leq -C_{\mathcal{C},\Phi}t .$

\medskip

Suppose now that $\ell\notin \text{Th}(\tau)$, then one can choose $(p_i)_{i\in I}$ such that $p_{i_0}\notin V_+^\tau$ for some $i_0\in I$, and hence $p_{i_0}\notin V_\ge^\tau$.
Therefore there exists $A>0$ such that the geodesic ray $c_{o,\eta}$ satisfies $h_{p_{i_0}}(c_{o,\eta}(t))-h_{p_{i_0}}(o)\geq (C_{\mathcal{C},\Phi}-\epsilon)t-A$.
Hence:
 $$h_\ell(c_{o,\eta}(t))-h(o)\geq (C_{\mathcal{C},\Phi}-\epsilon)t -A+\log(\lambda_{i_0}).$$
\end{proof}

\begin{remark}
A consequence of this argument is that the thickening can also be described as follows: let $p\in \Hull(S^2\mathcal{F})$, we consider the exposed face $F_p$ of the compact $\Hull(S^2\mathcal{F})$ containing $p$, i.e.\ the intersection of $\Hull(S^2\mathcal{F})$ with all support hyperplanes passing through $p$.
The point $p$ belongs to $\text{Th}(\tau)$ if and only if the extremal points of $F_p$ are all in $V^\tau_+$.
\end{remark}

\begin{definition}
Let us define the following domain:
	$$\Omega\coloneqq \Hull(S^2\mathcal{F})\setminus\bigcup_{x\in \partial\Gamma} \text{Th}(\xi_\Theta(x)).$$
\end{definition}

The following statements are the analog in this setting of Proposition \ref{prop:characterization of domains}, Lemma \ref{lem:LocalUnif}, Proposition \ref{prop:CoarseFibration} and Theorem \ref{thm:finiteSidedURU} respectively. The exact same proofs apply, by replacing $\X\cup \partial^\omega_\text{horo}\X$ by $\Hull(S^2\mathcal{F})$, $\text{Th}_\text{horo}^\omega$ by $\text{Th}$, $\mathcal{D}_\Gamma^\omega(o)$ by $\mathcal{DS}_\Gamma^\mathcal{F}(o)$ and $\Omega^\omega_\text{horo}\cup \X$ by $\Omega$. 

\begin{proposition}[{Analog of Proposition \ref{prop:characterization of domains}}]
\label{prop:uniformlyProper2}
An element $\ell\in\Hull(S^2\mathcal{F})$ belongs to $\Omega$ if and only if $h_\ell$ restricted to the $\Gamma$-orbit of $o\in \X$ is bounded from below.
In this case, $h_\ell$ is proper on any $\Gamma$-orbit. 
In particular $\mathcal{DS}^\mathcal{F}_\Gamma(o)\subset\Omega$ for all $o\in \X$.
\end{proposition}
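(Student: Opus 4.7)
The plan is to mirror the proof of Proposition \ref{prop:characterization of domains}, with Lemma \ref{lem:Dichotomy2} replacing Lemma \ref{lem:Dichotomy}. The key inputs are the dichotomy of Lemma \ref{lem:Dichotomy2}, the Morse property for $\Theta$-Anosov subgroups (Lemma \ref{lem:Morse Lemma}), and the fact that each $h_\ell$ with $\ell \in \Hull(S^2\mathcal{F})$ is convex and $1$-Lipschitz with respect to the Riemannian metric on $\mathcal{X}$, as observed in the paragraph preceding Lemma \ref{lem:Dichotomy2}.

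For the forward direction I would suppose $\ell \notin \Omega$, so that $\ell \in \Th(\xi_\Theta(z))$ for some $z \in \partial \Gamma$, and fix a geodesic ray $(\gamma_n)$ in $\Gamma$ converging to $z$. Lemma \ref{lem:Morse Lemma} places each $\gamma_n \cdot o$ within uniform distance of a point $x_n$ on a Weyl cone $\mathcal{V}(o, \st_\mathcal{C}(\xi_\Theta(z)))$, for a suitable compact neighborhood $\mathcal{C}$ of the limit cone $\mathcal{C}_\Gamma$. Case (i) of Lemma \ref{lem:Dichotomy2} then gives the uniform linear upper bound $h_\ell(x_n) - h_\ell(o) \leq -C_{\mathcal{C},\Phi}\, d(o, x_n)$, and the $1$-Lipschitz property transfers this to $h_\ell(\gamma_n \cdot o) \to -\infty$, so $h_\ell$ is unbounded from below on $\Gamma \cdot o$.

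For the converse I would argue contrapositively. If $h_\ell$ is unbounded below or fails to be proper on some $\Gamma$-orbit $\Gamma \cdot o$, extract a diverging sequence $(\gamma_n)$ along which $h_\ell(\gamma_n \cdot o)$ stays bounded above. After passing to a subsequence, the segments $[o, \gamma_n \cdot o]$ converge to a geodesic ray $[o, \eta)$, and convexity of $h_\ell$ carries the upper bound from the segments to the full ray. Since $\Gamma$ is $\Theta$-Anosov with limit cone inside $\mathcal{C}$, the Morse Lemma places $\eta$ in $\st_\mathcal{C}(\xi_\Theta(z))$ for some $z \in \partial \Gamma$, so case (ii) of Lemma \ref{lem:Dichotomy2} is excluded and $\ell \in \Th(\xi_\Theta(z))$, i.e.\ $\ell \notin \Omega$. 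The same contrapositive also forces $h_\ell$ to be proper on every $\Gamma$-orbit whenever $\ell \in \Omega$.

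The final ``in particular'' assertion is a direct unwinding of definitions: for $\ell = [X] \in \mathcal{DS}^\mathcal{F}_\Gamma(o)$, the defining inequality $\Tr(X(O - \gamma \cdot O)) \geq 0$ rewrites as $h_\ell(o) \leq h_\ell(\gamma \cdot o)$ for every $\gamma \in \Gamma$, so $h_\ell$ is bounded below by $h_\ell(o)$ on the orbit, and the equivalence just established gives $\ell \in \Omega$. I do not anticipate a serious obstacle, since the structural work is encapsulated in Lemma \ref{lem:Dichotomy2}; the only point requiring a little care is the translation between the algebraic pairing $\Tr(XO)$ appearing in the definition of $\mathcal{DS}^\mathcal{F}_\Gamma(o)$ and the functions $h_\ell$, which relies on the basepoint conventions fixed at the start of Section \ref{sec:Subsection where we prove the generalisation to Hull(F)}.
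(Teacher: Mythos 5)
Your proposal is correct and coincides with the paper's argument: the authors explicitly state that Proposition~\ref{prop:uniformlyProper2} (together with the other analogs in that subsection) follows by the ``exact same proof'' as Proposition~\ref{prop:characterization of domains}, replacing $\X\cup\partial^\omega_{\rm horo}\X$ by $\Hull(S^2\mathcal{F})$, $\ThomegaH$ by $\Th$, and $\Omega^\omega_{\rm horo}\cup\X$ by $\Omega$ — which is precisely what you carry out, with Lemma~\ref{lem:Dichotomy2} playing the role of Lemma~\ref{lem:Dichotomy}. Your remark at the end about the basepoint conventions for the pairing $\Tr(XO)$ versus the functions $h_\ell$ is well placed; the unwinding there is routine once one tracks the identification $\mathcal{X}\simeq\mathcal{X}^*$ from the paragraph preceding Theorem~\ref{thm:Properly finite sided domains in Hull(F)}.
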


\begin{lemma}[{Analog of Lemma \ref{lem:LocalUnif}}]
\label{lem:LocalUnif2}
Let $\ell_0\in \Omega$, and let $o\in \X$. 
There exists a neighborhood $U\subset \Omega$ of $\ell_0$ and a constant $A>0$ such that for $\ell\in U$ and $\gamma\in \Gamma$:
$$h_\ell(\gamma\cdot o)-h_\ell(o)\geq C_{\mathcal{C},\Phi}d(o, \gamma\cdot o)-A.$$  
\end{lemma}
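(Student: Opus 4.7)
The plan is to adapt the proof of Lemma \ref{lem:LocalUnif} almost verbatim, replacing horofunctions $[h]\in\partialomX$ by the functions $h_\ell$ for $\ell\in\Hull(S^2\mathcal{F})$, replacing Lemma \ref{lem:Dichotomy} by Lemma \ref{lem:Dichotomy2}, and replacing $\ThomegaH(\xi,\mathcal{C})$ by $\Th(\xi)$. The overall structure — shrink the limit cone slightly, extract a uniform scale on a compact set of directions, then feed $\Gamma$-orbit points into geodesic rays via the Morse Lemma — carries over without change.

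First I would choose a compact neighborhood $\mathcal{C}'$ of $\mathcal{C}_\Gamma$ contained in the interior of $\mathcal{C}$ and close enough to $\mathcal{C}_\Gamma$ that the sets $\Phi^+_{\mathcal{C}',\Theta}$ and $\Phi^-_{\mathcal{C}',\Theta}$ coincide with $\Phi^+_{\mathcal{C},\Theta}$ and $\Phi^-_{\mathcal{C},\Theta}$, so that the thickenings $\Th(\xi)$ are unchanged but $C_{\mathcal{C}',\Phi}>C_{\mathcal{C},\Phi}$. Then I would introduce the compact subset
\[ E \coloneqq \bigcup_{\zeta\in \partial \Gamma}\st_{\mathcal{C}'}(\xi_\Theta(\zeta))\subset \partial_{\rm vis}\X. \]

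Since $\ell_0\in\Omega$, Proposition \ref{prop:uniformlyProper2} implies that $\ell_0\notin\Th(\xi_\Theta(\zeta))$ for every $\zeta\in\partial\Gamma$. Applying Lemma \ref{lem:Dichotomy2}(ii) at scale $\mathcal{C}'$ with $\epsilon \coloneqq C_{\mathcal{C}',\Phi}-C_{\mathcal{C},\Phi}$, for each $\xi\in E$ there exists $t_\xi>0$ such that
\[ \frac{h_{\ell_0}(c_{o,\xi}(t))-h_{\ell_0}(o)}{t} > C_{\mathcal{C},\Phi} \]
for all $t\ge t_\xi$. The map $(\ell,\xi)\mapsto h_\ell(c_{o,\xi}(t))$ is continuous, and the same inequality persists on a neighborhood of $(\ell_0,\xi)$ in $\Omega\times E$. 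Compactness of $E$ then produces a uniform $t_1>0$ and an open neighborhood $U\subset\Omega$ of $\ell_0$ such that, for every $\ell\in U$ and every $\xi\in E$,
\[ h_\ell(c_{o,\xi}(t_1))-h_\ell(o) > C_{\mathcal{C},\Phi}\,t_1. \]

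By Lemma \ref{lem:Morse Lemma}, there is a constant $D>0$ such that every $\gamma\cdot o$ lies at Riemannian distance at most $D$ from a point $y$ on some geodesic ray $c_{o,\xi}$ with $\xi\in E$. The function $h_\ell$ is convex and $1$-Lipschitz along geodesics of $\X$: this follows from the log-sum-exp description $h_\ell = \tfrac{1}{d}\log\sum_i e^{h_{p_i}(\cdot)-h_{p_i}(o_i)}$ already used in the proof of Lemma \ref{lem:Dichotomy2}, together with the fact that each $h_{p_i}$ is a $1$-Lipschitz Busemann-type function. Convexity of $t\mapsto h_\ell(c_{o,\xi}(t))-h_\ell(o)$ combined with the inequality at $t_1$ gives $h_\ell(y)-h_\ell(o)\ge C_{\mathcal{C},\Phi}d(o,y)-(1+C_{\mathcal{C},\Phi})t_1$; the $1$-Lipschitz property then absorbs the Morse error $D$. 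Setting $A \coloneqq (1+C_{\mathcal{C},\Phi})t_1+D$ yields the claim.

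The main obstacle is checking that $h_\ell$ enjoys the convexity, $1$-Lipschitz property, and joint continuity in $(\ell,x)$ that horofunctions do; all three follow from the log-sum-exp representation recalled above, so this is routine rather than substantive. If one wishes the statement for elementary $\Gamma$, the Morse Lemma is replaced by Lemma \ref{lem:Logarithmic distance Cyclic case}, and the resulting logarithmic discrepancy is absorbed into $A$ because $C_{\mathcal{C}',\Phi}>C_{\mathcal{C},\Phi}$, exactly as in Lemma \ref{lem:LocalUnif CyclicCase}.
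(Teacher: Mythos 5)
Your proof is correct and follows essentially the same route the paper takes: the paper explicitly states that Lemma \ref{lem:LocalUnif2} is obtained by running the proof of Lemma \ref{lem:LocalUnif} verbatim with the substitutions $\X\cup\partialomX \rightsquigarrow \Hull(S^2\mathcal{F})$, $\ThomegaH \rightsquigarrow \Th$, Lemma \ref{lem:Dichotomy} $\rightsquigarrow$ Lemma \ref{lem:Dichotomy2}, and $\X\cup\Omega^\omega_{\rm horo} \rightsquigarrow \Omega$, which is exactly what you carried out. The one place you add something the paper leaves implicit is the explicit verification that each $h_\ell$ is convex, $1$-Lipschitz, and jointly continuous in $(\ell,x)$ via the log-sum-exp representation; the paper records these facts in passing just before Lemma \ref{lem:Dichotomy2} ("Therefore these functions are also $1$-Lipschitz and convex"), so your treatment is a correct and slightly more self-contained rendering of the same argument.
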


\begin{theorem}[{Analog of Theorem \ref{thm:finiteSidedURU}}]
\label{thm:finiteSidedURU2}
For all $o\in \X$ and for any $A>0$ one can find a finite set $S\subset \Gamma$ and a neighborhood $U$ of $\mathcal{DS}_\Gamma^\mathcal{F}(o)$ such that for all $\ell\in U$ and $\gamma\in \Gamma\setminus S$, $h_\ell(\gamma\cdot o)> h_\ell(o)+A$. 
\end{theorem}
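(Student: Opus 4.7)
The plan is to imitate the proof of Theorem \ref{thm:finiteSidedURU} essentially verbatim, with the analog ingredients in the restricted Dirichlet--Selberg setting substituted for their Finsler counterparts. The argument has three moves, and each transfers without difficulty because the relevant analogs (Proposition \ref{prop:uniformlyProper2} and Lemma \ref{lem:LocalUnif2}) have been set up precisely for this purpose.

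First, I would use Proposition \ref{prop:uniformlyProper2} to conclude that $\mathcal{DS}^\mathcal{F}_\Gamma(o) \subset \Omega$: any $\ell \in \mathcal{DS}^\mathcal{F}_\Gamma(o)$ satisfies $h_\ell(o) \le h_\ell(\gamma\cdot o)$ for all $\gamma \in \Gamma$, so $h_\ell$ is bounded below on $\Gamma \cdot o$ and the proposition places $\ell$ inside $\Omega$. Since $\Omega$ is open in the compact space $\Hull(S^2\mathcal{F})$ and $\mathcal{DS}^\mathcal{F}_\Gamma(o)$ is closed there, I fix a compact neighborhood $K$ of $\mathcal{DS}^\mathcal{F}_\Gamma(o)$ contained in $\Omega$.

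Second, I would apply Lemma \ref{lem:LocalUnif2} pointwise on $K$: for each $\ell_0 \in K$ there is an open neighborhood $U_{\ell_0} \subset \Omega$ and a constant $A_{\ell_0} > 0$ such that $h_\ell(\gamma \cdot o) - h_\ell(o) \ge C_{\mathcal{C},\Phi}\, d(o, \gamma \cdot o) - A_{\ell_0}$ for every $\ell \in U_{\ell_0}$ and every $\gamma \in \Gamma$. Extracting a finite subcover of $K$ and letting $B$ be the maximum of the associated constants yields a single uniform estimate $h_\ell(\gamma\cdot o) - h_\ell(o) \ge C_{\mathcal{C},\Phi}\, d(o,\gamma\cdot o) - B$ valid for all $\ell \in K$ and all $\gamma \in \Gamma$.

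Third, I set $S \coloneqq \{\gamma \in \Gamma \mid d(o,\gamma\cdot o) \le (A + B)/C_{\mathcal{C},\Phi}\}$. By proper discontinuity of the action of $\Gamma$ on $\X$, the set $S$ is finite. For any $\gamma \in \Gamma \setminus S$ and $\ell \in K$, the uniform estimate gives $h_\ell(\gamma\cdot o) - h_\ell(o) > A$. Taking $U$ to be the interior of $K$ in $\Hull(S^2\mathcal{F})$ then supplies the desired open neighborhood of $\mathcal{DS}^\mathcal{F}_\Gamma(o)$. The main obstacle is essentially nonexistent at this stage: the real work was done earlier in establishing the dichotomy Lemma \ref{lem:Dichotomy2}, which relied on the hypothesis on $\mathcal{F}$ ruling out case (b) of the weight decomposition, and in bootstrapping this to the local uniform bound Lemma \ref{lem:LocalUnif2}. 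With those in hand, the present statement is a direct compactness-plus-proper-discontinuity argument.
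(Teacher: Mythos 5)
Your proof is correct and follows exactly the approach the paper intends: the paper itself states that the proof of Theorem \ref{thm:finiteSidedURU} carries over verbatim after the substitutions $\X\cup\partial^\omega_{\rm horo}\X \to \Hull(S^2\mathcal{F})$, $\mathcal{D}^\omega_\Gamma(o) \to \mathcal{DS}^\mathcal{F}_\Gamma(o)$, $\X\cup\Omega^\omega_{\rm horo} \to \Omega$, and Lemmas/Propositions for their analogs. Your three-step argument (containment via Proposition \ref{prop:uniformlyProper2}, compactness plus Lemma \ref{lem:LocalUnif2} for the uniform lower bound, discreteness of $\Gamma$ to get finiteness) is precisely that transcription.
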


Note that in this theorem we use the fact that $\mathcal{DS}_\Gamma^\mathcal{F}(o)$ is compact as the intersection of closed spaces in a compact space. 
Hence we need here to have $\mathcal{F}$ closed.

We can now prove Theorem \ref{thm:Properly finite sided domains in Hull(F)}. 

\begin{proof}[{Proof of Theorem \ref{thm:Properly finite sided domains in Hull(F)}}]
Let $o\in \X$ and $o'\in \Hull(\mathcal{F})^*$, and fix two representatives $O^{-1}$ and $O'\colon V\to V^\ast$ of $o^{-1}$ and $o'$.
The following quantity is well-defined and continuous in $x=[X]$ on the compact set $\Hull(\mathcal{F})\subset \mathbb{P}(S^2V^{\geq 0})$, since $o^{-1},o'\in \Hull(\mathcal{F})^*$:
$$\abs{\log \abs{ \Tr (XO^{-1})}- \log \abs{\Tr (XO')}}.$$
We denote by $B$ the supremum of this quantity.

\medskip

Now we apply Theorem \ref{thm:finiteSidedURU2} for $o\in \X$ and $A=2B$, and we get that there exist a finite set $S\subset \Gamma$ and a neighborhood $U$ of $\mathcal{DS}_\Gamma^\mathcal{F}(o)$ such that for all $\ell\in U$ and $\gamma\in \Gamma\setminus S$, $h_\ell(\gamma\cdot o)> h_\ell(o)+2B$. 
This implies that the half-space $\mathcal{H}_\mathcal{F}(o', \gamma\cdot o')$ contains $U$ for all but finitely many $\gamma\in \Gamma$, so $\mathcal{DS}_\Gamma^\mathcal{F}(o')$ is properly finite-sided. 
\end{proof}

\subsection{Dirichlet-Selberg domains}
In the remainder of Section \ref{sec:restricting ds domains} we present applications of Theorem \ref{thm:Properly finite sided domains in Hull(F)}.
We first deduce Theorem \ref{thm:omega1-URU implies finitely-sided} for a second time.

\begin{corollary}\label{cor:ds domain is finite sided 2}
	Let $\Gamma$ be a subgroup of $\SL(2n,\R)$ which is $\omega_1$-undistorted. 
	Then for any $o \in \mathcal{X}$, the Dirichlet-Selberg domain $\DS_\Gamma(o)$ is properly finite-sided.
\end{corollary}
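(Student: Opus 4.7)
The plan is to apply Theorem \ref{thm:Properly finite sided domains in Hull(F)} to the standard representation $V = \R^{2n}$ with the largest possible compact $\Gamma$-invariant subset $\mathcal{F} = \mathbb{P}(V)$. With this choice $\Hull(S^2 \mathcal{F}) = \overline{\mathcal{X}_{2n}}$ is the full Satake compactification, $\Hull(S^2\mathcal{F})^\ast$ contains $\mathcal{X}_{2n}$, and the restricted domain $\DS^{\mathcal{F}}_\Gamma(o)$ agrees with $\DS_\Gamma(o)$. So it suffices to verify the hypothesis of Theorem \ref{thm:Properly finite sided domains in Hull(F)}: for every $\xi = \xi_\Theta(x)$ with $x \in \partial \Gamma$, the set $V^\xi_\ge \setminus V^\xi_+$ must be disjoint from $\mathcal{F}$.

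I would first handle the non-elementary case. By Proposition \ref{prop:omega1ImpliesNAnosov}, $\Gamma$ is $n$-Anosov, so the relevant set of simple roots is $\Theta = \{\lambda_n - \lambda_{n+1}\}$, and the limit cone $\mathcal{C}_\Gamma$ lies in the unique $\iota$-invariant component of $\mfa^+ \setminus \bigcup_w w \cdot \ker(\omega_1)$, namely the open cone where $\lambda_1 \ge \cdots \ge \lambda_n > 0 > \lambda_{n+1} \ge \cdots \ge \lambda_{2n}$. The parabolic Weyl subgroup $W_\Theta$ is $S_n \times S_n$, permuting the first $n$ and last $n$ coordinates separately, so $W_\Theta \cdot \mathcal{C}_\Gamma$ remains inside $\{v : v_1,\ldots,v_n > 0 > v_{n+1},\ldots,v_{2n}\} \cup \{v : v_1,\ldots,v_n < 0 < v_{n+1},\ldots,v_{2n}\}$. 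The restricted weights of $V$ are $\lambda_1,\ldots,\lambda_{2n}$, and each one is either strictly positive or strictly negative on $W_\Theta \cdot \mathcal{C}_\Gamma$. Hence $\Phi^0_{\mathcal{C}_\Gamma,\Theta} = \emptyset$, which forces $V^\xi_\ge = V^\xi_+$ and therefore $V^\xi_\ge \setminus V^\xi_+ = \emptyset$. The hypothesis of Theorem \ref{thm:Properly finite sided domains in Hull(F)} is then vacuous, and the conclusion gives that $\DS_\Gamma(o) = \DS^{\mathbb{P}(V)}_\Gamma(o)$ is properly finite-sided in $\overline{\mathcal{X}_{2n}}$ for every $o \in \Hull(S^2 \mathbb{P}(V))^\ast \supset \mathcal{X}_{2n}$.

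For the elementary case $\Gamma$ is virtually cyclic; the $\omega_1$-undistorted hypothesis ensures that the types of both endpoints $\eta_\pm$ of the axis of a generator avoid every hyperplane $\ker(w \cdot \omega_1)$. I would replace the use of the Anosov Morse lemma in the proofs of Proposition \ref{prop:uniformlyProper2}, Lemma \ref{lem:LocalUnif2} and Theorem \ref{thm:finiteSidedURU2} by Lemma \ref{lem:Logarithmic distance Cyclic case}, exactly as in Section \ref{sec:CyclicCase}, so that those intermediate results hold verbatim and Theorem \ref{thm:Properly finite sided domains in Hull(F)} still applies with $\mathcal{F} = \mathbb{P}(V)$.

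The main conceptual point—and arguably the only nontrivial step—is that the $|\log \sigma_i|$-undistorted condition is precisely what is needed to rule out every vanishing weight on $W_\Theta \cdot \mathcal{C}_\Gamma$, so that $\Phi^0_{\mathcal{C}_\Gamma,\Theta}$ is empty and the thickening hypothesis becomes vacuous. Everything else is a direct translation of the Finsler proof of Theorem \ref{thm:finslerfinitesidedIntro} into the Selberg setting via the framework of Section \ref{sec:Subsection where we prove the generalisation to Hull(F)}.
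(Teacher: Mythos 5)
Your proposal is correct and follows the same route as the paper's own proof: choose $\mathcal{F}=\mathbb{P}(V)$ (so $\Hull(S^2\mathcal{F})=\overline{\mathcal{X}_{2n}}$), invoke Proposition \ref{prop:omega1ImpliesNAnosov} to get $n$-Anosov, note that the $\abs{\log\sigma_i}$-undistorted condition forces $\Phi^0_{\mathcal{C}_\Gamma,\Theta}=\emptyset$ so the disjointness hypothesis of Theorem \ref{thm:Properly finite sided domains in Hull(F)} is vacuous, and conclude. You spell out the $W_\Theta=S_n\times S_n$ computation that the paper leaves implicit, and you explicitly flag the elementary case as needing the Section \ref{sec:CyclicCase}-style adaptations (with thickenings taken directly around $\eta_\pm$ and Lemma \ref{lem:Logarithmic distance Cyclic case} in place of the Morse lemma) — a point the paper's proof silently elides since Proposition \ref{prop:omega1ImpliesNAnosov} only covers the non-elementary case; your remark is sound since the undistorted hypothesis guarantees that no restricted weight of $V=\R^{2n}$ vanishes at $\type(\eta_\pm)$.
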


\begin{proof}
	As observed in Proposition \ref{prop:omega1ImpliesNAnosov}, if $\Gamma$ is not virtually cyclic it is  $n$-Anosov. 
	We set $\mathcal{F} = \mathbb{P}(V)$, which is clearly compact and $\Gamma$-invariant.
	The condition of being $\omega_1$-undistorted guarantees that $\Phi^0$ is empty.
	Therefore we may apply Theorem \ref{thm:Properly finite sided domains in Hull(F)}.
	Since $\overline{\mathcal{X}}=\Hull(S^2\mathcal{F})$,
	$\DS_\Gamma(o) = \DS_\Gamma^\mathcal{F}(o^{-1})$. 
	The case when $\Gamma$ is elementary is similar.
\end{proof}

\subsection{Projective Anosov subgroups}
We give two applications of the previous theorem for projective Anosov representations. 
In these examples the set $\mathcal{F}$ will depend on the representation.

\begin{theorem}
\label{thm:Projective Anosov finite sided}
Let $\Gamma$ be a projective Anosov subgroup of $\SL(d,\R)$.
Let $\Lambda$ be the projective limit set, i.e., $\Lambda=\lbrace \xi^1_\Gamma(x)|x\in \partial\Gamma\rbrace \subset \mathbb{P}(\R^d)$. 
The domain $\mathcal{DS}_\Gamma^{\Lambda}(o)$ is properly finite-sided in $\Hull(S^2\Lambda)\subset \mathbb{P}(S^2\mathbb{R}^d)$ for all $o\in \Hull(S^2\Lambda)^*$.
\end{theorem}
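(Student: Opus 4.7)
The plan is to apply Theorem \ref{thm:Properly finite sided domains in Hull(F)} to $V = \R^d$ (the standard representation) and $\mathcal{F} = \Lambda$, but with the enlarged set of simple roots $\Theta = \{\alpha_1, \alpha_{d-1}\}$ rather than the naive choice $\{\alpha_1\}$. Since the Anosov property is invariant under the opposition involution, which interchanges $\alpha_1$ and $\alpha_{d-1}$, any projective Anosov subgroup is automatically $\Theta$-Anosov. The limit map then takes the form $\xi_\Theta(x) = (\ell_x, H_x)$ where $\ell_x = \xi^1_\Gamma(x)$ and $H_x = \xi^{d-1}_\Gamma(x)$, and $\Lambda$ is compact and $\Gamma$-invariant as required.

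Next I will compute the weight subspaces. The Weyl subgroup $W_\Theta$ permutes the indices $\{2, \ldots, d-1\}$ while fixing $1$ and $d$. Combining $\sigma_1 > \sigma_2$ and $\sigma_{d-1} > \sigma_d$ on $\mathcal{C}_\Gamma$ with $\sum_i \sigma_i = 0$, the weight $\epsilon_1$ is strictly positive and $\epsilon_d$ strictly negative on $W_\Theta \cdot \mathcal{C}_\Gamma$, while the intermediate weights $\epsilon_2, \ldots, \epsilon_{d-1}$ take both signs. Hence
\[ \Phi^+_{\mathcal{C}_\Gamma,\Theta} = \{\epsilon_1\}, \quad \Phi^-_{\mathcal{C}_\Gamma,\Theta} = \{\epsilon_d\}, \quad \Phi^0_{\mathcal{C}_\Gamma,\Theta} = \{\epsilon_2, \ldots, \epsilon_{d-1}\}. \]
Choosing transverse full flags $f, g \in \mathcal{F}_\Delta$ with $F^1 = \ell_x$ and $F^{d-1} = H_x$, the weight lines $L_i = F^i \cap G^{d-i+1}$ satisfy $L_1 = \ell_x$ and $L_1 \oplus \cdots \oplus L_{d-1} = H_x$, giving $V^\xi_+ = \ell_x$ and $V^\xi_{\geq} = H_x$.

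The central step is verifying the hypothesis of Theorem \ref{thm:Properly finite sided domains in Hull(F)}: that $\Lambda$ is disjoint from $V^\xi_{\geq} \setminus V^\xi_+$ in $\mathbb{P}(V)$ for every $x \in \partial \Gamma$. For $\ell_y \in \Lambda$ with $y = x$, $\ell_y = \ell_x \subset V^\xi_+$. For $y \neq x$, the transversality property of the projective Anosov boundary maps (a standard consequence of the $\Theta$-Anosov property, since distinct boundary points go to opposite flags in $\mathcal{F}_\Theta$) gives $\ell_y \not\subset H_x$, hence $\ell_y \not\subset V^\xi_{\geq}$. In either case $\ell_y \notin V^\xi_{\geq} \setminus V^\xi_+$, so the hypothesis holds and Theorem \ref{thm:Properly finite sided domains in Hull(F)} yields the conclusion.

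The main obstacle, and also the key idea, is the enlargement of $\Theta$. With the naive choice $\Theta = \{\alpha_1\}$, the weight $\epsilon_d$ lands in $\Phi^0$ because $W_{\{\alpha_1\}}$ does not fix the index $d$; this forces $V^\xi_{\geq} = V$, making the disjointness condition impossible for any limit set containing more than one line. Enlarging $\Theta$ to include $\alpha_{d-1}$ transfers $\epsilon_d$ into $\Phi^-$, shrinks $V^\xi_{\geq}$ down to the hyperplane $H_x$, and precisely allows the transversality of projective Anosov boundary maps to rule out the rest of $\Lambda$.
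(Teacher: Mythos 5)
Your proof is correct and takes essentially the same route as the paper's: apply Theorem \ref{thm:Properly finite sided domains in Hull(F)} with the flag type $\Theta = \{\alpha_1,\alpha_{d-1}\}$, compute $V_+^\xi = \ell_x$ and $V_\geq^\xi = H_x$, and invoke transversality of the boundary map. Your explicit justification for enlarging $\Theta$ from $\{\alpha_1\}$ to $\{\alpha_1,\alpha_{d-1}\}$ --- which the paper leaves implicit by simply writing the flag as $(\xi^1,\xi^{d-1})$ --- is a useful clarification of why the hypothesis of Theorem \ref{thm:Properly finite sided domains in Hull(F)} is satisfiable at all.
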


\begin{proof}
We apply Theorem \ref{thm:Properly finite sided domains in Hull(F)}.
Given a partial flag $\xi=(\xi^1,\xi^{n-1})$ in $\R^n$ consisting of a line and a hyperplane, the corresponding set $V_+^\xi$ is equal to $\xi^1$ and $V_\geq^\xi$ is equal to $\xi^{n-1}$.
The transversality of the boundary map $\xi_\Gamma$ implies that $\Lambda$ satisfies the hypothesis of Theorem \ref{thm:Properly finite sided domains in Hull(F)}.
\end{proof}

If $\Gamma$ is moreover convex-cocompact in the sense of \cite{DGK19} or \cite{Zim21}, we can choose $\mathcal{F}$ to be larger.

\begin{theorem}
\label{thm:Projective Anosov convex cocompact finite sided}
Let $\Gamma\subset \SL(d,\R)$ be a projective Anosov subgroup that is convex cocompact, i.e. that preserves a properly convex domain $\Omega$ and acts cocompactly on a convex set $\mathcal{C}\subset \Omega$. 
Let $\Lambda=\lbrace \xi^1_\Gamma(x)|x\in \partial\Gamma\rbrace$. 
The domain $\mathcal{DS}_\Gamma^{\mathcal{C} \cup \Lambda}(o)$ is properly finite-sided in $\Hull\left(S^2\left( \mathcal{C}\cup \Lambda\right)\right)\subset \mathbb{P}(S^2\mathbb{R}^d)$  for all $o\in \Hull(S^2\Lambda)^*$.
\end{theorem}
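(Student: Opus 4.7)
The plan is to apply Theorem~\ref{thm:Properly finite sided domains in Hull(F)} with $\mathcal{F} = \mathcal{C} \cup \Lambda$; the work consists in verifying its hypotheses and then invoking it directly.

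First I would verify that $\mathcal{F} = \mathcal{C} \cup \Lambda$ is a compact $\Gamma$-invariant subset of $\mathbb{P}(\R^d)$. Invariance is immediate from the assumptions on $\mathcal{C}$ and on $\Lambda$. For compactness, I would use the standard fact from the theory of convex cocompact projective Anosov subgroups \cite{DGK19,Zim21}: after replacing $\mathcal{C}$ by its closure in $\Omega$ if necessary, the accumulation points of $\mathcal{C}$ in $\partial\Omega$ are exactly the limit set $\Lambda$, so the closure of $\mathcal{C}$ in $\mathbb{P}(\R^d)$ equals $\mathcal{C} \cup \Lambda$ and is therefore compact.

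Next, as in the proof of Theorem~\ref{thm:Projective Anosov finite sided}, $\Gamma$ being projective Anosov means $\Theta$-Anosov for the pair of simple roots giving the flag manifold of incident line–hyperplane pairs; for such $\xi = (\xi^1, \xi^{d-1}) \in \mathcal{F}_\Theta$, one computes $V_+^\xi = \xi^1$ and $V_{\geq}^\xi = \xi^{d-1}$. Thus the disjointness hypothesis of Theorem~\ref{thm:Properly finite sided domains in Hull(F)} reduces to the statement that $\mathcal{F}$ avoids $\mathbb{P}(\xi^{d-1}_\Gamma(x)) \setminus \xi^1_\Gamma(x)$ for every $x \in \partial \Gamma$. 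For points of $\Lambda$ this follows from transversality of the boundary map, exactly as in Theorem~\ref{thm:Projective Anosov finite sided}. For points of $\mathcal{C}$, the key input is that under the combined projective Anosov and convex cocompactness hypotheses, the hyperplane $\xi^{d-1}_\Gamma(x)$ is a supporting hyperplane of $\overline{\Omega}$ at $\xi^1_\Gamma(x) \in \partial\Omega$, see \cite{DGK19,Zim21}. Consequently $\mathbb{P}(\xi^{d-1}_\Gamma(x))$ does not meet $\Omega$ and, in particular, does not meet $\mathcal{C} \subset \Omega$. Theorem~\ref{thm:Properly finite sided domains in Hull(F)} then yields the conclusion for every $o \in \Hull(S^2(\mathcal{C}\cup\Lambda))^*$, a set containing $\mathcal{X}(V)$.

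The main (modest) obstacle lies in the two structural facts about convex cocompact projective Anosov subgroups I invoke above: that $\overline{\mathcal{C}} = \mathcal{C} \cup \Lambda$ in $\mathbb{P}(\R^d)$, and that the dual Anosov boundary map produces genuine supporting hyperplanes to $\Omega$. Both are standard consequences of the work of Danciger--Gu\'eritaud--Kassel and Zimmer, and once cited the rest of the argument reduces to a direct application of the general criterion Theorem~\ref{thm:Properly finite sided domains in Hull(F)}.
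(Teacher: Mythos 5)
Your proposal is correct and follows essentially the same route as the paper: both apply Theorem~\ref{thm:Properly finite sided domains in Hull(F)} with $\mathcal{F}=\mathcal{C}\cup\Lambda$, and both reduce the disjointness hypothesis to the two facts from \cite{DGK19,Zim21} that $\overline{\mathcal{C}}=\mathcal{C}\cup\Lambda$ and that the hyperplane $\xi^{d-1}_\Gamma(x)$ supports $\overline{\Omega}$ and hence misses $\Omega$. Your remark about replacing $\mathcal{C}$ by its closure is a small extra precaution that the paper leaves implicit.
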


Note that following from \cite{DGK19,Zim21}, every projective Anosov subgroup that preserves a convex domain admits such a non-empty convex set $\mathcal{C}$.

\begin{proof} 
We can again apply Theorem \ref{thm:Properly finite sided domains in Hull(F)}. 
As previously it suffices to verify that if $x\in \partial \Gamma$, then $\mathcal{C}\cup \Lambda$ does not intersect $\xi^{n-1}_\Gamma(x)\setminus \xi^1_\Gamma(x)$. 
Since $\Omega$ is a proper convex domain preserved by the projective Anosov subgroup $\Gamma$, the hyperplane $\xi^{n-1}_\Gamma(x)$ is disjoint from $\Omega$ for all $x\in \partial \Gamma$. 
Moreover since $\Gamma$ acts cocompactly on $\mathcal{C}\subset\Omega$, one has $\overline{\mathcal{C}}=\mathcal{C}\cup \Lambda$. 
Complete details for the proofs of the previous two sentences can be found in \cite[Section 8]{DGK19}.
\end{proof}

\subsection{\texorpdfstring{$\omega$}{omega}-undistorted subgroups through a representation}\label{sec:URU subgroups through a representation}
We consider a semisimple Lie group $G$ and an irreducible finite dimensional representation $V$. 
We construct a subset $\mathcal{F}$ that satisfies the hypothesis of Theorem \ref{thm:Properly finite sided domains in Hull(F)} for $\omega$-undistorted subgroups.

\medskip

Let $I\subset \Phi$ be an ideal and write $V^f_I = \bigoplus_{\lambda \in I} V_\lambda^f$ for $f \in \mathcal{F}_\Delta$. 
Set 
$$ \mathcal{F}_I \coloneqq \left\{ [v] \mid v \in V^f_I \setminus \{0\}, f \in \mathcal{F}_\Delta \right\} \subset \mathbb{P}(V) .$$

\begin{theorem}
\label{thm:restiction applied for URU representations in G}
Let $\Gamma\subset G$ be an $\omega$-undistorted subgroup for all $\omega\in I$. 
The domain $\mathcal{DS}_\Gamma^{\mathcal{F}_I}(o)$ is properly finite-sided in $\Hull(S^2\mathcal{F}_I)$ for all $o\in \Hull(S^2\mathcal{F}_I)^*$.
\end{theorem}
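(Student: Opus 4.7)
The strategy is to apply Theorem \ref{thm:Properly finite sided domains in Hull(F)} with $\mathcal{F} = \mathcal{F}_I$. This requires verifying four ingredients: that $\Gamma$ is $\Theta$-Anosov for an appropriate $\Theta$, that $\mathcal{F}_I$ is compact, that $\mathcal{F}_I$ is $\Gamma$-invariant, and most importantly that $\mathcal{F}_I$ is disjoint from $V^\xi_\geq \setminus V^\xi_+$ for every $\xi = \xi_\Theta(x)$ with $x \in \partial \Gamma$.

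The first three ingredients follow quickly. For the Anosov property, the argument of Proposition \ref{prop:limit cone is connected} extends to the family of linear forms $I$: the limit cone $\mathcal{C}_\Gamma$ is connected and contained in a unique connected component $\sigma_\Gamma$ of $\sigma_{\rm mod}\setminus \bigcup_{\omega \in I,\, w \in W} w \cdot \ker(\omega)$, and $\Gamma$ is $\Theta$-Anosov with $\Theta = \Theta(\sigma_\Gamma)$. Compactness of $\mathcal{F}_I$ follows because the weight spaces $V_I^f$ depend continuously on $f$, so $\mathcal{F}_I$ is the image under projection of a closed subbundle over $\mathcal{F}_\Delta$. The $\Gamma$-invariance is immediate from the equivariance $g \cdot V_I^f = V_I^{g \cdot f}$.

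The heart of the proof is verifying the disjointness condition. I would proceed in two substeps. First, I show that $I \subset \Phi^+_{\mathcal{C}_\Gamma, \Theta} \cup \Phi^-_{\mathcal{C}_\Gamma, \Theta}$. Let $\Sigma_\Gamma \subset \mathbb{S}\mathfrak{a}$ denote the connected component of the complement of the walls $\ker(w\omega)$ (for $w \in W$, $\omega \in I$) containing $\mathcal{C}_\Gamma$. Each simple reflection $s_\alpha$ with $\alpha \in \Delta \setminus \Theta$ preserves $\Sigma_\Gamma$: since $\ker \alpha$ meets $\sigma_\Gamma \subset \Sigma_\Gamma$ by the very definition of $\Theta(\sigma_\Gamma)$, the sets $s_\alpha \Sigma_\Gamma$ and $\Sigma_\Gamma$ share points on $\ker \alpha$, and connectedness forces $s_\alpha \Sigma_\Gamma = \Sigma_\Gamma$. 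Hence $W_\Theta \cdot \Sigma_\Gamma = \Sigma_\Gamma$, and any $\omega \in I$, not vanishing on $\Sigma_\Gamma$, has constant sign there, in particular on $W_\Theta \cdot \mathcal{C}_\Gamma \subset \Sigma_\Gamma$; the containment persists for a small neighborhood $\mathcal{C}$ of $\mathcal{C}_\Gamma$. Second, I show that for $v \in V_I^{f_0}$ with $[v] \in V^\xi_\geq$, one has $[v] \in V^\xi_+$. When $f_0$ is incident to $\xi$ (so that the simplex of $\xi$ is contained in $f_0$), both $V_I^{f_0}$ and $V^\xi_+, V^\xi_\geq$ can be computed in the common $(f_0, g_0)$-weight decomposition, and $I \cap \Phi^0 = \emptyset$ yields the claim.

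The main obstacle is the case where $f_0$ is not incident to $\xi$, since then $V_I^{f_0}$ is described in a decomposition unrelated to the one used for $V^\xi_\pm$. To handle this I would invoke the dynamical dichotomy of Lemma \ref{lem:Dichotomy2}: it suffices to show that the slope of $h_{[v\otimes v]}$ along any geodesic ray with direction $\eta \in \st_\mathcal{C}(\xi)$ is bounded away from zero, thereby excluding the intermediate case that corresponds exactly to $V^\xi_\geq \setminus V^\xi_+$. Since the norm on $V$ is $K$-invariant, one can write $v = kw$ with $w \in V_I$ and $k \in K$ and reduce to analyzing the growth of $\|e^{tA'} w\|$ where $A' = k^{-1} A k$ is a $K$-conjugate of $A = \vec\eta \in \mathcal{C}$. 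The first substep gives uniform nonvanishing of each $\omega \in I$ on $\mathcal{C}$, and combined with compactness of $K$ and of $\mathbb{P}(V_I)$ this yields a uniform slope bound, completing the verification of the disjointness hypothesis. Once this is established, Theorem \ref{thm:Properly finite sided domains in Hull(F)} delivers the conclusion that $\mathcal{DS}_\Gamma^{\mathcal{F}_I}(o)$ is properly finite-sided in $\Hull(S^2\mathcal{F}_I)$ for all $o \in \Hull(S^2\mathcal{F}_I)^*$.
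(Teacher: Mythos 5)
Your preparatory observations are correct, and your first substep is in fact stronger than you realize: your argument that $\Sigma_\Gamma$ is $W_\Theta$-invariant and connected shows not only that each $\omega \in I$ has constant sign on $W_\Theta \cdot \mathcal{C}$, but that $w\cdot\omega$ has constant sign on $W_\Theta \cdot \mathcal{C}$ for \emph{every} $w \in W$ (since $\Sigma_\Gamma$ avoids all the walls $\ker(w'\omega')$, $w' \in W$, $\omega' \in I$). This stronger statement is precisely what you need, and it is what the paper uses.

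The gap is in your handling of the ``non-incident'' case, and the fix is that no such separate case is needed. Given $v \in V_I^{f}$ for an arbitrary full flag $f$ and a flag $\xi = \xi_\Theta(x)$, the building axioms guarantee a full flag $g$ opposite to $f$ whose apartment contains the simplex $\xi$; this is the move the paper makes. Inside that one apartment, $\xi$ is a face of some chamber $w_0 \cdot \sigma_f$, so in the $(f,g)$-weight decomposition we have $V^f_I = \bigoplus_{\lambda \in I} V^{f,g}_\lambda$ while $V^\xi_+$ and $V^\xi_\geq$ are direct sums over $w_0 \Phi^+_{\mathcal{C},\Theta}$ and $w_0(\Phi^+ \cup \Phi^0)_{\mathcal{C},\Theta}$. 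The disjointness then follows from $I \cap w_0 \Phi^0_{\mathcal{C},\Theta} = \emptyset$, which is exactly your (strengthened) first substep.

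By contrast, your dynamical patch does not close the gap. Invoking Lemma \ref{lem:Dichotomy2} is circular, since that lemma is proved under the very hypothesis you are trying to verify (it is the disjointness assumption on $\mathcal{F}$ that rules out case (b) in its proof). Moreover, the equivalence you assert --- that $[v]\in V^\xi_\geq\setminus V^\xi_+$ ``corresponds exactly'' to an ambiguous slope --- is not established anywhere; case (b) is defined algebraically and is merely the case where the slope estimate is not controlled, not a slope characterization. Even granting the intent to prove a slope bound from scratch, the sketch via $K$-conjugation of $A$ does not directly yield a weight computation since $k^{-1}Ak \notin \mathfrak{a}$, and more importantly you would then be re-proving the downstream machinery rather than applying Theorem \ref{thm:Properly finite sided domains in Hull(F)}. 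Replacing that last paragraph with the common-apartment argument makes your proof both correct and aligned with the paper's.
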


An example of an ideal can always be obtained by taking the highest restricted weight $\lbrace \omega\rbrace \subset \Phi$. In this case, for $G=\SL(d,\R)$ with the standard representation on $\R^d$, we recover Theorem \ref{thm:omega1-URU implies finitely-sided}. 

\begin{proof}
Note that $\mathcal{F}$ is closed. For each $\lambda\in I$, we get a set $\Theta_\lambda \subset \Delta$ such that $\Gamma$ is $\Theta$-Anosov. We fix $\Theta$ to be the union of all these sets.

\medskip

Let $[v]\in \mathcal{F}$ and let $x\in \partial \Gamma$. For some full flag $f\in \mathcal{F}_\Delta$, one has $v\in V^f_I$. One can find an opposite full flag $g\in \mathcal{F}_\Delta$ such that the flat determined by $f,g$ contains $\xi_\Gamma(x)$. For all $w\in W$, either $w\cdot \omega>0$ on $W_\Theta\cdot \mathcal{C}$ or $w\cdot \omega<0$ on $W_\Theta\cdot \mathcal{C}$. Hence $v$ does not belong to $V^\xi_{\geq}\setminus V^\xi_+$, so $\mathcal{F}$ satisfies the hypothesis of Theorem \ref{thm:Properly finite sided domains in Hull(F)}.
\end{proof}

One can apply Proposition \ref{prop:Tameness} to the quotient of this convex hull.

\begin{corollary}
Let $\Gamma$ be a torsion-free subgroup of $G$ that is $\omega$-undistorted for all $\omega\in I$. 
The quotient by $\Gamma$ of the relative interior of $\Hull(S^2\mathcal{F}_I)$ is topologically tame. 
\end{corollary}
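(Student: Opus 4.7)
The plan is to apply Proposition \ref{prop:Tameness} following the same template as the proof of Corollary \ref{cor:Locally symmetric space is topologically tame}. I take the compact semi-algebraic set to be $X = \Hull(S^2\mathcal{F}_I) \subset \mathbb{P}(S^2V)$, the domain of proper discontinuity to be
\[ \Omega = X \setminus \bigcup_{x\in\partial\Gamma}\mathrm{Th}(\xi_\Theta(x)), \]
obtained from the analog of Proposition \ref{prop:CoarseFibration} in this setting (which the authors note holds by the same proof, relying on Proposition \ref{prop:uniformlyProper2} and Lemma \ref{lem:LocalUnif2}), and the $\Gamma$-invariant semi-algebraic manifold to be $U$, the relative interior of $X$.

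First I would check the semi-algebraic set-up. The set $\mathcal{F}_I$ is the union of the $G$-orbits of the algebraic projective subspaces $\mathbb{P}(V^f_I)$ as $f$ runs through $\mathcal{F}_\Delta$, and hence is semi-algebraic by Tarski--Seidenberg; its image $S^2\mathcal{F}_I$ under the polynomial Veronese map $[v]\mapsto[v\otimes v]$ is semi-algebraic; and its compact convex hull $X$ is semi-algebraic by Carath\'eodory's theorem combined with a further application of Tarski--Seidenberg. The $\Gamma$-action on $X$ is inherited from the linear representation on $V$ and is algebraic.

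The crucial geometric point is that $U \subset \Omega$. For each $x \in \partial \Gamma$, set $\xi = \xi_\Theta(x)$. I would show: (i) $V^\xi_+$ is a proper subspace of $V$, since the weight identity $\sum_\lambda \dim(V_\lambda)\,\lambda = 0$ (valid for any finite-dimensional representation of a semisimple Lie group) prevents all weights of $V$ from being simultaneously positive on $W_\Theta \cdot \mathcal{C}$; and (ii) $\mathcal{F}_I \not\subset \mathbb{P}(V^\xi_+)$ by irreducibility of $V$, since otherwise the $G$-invariant union $\bigcup_f V^f_I$ would lie in the $G$-invariant subspace $\bigcap_{g\in G} g\cdot V^\xi_+$, which by Schur's lemma is $0$ or $V$ and so cannot be a nonzero proper subspace. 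Because a positive convex combination of rank-one positive semidefinite tensors has image equal to the span of the underlying lines, this forces $\mathrm{Th}(\xi) = \Hull\bigl(S^2(\mathcal{F}_I \cap \mathbb{P}(V^\xi_+))\bigr)$, which is a proper exposed face of $X$ and therefore disjoint from the relative interior $U$.

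With these pieces in place, Proposition \ref{prop:Tameness} yields that $U/\Gamma$ is topologically tame. The main obstacle --- and the only step that is not essentially bookkeeping --- is handling the closure condition $\overline{U}\subset \Omega$ in the statement of Proposition \ref{prop:Tameness}, since the closure of $U$ in $X$ generally meets the thickenings lying on the relative boundary. I would treat this exactly as in the proof of Corollary \ref{cor:Locally symmetric space is topologically tame}, whose application of Proposition \ref{prop:Tameness} faces the identical issue.
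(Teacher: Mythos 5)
Your proposal follows the same route as the paper: apply Proposition \ref{prop:Tameness} with $U$ the relative interior of $\Hull(S^2\mathcal{F}_I)$, $\Omega$ the complement of the thickenings, and establish semi-algebraicity of the convex hull via Carath\'eodory together with Tarski--Seidenberg, exactly as in the paragraph the paper gives. Your verification that $U\subset\Omega$ is correct and fills in a step the paper leaves entirely implicit: each thickening $\Th(\xi)$ equals $\Hull\bigl(S^2(\mathcal{F}_I\cap\mathbb{P}(V^\xi_+))\bigr)$, which is a proper face of $\Hull(S^2\mathcal{F}_I)$ and hence disjoint from the relative interior. The facial property follows, as you say, from the fact that a positive semidefinite summand of a positive semidefinite tensor has image contained in the image of the sum; properness of $V^\xi_+$ follows from $W$-invariance of the weight multiplicities (so $\sum_\lambda \dim(V_\lambda)\lambda = 0$) and irreducibility of $V$.

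The one place your write-up falls short is the closure issue, and you should resolve it rather than defer it. You are right that $\overline{U}\subset\Omega$ fails as literally stated: the closure of the relative interior of a compact convex set is the set itself, here $\Hull(S^2\mathcal{F}_I)\supsetneq\Omega$. But pointing to Corollary \ref{cor:Locally symmetric space is topologically tame} does not resolve anything, because that corollary carries no written proof and its intended application of Proposition \ref{prop:Tameness} (with $U=\X$, $X=\overline{\X}$, $\Omega=\X\cup\Omega^\omega_{\rm horo}$) has the identical defect: $\overline{\X}\not\subset\X\cup\Omega^\omega_{\rm horo}$ whenever the limit set is nonempty, so the closure hypothesis cannot hold there either. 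What this actually indicates is that the hypothesis $\overline{U}\subset\Omega$ in the paper's rendering of \cite[Proposition 6.1]{GGKW17b} is not the condition one should (or can) check; the genuine hypotheses needed for both corollaries are that $U$ is a $\Gamma$-invariant open semi-algebraic submanifold of $\Omega$, all of which your argument establishes. State this explicitly instead of appealing to a proof that does not exist.
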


Indeed $S^2\mathcal{F}_I$ is the orbit of an algebraic set by an algebraic group and hence it is semi-algebraic. 
Moreover the convex hull in a finite dimensional vector space of a semi-algebraic set is also semi-algebraic: given a semi-algebraic set $A\subset \R^n$ we consider the subset $B\subset (\R^n)^{n+1}\times \R^{n+1}\times \R^n$ of elements $((x_0,\dots,x_n),(\lambda_0,\dots,\lambda_n),x)$ such that $x_0,\dots, x_n\in A$, $\lambda_0,\dots,\lambda_n \ge0$, $\lambda_0 + \cdots \lambda_n=1$ and $x=\lambda_0x_0+\cdots +\lambda_n x_n$. 
By Carath\'{e}odory's theorem, projecting $B$ to the last copy of $\R^n$ yields the convex hull.

\medskip

Let $\Gamma$ now be an $\omega_\Delta$-undistorted subgroup of $G=\SL(d,\R)$, equivalently a $\Delta$-Anosov subgroup, or sometimes called a Borel Anosov subgroup. 
Recall that $\omega_\Delta$ is the highest root, i.e.\ the highest weight of the adjoint representation $V = \mathfrak{sl}(d,\R)$.  
The highest weight space is given by the span of the unit matrix $E_{1,d}$. 
The set $\mathcal{F}_{\lbrace \omega_\Delta\rbrace}$ can be identified with the flag manifold $\mathcal{F}_{\omega_\Delta}$ which is the partial flag manifold of lines in hyperplanes in $\R^d$. 
The identification is given by $([v],[\alpha]) \mapsto [v \otimes \alpha] \in \mathbb{P}(\mathfrak{sl}(d,\R))$.

Therefore Theorem \ref{thm:restiction applied for URU representations in G} implies that one can obtain a properly finite sided domain in the convex hull of $S^2\mathcal{F}_{\lbrace \omega_\Delta\rbrace}\subset \mathbb{P}(S^2\mathfrak{sl}(n,\R))$ for every Borel Anosov subgroup of $\SL(d,\R)$.

\medskip
	
However one can apply this theorem to a larger ideal, and for any semi-simple Lie group $G$ with its adjoint representation $V=\mathfrak{g}$. We consider $I=\Sigma^+$ the set of positive roots.
The nilpotent cone $\mathcal{N}=\mathcal{F}_{I}\subset \mathbb{P}(\mathfrak{g})$ is the closed cone of nilpotent elements in the Lie algebra $\mathfrak{g}$ of $G$. 

\begin{corollary}
\label{cor:RestrictionBorel Anosov}
If $\Gamma\subset G$ is $\Delta$-Anosov, the Dirichlet-Selberg domain $\mathcal{DS}^\mathcal{N}_\Gamma(o)$ is properly finite-sided in $\Hull(S^2\mathcal{N})\subset \mathbb{P}(S^2\mathfrak{g})$ for all $o\in \Hull(S^2\mathcal{N})^*$. 
\end{corollary}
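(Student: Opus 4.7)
The plan is to apply Theorem~\ref{thm:restiction applied for URU representations in G} to the adjoint representation $V = \mathfrak{g}$ and the ideal $I = \Sigma^+$, the set of positive (restricted) roots, viewed as a subset of the weight system $\Phi = \Sigma \cup \{0\}$ of the adjoint representation. This reduces the problem to three verifications: (a) $\Sigma^+$ is an ideal in $\Phi$; (b) every $\Delta$-Anosov subgroup is $\omega$-undistorted for every $\omega \in \Sigma^+$; and (c) $\mathcal{F}_{\Sigma^+}$ coincides with the nilpotent cone $\mathfrak{n} \subset \mathbb{P}(\mathfrak{g})$. Point (c) is already asserted in the paragraph preceding the corollary, so the substance lies in (a) and (b).

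Both (a) and (b) will follow from a single observation: every root is strictly nonzero on the open Weyl chamber $\mathrm{int}(\mathfrak{a}^+)$. For (a), if $\omega \in \Sigma^+$ and $\omega' \in \Phi$ satisfies $\omega' - \omega \geq 0$ on $\mathfrak{a}^+$, then $\omega > 0$ on $\mathrm{int}(\mathfrak{a}^+)$ forces $\omega' > 0$ there, which excludes $\omega' = 0$ and $\omega' \in \Sigma^-$ and leaves only $\omega' \in \Sigma^+$. For (b), the $\Delta$-Anosov property means that $\Gamma$ is quasi-isometrically embedded and $\mathcal{C}_\Gamma$ avoids $\Ker(\alpha)$ for every $\alpha \in \Delta$, so $\mathcal{C}_\Gamma$ lies in the open Weyl chamber. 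Since for any $\omega \in \Sigma^+$ and $w \in W$ the functional $w \cdot \omega$ is again a root, its kernel misses the open chamber and hence $\mathcal{C}_\Gamma$; combined with the quasi-isometric embedding this is exactly the $\omega$-undistorted condition of Definition~\ref{def: v-uru}.

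For (c), $V^f_{\Sigma^+} = \bigoplus_{\alpha \in \Sigma^+} \mathfrak{g}^f_\alpha$ is the nilradical of the minimal parabolic subalgebra determined by the full flag $f \in \mathcal{F}_\Delta$, and its elements act nilpotently on $\mathfrak{g}$; conversely every nilpotent element of $\mathfrak{g}$ lies in such a nilradical for some $f$ by standard structure theory, so $\mathcal{F}_{\Sigma^+} = \bigcup_f \mathbb{P}(V^f_{\Sigma^+}) = \mathfrak{n}$. Feeding (a), (b), (c) into Theorem~\ref{thm:restiction applied for URU representations in G} then gives the corollary at once. I do not anticipate a substantive obstacle: the proof is essentially a hypothesis check, since the conceptual content --- the characterization of horofunctions bounded below on $\Gamma$-orbits, the thickening construction, and the uniform properness estimate --- is already packaged inside Theorem~\ref{thm:restiction applied for URU representations in G}.
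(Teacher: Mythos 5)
Your proof is correct and follows the same route as the paper: apply Theorem~\ref{thm:restiction applied for URU representations in G} to the adjoint representation with ideal $I=\Sigma^+$, identifying $\mathcal{F}_{\Sigma^+}$ with the nilpotent cone $\mathfrak{n}$. You spell out the hypothesis checks (that $\Sigma^+$ is an ideal, that $\Delta$-Anosov implies $\omega$-undistorted for all $\omega\in\Sigma^+$, and the identification $\mathcal{F}_{\Sigma^+}=\mathfrak{n}$) slightly more explicitly than the paper, which leaves them implicit, but the argument is the same.
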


\newpage

\bibliographystyle{hamsalpha}
\bibliography{biblio}

@article {Lemmens2023symmetriccones,
	AUTHOR = {Lemmens, Bas},
	TITLE = {Horofunction compactifications of symmetric cones under
	{F}insler distances},
	JOURNAL = {Ann. Fenn. Math.},
	FJOURNAL = {Annales Fennici Mathematici},
	VOLUME = {48},
	YEAR = {2023},
	NUMBER = {2},
	PAGES = {729--756},
	ISSN = {2737-0690,2737-114X},
	MRCLASS = {53C60 (17C36)},
	MRNUMBER = {4668531},
	MRREVIEWER = {Marten\ Wortel},
	DOI = {10.54330/afm.141190},
	URL = {https://doi.org/10.54330/afm.141190},
}

@misc{du2025busemannselbergfunctionscompletenessdirichletselberg,
	title={{B}usemann-{S}elberg Functions and Completeness for {D}irichlet-{S}elberg domains in {$SL(n,\mathbb{R})/SO(n,\mathbb{R})$}}, 
	author={Yukun Du},
	year={2025},
	eprint={2412.06987},
	archivePrefix={arXiv},
	primaryClass={math.GR},
	url={https://arxiv.org/abs/2412.06987}, 
}

@misc{KasselTholozan,
      title={Sharpness of proper and cocompact actions on reductive homogeneous spaces},
      author={Fanny Kassel and Nicolas Tholozan},
      year={2024},
      eprint={2410.08179},
      archivePrefix={arXiv},
      primaryClass={math.GR},
      url={https://arxiv.org/abs/2410.08179},
}

@article {Kassel,
	AUTHOR = {Kassel, Fanny},
	TITLE = {Proper actions on corank-one reductive homogeneous spaces},
	JOURNAL = {J. Lie Theory},
	FJOURNAL = {Journal of Lie Theory},
	VOLUME = {18},
	YEAR = {2008},
	NUMBER = {4},
	PAGES = {961--978},
	ISSN = {0949-5932},
	MRCLASS = {22E40 (20G25 57S30)},
	MRNUMBER = {2523148},
	MRREVIEWER = {Karl-Hermann\ Neeb},
}

@unpublished{ID,
	author = {author},
	title = {title},
	date = {date},
	OPTsubtitle = {subtitle},
	OPTtitleaddon = {titleaddon},
	OPTlanguage = {language},
	OPThowpublished = {howpublished},
	OPTnote = {note},
	OPTlocation = {location},
	OPTisbn = {isbn},
	OPTdate = {date},
	OPTmonth = {month},
	OPTyear = {year},
	OPTaddendum = {addendum},
	OPTpubstate = {pubstate},
	OPTurl = {url},
	OPTurldate = {urldate},
}

@article {GLW21,
	AUTHOR = {Guichard, Olivier and Labourie, Fran\c cois and Wienhard,
	Anna},
	TITLE = {Positivity and representations of surface groups},
	JOURNAL = {Forum Math. Pi},
	FJOURNAL = {Forum of Mathematics. Pi},
	VOLUME = {14},
	YEAR = {2026},
	PAGES = {Paper No. e6, 38},
	ISSN = {2050-5086},
	MRCLASS = {22E40 (22E15 22F30 57S20)},
	MRNUMBER = {5029150},
	DOI = {10.1017/fmp.2025.10022},
	URL = {https://doi.org/10.1017/fmp.2025.10022},
}

@article{poincare1883memoire,
  title={M{\'e}moire: Les groupes klein{\'e}ens},
  author={Poincar{\'e}, Henri},
  journal={Acta mathematica},
  volume={3},
  number={1},
  pages={49--92},
  year={1883},
  publisher={Springer}
}

@misc{GKW,
      title={Tameness of Riemannian locally symmetric spaces arising from Anosov representations}, 
      author={Olivier Guichard and Fanny Kassel and Anna Wienhard},
      year={2015},
      eprint={1508.04759},
      archivePrefix={arXiv},
      primaryClass={math.GT}
}

@article{GGKW17b,
	AUTHOR = {Gu\'{e}ritaud, Fran\c{c}ois and Guichard, Olivier and Kassel,
	Fanny and Wienhard, Anna},
	TITLE = {Compactification of certain {C}lifford-{K}lein forms of
	reductive homogeneous spaces},
	JOURNAL = {Michigan Math. J.},
	FJOURNAL = {Michigan Mathematical Journal},
	VOLUME = {66},
	YEAR = {2017},
	NUMBER = {1},
	PAGES = {49--84},
	ISSN = {0026-2285,1945-2365},
	MRCLASS = {22E46 (20F67 22E40 22F30 57S25 57S30)},
	MRNUMBER = {3619735},
	MRREVIEWER = {Peter\ Crooks},
	DOI = {10.1307/mmj/1488510025},
	URL = {https://doi.org/10.1307/mmj/1488510025},
}

@article {Benoist,
    AUTHOR = {Benoist, Yves},
     TITLE = {Propri\'{e}t\'{e}s asymptotiques des groupes lin\'{e}aires},
   JOURNAL = {Geom. Funct. Anal.},
  FJOURNAL = {Geometric and Functional Analysis},
    VOLUME = {7},
      YEAR = {1997},
    NUMBER = {1},
     PAGES = {1--47},
      ISSN = {1016-443X,1420-8970},
   MRCLASS = {22E15},
  MRNUMBER = {1437472},
MRREVIEWER = {Scot\ Adams},
       DOI = {10.1007/PL00001613},
       URL = {https://doi.org/10.1007/PL00001613},
}

@article {Dav23,
	AUTHOR = {Davalo, Colin},
	TITLE = {Nearly geodesic immersions and domains of discontinuity},
	JOURNAL = {Geom. Topol.},
	FJOURNAL = {Geometry \& Topology},
	VOLUME = {29},
	YEAR = {2025},
	NUMBER = {5},
	PAGES = {2391--2461},
	ISSN = {1465-3060,1364-0380},
	MRCLASS = {53C35 (22E40)},
	MRNUMBER = {4950919},
	MRREVIEWER = {\'Agota\ Figula},
	DOI = {10.2140/gt.2025.29.2391},
	URL = {https://doi.org/10.2140/gt.2025.29.2391},
}

@article {Kar53,
	AUTHOR = {Karpelevi\v{c}, F. I.},
	TITLE = {Surfaces of transitivity of a semisimple subgroup of the group
	of motions of a symmetric space},
	JOURNAL = {Doklady Akad. Nauk SSSR (N.S.)},
	VOLUME = {93},
	YEAR = {1953},
	PAGES = {401--404},
	MRCLASS = {53.0X},
	MRNUMBER = {0060283},
	MRREVIEWER = {M. S. Knebelman},
}

@article {Mos55,
	AUTHOR = {Mostow, G. D.},
	TITLE = {Some new decomposition theorems for semi-simple groups},
	JOURNAL = {Mem. Amer. Math. Soc.},
	FJOURNAL = {Memoirs of the American Mathematical Society},
	VOLUME = {14},
	YEAR = {1955},
	PAGES = {31--54},
	ISSN = {0065-9266},
	MRCLASS = {20.0X},
	MRNUMBER = {69829},
	MRREVIEWER = {K. Iwasawa},
}

@incollection {Wal14,
	AUTHOR = {Walsh, Cormac},
	TITLE = {The horoboundary and isometry group of {T}hurston's
	{L}ipschitz metric},
	BOOKTITLE = {Handbook of {T}eichm\"{u}ller theory. {V}ol. {IV}},
	SERIES = {IRMA Lect. Math. Theor. Phys.},
	VOLUME = {19},
	PAGES = {327--353},
	PUBLISHER = {Eur. Math. Soc., Z\"{u}rich},
	YEAR = {2014},
	ISBN = {978-3-03719-117-0},
	MRCLASS = {57S30 (30F60 53C23)},
	MRNUMBER = {3289705},
	MRREVIEWER = {J\'{e}r\'{e}my\ Toulisse},
	DOI = {10.4171/117-1/8},
	URL = {https://doi.org/10.4171/117-1/8},
}

@article {Lab06,
    AUTHOR = {Labourie, Fran\c{c}ois},
     TITLE = {Anosov flows, surface groups and curves in projective space},
   JOURNAL = {Invent. Math.},
  FJOURNAL = {Inventiones Mathematicae},
    VOLUME = {165},
      YEAR = {2006},
    NUMBER = {1},
     PAGES = {51--114},
      ISSN = {0020-9910},
   MRCLASS = {20F65 (37D20 37F30)},
  MRNUMBER = {2221137},
MRREVIEWER = {Richard Kenyon},
       DOI = {10.1007/s00222-005-0487-3},
       URL = {https://doi.org/10.1007/s00222-005-0487-3},
}

@article {GW12,
    AUTHOR = {Guichard, Olivier and Wienhard, Anna},
     TITLE = {Anosov representations: domains of discontinuity and
              applications},
   JOURNAL = {Invent. Math.},
  FJOURNAL = {Inventiones Mathematicae},
    VOLUME = {190},
      YEAR = {2012},
    NUMBER = {2},
     PAGES = {357--438},
      ISSN = {0020-9910},
   MRCLASS = {22F30 (32G15 53C30 53D25)},
  MRNUMBER = {2981818},
MRREVIEWER = {Pablo Su\'{a}rez-Serrato},
       DOI = {10.1007/s00222-012-0382-7},
       URL = {https://doi.org/10.1007/s00222-012-0382-7},
}

@article {GGKW17,
    AUTHOR = {Gu\'{e}ritaud, Fran\c{c}ois and Guichard, Olivier and Kassel, Fanny
              and Wienhard, Anna},
     TITLE = {Anosov representations and proper actions},
   JOURNAL = {Geom. Topol.},
  FJOURNAL = {Geometry \& Topology},
    VOLUME = {21},
      YEAR = {2017},
    NUMBER = {1},
     PAGES = {485--584},
      ISSN = {1465-3060},
   MRCLASS = {37D40 (20F67 22E40 57S30)},
  MRNUMBER = {3608719},
       DOI = {10.2140/gt.2017.21.485},
       URL = {https://doi.org/10.2140/gt.2017.21.485},
}

@article {KL06,
    AUTHOR = {Kleiner, Bruce and Leeb, Bernhard},
     TITLE = {Rigidity of invariant convex sets in symmetric spaces},
   JOURNAL = {Invent. Math.},
  FJOURNAL = {Inventiones Mathematicae},
    VOLUME = {163},
      YEAR = {2006},
    NUMBER = {3},
     PAGES = {657--676},
      ISSN = {0020-9910},
   MRCLASS = {53C24 (22E40 53C35 57M07)},
  MRNUMBER = {2207236},
MRREVIEWER = {David Michael Fisher},
       DOI = {10.1007/s00222-005-0471-y},
       URL = {https://doi.org/10.1007/s00222-005-0471-y},
}

@article {KLP18a,
    AUTHOR = {Kapovich, Michael and Leeb, Bernhard and Porti, Joan},
     TITLE = {Dynamics on flag manifolds: domains of proper discontinuity
              and cocompactness},
   JOURNAL = {Geom. Topol.},
  FJOURNAL = {Geometry \& Topology},
    VOLUME = {22},
      YEAR = {2018},
    NUMBER = {1},
     PAGES = {157--234},
      ISSN = {1465-3060},
   MRCLASS = {53C35 (22E40 37B05 51E24)},
  MRNUMBER = {3720343},
MRREVIEWER = {Gabriele Link},
       DOI = {10.2140/gt.2018.22.157},
       URL = {https://doi.org/10.2140/gt.2018.22.157},
}

@article {Ben96,
    AUTHOR = {Benoist, Yves},
     TITLE = {Actions propres sur les espaces homog\`enes r\'{e}ductifs},
   JOURNAL = {Ann. of Math. (2)},
  FJOURNAL = {Annals of Mathematics. Second Series},
    VOLUME = {144},
      YEAR = {1996},
    NUMBER = {2},
     PAGES = {315--347},
      ISSN = {0003-486X},
   MRCLASS = {22E40},
  MRNUMBER = {1418901},
MRREVIEWER = {Scot Adams},
       DOI = {10.2307/2118594},
       URL = {https://doi.org/10.2307/2118594},
}

@article {DGK19,
	AUTHOR = {Danciger, Jeffrey and Gu\'eritaud, Fran\c cois and Kassel,
	Fanny},
	TITLE = {Convex cocompact actions in real projective geometry},
	JOURNAL = {Ann. Sci. \'Ec. Norm. Sup\'er. (4)},
	FJOURNAL = {Annales Scientifiques de l'\'Ecole Normale Sup\'erieure.
	Quatri\`eme S\'erie},
	VOLUME = {57},
	YEAR = {2024},
	NUMBER = {6},
	PAGES = {1753--1843},
	ISSN = {0012-9593,1873-2151},
	MRCLASS = {22E40 (20F55)},
	MRNUMBER = {4862504},
	MRREVIEWER = {O.\ V.\ Shvartsman},
}

@book{E96,
  title={Geometry of nonpositively curved manifolds},
  author={Eberlein, Patrick},
  year={1996},
  publisher={University of Chicago Press}
}

@article{KL18a,
abstract = {We give a geometric interpretation of the maximal Satake compactification of symmetric spaces X=G/K of noncompact type, showing that it arises by attaching the horofunction boundary for a suitable G-invariant Finsler metric on X. As an application, we establish the existence of natural bordifications, as orbifolds-with-corners, of locally symmetric spaces X/$\Gamma$ for arbitrary discrete subgroups $\Gamma${\textless}G. These bordifications result from attaching $\Gamma$-quotients of suitable domains of proper discontinuity at infinity. We further prove that such bordifications are compactifications in the case of Anosov subgroups. We show, conversely, that Anosov subgroups are characterized by the existence of such compactifications among uniformly regular subgroups. Along the way, we give a positive answer, in the torsion free case, to a question of Ha$\backslash$"issinsky and Tukia on convergence groups regarding the cocompactness of their actions on the domains of discontinuity.},
author = {Kapovich, Michael and Leeb, Bernhard},
doi = {10.2140/gt.2018.22.2533},
file = {:C$\backslash$:/Users/riest/Downloads/gt-v22-n5-p02-s.pdf:pdf},
issn = {13640380},
journal = {Geometry and Topology},
keywords = {Discrete groups,Finsler geometry},
mendeley-groups = {Anosov Representations},
number = {5},
pages = {2533--2646},
title = {{Finsler bordifications of symmetric and certain locally symmetric spaces}},
volume = {22},
year = {2018}
}

@article{KLP18b,
  title={A Morse Lemma for quasigeodesics in symmetric spaces and euclidean buildings},
  author={Kapovich, Michael and Leeb, Bernhard and Porti, Joan},
  journal={Geometry \& Topology},
  volume={22},
  number={7},
  pages={3827--3923},
  year={2018},
  publisher={Mathematical Sciences Publishers}
}

@article{KLP17,
  title={Anosov subgroups: dynamical and geometric characterizations},
  author={Kapovich, Michael and Leeb, Bernhard and Porti, Joan},
  journal={European Journal of Mathematics},
  volume={3},
  number={4},
  pages={808--898},
  year={2017},
  publisher={Springer}
}

@article{KLM09,
author = {Kapovich, Michael and Leeb, Bernhard and Millson, John},
file = {:C$\backslash$:/Users/riest/Downloads/euclid.jdg.1231856263.pdf:pdf},
journal = {J. Differential Geom},
pages = {297--354},
title = {{Convex functions on symmetric spaces, side lengths of polygons, and the stability inequalities for weighted configurations at infinity}},
volume = {81},
year = {2009}
}

@article {BPS19,
    AUTHOR = {Bochi, Jairo and Potrie, Rafael and Sambarino, Andr\'{e}s},
     TITLE = {Anosov representations and dominated splittings},
   JOURNAL = {J. Eur. Math. Soc. (JEMS)},
  FJOURNAL = {Journal of the European Mathematical Society (JEMS)},
    VOLUME = {21},
      YEAR = {2019},
    NUMBER = {11},
     PAGES = {3343--3414},
      ISSN = {1435-9855},
   MRCLASS = {22E40 (20F67 37B99 37D30 53C35)},
  MRNUMBER = {4012341},
       DOI = {10.4171/JEMS/905},
       URL = {https://doi-org.ezproxy.lib.utexas.edu/10.4171/JEMS/905},
}

@incollection {GW18,
    AUTHOR = {Guichard, Olivier and Wienhard, Anna},
     TITLE = {Positivity and higher {T}eichm\"{u}ller theory},
 BOOKTITLE = {European {C}ongress of {M}athematics},
     PAGES = {289--310},
 PUBLISHER = {Eur. Math. Soc., Z\"{u}rich},
      YEAR = {2018},
   MRCLASS = {22E15 (22E46)},
  MRNUMBER = {3887772},
MRREVIEWER = {Renato G. Bettiol},
}

@book {Rat19,
	AUTHOR = {Ratcliffe, John G.},
	TITLE = {Foundations of hyperbolic manifolds},
	SERIES = {Graduate Texts in Mathematics},
	VOLUME = {149},
	NOTE = {Third edition [of  1299730]},
	PUBLISHER = {Springer, Cham},
	YEAR = {[2019] \copyright 2019},
	PAGES = {xii+800},
	ISBN = {978-3-030-31597-9; 978-3-030-31596-2},
	MRCLASS = {57M50 (20H10 30F40 57K32)},
	MRNUMBER = {4221225},
	DOI = {10.1007/978-3-030-31597-9},
	URL = {https://doi.org/10.1007/978-3-030-31597-9},
}

@misc{HSWW,
      title={Horofunction Compactifications of Symmetric Spaces}, 
      author={Thomas Haettel and Anna-Sofie Schilling and Cormac Walsh and Anna Wienhard},
      year={2018},
      eprint={1705.05026},
      archivePrefix={arXiv},
      primaryClass={math.DG}
}

@incollection {Kap23,
	AUTHOR = {Kapovich, Michael},
	TITLE = {Geometric algorithms for discreteness and faithfulness},
	BOOKTITLE = {Computational aspects of discrete subgroups of {L}ie groups},
	SERIES = {Contemp. Math.},
	VOLUME = {783},
	PAGES = {87--112},
	PUBLISHER = {Amer. Math. Soc., [Providence], RI},
	YEAR = {[2023] \copyright 2023},
	ISBN = {978-1-4704-6804-0},
	MRCLASS = {20-08 (20F67 22E40 53C35)},
	MRNUMBER = {4556436},
	DOI = {10.1090/conm/783/15736},
	URL = {https://doi.org/10.1090/conm/783/15736},
}

@article {Du2024geometry,
	AUTHOR = {Du, Yukun},
	TITLE = {Geometry of {S}elberg's bisectors in the symmetric space
	{$SL(n,\Bbb {R})/SO(n,\Bbb {R})$}},
	JOURNAL = {J. Lond. Math. Soc. (2)},
	FJOURNAL = {Journal of the London Mathematical Society. Second Series},
	VOLUME = {110},
	YEAR = {2024},
	NUMBER = {4},
	PAGES = {Paper No. e12992, 36},
	ISSN = {0024-6107,1469-7750},
	MRCLASS = {20F65 (15A22 53C35)},
	MRNUMBER = {4801894},
	MRREVIEWER = {Vladimir\ V.\ Gorbatsevich},
	DOI = {10.1112/jlms.12992},
	URL = {https://doi.org/10.1112/jlms.12992},
}

@incollection{Sel60,
	AUTHOR = {Selberg, Atle},
	TITLE = {On discontinuous groups in higher-dimensional symmetric
	spaces},
	BOOKTITLE = {Contributions to function theory ({I}nternat. {C}olloq.
	{F}unction {T}heory, {B}ombay, 1960)},
	PAGES = {147--164},
	PUBLISHER = {Tata Inst. Fund. Res., Bombay},
	YEAR = {1960},
	MRCLASS = {20.65 (22.70)},
	MRNUMBER = {130324},
	MRREVIEWER = {E.\ Grosswald},
}

@phdthesis{Mar09,
	TITLE = {{Les pavages en g{\'e}om{\'e}trie projective de dimension 2 et 3}},
	AUTHOR = {Marquis, Ludovic},
	URL = {https://theses.hal.science/tel-00428902},
	SCHOOL = {{Universit{\'e} Paris Sud - Paris XI}},
	YEAR = {2009},
	MONTH = May,
	KEYWORDS = {G{\'e}om{\'e}trie hyperbolique ; g{\'e}om{\'e}trie projective ; groupe de Coxeter ; G{\'e}om{\'e}trie de Hilbert ; th{\'e}orie g{\'e}om{\'e}trique des groupes},
	TYPE = {Theses},
	PDF = {https://theses.hal.science/tel-00428902/file/These_de_Ludovic_Marquis_final.pdf},
	HAL_ID = {tel-00428902},
	HAL_VERSION = {v1},
}

@misc{Lee08,
	title={Convex fundamental domains for properly convex real projective structures}, 
	author={Jaejeong Lee},
	year={2008},
	type={Preprint},
}

@article{Zim21,
	AUTHOR = {Zimmer, Andrew},
	TITLE = {Projective {A}nosov representations, convex cocompact actions,
	and rigidity},
	JOURNAL = {J. Differential Geom.},
	FJOURNAL = {Journal of Differential Geometry},
	VOLUME = {119},
	YEAR = {2021},
	NUMBER = {3},
	PAGES = {513--586},
	ISSN = {0022-040X,1945-743X},
	MRCLASS = {22E40 (20F65 20F67 20H10 37D40 53C24 57S30)},
	MRNUMBER = {4333029},
	MRREVIEWER = {Samuel\ Aaron\ Ballas},
	DOI = {10.4310/jdg/1635368438},
	URL = {https://doi.org/10.4310/jdg/1635368438},
}
	
\end{document}